\newcommand{\Nn}{\mathcal{N}}
\newcommand{\Nak}[2]{\Nn_{#1}(#2)}
\newtheorem{thm}{Theorem}[section]
\newtheorem{set}[thm]{Setting}
\newtheorem{conven}[thm]{Convention}
\newtheorem{cor}[thm]{Corollary}
\newtheorem{lem}[thm]{Lemma}
\newtheorem{prop}[thm]{Proposition}
\theoremstyle{definition}
\newtheorem{defn}[thm]{Definition}
\newtheorem{exm}[thm]{Example}
\theoremstyle{remark}
\newtheorem{rem}[thm]{\bf Remark}
\numberwithin{equation}{section}
\begin{document}
	\input xy
	\xyoption{all}
	
	\title[Geometric-Combinatorics in Tilting Theory on WPL (2,2,n)]
	{Geometric-Combinatorial Approaches to Tilting Theory for Weighted Projective Lines}

		\author[Jianmin Chen]{Jianmin Chen}
	\address{School of Mathematical Sciences\\
		Xiamen University\\
		361005, Xiamen, P.R. China}
	\email{chenjianmin@xmu.edu.cn}
	\author[Jinfeng Zhang]{Jinfeng Zhang$^*$}
	\address{School of Mathematical Sciences\\
		Xiamen University\\
		361005, Xiamen, P.R. China}
	\email{zhangjinfeng@stu.xmu.edu.cn}
	
	\makeatletter \@namedef{subjclassname@2020}{\textup{2020} Mathematics Subject Classification} \makeatother
	
	\thanks{$^*$ the corresponding author}
	\subjclass[2020]{14F06, 18E10, 05E10, 16S99, 57M50}
	\date{\today}
	\keywords{weighted projective line; geometric model; equivariantization}
	\maketitle
	
	\begin{abstract}
		We provide a geometric-combinatorial model for the category of coherent sheaves on the
weighted projective line of type $(2,2,n)$ via a cylindrical surface with $n$ marked points on each of its upper and lower boundaries, equipped with an order $2$ self-homeomorphism. A bijection is established between indecomposable sheaves on the weighted projective line and skew-curves on the surface. Moreover, by defining a skew-arc as a self-compatible skew-curve and a pseudo-triangulation as a maximal set of distinct pairwise compatible skew-arcs, we show that pseudo-triangulations correspond bijectively to tilting sheaves. Under this bijection, the flip of a skew-arc within a pseudo-triangulation  coincides with the tilting mutation. As an application, we prove the connectivity of the tilting graph for the category of coherent sheaves.
	\end{abstract}
	
\section{Introduction}
Tilting theory is an important topic in the representation theory of algebras, with its origins traced back to reflection functors \cite{MR530043,MR393065}. Its development has since expanded to various contexts, including the category ${\rm coh}\mbox{-}\mathbb{X}$ of coherent sheaves on a weighted projective lines $\mathbb{X}$, as introduced by Geigle and Lenzing \cite{Geigle1987WeightedCurves}. In the literature, the connectedness of titling graph of ${\rm coh}\mbox{-}\mathbb{X}$
has been extensively investigated from a categorical perspective \cite{MR2608408, MR2146246, Geng2020MutationTilting, MR4232543}. 

On the other hand, representation theory has increasingly embraced geometric models as tools for understanding categorical structures, motivated by advancements in cluster algebras and symplectic geometry. For instance, 
 Caldero, Chapoton and Schiffler \cite{MR2187656} described the cluster category of type $A_n$ by using a regular $(n+3)$-gon. Later, Schiffler \cite{MR2366159} extended this work to type $D_n$ with a punctured regular $n$-gon. Further studies on cluster categories related to triangulated marked surfaces include Br\"ustle-Zhang \cite{MR2870100} (unpunctured case), Qiu-Zhou \cite{MR3705277} (punctured case), and Amiot-Plamondon \cite{MR4289034} (punctured case via group actions). Geometric models have also been developed for abelian categories, such as tube categories \cite{MR2921637}, module categories over gentle algebras \cite{MR4294120}, and skew-gentle algebras \cite{MR4581165}.

Notice that most of the above works establish a one-to-one correspondence between indecomposable objects and curves on marked surfaces, where $\operatorname{Ext}^1$ dimensions correspond to intersection numbers, and triangulations naturally align with tilting objects. Motivated by these work, we concern about constructing geometric  models for the categories of coherent sheaves over weighted projective lines. Recently, \cite{Chen2023GeometricModel} introduced a geometric-combinatorial model for ${\rm coh}\mbox{-}\mathbb{X}(n,n)$ of type $(n,n)$ based on a cylindrical surface $(\mathcal{S}, M)$ with $n$ marked points on both its upper and lower boundaries (corresponding to the case $p = q = n$ in a general framework). And further,  \cite{chen2024geometricmodelvectorbundles} characterized the intrinsic properties of the category ${\rm vect}\mbox{-}\mathbb{X}(2,2,n)$ of vector bundles on $\mathbb{X}(2,2,n)$ via an infinite marked strip. 
In this paper, we try to develop geometric-combinatorial approaches that provide a novel perspective on tilting theory for ${\rm coh}\mbox{-}\mathbb{X}(2,2,n)$.

Let $\operatorname{Aut}(\mathbb{X}(n,n))$ be the automorphism group of $\mathbb{X}(n,n)$ and $\operatorname{Aut}({\rm coh}\mbox{-}\mathbb{X}(n,n))$ be the group of isoclasses of self-equivalences of ${\rm coh}\mbox{-}\mathbb{X}(n,n)$. By \cite{MR3644095}, there exists an isomorphism $\mathbb{X}(n,n)/G \cong \mathbb{X}(2,2,n)$ as weighted Riemann surfaces, where $G$ is a subgroup of $\operatorname{Aut}(\mathbb{X}(n,n))$. Indeed, the group $G$ is generated by the automorphism $\sigma_{1,2}$, which exchanges the two weighted points of $\mathbb{X}(n,n)$. Naturally, the group $G$ can be embedded into $\operatorname{Aut}({\rm coh}\mbox{-}\mathbb{X}(n,n))$ as a subgroup fixing the structure sheaf (compare \cite{Lenzing2000AutomorphismGroup}). Moreover, ${\rm coh}\mbox{-}\mathbb{X}(2,2,n)$ is equivalent to the category $({\rm coh}\mbox{-}\mathbb{X}(n,n))^{G}$ of $G$-equivariant objects in ${\rm coh}\mbox{-}\mathbb{X}(n,n)$, see  Appendix \ref{equivariant}. On the other hand,  there exists an order $2$ self-homeomorphism $\sigma$ on $(\mathcal{S}, M)$ that is compatible with $\sigma_{1,2}$ in a specific sense, see \S \ref{thecyl}. Based on these facts, we propose a geometric-combinatorial model for ${\rm coh}\mbox{-}\mathbb{X}(2,2,n)$ via the marked surface $(\mathcal{S},M)$ equipped with $\sigma$. This model reveals the equivariant relationship between ${\rm coh}\mbox{-}\mathbb{X}(2,2,n)$ and ${\rm coh}\mbox{-}\mathbb{X}(n,n)$.  

According to \cite{Chen2023GeometricModel}, there is a bijection between isoclasses of indecomposable sheaves on $\mathbb{X}(n,n)$ and homotopy classes of curves in a set $\mathbf{C}$ on $(\mathcal{S}, M)$.  Using $\sigma$ and  $\mathbf{C}$, we construct the skew-curves on $(\mathcal{S}, M, \sigma)$, see \S \ref{hybrid}. Let $\widehat{\mathbf{C}}$ be the set of all skew-curves on $(\mathcal{S}, M, \sigma)$. Denote by $\operatorname{ind}\left({\rm coh}\mbox{-}\mathbb{X}(2,2,n)\right)$  the set of isoclasses of indecomposable sheaves in ${\rm coh}\mbox{-}\mathbb{X}(2,2,n)$. We show that:

\begin{thm}(Proposition \ref{correspondence})\label{thmA}
The isoclasses in $\operatorname{ind}\left({\rm coh}\mbox{-}\mathbb{X}(2,2,n)\right)$ can be parameterized by the skew-curves in $\widehat{\mathbf{C}}$, through a bijection \[\widehat{\phi}:\widehat{\mathbf{C}}\rightarrow \operatorname{ind}\left({\rm coh}\mbox{-}\mathbb{X}(2,2,n)\right). \]
The explicit correspondence of $\widehat{\phi}$ is given in Table \ref{table:skew-curves-mapping}.
\end{thm}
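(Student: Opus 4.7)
The plan is to build $\widehat{\phi}$ by combining the bijection $\phi:\mathbf{C}\to \operatorname{ind}({\rm coh}\mbox{-}\mathbb{X}(n,n))$ of \cite{Chen2023GeometricModel} with the equivariantization equivalence ${\rm coh}\mbox{-}\mathbb{X}(2,2,n)\cong ({\rm coh}\mbox{-}\mathbb{X}(n,n))^{G}$ recalled in Appendix \ref{equivariant}. First I would verify the compatibility statement
\[
\phi(\sigma(c))\;\cong\;\sigma_{1,2}\cdot \phi(c)\qquad\text{for all }c\in\mathbf{C}.
\]
Since $\sigma$ and $\sigma_{1,2}$ are both order-two and are set up to match in \S\ref{thecyl}, this can be checked stratum by stratum on the classes of curves in $\mathbf{C}$ (line bundle curves on the cylinder, torsion curves at ordinary points, and the distinguished curves sitting at the two weighted points of $\mathbb{X}(n,n)$) by tracing how each of the two operations permutes the parameters (endpoints, winding slope, weighted decoration) attached to each curve.

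With equivariance of $\phi$ in hand, I would invoke the standard structure theorem for indecomposable $G$-equivariant objects when $G=\mathbb{Z}/2\mathbb{Z}$. Because ${\rm coh}\mbox{-}\mathbb{X}(n,n)$ is a Krull-Schmidt Hom-finite $k$-category, each indecomposable $Y\in \operatorname{ind}({\rm coh}\mbox{-}\mathbb{X}(2,2,n))$ falls into exactly one of the following cases under the forgetful functor to ${\rm coh}\mbox{-}\mathbb{X}(n,n)$: either (i) its underlying object is indecomposable and $\sigma_{1,2}$-invariant, in which case $G$-equivariant structures on it form a torsor over $\operatorname{Hom}(G,k^{\times})=\mathbb{Z}/2$, producing two non-isomorphic equivariant lifts; or (ii) its underlying object decomposes as $X\oplus \sigma_{1,2}X$ with $\sigma_{1,2}X\not\cong X$, giving a unique equivariant lift per orbit $\{X,\sigma_{1,2}X\}$. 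Thus the isoclasses in $\operatorname{ind}({\rm coh}\mbox{-}\mathbb{X}(2,2,n))$ are indexed by $\sigma_{1,2}$-orbits of indecomposables in ${\rm coh}\mbox{-}\mathbb{X}(n,n)$, with fixed orbits counted with multiplicity two.

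The geometric side must match this count precisely. The definition of skew-curves in \S\ref{hybrid} is designed so that a $\sigma$-invariant curve $c\in\mathbf{C}$ contributes two distinct elements of $\widehat{\mathbf{C}}$, distinguished by an additional sign or decoration arising from the two possible splittings at the self-intersection with $\sigma$, while a non-$\sigma$-invariant curve contributes one element of $\widehat{\mathbf{C}}$ per orbit $\{c,\sigma(c)\}$. I would therefore unfold the definition, partition $\widehat{\mathbf{C}}$ according to these two cases, and check that the resulting cardinality matches the algebraic count above. Defining $\widehat{\phi}$ as the induction of $\phi$ in case (ii), and as the map sending the two decorated copies of $c$ to the two equivariant lifts of $\phi(c)$ in case (i), gives a well-defined map whose restriction to each stratum is a bijection; assembling the strata produces the table cited in the statement.

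The main obstacle is case (i): one must show that for every $\sigma$-invariant skew-curve, the underlying sheaf is $\sigma_{1,2}$-invariant \emph{and} indecomposable (not merely semisimple), and that the two geometric decorations correspond canonically, rather than up to an unspecified sign, to the two equivariant structures. This is most delicate for the curves sitting at or winding around the weighted points of $\mathbb{X}(n,n)$ that get identified by $G$ into the two weight-$2$ points of $\mathbb{X}(2,2,n)$, since there both the combinatorics of the flip and the representation-theoretic behaviour of $\sigma_{1,2}$ are non-generic; a careful enumeration at these exceptional fibres, against the closed formulas for $\phi$ on these curves from \cite{Chen2023GeometricModel}, should complete the verification and yield exactly Table \ref{table:skew-curves-mapping}.
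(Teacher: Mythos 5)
Your proposal is correct and follows essentially the same route as the paper: the paper also builds $\widehat{\phi}$ by combining the bijection $\phi$ of \cite{Chen2023GeometricModel} with the equivariantization equivalence $({\rm coh}\mbox{-}\mathbb{Y})^{G}\simeq{\rm coh}\mbox{-}\mathbb{X}$, using the compatibility $\phi(\sigma(\gamma))=\sigma_{1,2}(\phi(\gamma))$ and the standard classification of indecomposable $\mathbb{Z}/2$-equivariant objects (two lifts for $G$-stable indecomposables, one per non-stable orbit), the case-by-case matching you flag as delicate being carried out in the paper by the explicit tables of Appendix \ref{equivariant}.
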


Moreover, by introducing the notations of compatibility among skew-curves and skew-arc (see Definition \ref{curvecompatible}) and defining a pseudo-triangulation on $(\mathcal{S}, M, \sigma)$ as a maximal set of distinct, pairwise compatible skew-arcs, we show that each pseudo-triangulation $\Lambda$ on $(\mathcal{S}, M, \sigma)$ corresponds to a tilting object $T=\oplus_{\widehat{\gamma}\in \Lambda}\widehat{\phi}(\widehat{\gamma})$ in ${\rm coh}\mbox{-}\mathbb{X}(2,2,n)$. This extends the classification of all tilting bundles on $\mathbb{X}(2,2,n)$ given in \cite{MR3313495}.

Furthermore, we define the flip of a skew-arc $\widehat{\gamma}$ in a pseudo-triangulation $\Lambda$.  The flip replaces $\widehat{\gamma}$ by a uniquely defined, distinct skew-arc  $\widehat{\mu}_{\Lambda}(\widehat{\gamma})$, such that the pseudo-triangulation $\Lambda$ is transformed into another pseudo-triangulation, see Appendix \ref{foh}. On the other hand, one can apply the mutation $\mu_T(\widehat{\phi}(\widehat{\gamma}))$  locally changing the summand $\widehat{\phi}(\widehat{\gamma})$ of $T$ to yield another tilting sheaf. We show that these two operations are compatible: 
\begin{thm} (Proposition \ref{tilting} and Corollary \ref{mutation and flip})\label{thmB}
Each pseudo-triangulation $\Lambda$ on $(\mathcal{S}, M, \sigma)$ corresponds to a tilting object $T$ in ${\rm coh}\mbox{-}\mathbb{X}(2,2,n)$. Specifically, the correspondence is given by $\Lambda \mapsto \oplus_{\widehat{\gamma}\in \Lambda}\widehat{\phi}(\widehat{\gamma})$. Moreover, if $\widehat{\gamma}\in \Lambda$, then 
\[
\widehat{\phi}(\widehat{\mu}_{\Lambda}(\widehat{\gamma})) = \mu_T(\widehat{\phi}(\widehat{\gamma})).
\]
\end{thm}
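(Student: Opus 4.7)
The plan is to leverage the equivariant equivalence ${\rm coh}\mbox{-}\mathbb{X}(2,2,n)\simeq ({\rm coh}\mbox{-}\mathbb{X}(n,n))^{G}$ recalled in the introduction, together with the geometric model of \cite{Chen2023GeometricModel} for ${\rm coh}\mbox{-}\mathbb{X}(n,n)$, in which triangulations of the cylinder $(\mathcal{S},M)$ correspond bijectively to tilting sheaves and arc-flips to tilting mutations. The overall idea is that a pseudo-triangulation of $(\mathcal{S},M,\sigma)$ is the $\sigma$-quotient picture of a $\sigma$-stable triangulation of $(\mathcal{S},M)$, so that the tilting and mutation statements upstairs should descend along equivariantization.

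For the first assertion, I would first unfold a pseudo-triangulation $\Lambda$ to a set $\widetilde{\Lambda}$ of curves on $(\mathcal{S},M)$ by replacing each skew-arc $\widehat{\gamma}\in\Lambda$ with its constituent curves: either the single $\sigma$-invariant arc it encodes, or the two arcs forming its $\sigma$-orbit. A case analysis based on Definition \ref{curvecompatible} shows that pairwise compatibility of skew-arcs in $\Lambda$ transfers to pairwise non-crossing of arcs in $\widetilde{\Lambda}$, and that maximality of $\Lambda$ forces $\widetilde{\Lambda}$ to be a triangulation of $(\mathcal{S},M)$ in the sense of \cite{Chen2023GeometricModel}. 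By loc.\ cit., the corresponding object $\widetilde{T}=\bigoplus_{\gamma\in\widetilde{\Lambda}}\phi(\gamma)$ is tilting in ${\rm coh}\mbox{-}\mathbb{X}(n,n)$, and it is manifestly $G$-stable by construction. Equivariantizing then yields the claimed tilting object $T=\bigoplus_{\widehat{\gamma}\in\Lambda}\widehat{\phi}(\widehat{\gamma})$ in $({\rm coh}\mbox{-}\mathbb{X}(n,n))^{G}\simeq{\rm coh}\mbox{-}\mathbb{X}(2,2,n)$; here one invokes that equivariantization preserves $\operatorname{Ext}^{1}$-vanishing and that the cardinality $|\Lambda|$, split according to $\sigma$-fixed versus $\sigma$-paired skew-arcs, matches the rank of $K_{0}({\rm coh}\mbox{-}\mathbb{X}(2,2,n))$.

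For the flip-mutation compatibility, I would split into two cases according to the type of $\widehat{\gamma}$. If $\widehat{\gamma}$ is a $\sigma$-orbit of two distinct arcs $\gamma,\sigma\gamma$, then $\widehat{\mu}_{\Lambda}(\widehat{\gamma})$ lifts to the simultaneous flip of $\gamma$ and $\sigma\gamma$ in $\widetilde{\Lambda}$, while $\mu_{T}(\widehat{\phi}(\widehat{\gamma}))$ descends from the simultaneous mutation of $\phi(\gamma)$ and $\phi(\sigma\gamma)$ in $\widetilde{T}$; the equality of the two sides then follows from the corresponding assertion in \cite{Chen2023GeometricModel} applied at both arcs, combined with functoriality of descent and uniqueness of mutation. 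If $\widehat{\gamma}$ is itself $\sigma$-invariant, then $\widehat{\phi}(\widehat{\gamma})$ carries the additional data of a $\sigma$-equivariant structure, and I would compute the exchange triangle directly in ${\rm coh}\mbox{-}\mathbb{X}(2,2,n)$: read off the two candidate neighbours of $\widehat{\gamma}$ in $\Lambda$ from the local flip picture of Appendix \ref{foh}, and match them against the Bongartz complement of $T/\widehat{\phi}(\widehat{\gamma})$.

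I expect the subtle point to be precisely this $\sigma$-invariant case: the unfolded flip is less informative since the lift of $\widehat{\gamma}$ is itself $\sigma$-fixed, so the mutation triangle for $\widehat{\phi}(\widehat{\gamma})$ cannot simply be read off from that of $\phi(\gamma)$. One has to track how the $G$-equivariant structure interacts with the exchange triangle, and in particular rule out that equivariantization splits the candidate complement into two non-isomorphic summands. My plan is to identify the geometric candidate $\widehat{\mu}_{\Lambda}(\widehat{\gamma})$ from the pseudo-triangulated neighbourhood of $\widehat{\gamma}$ in $\Lambda$, invoke Theorem \ref{thmA} to name the resulting indecomposable sheaf, and then verify directly in the equivariant category that it fits into the unique exchange triangle mutating $\widehat{\phi}(\widehat{\gamma})$ at $T/\widehat{\phi}(\widehat{\gamma})$. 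Once this identification is secured, the flip-mutation equality follows, and the corollary is immediate.
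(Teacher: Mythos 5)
There is a genuine gap, and it sits at the very first step of your plan. You propose to unfold $\Lambda$ to a set $\widetilde{\Lambda}$ of arcs on $(\mathcal{S},M)$ and argue that compatibility of skew-arcs transfers to non-crossing, so that $\widetilde{\Lambda}$ is a triangulation whose image under $\phi$ is a $G$-stable tilting sheaf upstairs. Both halves of this fail. First, by Proposition \ref{prop:6.4} two compatible skew-arcs may satisfy condition (T2), i.e.\ their underlying curves intersect once at a $\sigma$-fixed point $\times_i$; moreover the skew-arcs in $\mathbf{C}_*$ have $\mathbf{C}(\widehat{\gamma})=\{L_0,L_1\}$, which are not arcs with endpoints in $M$ at all. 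So $\widetilde{\Lambda}$ is not a triangulation of $(\mathcal{S},M)$; the paper only obtains a genuine triangulation $\Gamma_\Lambda$ after adjoining the fixed points as punctures, and uses it solely to count $|\Lambda|=n+3$. Second, the descent step is false as stated: the forgetful image of a tilting sheaf in ${\rm coh}\mbox{-}\mathbb{X}(2,2,n)$ need not be rigid in ${\rm coh}\mbox{-}\mathbb{Y}$. For instance, $T$ may contain $S_{\infty,\overline{0}}=\widehat{\phi}((1,L)^+)$, whose underlying sheaf upstairs is the ordinary simple $S_{1}$ with ${\rm Ext}^1_{\mathbb{Y}}(S_1,S_1)\neq 0$; so there is no $G$-stable tilting object $\widetilde{T}$ upstairs to equivariantize. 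You have also overlooked that Definition \ref{curvecompatible} \emph{defines} compatibility by ${\rm Ext}^1$-vanishing, so rigidity of $T=\oplus_{\widehat{\gamma}\in\Lambda}\widehat{\phi}(\widehat{\gamma})$ is tautological; by Proposition \ref{tilting2} the only real content of the first assertion is the equality $|\Lambda|=n+3$, which is exactly the six-case count the paper carries out via $\Gamma_\Lambda$ and the formula \eqref{formula}, and which your proposal compresses into a single unproved clause.

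The flip--mutation statement has a much shorter route than the one you sketch, and your lift-and-descend argument for the $\sigma$-orbit case inherits the problems above (the simultaneously flipped pair upstairs again need not sit inside a rigid object of ${\rm coh}\mbox{-}\mathbb{Y}$). The paper's argument is: $\Lambda\setminus\{\widehat{\gamma}\}$ corresponds to an almost complete tilting sheaf $\overline{T}$, which by \cite{Hubner1996Dissertation} has exactly two complements; since the flip produces another pseudo-triangulation containing $\Lambda\setminus\{\widehat{\gamma}\}$ together with a skew-arc distinct from $\widehat{\gamma}$, the two complements are forced to be $\widehat{\phi}(\widehat{\gamma})$ and $\widehat{\phi}(\widehat{\mu}_{\Lambda}(\widehat{\gamma}))$, and the identity follows. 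Your fallback for the $\sigma$-invariant case (identify the geometric candidate and check it is the unique second complement) is essentially this argument; it applies uniformly to all cases and makes the case split, the exchange-triangle computation, and the equivariant bookkeeping unnecessary.
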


We then investigate the connectivity of the tilting graph $\mathcal{G}(\mathcal{T}_{\mathbb{X}(2,2,n)})$ of the category ${\rm coh}\mbox{-}\mathbb{X}(2,2,n)$. The vertices of this graph are the isoclasses of basic tilting sheaves of ${\rm coh}\mbox{-}\mathbb{X}(2,2,n)$, while two vertices $T$ and $T^\prime$ are connected by an edge if and only if they differ by precisely one indecomposable direct summand. We establish that any two pseudo-triangulation  on $(\mathcal{S}, M, \sigma)$ can be transformed into each other by a sequence of flips of skew-arcs.  Consequently, by Theorem \ref{thmB},  we have the following result on the  connectivity of $\mathcal{G}(\mathcal{T}_{\mathbb{X}(2,2,n)})$: 

\begin{thm}(Theorem \ref{connected})\label{thmC}
	The tilting graph $\mathcal{G}(\mathcal{T}_{\mathbb{X}(2,2,n)})$ is connected.
\end{thm}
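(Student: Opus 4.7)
The plan is to use Theorem \ref{thmB} to translate the categorical problem into a purely geometric one: since pseudo-triangulations on $(\mathcal{S}, M, \sigma)$ biject with basic tilting sheaves in ${\rm coh}\mbox{-}\mathbb{X}(2,2,n)$, and flips of skew-arcs match tilting mutations, it suffices to prove that the flip graph on the set of pseudo-triangulations is connected. I would then prove this connectivity by an inductive ``reduce intersections with a reference triangulation'' argument, which is the standard engine for flip-connectivity statements on marked surfaces.

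First, I would fix a convenient reference pseudo-triangulation $\Lambda_{0}$ on $(\mathcal{S}, M, \sigma)$, for instance the one corresponding to a known tilting sheaf on $\mathbb{X}(2,2,n)$ such as a canonical tilting sheaf (so that the result of \cite{MR3313495} is recovered as a special case). Given any other pseudo-triangulation $\Lambda$, define a non-negative complexity
\[
d(\Lambda,\Lambda_{0})=\sum_{\widehat{\gamma}\in\Lambda,\ \widehat{\delta}\in\Lambda_{0}} \operatorname{Int}(\widehat{\gamma},\widehat{\delta}),
\]
where $\operatorname{Int}(\cdot,\cdot)$ is the minimal ($\sigma$-equivariant) intersection number of skew-curves defined via the compatibility of Definition \ref{curvecompatible}. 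The plan is to show that $d(\Lambda,\Lambda_{0})=0$ forces $\Lambda=\Lambda_{0}$, using the maximality clause in the definition of pseudo-triangulation together with the fact that the skew-arcs in $\Lambda_{0}$ are already pairwise compatible with every skew-arc of $\Lambda$ when no intersections occur. When $d(\Lambda,\Lambda_{0})>0$, I would select some $\widehat{\gamma}\in\Lambda$ with $\operatorname{Int}(\widehat{\gamma},\widehat{\delta})>0$ for some $\widehat{\delta}\in\Lambda_{0}$ and verify, by a direct local analysis of the flip $\widehat{\mu}_{\Lambda}(\widehat{\gamma})$ constructed in Appendix \ref{foh}, that $d(\widehat{\mu}_{\Lambda}(\Lambda),\Lambda_{0})<d(\Lambda,\Lambda_{0})$. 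Induction then produces a finite sequence of flips from $\Lambda$ to $\Lambda_{0}$, and composition gives flip-paths between any two pseudo-triangulations.

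The main obstacle will be controlling the interaction of the flip with the order-$2$ self-homeomorphism $\sigma$. Skew-arcs split into two qualitatively different types: $\sigma$-invariant skew-arcs and $\sigma$-orbit pairs of ordinary curves, and the flip must respect this dichotomy at every step — an invariant skew-arc must flip to an invariant one, and an orbit pair to an orbit pair — in order for the result to remain a pseudo-triangulation. The cleanest way to handle this is to lift the whole configuration to the cylindrical cover $(\mathcal{S},M)$, where the classical flip-connectivity for triangulations is already available from \cite{Chen2023GeometricModel}, perform the flip equivariantly there, and then push the resulting sequence of equivariant flips back down to $(\mathcal{S},M,\sigma)$ using the $G$-equivariantization correspondence invoked in the introduction and Theorem \ref{thmA}. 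The verification that this $\sigma$-equivariant choice of local flip still strictly decreases $d(\Lambda,\Lambda_{0})$ in both the invariant and the orbit-pair case is the delicate bookkeeping step; once it is done, combining the resulting flip-path with Theorem \ref{thmB} yields an explicit sequence of tilting mutations between the corresponding tilting sheaves, which is precisely the statement that $\mathcal{G}(\mathcal{T}_{\mathbb{X}(2,2,n)})$ is connected.
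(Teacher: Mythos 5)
Your reduction to flip-connectivity of pseudo-triangulations via Theorem \ref{thmB} matches the paper's starting point, but the engine you propose — a complexity $d(\Lambda,\Lambda_0)$ counting intersections with a reference and an induction that strictly decreases it — is not what the paper does, and as sketched it has a genuine gap. The core problem is that in this model compatibility of skew-curves is \emph{not} equivalent to vanishing intersection number. Proposition \ref{prop:6.4} explicitly allows compatible skew-arcs with $I(\mathbf{C}(\widehat{\gamma}_1),\mathbf{C}(\widehat{\gamma}_2))=1$ (condition (T2)), and for the classes $\mathbf{C}_{\mathbf{k}^*}^{\text{pw}}\cup\mathbf{C}_{\mathbf{k}^*}^{\text{sp}}$ compatibility is governed by ${\rm Ext}^1$ computations (e.g.\ Lemma \ref{lem:6.4} and Table \ref{tab:dim_ext1}), not by any geometric intersection of the semicircles $L_0,L_1$. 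Moreover, distinct skew-arcs can share the same underlying curve set — $[D^i_{-i}]^+$ versus $[D^i_{-i}]^-$, or the four parameter pairs on $\{L_0,L_1\}$ — so $d(\Lambda,\Lambda_0)=0$ does not force $\Lambda=\Lambda_0$, and the base case of your induction fails. The inductive step is also unverified: the flip $\widehat{\mu}_{\Lambda}(\widehat{\gamma})$ is defined in Appendix \ref{foh} by a three-type, many-subcase analysis involving the decorations $\zeta(\Lambda)$, and it is not clear (and you do not show) that some flip always strictly decreases your $d$.

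Your fallback — lift to $(\mathcal{S},M)$, invoke the known flip-connectivity from \cite{Chen2023GeometricModel}, and perform flips ``equivariantly'' — does not close the gap either: connectivity of the ordinary flip graph upstairs does not imply connectivity of the $\sigma$-equivariant flip graph, because a flip path between two $\sigma$-invariant triangulations need not pass through $\sigma$-invariant triangulations, and a $\sigma$-orbit pair of flips upstairs is generally not a single flip downstairs. This equivariance obstruction is precisely why the paper proceeds differently: it first shows (Lemmas \ref{lem:1}--\ref{lem:3}) by explicit local flip sequences that every pseudo-triangulation can be flipped to one containing a skew-arc in $\mathbf{C}_{\mathtt{b}}^{\sigma}$, then (Proposition \ref{filp to FV}) to an FV-pseudo-triangulation, and finally navigates among the $\mathbb{Z}(\vec{x}_1-\vec{x}_2)$-stable tilting bundles using the combinatorial invariant $\iota$ and Lemma \ref{flipsss} to reach the canonical family $\{\widehat{\phi}(\overrightarrow{\Lambda}^a_b),\widehat{\phi}(\underrightarrow{\Lambda}^a_b)\}$, whose members are mutually reachable by Corollary \ref{Tab}. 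To salvage your approach you would need to replace intersection number by a complexity function that sees the decorations and the Ext-based compatibilities, and prove the decreasing property case by case against the flip table — at which point you would essentially be reconstructing the paper's argument.
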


\subsection{Context}The paper is organized as follows. 
In \S \ref{Preliminaries},  we collect the fundamental properties of the coherent sheaf categories on weighted projective lines.
In \S \ref{geometricmodel}, we recall the marked cylindrical surface  model $(\mathcal{S},M)$ for ${\rm coh}\mbox{-}\mathbb{X}(n,n)$ given in \cite{Chen2023GeometricModel}. We introduce an order $2$ self-homeomorphism $\sigma$ on $(\mathcal{S}, M)$ and define skew-curves on the extended structure $(\mathcal{S}, M, \sigma)$. Using this structure, we construct a geometric model for ${\rm coh}\mbox{-}\mathbb{X}(2, 2, n)$ and provide an illustrative example.
In \S \ref{GCTTT}, we establish a correspondence between tilting objects and pseudo-triangulations on $(\mathcal{S},M,\sigma)$ and show that the flip of a skew-arc is compatible with the tilting mutation. Building on this, \S \ref{connectedness} focuses on proving the connectivity of the tilting graph via geometric-combinatorial methods.  In Appendix \ref{equivariant} we illustrate the equivariant relationship between ${\rm coh}\mbox{-}\mathbb{X}(2,2,n)$ and ${\rm coh}\mbox{-}\mathbb{X}(n,n)$ and in Appendix \ref{foh} we discuss the flip of a skew-arc with respect to a  pseudo-triangulation on $(\mathcal{S},M,\sigma)$.

\subsection{Conventions}
Throughout the paper, let $\mathbf{k}$ be an algebraically closed field with characteristic not equal to $2$. The cardinality of a set $I$ is denoted by $|I|$. For a category $\mathcal{C}$, $\operatorname{ind}\mathcal{C}$ denotes the set of isomorphism classes of indecomposable objects in $\mathcal{C}$. Additionally, for the symbols $+$ and $-$, we define $\neg + = -$ and $\neg - = +$.

\section{Preliminaries}\label{Preliminaries}
In this section, we collect basic facts on weighted
projective lines from \cite{Geigle1987WeightedCurves, Lenzing2011WeightedProjective}.

Let $t\geq 0$ be an integer and ${\bf p}=(p_1, p_2, \cdots, p_t)$ be a sequence	of integers with each $p_i\geq 2$. Assume ${\boldsymbol\lambda}=(\lambda_1, \lambda_2, \cdots, \lambda_t)$ is a sequence of
pairwise distinct points on the projective line $\mathbb{P}_{\mathbf k}^1$. A \emph{weighted projective line} $\mathbb{X}:=\mathbb{P}_{\mathbf k}^1(\mathbf{p}; {\boldsymbol\lambda})$ of weight type $\mathbf{p}$ and parameter sequence ${\boldsymbol\lambda}$ is obtained from $\mathbb{P}_{\mathbf k}^1$ by attaching the weight $p_i$ to each point $\lambda_i$ for $1\leq i\leq t$. By normalizing ${\boldsymbol\lambda}$, we can always assume that $\lambda_1=\infty$, $\lambda_2=0$ and $\lambda_3=1$. 

	The \emph{string group} $\mathbb{L}({\bf p})$ is a rank one additive abelian group on generators $\vec{x}_1$, $\vec{x}_2$, $\cdots$, $\vec{x}_t$, subject to the relations $p_1\vec{x}_1=p_2\vec{x}_2=\cdots = p_t\vec{x}_t=: \vec{c}$, where $\vec{c}$ is called the \emph{canonical element} of $\mathbb{L}(\mathbf{p})$. Each element $\vec{x}$ in $\mathbb{L}(\mathbf{p})$ can be uniquely written in its \emph{normal form}
	\begin{align}\label{equ:nor}
		\vec{x}=\sum_{i=1}^t l_i\vec{x}_i+l\vec{c},
	\end{align}
	where $0\leq l_i\leq p_i-1$ for $1\leq i\leq t$ and $l\in \mathbb{Z}$. 
	
	The \emph{homogeneous coordinate algebra} $S(\mathbf{p}; {\boldsymbol\lambda})$ of the weighted projective line $\mathbb{X}$ is given by $\mathbf{k}[X_1, X_2, \cdots, X_t]/I$, where the ideal $I$ is generated by $X_i^{p_i}-(X_2^{p_2}-\lambda_iX_1^{p_1})$ for $3\leq i\leq t$. We write $x_i=X_i+I$ in $S(\mathbf{p}; {\boldsymbol\lambda})$.
The algebra $S(\mathbf{p}; {\boldsymbol\lambda})$ is $\mathbb{L}(\mathbf{p})$-graded by means of $\deg (x_i)=\vec{x}_i$.

Denote by ${\rm gr}\mbox{-}S(\mathbf{p}; {\boldsymbol\lambda})$ the abelian category of finitely generated $\mathbb{L}(\mathbf{p})$-graded $S(\mathbf{p}; {\boldsymbol\lambda})$-modules, and by ${\rm gr}_0\mbox{-}S(\mathbf{p}; {\boldsymbol\lambda})$ its Serre subcategory formed by finite dimensional modules.  By \cite[Theorem 1.8]{Geigle1987WeightedCurves}, the sheafification functor yields an equivalence
$$
 {\rm qgr}\mbox{-}S(\mathbf{p}; {\boldsymbol\lambda})\stackrel{\sim}\longrightarrow {\rm coh}\mbox{-}\mathbb{X}, 
$$
where 
 ${\rm qgr}\mbox{-}S(\mathbf{p}; {\boldsymbol\lambda})$ is the quotient abelian category ${\rm gr}\mbox{-}S(\mathbf{p}; {\boldsymbol\lambda})/{{\rm gr}_0\mbox{-}S(\mathbf{p}; {\boldsymbol\lambda})}$
and ${\rm coh}\mbox{-}\mathbb{X}$ is the category  of coherent sheaves on $\mathbb{X}$. From now on, we will identify these two categories.  Denote by $\mathcal{O}_{\mathbb{X}}$ the image of $S(\mathbf{p}; {\boldsymbol\lambda})$ under the sheafification functor, which we call the \emph{structure sheaf} of $\mathbb{X}$.  

In what follows, we abbreviate ${\rm Hom}_{{\rm coh}\mbox{-}\mathbb{X}}(-,-)$ as ${\rm Hom}_{\mathbb{X}}(-,-)$. Similarly, we abbreviate ${\rm Ext}^{1}_{{\rm coh}\mbox{-}\mathbb{X}}(-,-)$ as ${\rm Ext}^{1}_{\mathbb{X}}(-,-)$.

\begin{prop}(\cite{Geigle1987WeightedCurves, Lenzing2011WeightedProjective})\label{the properties of coherent sheaves}
The category ${\rm coh}\mbox{-}\mathbb{X}$ is connected, ${\rm Hom}$-finite and $\mathbf{k}$-linear with the following properties:
\begin{itemize}
    \item[(a)] ${\rm coh}\mbox{-}\mathbb{X}$ satisfies Serre duality in the form ${\rm Ext}_{\mathbb{X}}^{1}(X,Y)\cong D{\rm Hom}_{\mathbb{X}}(Y, \tau(X)),$ where the $\mathbf{k}$-equivalence $\tau:{\rm coh}\mbox{-}\mathbb{X}\longrightarrow {\rm coh}\mbox{-}\mathbb{X}$ is the shift $X\mapsto X(\vec{\omega})$ with the dualizing element $\vec{\omega}=(t-2)\vec{c}-\Sigma_{i=1}^t \vec{x}_i \in \mathbb{L}(\mathbf{p}).$

\item[(b)] ${\rm coh}\mbox{-}\mathbb{X}={\rm vect}\mbox{-}\mathbb{X}\vee{\rm coh}_{0}\mbox{-}\mathbb{X},$ where ${\rm vect}\mbox{-}\mathbb{X}$ denotes the full subcategory of ${\rm coh}\mbox{-}\mathbb{X}$ consisting of vector bundles on $\mathbb{X}$, and ${\rm coh}_{0}\mbox{-}\mathbb{X}$ denotes the full subcategory of ${\rm coh}\mbox{-}\mathbb{X}$ consisting of all objects of finite length, $\vee$ means that each indecomposable object of ${\rm coh}\mbox{-}\mathbb{X}$ is either in ${\rm vect}\mbox{-}\mathbb{X}$ or in ${\rm coh}_{0}\mbox{-}\mathbb{X}$, and there are no non-zero morphism from ${\rm coh}_{0}\mbox{-}\mathbb{X}$ to ${\rm vect}\mbox{-}\mathbb{X}.$

\item[(c)] For any $\vec{x}, \vec{y}\in\mathbb{L}(\mathbf{p})$, there has
$${\rm Hom}_{\mathbb{X}}(\mathcal{O}_{\mathbb{X}}(\vec{x}), \mathcal{O}_{\mathbb{X}}(\vec{y}))\cong S(\mathbf{p}; {\boldsymbol\lambda})_{\vec{y}-\vec{x}}.$$
In particular, ${\rm Hom}_{\mathbb{X}}(\mathcal{O}_{\mathbb{X}}(\vec{x}), \mathcal{O}_{\mathbb{X}}(\vec{y}))\neq 0$ if and only if $\vec{x}\leq \vec{y}.$

\item[(d)] Denote by $K_{0}({\rm coh}\mbox{-}\mathbb{X})$ the \emph{Grothendieck group} of the category ${\rm coh}\mbox{-}\mathbb{X}$. Then the classes $[\mathcal{O}_{\mathbb{X}}(\vec{x})] \; (0\leq \vec{x}\leq \vec{c})$ form a $\mathbb{Z}$-basis of $K_{0}({\rm coh}\mbox{-}\mathbb{X})$. Moreover, the rank of $K_{0}({\rm coh}\mbox{-}\mathbb{X})$ equals $\Sigma_{i=1}^t p_i+2-t.$
\end{itemize}
\end{prop}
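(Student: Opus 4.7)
The plan is to work throughout in the equivalent description ${\rm qgr}\mbox{-}S(\mathbf{p};\boldsymbol{\lambda})$ and to exploit the graded commutative structure of $S=S(\mathbf{p};\boldsymbol{\lambda})$. The basic formal properties are immediate: $\mathbf{k}$-linearity is inherited from $S$; ${\rm Hom}$-finiteness follows because, after applying the sheafification functor, $\mathrm{Hom}$-spaces reduce to finite-dimensional graded components of $S$ (see (c) below); connectedness is a consequence of the fact that every indecomposable coherent sheaf maps non-trivially either to some line bundle $\mathcal{O}_{\mathbb{X}}(\vec{x})$ or to a simple torsion sheaf at a point of $\mathbb{P}_{\mathbf{k}}^{1}$.

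For (a), the input is that $S$ is an $\mathbb{L}(\mathbf{p})$-graded Gorenstein ring of Krull dimension two with dualizing module isomorphic to $S(\vec{\omega})$. This can be verified directly from the presentation: the relations $X_{i}^{p_{i}}=X_{2}^{p_{2}}-\lambda_{i}X_{1}^{p_{1}}$ exhibit $S$ as a graded complete intersection over $\mathbf{k}[X_{1}^{p_{1}},X_{2}^{p_{2}}]$, and the adjunction formula for the dualizing shift yields $\vec{\omega}=(t-2)\vec{c}-\sum_{i=1}^{t}\vec{x}_{i}$. Graded local duality at the irrelevant ideal then descends through the Serre quotient ${\rm gr}\mbox{-}S\to{\rm qgr}\mbox{-}S$ to produce the claimed natural isomorphism ${\rm Ext}_{\mathbb{X}}^{1}(X,Y)\cong D{\rm Hom}_{\mathbb{X}}(Y,X(\vec{\omega}))$.

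For (b), one defines the rank of $F\in{\rm coh}\mbox{-}\mathbb{X}$ as the generic dimension of $F$ under restriction to $\mathbb{P}_{\mathbf{k}}^{1}\setminus\{\lambda_{1},\dots,\lambda_{t}\}$; this is additive. An object lies in ${\rm coh}_{0}\mbox{-}\mathbb{X}$ iff its rank is zero, and in ${\rm vect}\mbox{-}\mathbb{X}$ iff it is torsion-free. Every $F$ sits in a canonical torsion/torsion-free sequence $0\to tF\to F\to F/tF\to 0$, which splits on indecomposables since the generic stalk is already a free module over a principal ideal domain; this yields the decomposition ${\rm coh}\mbox{-}\mathbb{X}={\rm vect}\mbox{-}\mathbb{X}\vee{\rm coh}_{0}\mbox{-}\mathbb{X}$. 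The absence of nonzero morphisms from ${\rm coh}_{0}\mbox{-}\mathbb{X}$ to ${\rm vect}\mbox{-}\mathbb{X}$ is automatic because the image of a finite-length sheaf inside a torsion-free sheaf is zero. For (c), the equivalence ${\rm qgr}\mbox{-}S\simeq{\rm coh}\mbox{-}\mathbb{X}$ identifies ${\rm Hom}_{\mathbb{X}}(\mathcal{O}_{\mathbb{X}}(\vec{x}),\mathcal{O}_{\mathbb{X}}(\vec{y}))$ with the degree-$(\vec{y}-\vec{x})$ piece of $S$, using that $S$ is reflexive (depth $\geq 1$ at the irrelevant ideal), so no correction at finite length arises. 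The positivity statement $\mathrm{Hom}_{\mathbb{X}}(\mathcal{O}(\vec{x}),\mathcal{O}(\vec{y}))\neq 0\iff\vec{x}\leq\vec{y}$ then follows from the normal form (\ref{equ:nor}), since $S_{\vec{z}}\neq 0$ exactly when $\vec{z}\geq 0$.

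For (d), the short exact sequences $0\to\mathcal{O}(\vec{x})\xrightarrow{x_{i}}\mathcal{O}(\vec{x}+\vec{x}_{i})\to\mathcal{S}_{i}\to 0$ together with the standard short exact sequences involving the structure sheaf and its shifts show, by an induction on the normal form, that the classes $[\mathcal{O}_{\mathbb{X}}(\vec{x})]$ with $0\leq\vec{x}\leq\vec{c}$ generate $K_{0}$. Linear independence is checked by pairing with additive functions that separate them: the rank, the degree, and for each weighted point the truncated length of localization at $\lambda_{i}$. A direct enumeration using (\ref{equ:nor}) then gives $|\{\vec{x}\in\mathbb{L}(\mathbf{p})\,:\,0\leq\vec{x}\leq\vec{c}\}|=\sum_{i=1}^{t}p_{i}+2-t$. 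I expect the main obstacle to be (a): namely, carefully identifying the dualizing element and verifying that graded local duality descends to the Serre quotient without distortion. The remaining items are then essentially bookkeeping with the normal form on $\mathbb{L}(\mathbf{p})$ and elementary properties of the graded algebra $S$.
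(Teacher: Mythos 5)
The paper offers no proof of this proposition: it is quoted as a known result with citations to \cite{Geigle1987WeightedCurves, Lenzing2011WeightedProjective}, so there is nothing internal to compare against. Your sketch reconstructs the standard Geigle--Lenzing arguments (complete-intersection presentation of $S$, graded local duality for (a), torsion/torsion-free splitting for (b), the $\operatorname{qgr}$ Hom-computation for (c), generation plus separating additive functions for (d)), and the outline is correct, including the count $\sum_i p_i + 2 - t$ for the elements $0\leq \vec{x}\leq \vec{c}$ in normal form.

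Two steps are stated with the wrong (or insufficient) justification and should be tightened. First, in (c): reflexivity, i.e.\ depth $\geq 1$ at the irrelevant ideal, only gives injectivity of the natural map $S_{\vec{y}-\vec{x}}\to \operatorname{Hom}_{\operatorname{qgr}}(\pi S(\vec{x}),\pi S(\vec{y}))=\varinjlim_n \operatorname{Hom}_{\operatorname{gr}}(S(\vec{x})_{\geq n}, S(\vec{y}))$; for this map to be an isomorphism with no correction from the colimit over truncations you need $H^0_{\mathfrak{m}}(S)=H^1_{\mathfrak{m}}(S)=0$, i.e.\ depth $\geq 2$. This does hold, because $S$ is a two-dimensional graded complete intersection and hence Cohen--Macaulay, but that is the fact to invoke, not reflexivity. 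Second, in (b): the splitting of $0\to tF\to F\to F/tF\to 0$ does not follow from the generic stalk being free over a principal ideal domain --- that only shows $F/tF$ is a vector bundle. The splitting comes from $\operatorname{Ext}^1_{\mathbb{X}}(F/tF,\,tF)\cong D\operatorname{Hom}_{\mathbb{X}}(tF,\,(F/tF)(\vec{\omega}))=0$, i.e.\ from part (a) combined with the vanishing of morphisms from finite-length sheaves to torsion-free sheaves that you correctly record at the end of that paragraph; so (b) logically depends on (a) and the order of the argument matters. With these repairs your proposal is exactly the proof in the cited literature.
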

By \cite[Proposition 1.1]{Lenzing2006HereditaryCategories}, the torsion subcategory ${\rm coh}_0\mbox{-}\mathbb{X}$ decomposes into a coproduct
	$\coprod_{\lambda\in \mathbb{X}}\mathcal{U}_{\lambda}$, where $\mathcal{U}_{\lambda}$ for each $\lambda\in \mathbb{X}$ is a connected uniserial length category whose associated Auslander-Reiten quiver is a stable tube $\mathbb{ZA}_{\infty}/(\tau^r)$  for some $r\in \mathbb{Z}_{\geq 1}$. Here, the integral $r$, which is called the $\tau$-period of the stable tube $\mathbb{ZA}_{\infty}/(\tau^r)$, depends on $\lambda$. Precisely, 
 $$r = \begin{cases} 
p_i & \text{if } \lambda = \lambda_i; \\
1 & \text{if } \lambda \in \mathbb{X} \setminus \{p_1, p_2, \cdots, p_t\}.
\end{cases}$$
 A stable tube with $\tau$-period $1$ is called a \emph{homogeneous stable tube}. Objects that lie at the bottom of the stable tubes are all simple objects of ${\rm coh}\mbox{-}\mathbb{X}$. Each $\lambda \in \mathbb{X} \setminus \{\lambda_1, \lambda_2, \cdots, \lambda_t\}$ is associated with a unique simple sheaf $S_{\lambda}$, called \emph{ordinary simple}; while $\lambda=\lambda_i\; ( 1\leq i\leq t)$ is associated with $p_i$ simple objects respectively, denoted by $S_{\lambda_i,\overline{k}}$ where $k\in \mathbb{Z}$ and  $\overline{k}$ denotes the reduction of $k$ modulo $p_i$, called \emph{exceptional simples}.
	
	According to \cite[Proposition 2.5]{Geigle1987WeightedCurves}, there are some important short exact sequences in ${\rm coh}\mbox{-}\mathbb{X}$.  For each ordinary simple sheaf $S_{\lambda}$, there is an
	exact sequence
	$$0\longrightarrow \mathcal{O}_{\mathbb{X}}\stackrel{u_{\lambda}}{\longrightarrow}\mathcal{O}_{\mathbb{X}}(\vec{c})\longrightarrow S_{\lambda}\longrightarrow 0,$$
	where the homomorphism $u_{\lambda}$ is given by
	multiplication with $x_2^{p_2}-\lambda x_1^{p_1}$. For each exceptional simple $S_{\lambda_i,\overline{k}}$, there is an exact sequences
	$$0\longrightarrow \mathcal{O}_{\mathbb{X}}(k\vec{x}_i)\stackrel{u_{\lambda_i}}{\longrightarrow} \mathcal{O}_{\mathbb{X}}((k+1)\vec{x}_i)\longrightarrow S_{\lambda_i,\overline{k}}\longrightarrow 0,$$	
	where $u_{\lambda_i}$ is given by multiplication with $x_i$.
	As is easily checked, for each $\vec{x}=\sum_{i=1}^t l_i\vec{x}_i \in \mathbb{L}(\mathbf{p})$, we have 
	$$S_{\lambda}(\vec{x})=S_{\lambda}\;\;{\rm for}\;\, \lambda \in \mathbb{X}\setminus \{\lambda_1,\lambda_2,\cdots,\lambda_t\}$$
	and $$S_{\lambda_i,\overline{k}}(\vec{x})=S_{\lambda_i, \overline{k+l_{i}}}\;\; {\rm for}\;\, 1 \leq i \leq t.$$

In the following, we denote the weighted projective lines of type $(2,2,n)$ and $(n,n)$ by $\mathbb{X}$ and $\mathbb{Y}$, respectively. Our focus is on the category ${\rm coh}\mbox{-}\mathbb{X}$ and the equivariant relationship between it and ${\rm coh}\mbox{-}\mathbb{Y}$. For more details on these categories and the equivariant relationship, see Appendix \ref{equivariant}. 
	
\section{Geometric model for the category ${\rm coh}\mbox{-}\mathbb{X}$}\label{geometricmodel}
In this section, we aim to construct a geometric model for ${\rm coh}\mbox{-}\mathbb{X}$ using a cylindrical surface with $n$ marked points on both its upper and lower boundaries, equipped with an order $2$ self-homeomorphism. Without the self-homeomorphism, such a cylindrical surface has previously been used in a geometric model for ${\rm coh}\mbox{-}\mathbb{Y}$; see \cite{Chen2023GeometricModel}. Here, we will recall a slightly modified version of this construction. 

Building on the equivariant relationship between ${\rm coh}\mbox{-}\mathbb{X}$ and ${\rm coh}\mbox{-}\mathbb{Y}$ (see Appendix \ref{equivariant}), we introduce an order $2$ self-homeomorphism on the marked cylindrical surface. We then define skew-curves on this extended structure and establish a correspondence between isoclasses of  indecomposable coherent sheaves on $\mathbb{X}$ and these skew-curves. As a first application of this correspondence, we investigate the behavior of the $\mathbb{L}(2,2,n)$-action on ${\rm vect}\mbox{-}\mathbb{X}$ within this geometric framework.

\subsection{A marked cylindrical surface with an order $2$ self-homeomorphism}\label{thecyl}

We adopt the notation and terminology for marked surfaces as described in \cite{MR2448067}. A marked surface $(\mathcal{S}, M)$ consists of:
\begin{itemize}
    \item $\mathcal{S}$: a connected, oriented, 2-dimensional Riemann surface with non-empty boundary $\partial\mathcal{S}$;
    \item $M$: a finite set of marked points on the closure of $\mathcal{S}$, with those in the interior of $\mathcal{S}$ called \emph{punctures}.
\end{itemize} 

A \emph{curve} on $(\mathcal{S}, M)$ is a continuous map $\gamma: [0,1] \to \mathcal{S}$ such that 
\begin{itemize}
    \item $\gamma(t)\in \mathcal{S}\setminus(M\cup\partial\mathcal{S})$ for any $t\in (0,1)$ and $\gamma(0), \gamma(1) \in M$;
    \item $\gamma$ is not contractible into $M$ or onto the boundary $\partial\mathcal{S}$.
\end{itemize}

The \emph{inverse} of a curve $\gamma$ is defined as $\gamma^{-1}:=\gamma(1-t)$ for $t \in[0, 1]$. In particular, if $\gamma_{|(0,1)}$ is injective, the curve is called an \emph{arc}. Unless otherwise specified, curves are considered up to inverse and isotopy, with endpoints fixed.

Two arcs in $(\mathcal{S}, M)$ are \emph{compatible} if they have representatives in their isotopy classes that do not intersect except at their endpoints. A maximal set of distinct, pairwise compatible arcs forms a \emph{triangulation}. Such a triangulation has
\begin{equation}\label{formula}
   \mathcal{N} = 6g + 3b + 3p + c - 6
\end{equation}
arcs, where $g$ is the genus of $\mathcal{S}$, $b$ the number of boundary components, $p$ the number of punctures, and $c$ the number of boundary-marked points.

In the following, consider the marked surface $(\mathcal{S}, M)$ with the following setup:
\begin{set}
Let $\mathcal{S} = S^1 \times [0, 1]$ be a cylindrical surface, with the set of marked points 
\[
M = \left\{ \frac{2\pi i}{n} \mid 0 \leq i \leq n-1, i \in \mathbb{Z} \right\} \times \{0, 1\}.
\]
To orient $\mathcal{S}$, we embed it in the $(\theta, h)$-plane via the map $\eta: (\theta, h) \mapsto \left( \frac{n \theta}{2 \pi}, h \right)$, adopting the orientation induced by the standard orientation of the plane. 
\end{set}

Consider the infinite strip $\widetilde{\mathcal{S}} := \mathbb{R} \times [0, 1]$, with the orientation inherited from its embedding in the $(\theta, h)$-plane. The universal cover $(\widetilde{\mathcal{S}}, \pi)$ of $\mathcal{S}$ is defined by the covering map
\[
\pi: (x, y) \mapsto \eta^{-1}_{|\operatorname{Im}(\eta)}\left((x \; \text{mod}\;n, y)\right),
\]
where $\eta^{-1}_{|\operatorname{Im}(\eta)}$ is the inverse of $\eta$, mapping from $\operatorname{Im}(\eta)$ to $\mathcal{S}$.
\begin{figure}[H]

\tikzset{every picture/.style={line width=0.75pt}}   

\begin{tikzpicture}[x=0.75pt,y=0.75pt,yscale=-1,xscale=1]

\draw [line width=1.5]    (37.26,143.25) -- (281.68,143.25) ;
  
\draw [line width=1.5]    (37.58,93.25) -- (282,93.25) ;
  
\draw   (458.43,60.07) -- (458.43,156.62) .. controls (458.43,162.82) and (441.68,167.84) .. (421.01,167.84) .. controls (400.35,167.84) and (383.6,162.82) .. (383.6,156.62) -- (383.6,60.07) .. controls (383.6,53.87) and (400.35,48.84) .. (421.01,48.84) .. controls (441.68,48.84) and (458.43,53.87) .. (458.43,60.07) .. controls (458.43,66.27) and (441.68,71.29) .. (421.01,71.29) .. controls (400.35,71.29) and (383.6,66.27) .. (383.6,60.07) ;
  
\draw    (383.6,60.07) ;
\draw [shift={(383.6,60.07)}, rotate = 0] [color={rgb, 255:red, 0; green, 0; blue, 0 }  ][fill={rgb, 255:red, 0; green, 0; blue, 0 }  ][line width=0.75]      (0, 0) circle [x radius= 1.34, y radius= 1.34]   ;
  
\draw    (407.33,70.67) ;
\draw [shift={(407.33,70.67)}, rotate = 0] [color={rgb, 255:red, 0; green, 0; blue, 0 }  ][fill={rgb, 255:red, 0; green, 0; blue, 0 }  ][line width=0.75]      (0, 0) circle [x radius= 1.34, y radius= 1.34]   ;
  
\draw    (394.33,52.33) ;
\draw [shift={(394.33,52.33)}, rotate = 0] [color={rgb, 255:red, 0; green, 0; blue, 0 }  ][fill={rgb, 255:red, 0; green, 0; blue, 0 }  ][line width=0.75]      (0, 0) circle [x radius= 1.34, y radius= 1.34]   ;
  
\draw    (394,68) ;
\draw [shift={(394,68)}, rotate = 0] [color={rgb, 255:red, 0; green, 0; blue, 0 }  ][fill={rgb, 255:red, 0; green, 0; blue, 0 }  ][line width=0.75]      (0, 0) circle [x radius= 1.34, y radius= 1.34]   ;
  
\draw    (393.5,164) ;
\draw [shift={(393.5,164)}, rotate = 0] [color={rgb, 255:red, 0; green, 0; blue, 0 }  ][fill={rgb, 255:red, 0; green, 0; blue, 0 }  ][line width=0.75]      (0, 0) circle [x radius= 1.34, y radius= 1.34]   ;
  
\draw    (383.6,156.62) ;
\draw [shift={(383.6,156.62)}, rotate = 0] [color={rgb, 255:red, 0; green, 0; blue, 0 }  ][fill={rgb, 255:red, 0; green, 0; blue, 0 }  ][line width=0.75]      (0, 0) circle [x radius= 1.34, y radius= 1.34]   ;
  
\draw    (406.67,167) ;
\draw [shift={(406.67,167)}, rotate = 0] [color={rgb, 255:red, 0; green, 0; blue, 0 }  ][fill={rgb, 255:red, 0; green, 0; blue, 0 }  ][line width=0.75]      (0, 0) circle [x radius= 1.34, y radius= 1.34]   ;
  
\draw    (394,147.33) ;
\draw [shift={(394,147.33)}, rotate = 0] [color={rgb, 255:red, 0; green, 0; blue, 0 }  ][fill={rgb, 255:red, 0; green, 0; blue, 0 }  ][line width=0.75]      (0, 0) circle [x radius= 1.34, y radius= 1.34]   ;
  
\draw  [dash pattern={on 0.84pt off 2.51pt}]  (245.34,93.25) -- (245.02,143.25) ;
  
\draw  [dash pattern={on 0.84pt off 2.51pt}]  (101.65,148.32) -- (132.57,148.63) ;
  
\draw  [dash pattern={on 0.84pt off 2.51pt}]  (101.42,88.52) -- (132.34,88.85) ;
  
\draw    (37.58,93.25) -- (74.25,93.25) ;
  
\draw    (86.47,93.25) -- (123.13,93.25) ;
  
\draw    (123.13,93.25) -- (159.79,93.25) ;
  
\draw    (172.01,93.25) -- (208.68,93.25) ;
  
\draw    (208.68,93.25) -- (245.34,93.25) ;
  
\draw    (245.34,93.25) -- (282,93.25) ;
\draw [shift={(245.34,93.25)}, rotate = 0] [color={rgb, 255:red, 0; green, 0; blue, 0 }  ][fill={rgb, 255:red, 0; green, 0; blue, 0 }  ][line width=0.75]      (0, 0) circle [x radius= 1.34, y radius= 1.34]   ;
  
\draw    (37.26,143.25) -- (73.92,143.25) ;
  
\draw    (73.92,143.25) -- (86.15,143.25) ;
\draw [shift={(86.15,143.25)}, rotate = 0] [color={rgb, 255:red, 0; green, 0; blue, 0 }  ][fill={rgb, 255:red, 0; green, 0; blue, 0 }  ][line width=0.75]      (0, 0) circle [x radius= 1.34, y radius= 1.34]   ;
\draw [shift={(73.92,143.25)}, rotate = 0] [color={rgb, 255:red, 0; green, 0; blue, 0 }  ][fill={rgb, 255:red, 0; green, 0; blue, 0 }  ][line width=0.75]      (0, 0) circle [x radius= 1.34, y radius= 1.34]   ;
  
\draw    (74.25,93.25) -- (86.47,93.25) ;
\draw [shift={(86.47,93.25)}, rotate = 0] [color={rgb, 255:red, 0; green, 0; blue, 0 }  ][fill={rgb, 255:red, 0; green, 0; blue, 0 }  ][line width=0.75]      (0, 0) circle [x radius= 1.34, y radius= 1.34]   ;
\draw [shift={(74.25,93.25)}, rotate = 0] [color={rgb, 255:red, 0; green, 0; blue, 0 }  ][fill={rgb, 255:red, 0; green, 0; blue, 0 }  ][line width=0.75]      (0, 0) circle [x radius= 1.34, y radius= 1.34]   ;
  
\draw    (86.15,143.25) -- (122.81,143.25) ;
  
\draw    (122.81,143.25) -- (159.47,143.25) ;
  
\draw    (171.69,143.25) -- (208.35,143.25) ;
  
\draw    (208.35,143.25) -- (245.02,143.25) ;
  
\draw    (245.02,143.25) -- (281.68,143.25) ;
\draw [shift={(245.02,143.25)}, rotate = 0] [color={rgb, 255:red, 0; green, 0; blue, 0 }  ][fill={rgb, 255:red, 0; green, 0; blue, 0 }  ][line width=0.75]      (0, 0) circle [x radius= 1.34, y radius= 1.34]   ;
  
\draw  [dash pattern={on 0.84pt off 2.51pt}]  (74.25,93.25) -- (73.92,143.25) ;
  
\draw  [dash pattern={on 0.84pt off 2.51pt}]  (159.79,93.25) -- (159.47,143.25) ;
  
\draw  [dash pattern={on 0.84pt off 2.51pt}]  (188.97,87.77) -- (219.89,88.1) ;
  
\draw  [dash pattern={on 0.84pt off 2.51pt}]  (192.36,148.02) -- (223.28,148.35) ;
  
\draw  [dash pattern={on 0.84pt off 2.51pt}]  (30.98,117.27) -- (61.9,117.6) ;
  
\draw  [dash pattern={on 0.84pt off 2.51pt}]  (262.98,117.27) -- (293.9,117.6) ;
  
\draw    (159.79,93.25) -- (172.01,93.25) ;
\draw [shift={(172.01,93.25)}, rotate = 0] [color={rgb, 255:red, 0; green, 0; blue, 0 }  ][fill={rgb, 255:red, 0; green, 0; blue, 0 }  ][line width=0.75]      (0, 0) circle [x radius= 1.34, y radius= 1.34]   ;
\draw [shift={(159.79,93.25)}, rotate = 0] [color={rgb, 255:red, 0; green, 0; blue, 0 }  ][fill={rgb, 255:red, 0; green, 0; blue, 0 }  ][line width=0.75]      (0, 0) circle [x radius= 1.34, y radius= 1.34]   ;
  
\draw    (159.47,143.25) -- (171.69,143.25) ;
\draw [shift={(171.69,143.25)}, rotate = 0] [color={rgb, 255:red, 0; green, 0; blue, 0 }  ][fill={rgb, 255:red, 0; green, 0; blue, 0 }  ][line width=0.75]      (0, 0) circle [x radius= 1.34, y radius= 1.34]   ;
\draw [shift={(159.47,143.25)}, rotate = 0] [color={rgb, 255:red, 0; green, 0; blue, 0 }  ][fill={rgb, 255:red, 0; green, 0; blue, 0 }  ][line width=0.75]      (0, 0) circle [x radius= 1.34, y radius= 1.34]   ;
  
\draw    (306.25,116.57) -- (359.25,116.57) ;
\draw [shift={(361.25,116.57)}, rotate = 180] [color={rgb, 255:red, 0; green, 0; blue, 0 }  ][line width=0.75]    (6.56,-2.94) .. controls (4.17,-1.38) and (1.99,-0.4) .. (0,0) .. controls (1.99,0.4) and (4.17,1.38) .. (6.56,2.94)   ;
  
\draw    (406.8,49.6) ;
\draw [shift={(406.8,49.6)}, rotate = 0] [color={rgb, 255:red, 0; green, 0; blue, 0 }  ][fill={rgb, 255:red, 0; green, 0; blue, 0 }  ][line width=0.75]      (0, 0) circle [x radius= 1.34, y radius= 1.34]   ;
  
\draw    (405.6,144.4) ;
\draw [shift={(405.6,144.4)}, rotate = 0] [color={rgb, 255:red, 0; green, 0; blue, 0 }  ][fill={rgb, 255:red, 0; green, 0; blue, 0 }  ][line width=0.75]      (0, 0) circle [x radius= 1.34, y radius= 1.34]   ;
  
\draw  [line width=1.5]  (383.6,60.07) .. controls (383.6,53.87) and (400.35,48.84) .. (421.01,48.84) .. controls (441.68,48.84) and (458.43,53.87) .. (458.43,60.07) .. controls (458.43,66.27) and (441.68,71.29) .. (421.01,71.29) .. controls (400.35,71.29) and (383.6,66.27) .. (383.6,60.07) -- cycle ;
  
\draw [line width=1.5]    (383.6,156.62) .. controls (382.13,170.05) and (456.05,172.55) .. (458.43,156.62) ;
  
\draw  [dash pattern={on 0.84pt off 2.51pt}]  (422.92,148.22) .. controls (453.44,147.51) and (467.44,162.71) .. (421.74,164.24) ;
  
\draw  [dash pattern={on 0.84pt off 2.51pt}]  (423.72,52.22) .. controls (454.24,51.51) and (468.24,66.71) .. (422.54,68.24) ;
  
\draw  [dash pattern={on 0.84pt off 2.51pt}]  (383.26,156.62) .. controls (391.47,136.73) and (456.8,141.06) .. (458.43,156.62) ;

\draw (69.47,79.4) node [anchor=north west][inner sep=0.75pt]  [font=\tiny]  {$0_{\partial }$};
  
\draw (68.03,152.15) node [anchor=north west][inner sep=0.75pt]  [font=\tiny]  {$0_{\partial ^{\prime }}$};
  
\draw (83.82,79.65) node [anchor=north west][inner sep=0.75pt]  [font=\tiny]  {$1_{\partial }$};
  
\draw (83.75,152.15) node [anchor=north west][inner sep=0.75pt]  [font=\tiny]  {$1_{\partial ^{\prime }}$};
  
\draw (154.94,79.15) node [anchor=north west][inner sep=0.75pt]  [font=\tiny]  {$n_{\partial }$};
  
\draw (154.29,152.65) node [anchor=north west][inner sep=0.75pt]  [font=\tiny]  {$n_{\partial ^{\prime }}$};
  
\draw (238.73,79.4) node [anchor=north west][inner sep=0.75pt]  [font=\tiny]  {$2n_{\partial }$};
  
\draw (239.56,152.15) node [anchor=north west][inner sep=0.75pt]  [font=\tiny]  {$2n_{\partial ^{\prime }}$};
  
\draw (326,93.4) node [anchor=north west][inner sep=0.75pt]    {$\pi $};
  
\draw (20.67,87.73) node [anchor=north west][inner sep=0.75pt]  [font=\tiny]  {$\partial $};
  
\draw (20,137.07) node [anchor=north west][inner sep=0.75pt]  [font=\tiny]  {$\partial ^{\prime }$};

\end{tikzpicture}
\caption{ The cylinder $(\mathcal{S},M)$ and its universal cover $(\tilde{\mathcal{S}},\pi)$.}\label{marked surface}
\end{figure}
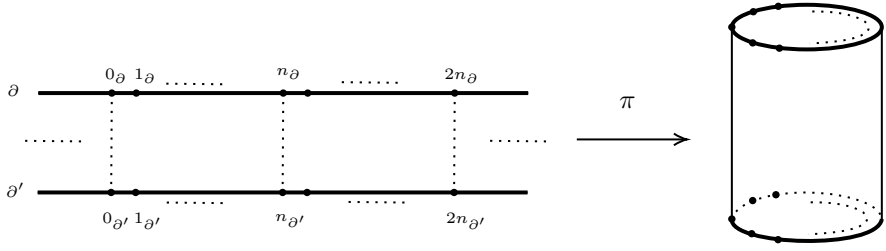

 The marked points on $\widetilde{\mathcal{S}}$ are given by $\widetilde{M} := \pi^{-1}(M) = \{(i, 0), (j, 1) \mid i, j \in \mathbb{Z}\}$. A marked point $(i, 1)$ on the boundary $\partial := \mathbb{R} \times \{1\}$ is denoted by $i_{\partial}$, and a marked point $(j, 0)$ on $\partial^\prime := \mathbb{R} \times \{0\}$ is denoted by $j_{\partial^\prime}$, where $i, j \in \mathbb{Z}$. Note that $(\widetilde{\mathcal{S}}, \widetilde{M})$ does not satisfy the definition of a marked surface because $|\widetilde{M}|$ is infinite. Nevertheless, we can still define curves on $(\widetilde{\mathcal{S}}, \widetilde{M})$ in the same way as those on $(\mathcal{S}, M)$. 

\begin{rem}
    It is evident that any curve on $\widetilde{\mathcal{S}}$ is uniquely determined by its two endpoints. For the marked points $(i,0)$, $(j,0)$, $(i,1)$, and $(j,1)$ of $\widetilde{\mathcal{S}}$, we denote:
\begin{itemize}
    \item $D_i^j$ as the curve with endpoints $(i,0)$ and $(j,1)$,
    \item $D_{i,j}$ as the curve with endpoints $(i,0)$ and $(j,0)$,
    \item $D^{i,j}$ as the curve with endpoints $(i,1)$ and $(j,1)$.
\end{itemize}

For simplicity, denote
\[[D^j_i]:=\pi(D^j_i),\;\;[D^{i,j}]:=\pi(D^{i,j})\;\;{\rm and}\;\;[D_{i,j}]:=\pi(D_{i,j}).\]
 It follows that for any $k\in\mathbb{Z}$,
$$[D^{i+kn}_{j+kn}]=[D^i_j],\;\;[D^{i+kn,j+kn}]=[D^{i, j}]\;\;{\rm and}\;\;[D_{i+kn,j+kn}]=[D_{i, j}].$$
Later, for visualization, we can represent arbitrary curves on $\mathcal{S}$ by drawing one of their lifts on $\widetilde{\mathcal{S}}$.
\end{rem}

Next, we introduce an order $2$ self-homeomorphism on $(\mathcal{S},M)$. Consider the homeomorphism $\widetilde{\sigma}: \widetilde{\mathcal{S}} \rightarrow \widetilde{\mathcal{S}}, \quad (x, y) \mapsto (-x, 1-y),$ which reflects all points on $\widetilde{\mathcal{S}}$ with respect to the point $\left(0,\frac{1}{2}\right)$. It can be verified that $\widetilde{\sigma}$ induces a homeomorphism $\sigma: \mathcal{S} \rightarrow \mathcal{S}$  such that the following diagram is commutative
\begin{figure}[H]
    \centering
       	\begin{tikzcd}
		\widetilde{\mathcal{S}} \arrow[d, "\widetilde{\sigma}"'] \arrow[r, "\pi"] & \mathcal{S} \arrow[d, "\sigma", dashed] \\
		\widetilde{\mathcal{S}} \arrow[r, "\pi"'] & \mathcal{S} 
	\end{tikzcd}
\end{figure}
\noindent In particular, $\sigma$ is a diffeomorphism of order $2$ that preserves $M$ setwise. 
\begin{rem}
The diffeomorphism $\sigma$ has exactly two fixed points, denoted $\times_1$ and $\times_2$. These fixed points are the images under $\pi$ of the points $(0, \frac{1}{2})$ and $(\frac{n}{2}, \frac{1}{2})$ in $\widetilde{\mathcal{S}}$, respectively. Define $\mathcal{X}=\{\times_1,\times_2\}$. Observe that $\sigma$ reflects all point with respect to the line through $\times_1$ and $\times_2$. 
\end{rem}
We will use the data $(\mathcal{S},M,\sigma)$ to give a geometric model for ${\rm coh}\mbox{-}\mathbb{X}$.  To do it, we first define the skew-curves on $(\mathcal{S},M,\sigma)$.
\subsection{Skew-curves on $(\mathcal{S},M,\sigma)$}\label{hybrid} 
  Let  $\mathbf{C}$ be the union of $\mathbf{C}_{\mathtt{b}}$, $\mathbf{C}_{\mathtt{p}}$, and $\mathbf{C}_{\mathbf{k}^*}$, where:
	\begin{itemize} 
	\item $\mathbf{C}_{\mathtt{b}}:=\{[D^j_i]\mid i,j\in \mathbb{Z}\}$; 
	\item $\mathbf{C}_{\mathtt{p}}:=\{[D_{i,j}],[D^{i,j}]\mid i,j\in \mathbb{Z} \text{ with } j - i \geq 2\}$; 
	\item $\mathbf{C}_{\mathbf{k}^*}:=\{(\lambda, L^{j})\mid \lambda \in \mathbf{k}^*, j\in \mathbb{Z}_+ \text{ and } L = S^1 \times \left\{\frac{1}{2}\right\} \text{ is a loop on }(\mathcal{S}, M) \}$. \end{itemize}

\begin{rem}\label{sigma-fixed}
Let $\gamma$ be a curve in $\mathbf{C}_{\mathtt{b}}\cup \mathbf{C}_{\mathtt{p}}$. It can be verified that 
\begin{itemize}
    \item $\gamma\in \mathbf{C}_{\mathtt{p}}$ are not $\sigma$-fixed; 
    \item $\gamma$ is $\sigma$-fixed if and only if  $\gamma = [D^j_i]$ for some $i, j \in \mathbb{Z}$ with $i + j \equiv 0 \pmod{n}$. In particular, $\gamma$ is called \emph{$\sigma$-fixed on} $\times_1$ (resp. $\times_2$) if $\frac{i+j}{n}$ is even (resp. odd). 
    \item Each  not-$\sigma$-fixed $\gamma\in \mathbf{C}_{\mathtt{b}}$ can be expressed uniquely as $[D^{k-i}_{i}]$ or $[D^{-i}_{i-k}]$ (namely, $\sigma([D^{k-i}_i])$) for some $i,k \in \mathbb{Z}$ and $1 \leq k \leq n-1$. 
\end{itemize}    
\end{rem}
 
With the self-homeomorphism $\sigma$ of $\mathcal{S}$, we describe the process of constructing skew-curves on $(\mathcal{S}, M, \sigma)$ from the elements of $\mathbf{C} = \mathbf{C}_{\mathtt{b}} \cup \mathbf{C}_{\mathtt{p}} \cup \mathbf{C}_{\mathbf{k}^*}$: 
\begin{itemize}
    \item [(i)] For each curve $\gamma$ that $\sigma$-fixed on $\times\in \mathcal{X}$, we divide it into a top half $\gamma^+$ and a bottom half $\gamma^-$ with respect to $\times$, as shown in Figure \ref{divide}:
    \begin{figure}[H]
    \centering
\tikzset{every picture/.style={line width=0.75pt}}  

\begin{tikzpicture}[x=0.75pt,y=0.75pt,yscale=-1,xscale=1]

\draw [line width=1.5]    (168.33,70.2) -- (309.97,70.28) ;
 
\draw [line width=1.5]    (168.73,20.6) -- (309.37,20.68) ;
 
\draw    (219.97,70.28) -- (230.37,70.28) ;
\draw [shift={(230.37,70.28)}, rotate = 0] [color={rgb, 255:red, 0; green, 0; blue, 0 }  ][fill={rgb, 255:red, 0; green, 0; blue, 0 }  ][line width=0.75]      (0, 0) circle [x radius= 1.34, y radius= 1.34]   ;
\draw [shift={(219.97,70.28)}, rotate = 0] [color={rgb, 255:red, 0; green, 0; blue, 0 }  ][fill={rgb, 255:red, 0; green, 0; blue, 0 }  ][line width=0.75]      (0, 0) circle [x radius= 1.34, y radius= 1.34]   ;
 
\draw    (239.81,45.26) ;
\draw [shift={(239.81,45.26)}, rotate = 0] [color={rgb, 255:red, 0; green, 0; blue, 0 }  ][fill={rgb, 255:red, 0; green, 0; blue, 0 }  ][line width=0.75]      (0, 0) circle [x radius= 1.34, y radius= 1.34]   ;
\draw [shift={(239.81,45.26)}, rotate = 0] [color={rgb, 255:red, 0; green, 0; blue, 0 }  ][fill={rgb, 255:red, 0; green, 0; blue, 0 }  ][line width=0.75]      (0, 0) circle [x radius= 1.34, y radius= 1.34]   ;
 
\draw    (259.95,20.54) -- (269.95,20.43) ;
\draw [shift={(269.95,20.43)}, rotate = 359.39] [color={rgb, 255:red, 0; green, 0; blue, 0 }  ][fill={rgb, 255:red, 0; green, 0; blue, 0 }  ][line width=0.75]      (0, 0) circle [x radius= 1.34, y radius= 1.34]   ;
\draw [shift={(259.95,20.54)}, rotate = 359.39] [color={rgb, 255:red, 0; green, 0; blue, 0 }  ][fill={rgb, 255:red, 0; green, 0; blue, 0 }  ][line width=0.75]      (0, 0) circle [x radius= 1.34, y radius= 1.34]   ;
 
\draw  [dash pattern={on 0.84pt off 2.51pt}]  (195.82,67.98) -- (176.48,67.82) ;
 
\draw  [dash pattern={on 0.84pt off 2.51pt}]  (262.32,67.48) -- (242.98,67.32) ;
 
\draw  [dash pattern={on 0.84pt off 2.51pt}]  (234.32,24.48) -- (214.98,24.32) ;
 
\draw  [dash pattern={on 0.84pt off 2.51pt}]  (302.32,24.48) -- (282.98,24.32) ;
 
\draw [color={rgb, 255:red, 0; green, 0; blue, 0 }  ,draw opacity=1 ]   (219.67,69.98) -- (259.95,20.54) ;
\draw [shift={(259.95,20.54)}, rotate = 309.17] [color={rgb, 255:red, 0; green, 0; blue, 0 }  ,draw opacity=1 ][fill={rgb, 255:red, 0; green, 0; blue, 0 }  ,fill opacity=1 ][line width=0.75]      (0, 0) circle [x radius= 1.34, y radius= 1.34]   ;
\draw [shift={(239.81,45.26)}, rotate = 309.17] [color={rgb, 255:red, 0; green, 0; blue, 0 }  ,draw opacity=1 ][fill={rgb, 255:red, 0; green, 0; blue, 0 }  ,fill opacity=1 ][line width=0.75]      (0, 0) circle [x radius= 1.34, y radius= 1.34]   ;
\draw [shift={(219.67,69.98)}, rotate = 309.17] [color={rgb, 255:red, 0; green, 0; blue, 0 }  ,draw opacity=1 ][fill={rgb, 255:red, 0; green, 0; blue, 0 }  ,fill opacity=1 ][line width=0.75]      (0, 0) circle [x radius= 1.34, y radius= 1.34]   ;
 
\draw   (244.13,46.2) .. controls (247.46,48.95) and (250.5,48.65) .. (253.25,45.32) -- (253.25,45.32) .. controls (257.17,40.56) and (260.8,39.55) .. (264.13,42.3) .. controls (260.8,39.55) and (261.09,35.8) .. (265.01,31.05)(263.25,33.19) -- (265.01,31.05) .. controls (267.76,27.72) and (267.46,24.68) .. (264.13,21.93) ;
 
\draw   (235,42.73) .. controls (231.81,40.1) and (228.9,40.37) .. (226.27,43.56) -- (226.27,43.56) .. controls (222.5,48.11) and (219.03,49.06) .. (215.85,46.43) .. controls (219.03,49.06) and (218.74,52.65) .. (214.98,57.2)(216.67,55.16) -- (214.98,57.2) .. controls (212.34,60.39) and (212.61,63.3) .. (215.8,65.93) ;
 
\draw    (209.67,70.09) -- (219.67,69.98) ;
\draw [shift={(219.67,69.98)}, rotate = 359.39] [color={rgb, 255:red, 0; green, 0; blue, 0 }  ][fill={rgb, 255:red, 0; green, 0; blue, 0 }  ][line width=0.75]      (0, 0) circle [x radius= 1.34, y radius= 1.34]   ;
\draw [shift={(209.67,70.09)}, rotate = 359.39] [color={rgb, 255:red, 0; green, 0; blue, 0 }  ][fill={rgb, 255:red, 0; green, 0; blue, 0 }  ][line width=0.75]      (0, 0) circle [x radius= 1.34, y radius= 1.34]   ;
 
\draw    (249.95,20.65) -- (259.95,20.54) ;
\draw [shift={(259.95,20.54)}, rotate = 359.39] [color={rgb, 255:red, 0; green, 0; blue, 0 }  ][fill={rgb, 255:red, 0; green, 0; blue, 0 }  ][line width=0.75]      (0, 0) circle [x radius= 1.34, y radius= 1.34]   ;
\draw [shift={(249.95,20.65)}, rotate = 359.39] [color={rgb, 255:red, 0; green, 0; blue, 0 }  ][fill={rgb, 255:red, 0; green, 0; blue, 0 }  ][line width=0.75]      (0, 0) circle [x radius= 1.34, y radius= 1.34]   ;

\draw (153.97,63.7) node [anchor=north west][inner sep=0.75pt]  [font=\tiny]  {$\partial ^{\prime }$};
 
\draw (154.97,15.7) node [anchor=north west][inner sep=0.75pt]  [font=\tiny]  {$\partial $};
 
\draw (266.27,40.33) node [anchor=north west][inner sep=0.75pt]  [font=\tiny]  {$\gamma ^{+ }$};
 
\draw (206.93,38.2) node [anchor=north west][inner sep=0.75pt]  [font=\tiny]  {$\gamma ^{- }$};
 
\draw (238.81,46.66) node [anchor=north west][inner sep=0.75pt]  [font=\tiny]  {$\times $};
\end{tikzpicture}
\caption{The top half $\gamma^+$ and bottom  half $\gamma^-$ of  $\gamma$}\label{divide}
\end{figure}
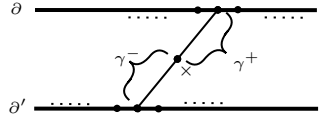

\item[(ii)] For each non-$\sigma$-fixed curve $\gamma$ in $\mathbf{C}_{\mathtt{b}} \cup \mathbf{C}_{\mathtt{p}}$, we pair $\gamma$ with its image under $\sigma$, forming $\{\gamma, \sigma(\gamma)\}$.
 
\item[(iii)]  For each parameterized loop $(\lambda, L^j)$ in $\mathbf{C}_{\mathbf{k}^*}$, where $\lambda\neq \pm 1,j\in \mathbb{Z}_+$, we pair it with $(\lambda^{-1}, L^j)$, forming $\{(\lambda, L^j), (\lambda^{-1}, L^j)\}$. 

\item[(iv)]  For each parameterized loop $(\lambda, L^j)$ in $\mathbf{C}_{\mathbf{k}^*}$, where $\lambda= 1$ or $-1$ and $j\in \mathbb{Z}_+$, we assign new parameters to generate two elements: $(\lambda, L^j)^+$ and $(\lambda, L^j)^-$.
\end{itemize}

We refer to the elements constructed in (i)-(iv) as \emph{skew-curves} on $(\mathcal{S}, M, \sigma)$. A skew-curve is denoted by $\widehat{\gamma}$. Let $\widehat{\mathbf{C}}$ denote the set of all skew-curves on $(\mathcal{S}, M, \sigma)$. For convenience in future discussions, we partition $\widehat{\mathbf{C}}$ as follows:
\[
\widehat{\mathbf{C}} = \mathbf{C}_{\mathtt{b},\times_1}^+\cup \mathbf{C}_{\mathtt{b},\times_1}^-\cup \mathbf{C}_{\mathtt{b},\times_2}^+\cup \mathbf{C}_{\mathtt{b},\times_2}^- \cup \mathbf{C}_{\mathtt{b}}^\sigma \cup \mathbf{C}_{\mathtt{p}}^\sigma \cup \mathbf{C}_{\mathbf{k}^*}^{\text{pw}} \cup \mathbf{C}_{\mathbf{k}^*}^{\text{sp}},
\]
where, in slightly more precise terms, the components are defined as follows:
\begin{itemize}
    \item  $\mathbf{C}_{\mathtt{b},\times_1}^+ = \{[D_{-i}^{i}]^+\mid i\in \mathbb{Z}\}$ and $\mathbf{C}_{\mathtt{b},\times_1}^- = \{[D_{-i}^{i}]^-\mid i\in \mathbb{Z}\}$;
    \item   $\mathbf{C}_{\mathtt{b},\times_2}^+ = \{[D_{-i}^{n+i}]^+\mid i\in \mathbb{Z}\}$ and $\mathbf{C}_{\mathtt{b},\times_2}^- = \{[D_{-i}^{n+i}]^-\mid i\in \mathbb{Z}\}$;
    \item  $\mathbf{C}_{\mathtt{b}}^\sigma = \{\widetilde{[D_i^{k-i}]}(= \widetilde{[D_{i-k}^{-i}]}):= \{[D^{k-i}_{i}],[D^{-i}_{i-k}]\} \mid i,j\in \mathbb{Z} \text{ and } 1\leq k\leq n-1\}$;
\item  $\mathbf{C}_{\mathtt{p}}^\sigma = \{ \widetilde{[D^{i,j}]}(= \widetilde{[D_{-j,-i}]}):=\{[D^{i,j}], [D_{-j,-i}]\} \mid i,j\in \mathbb{Z} \text{ and } j-i\geq 2\};$
    \item  $\mathbf{C}_{\mathbf{k}^*}^{\text{pw}} = \{ \{(\lambda, L^j), (\lambda^{-1}, L^j)\} \mid \lambda \neq \lambda^{-1}, j \in \mathbb{Z}_+ \}$;
    \item  $\mathbf{C}_{\mathbf{k}^*}^{\text{sp}} = \{ (\lambda, L^j)^+, (\lambda, L^j)^- \mid \lambda = 1 \text{ or } -1, j \in \mathbb{Z}_+ \}.$
\end{itemize}
We also use the notation $\mathbf{C}_{\mathtt{b},\times_i} = \mathbf{C}_{\mathtt{b},\times_i}^+ \cup \mathbf{C}_{\mathtt{b},\times_i}^-$, and $\mathbf{C}_{\mathtt{b},\mathcal{X}} = \mathbf{C}_{\mathtt{b},\times_1} \cup \mathbf{C}_{\mathtt{b},\times_2}$. 

Recall from Appendix \ref{equivariant} that each indecomposable bundle in ${\rm vect}\mbox{-}\mathbb{X}$ is either a line bundle or an extension bundle. Note that any line bundle in ${\rm vect}\mbox{-}\mathbb{X}$ can be expressed as a degree shift of the structure sheaf $\mathcal{O}_\mathbb{X}$, namely $\mathcal{O}_\mathbb{X}(\vec{x})$ for some $\vec{x} \in \mathbb{L}(2,2,n)$. Under the relations $2\vec{x}_1 = 2\vec{x}_2 = n\vec{x}_3 = \vec{c}$, $\vec{x}$ can be uniquely written in one of the forms $i\vec{x}_3$, $\vec{x}_1 - \vec{x}_2 + i\vec{x}_3$,  $\vec{x}_1 + i\vec{x}_3$, and $\vec{x}_2 + i\vec{x}_3$, where $i \in \mathbb{Z}$. Moreover, by \cite[Proposition 2.3 (i)]{Dong2025Two}, each extension bundle has a unique expression as $E_{\mathcal{O}_\mathbb{X}(-(i+1)\vec{x}_3)}\langle(k-1)\vec{x}_3\rangle$ with $i,k\in \mathbb{Z}$ and $1\leq k\leq n-1$. 

Denote by $G$  the cyclic group of order $2$, generated by $\sigma_{1,2}$, which exchanges the two weighted points of $\mathbb{Y}$. Let $\lambda \notin \{\pm 1, 0, \infty\} \subset \mathbb{Y}$, and  $\varphi: \mathbb{Y}/G \to \mathbb{X}$ be the isomorphism introduced in Remark \ref{isomor}.

\begin{prop}\label{correspondence}
The isoclasses of $\operatorname{ind}({\rm coh}\mbox{-}\mathbb{X})$ can be  parameterized by the skew-curves in $\widehat{\mathbf{C}}$ through the bijection \[\widehat{\phi}:\widehat{\mathbf{C}}\rightarrow \operatorname{ind}\left({\rm coh}\mbox{-}\mathbb{X}(2,2,n)\right). \] given as follows:
\begin{table}[H]
\centering
\caption{Mapping of Skew-Curves to Isoclasses in ${\rm ind}({\rm coh}\mbox{-}\mathbb{X})$}
\label{table:skew-curves-mapping}
\[\begin{tabular}{|c|c|}
\hline
\text{Skew-Curve Class} & \text{Mapping $\widehat{\phi}$} \\
\hline
\multirow{2}{*}{$\mathbf{C}_{\mathtt{b},\times_1}$} & $[D^i_{-i}]^+ \mapsto \mathcal{O}_\mathbb{X}(\vec{x}_1 - \vec{x}_2 + i \vec{x}_3)$ \\
 & $[D^i_{-i}]^- \mapsto \mathcal{O}_\mathbb{X}(i \vec{x}_3)$ \\
\hline
\multirow{2}{*}{$\mathbf{C}_{\mathtt{b},\times_2}$} & $[D^{n+i}_{-i}]^+ \mapsto \mathcal{O}_\mathbb{X}(\vec{x}_1 + i \vec{x}_3)$ \\
 & $[D^{n+i}_{-i}]^- \mapsto \mathcal{O}_\mathbb{X}(\vec{x}_2 + i \vec{x}_3)$ \\
\hline
$\mathbf{C}_{\mathtt{b}}^\sigma$ & $\widetilde{[D^{k-i}_i]} \mapsto E_{\mathcal{O}_\mathbb{X}(-(i+1)\vec{x}_3)}\langle(k-1)\vec{x}_3\rangle$ \\
\hline
$\mathbf{C}_{\mathtt{p}}^\sigma$ & $\widetilde{[D^{i-j-1,i}]} \mapsto S_{1,\overline{i}}^{(j)}$ \\
\hline
$\mathbf{C}_{\mathbf{k}^*}^{\text{pw}}$ & $\{(\lambda, L^j),(\lambda^{-1}, L^j)\} \mapsto S_{\varphi(G\lambda)}^{(j)}$ \\
\hline
\multirow{4}{*}{$\mathbf{C}_{\mathtt{b}}^{\text{sp}}$} & $(1, L^j)^+ \mapsto S_{\infty,\overline{j+1}}^{(j)}$ \\
 & $(1, L^j)^- \mapsto S_{\infty,\overline{j}}^{(j)}$\\
& $(-1, L^j)^+ \mapsto S_{0,\overline{j+1}}^{(j)}$ \\
 & $(-1, L^j)^- \mapsto S_{0,\overline{j}}^{(j)}$ \\
\hline
\end{tabular}\]
\end{table}
\end{prop}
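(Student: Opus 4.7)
The plan is to pull the statement back to the known geometric model for $\operatorname{coh}\text{-}\mathbb{Y}$ from \cite{Chen2023GeometricModel} and then transport everything through the equivariantization equivalence $\operatorname{coh}\text{-}\mathbb{X}\simeq (\operatorname{coh}\text{-}\mathbb{Y})^{G}$ recorded in Appendix \ref{equivariant}. By \cite{Chen2023GeometricModel} there is already a bijection $\phi:\mathbf{C}\to \operatorname{ind}(\operatorname{coh}\text{-}\mathbb{Y})$ sending $[D^j_i]$, $[D^{i,j}]$, $[D_{i,j}]$ and $(\lambda,L^j)$ to line bundles, extension bundles and the various torsion sheaves on $\mathbb{Y}$ respectively. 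The first step is to identify the $G$-action on $\operatorname{coh}\text{-}\mathbb{Y}$ with the $\sigma$-action on $\mathbf{C}$, i.e. to prove the compatibility
\[
\phi\circ \sigma \;\cong\; \sigma_{1,2}\circ \phi,
\]
up to isomorphism of sheaves. This is a finite case check following the partition $\mathbf{C}=\mathbf{C}_{\mathtt{b}}\cup\mathbf{C}_{\mathtt{p}}\cup\mathbf{C}_{\mathbf{k}^*}$: on line bundles $\sigma$ reflects the endpoints, which corresponds via $\phi$ exactly to the swap of the two weighted points $\vec{x}_1\leftrightarrow\vec{x}_2$ performed by $\sigma_{1,2}$; on arcs of type $[D^{i,j}]$/$[D_{i,j}]$ the reflection exchanges the two exceptional tubes over $\lambda_1,\lambda_2\in\mathbb{Y}$; and on a parameterized loop $(\lambda,L^j)$ the reflection sends $\lambda\mapsto \lambda^{-1}$, which is the action of $\sigma_{1,2}$ on the homogeneous tube labels via the isomorphism $\varphi:\mathbb{Y}/G\to \mathbb{X}$ of Remark \ref{isomor}.

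Once this compatibility is in place, the classification of indecomposables of the equivariant category $(\operatorname{coh}\text{-}\mathbb{Y})^{G}$ for $G=\mathbb{Z}/2$ acting (with $\mathrm{char}\,\mathbf{k}\neq 2$) gives exactly the dichotomy built into the definition of $\widehat{\mathbf{C}}$: each $\sigma_{1,2}$-orbit of length two on $\operatorname{ind}(\operatorname{coh}\text{-}\mathbb{Y})$ carries a unique indecomposable equivariant structure, while every $\sigma_{1,2}$-fixed indecomposable $X$ carries exactly two non-isomorphic equivariant structures $(X,+)$ and $(X,-)$. Applied via $\phi$, the first case produces the classes $\mathbf{C}_{\mathtt{b}}^\sigma$, $\mathbf{C}_{\mathtt{p}}^\sigma$ and $\mathbf{C}_{\mathbf{k}^*}^{\mathrm{pw}}$, and the second case produces precisely the decorated classes $\mathbf{C}_{\mathtt{b},\times_1}^\pm$, $\mathbf{C}_{\mathtt{b},\times_2}^\pm$ and $\mathbf{C}_{\mathbf{k}^*}^{\mathrm{sp}}$. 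Combined with the equivalence $\operatorname{coh}\text{-}\mathbb{X}\simeq (\operatorname{coh}\text{-}\mathbb{Y})^{G}$, this yields a bijection $\widehat{\phi}:\widehat{\mathbf{C}}\to\operatorname{ind}(\operatorname{coh}\text{-}\mathbb{X})$.

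It remains to verify that the explicit assignments in Table \ref{table:skew-curves-mapping} are the right ones. For the paired and orbit cases the mapping is forced by the previous paragraph up to the choice of representative, so one only has to translate the $\mathbb{L}(n,n)$-labels on $\mathbb{Y}$ into their $\mathbb{L}(2,2,n)$-counterparts under the inclusion $\mathbb{L}(2,2,n)\hookrightarrow \mathbb{L}(n,n)$, which recovers the normal forms $i\vec{x}_3$, $\vec{x}_1-\vec{x}_2+i\vec{x}_3$, etc., listed in the table, together with the extension-bundle description $E_{\mathcal{O}_\mathbb{X}(-(i+1)\vec{x}_3)}\langle (k-1)\vec{x}_3\rangle$ from \cite[Proposition 2.3(i)]{Dong2025Two}. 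For the $\sigma$-fixed cases the $+$/$-$ labels have to be pinned down: I would fix the convention so that $[D^i_{-i}]^-$ is sent to the line bundle $\mathcal{O}_\mathbb{X}(i\vec{x}_3)$ associated with the trivial equivariant structure on $\mathcal{O}_\mathbb{Y}(i\vec{x}_3^{\,\mathbb{Y}})$, and similarly fix the half attached to $\times_1,\times_2$ by comparing with the images of adjacent orbits.

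The main obstacle I expect is precisely this last bookkeeping: ensuring the two equivariant structures on each $\sigma_{1,2}$-fixed indecomposable are consistently labeled $+$ and $-$ across the four classes $\mathbf{C}_{\mathtt{b},\times_1}^\pm$, $\mathbf{C}_{\mathtt{b},\times_2}^\pm$ and $\mathbf{C}_{\mathbf{k}^*}^{\mathrm{sp}}$, and that the labels induced by the two fixed points $\times_1,\times_2$ of $\sigma$ on $\mathcal{S}$ correspond to the two exceptional simples at $\lambda_1=\infty$ and $\lambda_2=0$ on $\mathbb{X}$, rather than being swapped. This is essentially a sign/normalization issue rather than a conceptual one, and it is resolved by comparing short exact sequences from Proposition \ref{the properties of coherent sheaves} (the defining sequences for ordinary and exceptional simples) with the geometric boundary of the top/bottom halves $\gamma^+,\gamma^-$ at the fixed points in Figure \ref{divide}.
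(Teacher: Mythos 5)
Your proposal is correct and follows essentially the same route as the paper: the paper likewise invokes the bijection $\phi$ from \cite[Theorem 3.4]{Chen2023GeometricModel}, the compatibility $\phi(\sigma(\gamma))=\sigma_{1,2}(\phi(\gamma))$, and the classification of indecomposable $G$-equivariant objects (one equivariant structure per length-two orbit, two per fixed object) recorded in Appendix \ref{equivariant} together with the explicit bijection $\overline{H_2}$, which is exactly the dichotomy and the label-translation you describe. The $+/-$ normalization you flag as the main remaining bookkeeping is resolved in the paper simply by the conventions fixed in Table \ref{H2}.
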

\begin{proof} 
According to Remark \ref{sigma-fixed}, $\widehat{\phi}$ is well-defined. It remains to show that $\widehat{\phi}$ is a bijection.

First, recall that the objects of $\operatorname{ind}({\rm coh}\mbox{-}\mathbb{Y})$ are line bundles $\mathcal{O}_\mathbb{Y}(\vec{y})$ with $\vec{y}\in \mathbb{L}(n,n)$, and objects of finite length $S_{\infty,\overline{i}}^{(j)}$,  $S_{0,\overline{i}}^{(j)}$ ($i\in\mathbb{Z}$) and $S_{\lambda}^{(j)}\,(\lambda \in \mathbf k^{*})$ for $j \in \mathbb{Z}_+$. By \cite[Theorem 3.4]{Chen2023GeometricModel}, there exists a bijection 
	\begin{align*}
	\phi: \mathbf{C}&\rightarrow {\rm ind}({\rm coh}\mbox{-}\mathbb{Y})\\
		[D^j_i] & \mapsto \mathcal{O}_\mathbb{Y}(j\vec{y}_{1}-i\vec{y}_{2}), \\
		[D^{i-j-1,i}] & \mapsto S_{\infty,\overline{i}}^{(j)}, \\
		[D_{-i,j-i+1}]  & \mapsto S_{0,\overline{i}}^{(j)}, \\
		(\lambda,L^{j}) & \mapsto S_{\lambda}^{(j)}.
	\end{align*}
Next, note that the diffeomorphism $\sigma$ can naturally be regarded as an element of  the mapping class group of $(\mathcal{S},M)$ and it is implicit in the proof of \cite[Theorem 1.1]{Chen2023GeometricModel} that $\sigma$ is compatible with $\sigma_{1,2}$ in the following sense: 
\[ \phi(\sigma(\gamma)) = \sigma_{1,2}(\phi(\gamma)), \text{ for any curve }  \gamma \in \mathbf{C}_{\mathtt{b}}\cup \mathbf{C}_{\mathtt{p}},\] 
where $\sigma_{1,2}$ is the automorphism of ${\rm coh}\mbox{-}\mathbb{Y}$ mentioned in Appendix \ref{equivariant}. 

Therefore, based on the bijections $\phi$ and  Table \ref{H2}, it is straightforward to verify that $\widehat{\phi}$ is indeed a bijection.
\end{proof}

To illustrate the bijection $\widehat{\phi}$, we present an example.

\begin{exm}\label{example1}
Let $n=4$ and consider the following curves on $(\mathcal{S},M)$, which are depicted in distinct colors:
\begin{figure}[H]
    \centering

\tikzset{every picture/.style={line width=0.75pt}}   

\begin{tikzpicture}[x=0.75pt,y=0.75pt,yscale=-1,xscale=1]

\draw  [dash pattern={on 4.5pt off 4.5pt}]  (43.75,112.85) -- (163.42,112.85) ;
  
\draw  [dash pattern={on 4.5pt off 4.5pt}]  (190.42,112.85) -- (310.08,112.85) ;
  
\draw   (142.43,65.17) -- (142.43,161.72) .. controls (142.43,167.92) and (125.68,172.94) .. (105.01,172.94) .. controls (84.35,172.94) and (67.6,167.92) .. (67.6,161.72) -- (67.6,65.17) .. controls (67.6,58.97) and (84.35,53.94) .. (105.01,53.94) .. controls (125.68,53.94) and (142.43,58.97) .. (142.43,65.17) .. controls (142.43,71.37) and (125.68,76.39) .. (105.01,76.39) .. controls (84.35,76.39) and (67.6,71.37) .. (67.6,65.17) ;
  
\draw [color={rgb, 255:red, 208; green, 2; blue, 27 }  ,draw opacity=1 ] [dash pattern={on 0.84pt off 2.51pt}]  (142.76,160.72) .. controls (125.3,117.58) and (67.3,121.58) .. (67.75,112.85) ;
  
\draw [color={rgb, 255:red, 208; green, 2; blue, 27 }  ,draw opacity=1 ]   (142.33,65.77) .. controls (131.74,101.7) and (64.4,101.7) .. (66.99,114.45) ;
  
\draw [color={rgb, 255:red, 245; green, 166; blue, 35 }  ,draw opacity=1 ]   (105.5,173.1) .. controls (129.99,150.26) and (143.5,112.77) .. (142.58,113.68) ;
  
\draw [color={rgb, 255:red, 245; green, 166; blue, 35 }  ,draw opacity=1 ] [dash pattern={on 0.84pt off 2.51pt}]  (103.92,54.1) .. controls (128.4,74.93) and (140.2,92.41) .. (141.49,113.84) ;
  
\draw [color={rgb, 255:red, 189; green, 16; blue, 224 }  ,draw opacity=1 ] [dash pattern={on 0.84pt off 2.51pt}]  (105,148.43) .. controls (121.67,138.77) and (140.55,123.33) .. (141.5,115.1) ;
  
\draw [color={rgb, 255:red, 189; green, 16; blue, 224 }  ,draw opacity=1 ]   (105.01,76.39) .. controls (126.08,88.1) and (143.09,101.83) .. (142.58,113.68) ;
  
\draw [color={rgb, 255:red, 208; green, 2; blue, 27 }  ,draw opacity=1 ]   (334.87,80.28) -- (358.2,80.28) ;
  
\draw [color={rgb, 255:red, 80; green, 227; blue, 194 }  ,draw opacity=1 ]   (334.87,100.62) -- (358.2,100.62) ;
  
\draw [color={rgb, 255:red, 189; green, 16; blue, 224 }  ,draw opacity=1 ]   (401.7,80.45) -- (425.03,80.45) ;
  
\draw [color={rgb, 255:red, 245; green, 166; blue, 35 }  ,draw opacity=1 ]   (401.7,100.78) -- (425.03,100.78) ;
  
\draw [color={rgb, 255:red, 128; green, 128; blue, 128 }  ,draw opacity=1 ]   (335.37,124.28) -- (358.7,124.28) ;
  
\draw [color={rgb, 255:red, 74; green, 144; blue, 226 }  ,draw opacity=1 ]   (335.37,144.62) -- (358.7,144.62) ;
  
\draw [color={rgb, 255:red, 80; green, 227; blue, 194 }  ,draw opacity=1 ]   (67.6,65.17) -- (67.6,161.72) ;
  
\draw [color={rgb, 255:red, 248; green, 231; blue, 28 }  ,draw opacity=1 ]   (402.37,124.28) -- (425.7,124.28) ;
  
\draw [color={rgb, 255:red, 65; green, 117; blue, 5 }  ,draw opacity=1 ]   (402.53,144.45) -- (425.87,144.45) ;
  
\draw [color={rgb, 255:red, 74; green, 144; blue, 226 }  ,draw opacity=1 ]   (203.6,161.82) .. controls (263.83,142.87) and (275.07,106.8) .. (278.33,65.87) ;
  
\draw [color={rgb, 255:red, 65; green, 117; blue, 5 }  ,draw opacity=1 ] [dash pattern={on 0.84pt off 2.51pt}]  (203.6,66.27) .. controls (239.5,82.53) and (275.17,99.53) .. (278.76,160.82) ;
  
\draw [color={rgb, 255:red, 65; green, 117; blue, 5 }  ,draw opacity=1 ] [dash pattern={on 0.84pt off 2.51pt}]  (203.97,143.12) .. controls (207.3,138.78) and (225.17,135.78) .. (240.92,147.62) ;
  
\draw [color={rgb, 255:red, 65; green, 117; blue, 5 }  ,draw opacity=1 ]   (203.97,143.12) .. controls (203.3,154.78) and (248.63,163.45) .. (278.76,160.82) ;
  
\draw [color={rgb, 255:red, 248; green, 231; blue, 28 }  ,draw opacity=1 ] [dash pattern={on 0.84pt off 2.51pt}]  (203.87,78.55) .. controls (196.54,72.88) and (253.12,58.22) .. (278.33,65.87) ;
  
\draw [color={rgb, 255:red, 248; green, 231; blue, 28 }  ,draw opacity=1 ]   (203.87,78.55) .. controls (204.54,83.22) and (227.22,83.53) .. (241.01,76.49) ;
  
\draw    (67.6,161.72) ;
\draw [shift={(67.6,161.72)}, rotate = 0] [color={rgb, 255:red, 0; green, 0; blue, 0 }  ][fill={rgb, 255:red, 0; green, 0; blue, 0 }  ][line width=0.75]      (0, 0) circle [x radius= 1.34, y radius= 1.34]   ;
  
\draw    (67.6,66.17) ;
\draw [shift={(67.6,66.17)}, rotate = 0] [color={rgb, 255:red, 0; green, 0; blue, 0 }  ][fill={rgb, 255:red, 0; green, 0; blue, 0 }  ][line width=0.75]      (0, 0) circle [x radius= 1.34, y radius= 1.34]   ;
  
\draw    (105.5,173.1) ;
\draw [shift={(105.5,173.1)}, rotate = 0] [color={rgb, 255:red, 0; green, 0; blue, 0 }  ][fill={rgb, 255:red, 0; green, 0; blue, 0 }  ][line width=0.75]      (0, 0) circle [x radius= 1.34, y radius= 1.34]   ;
  
\draw    (105,148.43) ;
\draw [shift={(105,148.43)}, rotate = 0] [color={rgb, 255:red, 0; green, 0; blue, 0 }  ][fill={rgb, 255:red, 0; green, 0; blue, 0 }  ][line width=0.75]      (0, 0) circle [x radius= 1.34, y radius= 1.34]   ;
  
\draw    (142.76,160.72) ;
\draw [shift={(142.76,160.72)}, rotate = 0] [color={rgb, 255:red, 0; green, 0; blue, 0 }  ][fill={rgb, 255:red, 0; green, 0; blue, 0 }  ][line width=0.75]      (0, 0) circle [x radius= 1.34, y radius= 1.34]   ;
  
\draw    (105,76.1) ;
\draw [shift={(105,76.1)}, rotate = 0] [color={rgb, 255:red, 0; green, 0; blue, 0 }  ][fill={rgb, 255:red, 0; green, 0; blue, 0 }  ][line width=0.75]      (0, 0) circle [x radius= 1.34, y radius= 1.34]   ;
  
\draw    (142.33,65.77) ;
\draw [shift={(142.33,65.77)}, rotate = 0] [color={rgb, 255:red, 0; green, 0; blue, 0 }  ][fill={rgb, 255:red, 0; green, 0; blue, 0 }  ][line width=0.75]      (0, 0) circle [x radius= 1.34, y radius= 1.34]   ;
  
\draw    (105.33,53.43) -- (105.01,53.94) ;
\draw [shift={(105.01,53.94)}, rotate = 122.24] [color={rgb, 255:red, 0; green, 0; blue, 0 }  ][fill={rgb, 255:red, 0; green, 0; blue, 0 }  ][line width=0.75]      (0, 0) circle [x radius= 1.34, y radius= 1.34]   ;
  
\draw   (278.43,65.27) -- (278.43,161.82) .. controls (278.43,168.02) and (261.68,173.04) .. (241.01,173.04) .. controls (220.35,173.04) and (203.6,168.02) .. (203.6,161.82) -- (203.6,65.27) .. controls (203.6,59.07) and (220.35,54.04) .. (241.01,54.04) .. controls (261.68,54.04) and (278.43,59.07) .. (278.43,65.27) .. controls (278.43,71.47) and (261.68,76.49) .. (241.01,76.49) .. controls (220.35,76.49) and (203.6,71.47) .. (203.6,65.27) ;
  
\draw    (203.6,161.82) ;
\draw [shift={(203.6,161.82)}, rotate = 0] [color={rgb, 255:red, 0; green, 0; blue, 0 }  ][fill={rgb, 255:red, 0; green, 0; blue, 0 }  ][line width=0.75]      (0, 0) circle [x radius= 1.34, y radius= 1.34]   ;
  
\draw    (203.6,66.27) ;
\draw [shift={(203.6,66.27)}, rotate = 0] [color={rgb, 255:red, 0; green, 0; blue, 0 }  ][fill={rgb, 255:red, 0; green, 0; blue, 0 }  ][line width=0.75]      (0, 0) circle [x radius= 1.34, y radius= 1.34]   ;
  
\draw    (241.5,173.2) ;
\draw [shift={(241.5,173.2)}, rotate = 0] [color={rgb, 255:red, 0; green, 0; blue, 0 }  ][fill={rgb, 255:red, 0; green, 0; blue, 0 }  ][line width=0.75]      (0, 0) circle [x radius= 1.34, y radius= 1.34]   ;
  
\draw    (241,148.53) ;
\draw [shift={(241,148.53)}, rotate = 0] [color={rgb, 255:red, 0; green, 0; blue, 0 }  ][fill={rgb, 255:red, 0; green, 0; blue, 0 }  ][line width=0.75]      (0, 0) circle [x radius= 1.34, y radius= 1.34]   ;
  
\draw    (278.76,160.82) ;
\draw [shift={(278.76,160.82)}, rotate = 0] [color={rgb, 255:red, 0; green, 0; blue, 0 }  ][fill={rgb, 255:red, 0; green, 0; blue, 0 }  ][line width=0.75]      (0, 0) circle [x radius= 1.34, y radius= 1.34]   ;
  
\draw    (241,76.2) ;
\draw [shift={(241,76.2)}, rotate = 0] [color={rgb, 255:red, 0; green, 0; blue, 0 }  ][fill={rgb, 255:red, 0; green, 0; blue, 0 }  ][line width=0.75]      (0, 0) circle [x radius= 1.34, y radius= 1.34]   ;
  
\draw    (278.33,65.87) ;
\draw [shift={(278.33,65.87)}, rotate = 0] [color={rgb, 255:red, 0; green, 0; blue, 0 }  ][fill={rgb, 255:red, 0; green, 0; blue, 0 }  ][line width=0.75]      (0, 0) circle [x radius= 1.34, y radius= 1.34]   ;
  
\draw    (241.33,53.53) -- (241.01,54.04) ;
\draw [shift={(241.01,54.04)}, rotate = 122.24] [color={rgb, 255:red, 0; green, 0; blue, 0 }  ][fill={rgb, 255:red, 0; green, 0; blue, 0 }  ][line width=0.75]      (0, 0) circle [x radius= 1.34, y radius= 1.34]   ;
  
\draw  [line width=1.5]  (67.5,64.77) .. controls (67.5,58.57) and (84.25,53.54) .. (104.92,53.54) .. controls (125.58,53.54) and (142.33,58.57) .. (142.33,64.77) .. controls (142.33,70.97) and (125.58,75.99) .. (104.92,75.99) .. controls (84.25,75.99) and (67.5,70.97) .. (67.5,64.77) -- cycle ;
  
\draw  [line width=1.5]  (203.6,65.27) .. controls (203.6,59.07) and (220.35,54.04) .. (241.01,54.04) .. controls (261.68,54.04) and (278.43,59.07) .. (278.43,65.27) .. controls (278.43,71.47) and (261.68,76.49) .. (241.01,76.49) .. controls (220.35,76.49) and (203.6,71.47) .. (203.6,65.27) -- cycle ;
  
\draw   (330,82.6) .. controls (330,71.22) and (339.22,62) .. (350.6,62) -- (462,62) .. controls (473.38,62) and (482.6,71.22) .. (482.6,82.6) -- (482.6,144.4) .. controls (482.6,155.78) and (473.38,165) .. (462,165) -- (350.6,165) .. controls (339.22,165) and (330,155.78) .. (330,144.4) -- cycle ;
  
\draw [color={rgb, 255:red, 189; green, 16; blue, 224 }  ,draw opacity=1 ]   (67.75,112.85) ;
\draw [shift={(67.75,112.85)}, rotate = 0] [color={rgb, 255:red, 189; green, 16; blue, 224 }  ,draw opacity=1 ][fill={rgb, 255:red, 189; green, 16; blue, 224 }  ,fill opacity=1 ][line width=0.75]      (0, 0) circle [x radius= 1.34, y radius= 1.34]   ;
  
\draw [color={rgb, 255:red, 189; green, 16; blue, 224 }  ,draw opacity=1 ]   (278.6,112.94) ;
\draw [shift={(278.6,112.94)}, rotate = 0] [color={rgb, 255:red, 189; green, 16; blue, 224 }  ,draw opacity=1 ][fill={rgb, 255:red, 189; green, 16; blue, 224 }  ,fill opacity=1 ][line width=0.75]      (0, 0) circle [x radius= 1.34, y radius= 1.34]   ;
  
\draw [color={rgb, 255:red, 189; green, 16; blue, 224 }  ,draw opacity=1 ]   (203.6,112.94) ;
\draw [shift={(203.6,112.94)}, rotate = 0] [color={rgb, 255:red, 189; green, 16; blue, 224 }  ,draw opacity=1 ][fill={rgb, 255:red, 189; green, 16; blue, 224 }  ,fill opacity=1 ][line width=0.75]      (0, 0) circle [x radius= 1.34, y radius= 1.34]   ;
  
\draw [color={rgb, 255:red, 189; green, 16; blue, 224 }  ,draw opacity=1 ]   (142.58,113.68) ;
\draw [shift={(142.58,113.68)}, rotate = 0] [color={rgb, 255:red, 189; green, 16; blue, 224 }  ,draw opacity=1 ][fill={rgb, 255:red, 189; green, 16; blue, 224 }  ,fill opacity=1 ][line width=0.75]      (0, 0) circle [x radius= 1.34, y radius= 1.34]   ;
  
\draw [line width=1.5]    (67.93,161.72) .. controls (66.46,175.15) and (140.38,177.65) .. (142.76,161.72) ;
  
\draw [line width=1.5]    (203.6,161.82) .. controls (202.13,175.25) and (276.05,177.75) .. (278.43,161.82) ;
  
\draw  [dash pattern={on 0.84pt off 2.51pt}]  (67.6,161.72) .. controls (75.8,141.83) and (141.13,146.16) .. (142.76,161.72) ;
  
\draw  [dash pattern={on 0.84pt off 2.51pt}]  (203.6,161.82) .. controls (211.8,141.93) and (277.13,146.26) .. (278.76,161.82) ;

\draw (362.03,73.18) node [anchor=north west][inner sep=0.75pt]  [font=\tiny]  {$\left[ D_{-2}^{2}\right]$};
  
\draw (363.03,93.68) node [anchor=north west][inner sep=0.75pt]  [font=\tiny]  {$\left[ D_{0}^{0}\right]$};
  
\draw (428.87,94.35) node [anchor=north west][inner sep=0.75pt]  [font=\tiny]  {$\left[ D_{1}^{3}\right]$};
  
\draw (428.53,73.18) node [anchor=north west][inner sep=0.75pt]  [font=\tiny]  {$\left[ D_{3}^{1}\right]$};
  
\draw (362.53,115.35) node [anchor=north west][inner sep=0.75pt]  [font=\tiny]  {$\left[ D_{-2}^{0}\right]$};
  
\draw (363.53,136.52) node [anchor=north west][inner sep=0.75pt]  [font=\tiny]  {$\left[ D_{0}^{-2}\right]$};
  
\draw (429.53,117.35) node [anchor=north west][inner sep=0.75pt]  [font=\tiny]  {$\left[ D^{-2,1}\right]$};
  
\draw (430.53,139.02) node [anchor=north west][inner sep=0.75pt]  [font=\tiny]  {$[ D_{-1,2}]$};
  
\draw (52.67,160.17) node [anchor=north west][inner sep=0.75pt]  [font=\tiny]  {$0_{\partial ^{\prime }}$};
  
\draw (107.5,176.5) node [anchor=north west][inner sep=0.75pt]  [font=\tiny]  {$1_{\partial ^{\prime }}$};
  
\draw (144.76,164.12) node [anchor=north west][inner sep=0.75pt]  [font=\tiny]  {$2_{\partial ^{\prime }}$};
  
\draw (54,59.5) node [anchor=north west][inner sep=0.75pt]  [font=\tiny]  {$0_{\partial }$};
  
\draw (107.01,79.79) node [anchor=north west][inner sep=0.75pt]  [font=\tiny]  {$1_{\partial }$};
  
\draw (144.33,69.17) node [anchor=north west][inner sep=0.75pt]  [font=\tiny]  {$2_{\partial }$};
  
\draw (102.17,43.4) node [anchor=north west][inner sep=0.75pt]  [font=\tiny]  {$3_{\partial }$};
  
\draw (102.76,137.12) node [anchor=north west][inner sep=0.75pt]  [font=\tiny]  {$3_{\partial ^{\prime }}$};
  
\draw (188.67,160.27) node [anchor=north west][inner sep=0.75pt]  [font=\tiny]  {$0_{\partial ^{\prime }}$};
  
\draw (243.5,176.6) node [anchor=north west][inner sep=0.75pt]  [font=\tiny]  {$1_{\partial ^{\prime }}$};
  
\draw (280.76,164.22) node [anchor=north west][inner sep=0.75pt]  [font=\tiny]  {$2_{\partial ^{\prime }}$};
  
\draw (190,59.6) node [anchor=north west][inner sep=0.75pt]  [font=\tiny]  {$0_{\partial }$};
  
\draw (243.01,79.89) node [anchor=north west][inner sep=0.75pt]  [font=\tiny]  {$1_{\partial }$};
  
\draw (280.33,69.27) node [anchor=north west][inner sep=0.75pt]  [font=\tiny]  {$2_{\partial }$};
  
\draw (238.17,43.5) node [anchor=north west][inner sep=0.75pt]  [font=\tiny]  {$3_{\partial }$};
  
\draw (238.76,137.22) node [anchor=north west][inner sep=0.75pt]  [font=\tiny]  {$3_{\partial ^{\prime }}$};
  
\draw (155.5,110.4) node [anchor=north west][inner sep=0.75pt]  [font=\normalsize]  {$\sigma $};
  
\draw (302.5,110.4) node [anchor=north west][inner sep=0.75pt]  [font=\normalsize]  {$\sigma $};

\end{tikzpicture}

\end{figure}
\noindent By Proposition \ref{correspondence}, we have
		\begin{align*}
			\widehat{\phi}([D^2_{-2}]^+)        & = \mathcal{O}_{\mathbb{X}}(\vec{x}_1-\vec{x}_2+2\vec{x}_3),                     &
			\widehat{\phi}([D^2_{-2}]^-)        & = \mathcal{O}_{\mathbb{X}}(2\vec{x}_3),   \\
			\widehat{\phi}([D^0_0]^+)        & = \mathcal{O}_{\mathbb{X}}(\vec{x}_1-\vec{x}_2),                     &
			\widehat{\phi}([D^0_0]^-)        & = \mathcal{O}_{\mathbb{X}},   \\
			\widehat{\phi}([D^1_3]^+)        & = \mathcal{O}_{\mathbb{X}}(\vec{x}_1-3\vec{x}_3),                                 &
			\widehat{\phi}([D^1_3]^-)        & = \mathcal{O}_{\mathbb{X}}(\vec{x}_2-3\vec{x}_3),              \\
			\widehat{\phi}([D^3_1]^+)        & = \mathcal{O}_{\mathbb{X}}(\vec{x}_1-\vec{x}_3),                                 &
			\widehat{\phi}([D^3_1]^-)        & = \mathcal{O}_{\mathbb{X}}(\vec{x}_2-\vec{x}_3),              \\
			\widehat{\phi}(\widetilde{[D^2_0]})        & = E_{\mathcal{O}_{\mathbb{X}}(-\vec{x}_3)}\langle\vec{x}_3\rangle=\tau(E_{\mathcal{O}_{\mathbb{X}}}\langle\vec{x}_3\rangle),           &
			\widehat{\phi}(\widetilde{[D^{-2,1}]})        & = S_{1,\overline{1}}^{(2)}.
  		\end{align*}
  		\end{exm}

\subsection{The $\mathbb{L}(2,2,n)$-action on ${\rm vect}\mbox{-}\mathbb{X}$}
Recall that each $\vec{x} \in \mathbb{L}(2,2,n)$ admits a degree-shift automorphism $(\vec{x})$ on ${\rm coh}\mbox{-}\mathbb{X}$. This automorphism $(\vec{x})$ can be restricted to ${\rm vect}\mbox{-}\mathbb{X}$, as it preserves the finiteness of object lengths. Thus, there is a natural $\mathbb{L}(2,2,n)$-action on ${\rm vect}\mbox{-}\mathbb{X}$  such that  
\begin{equation}\label{LLL}
    X(\vec{x}+\vec{y})=(X(\vec{x}))(\vec{y})=(X(\vec{y}))(\vec{x})
\end{equation} 
for any $X\in {\rm vect}\mbox{-}\mathbb{X}(2,2,n)$ and $\vec{x}$, $\vec{y}\in \mathbb{L}(2,2,n)$. 

Now, let $X$ be an indecomposable bundle in ${\rm vect}\mbox{-}\mathbb{X}$, and suppose  $\vec{x}=\sum_{i=1}^3 l_i\vec{x}_i+l\vec{c} \in \mathbb{L}(2,2,n)$ is in normal form. We are interested in understanding how the transformation occurs from the skew-curve $\widehat{\phi}^{-1}(X)$ to the skew-curve $\widehat{\phi}^{-1}(X(\vec{x}))$. From  Proposition \ref{correspondence}, we know that $X$ is a line bundle if and only if $X$ can be expressed as $\widehat{\phi}([D^j_i]^{\epsilon})$ for some $i,j \in \mathbb{Z}$ satisfying $i+j \equiv 0 \mod n$, and $\epsilon \in \{+, -\}$. Moreover, $X$ is an extension bundle if and only if $X$ can be written as $\widehat{\phi}(\widetilde{[D^{k-i}_i]})$ (or equivalently $\widehat{\phi}(\widetilde{[D^{-i}_{i-k}]})$) for some $i \in \mathbb{Z}$ and $1 \leq k \leq n-1$. 

We end this section by giving the following observation:
\begin{prop}\label{L-action}
  Keep the notations as above. Then the following statements hold.
  \begin{itemize}
      \item [(a)] If $X$ is a line bundle, then
            \[
X(\vec{x}) = 
\begin{cases} 
\widehat{\phi}([D^{j+l_3+(l_1+l_2+l)n}_{i-l_3-l n}]^\epsilon) & \text{if } l_1 = 0, \\
\widehat{\phi}([D^{j+l_3+(l_1+l_2+l)n}_{i-l_3-l n}]^{\neg \epsilon}) & \text{if } l_1 = 1.
\end{cases}
\]

\item[(b)] If $X$ is an extension bundle, then \[X(\vec{x})=\widehat{\phi}(\widetilde{[D^{k-i+l_3+(l_1+l_2+l)n}_{i-l_3-l n}]}).\]
  \end{itemize}

\end{prop}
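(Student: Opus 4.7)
The plan is to translate the degree-shift action on ${\rm vect}\mbox{-}\mathbb{X}$ through the bijection $\widehat{\phi}$ of Proposition \ref{correspondence} into explicit shifts of skew-curve parameters, case-splitting on the type of bundle and the generator of $\mathbb{L}(2,2,n)$ being added.

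For part (a), I would exploit the uniqueness of the four standard representatives $\vec{y} \in \{i\vec{x}_3,\, \vec{x}_1-\vec{x}_2+i\vec{x}_3,\, \vec{x}_1+i\vec{x}_3,\, \vec{x}_2+i\vec{x}_3\}$ for the degree of any line bundle $\mathcal{O}_\mathbb{X}(\vec{y})$. For each standard form and each generator $g \in \{\vec{x}_1, \vec{x}_2, \vec{x}_3, \vec{c}\}$, compute $\vec{y} + g$, reduce back to standard form using $2\vec{x}_1 = 2\vec{x}_2 = n\vec{x}_3 = \vec{c}$, and read off the resulting skew-curve from Table \ref{table:skew-curves-mapping}. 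These sixteen routine checks reveal the following unified shift rules on $(j, i, \epsilon)$, understood modulo $[D^{j+sn}_{i+sn}] = [D^j_i]$: $\vec{x}_3$ sends $(j, i, \epsilon)$ to $(j+1, i-1, \epsilon)$; $\vec{c}$ to $(j+n, i-n, \epsilon)$; $\vec{x}_2$ to $(j+n, i, \epsilon)$; and $\vec{x}_1$ to $(j+n, i, \neg\epsilon)$. Adding these contributions with multiplicities $l_1, l_2, l_3, l$ reproduces precisely the stated formula, with the sign flipped exactly when $l_1 = 1$.

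For part (b), I would invoke the equivariant correspondence ${\rm coh}\mbox{-}\mathbb{X} \cong ({\rm coh}\mbox{-}\mathbb{Y})^G$ of Appendix \ref{equivariant}. The extension bundle $X = \widehat{\phi}(\widetilde{[D^{k-i}_i]})$ corresponds to the $G$-orbit $\{\phi([D^{k-i}_i]),\, \phi([D^{-i}_{i-k}])\}$ of line bundles on $\mathbb{Y}$, and the degree-shift by $\vec{x}$ pulls back to a shift by some $\pi^*\vec{x} \in \mathbb{L}(n,n)$. Applying part (a) to identify the line bundles $\mathcal{O}_\mathbb{X}(\vec{x}_1) = \widehat{\phi}([D^n_0]^+)$, $\mathcal{O}_\mathbb{X}(\vec{x}_2) = \widehat{\phi}([D^n_0]^-)$, and $\mathcal{O}_\mathbb{X}(\vec{x}_3) = \widehat{\phi}([D^1_{-1}]^-)$ and then forgetting the $\pm$-decoration pins down $\pi^*\vec{x}_1 = \pi^*\vec{x}_2 = \vec{c}_Y$ and $\pi^*\vec{x}_3 = \vec{y}_1 + \vec{y}_2$, so $\pi^*\vec{x} = l_3(\vec{y}_1 + \vec{y}_2) + (l_1 + l_2 + 2l)\vec{c}_Y$. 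The known shift rule on $\mathbb{Y}$, namely $\phi([D^j_i])(z_1\vec{y}_1 + z_2\vec{y}_2) = \phi([D^{j+z_1}_{i-z_2}])$, then yields the new $G$-orbit, and a simultaneous $(-ln, -ln)$-shift reconciles it with the claimed $\widetilde{[D^{(k-i)+l_3+(l_1+l_2+l)n}_{i-l_3-ln}]}$.

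The main obstacle I anticipate is the careful bookkeeping of the $\pm$-decoration in part (a). Although $\pi^*\vec{x}_1 = \pi^*\vec{x}_2 = \vec{c}_Y$ agree on the underlying Picard group of $\mathbb{Y}$, the shift by $\vec{x}_1$ carries a non-trivial $G$-equivariant twist (encoding the difference between the two weight-$2$ points of $\mathbb{X}$) that flips $\epsilon$, whereas $\vec{x}_2$ does not. One must consistently track this twist through all sixteen cases of the line-bundle reduction, and verify that no analogous twist appears for extension bundles in part (b) — which is precisely what distinguishes the two formulas and accounts for the absence of any $\epsilon$-dependence in (b).
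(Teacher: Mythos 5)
Your part (a) is essentially the paper's own argument: the paper likewise reduces to the generators $\vec{x}_1,\vec{x}_2,\vec{x}_3$, tabulates the effect of each on the four standard forms of a line-bundle degree, reads off the resulting skew-curves from Table \ref{table:skew-curves-mapping}, and assembles the general case via \eqref{LLL}. Your four unified shift rules on $(j,i,\epsilon)$ match its tables exactly (your separate check for $\vec{c}$ is redundant, since $\vec{c}=n\vec{x}_3$ is already covered by iterating the $\vec{x}_3$-rule).

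Part (b) is where you genuinely diverge. The paper stays entirely inside ${\rm coh}\mbox{-}\mathbb{X}$: it writes $X=E_{\mathcal{O}_\mathbb{X}(-(i+1)\vec{x}_3)}\langle(k-1)\vec{x}_3\rangle$, quotes \cite[Proposition 2.3 (i)]{Dong2025Two} to get $X(\vec{x}_1)\cong X(\vec{x}_2)\cong E_{\mathcal{O}_\mathbb{X}((k-i-1)\vec{x}_3)}\langle(n-k-1)\vec{x}_3\rangle$ together with $X(\vec{x}_3)=E_{\mathcal{O}_\mathbb{X}(-i\vec{x}_3)}\langle(k-1)\vec{x}_3\rangle$, and reads off the skew-curves from Proposition \ref{correspondence}. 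You instead route through Appendix \ref{equivariant}, treating the extension bundle as the $G$-orbit $\{\phi([D^{k-i}_i]),\phi([D^{-i}_{i-k}])\}$ and shifting on $\mathbb{Y}$; your arithmetic, including the final $(-ln,-ln)$ renormalization, is correct. The one step that needs shoring up is the assertion that the degree shift by $\vec{x}$ ``pulls back to a shift by some $\pi^*\vec{x}\in\mathbb{L}(n,n)$'': the equivalence $({\rm coh}\mbox{-}\mathbb{Y})^{G}\simeq{\rm coh}\mbox{-}\mathbb{X}$ does not a priori intertwine degree shifts, and knowing the conjugated autoequivalence on the $G$-stable line bundles (which is all part (a) gives you) does not by itself determine it on the non-stable orbits. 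To close this you need the structure of $\operatorname{Aut}({\rm coh}\mbox{-}\mathbb{Y})$ as in \cite{Lenzing2000AutomorphismGroup}: the conjugate of $(\vec{x}_1)$ is a degree shift composed with a geometric automorphism, the residual ambiguity being at worst a factor of $\sigma_{1,2}$, which permutes each orbit and is therefore harmless for locating $X(\vec{x}_1)$. With that supplement your route works, and it has the virtue of making the absence of any $\epsilon$-decoration in formula (b) conceptually transparent; the paper's route is shorter and needs only the extension-bundle calculus of \cite{Dong2025Two}.
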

\begin{proof}
We first consider the case where $\vec{x} = \vec{x}_1, \vec{x}_2,$ or $\vec{x}_3$. There are two cases to examine:

    (1)  $X$ is a line bundle. In this case, the possible forms of $X$ and their corresponding $\widehat{\phi}^{-1}(X)$, $X(\vec{x})$, and $\widehat{\phi}^{-1}(X(\vec{x}))$ are presented as follows:
    
    \begin{tikzpicture}[scale=0.9, transform shape]
     
    \node (1) at (0, 0) {$\begin{array}{|c|c|}
\hline
X & \widehat{\phi}^{-1}(X) \\ 
\hline
\mathcal{O}_\mathbb{X}(-i \vec{x}_3) & [D^{-i}_i]^- \\ 
\mathcal{O}_\mathbb{X}(\vec{x}_1 - \vec{x}_2 - i \vec{x}_3) & [D^{-i}_i]^+ \\ 
\mathcal{O}_\mathbb{X}(\vec{x}_2 - i \vec{x}_3) & [D^{n-i}_i]^- \\ 
\mathcal{O}_\mathbb{X}(\vec{x}_1 - i \vec{x}_3) & [D^{n-i}_i]^+ \\ 
\hline
\end{array}$};
    \node (2) at (7.5, 2.7) {$\begin{array}{|c|c|}
\hline
X(\vec{x}_1) & \widehat{\phi}^{-1}(X(\vec{x}_1)) \\ 
\hline
\mathcal{O}_\mathbb{X}(\vec{x}_1-i \vec{x}_3) & [D^{n-i}_i]^+ \\ 
\mathcal{O}_\mathbb{X}(\vec{x}_2 - i \vec{x}_3) & [D^{n-i}_i]^- \\ 
\mathcal{O}_\mathbb{X}(\vec{x}_1-\vec{x}_2 + (n-i) \vec{x}_3) & [D^{2n-i}_i]^+ \\ 
\mathcal{O}_\mathbb{X}( (n-i) \vec{x}_3) & [D^{2n-i}_i]^- \\ 
\hline
\end{array}$};
    \node (3) at (7.5, 0) {$\begin{array}{|c|c|}
\hline
X(\vec{x}_2) & \widehat{\phi}^{-1}(X(\vec{x}_2)) \\ 
\hline
\mathcal{O}_\mathbb{X}(\vec{x}_2-i \vec{x}_3) & [D^{n-i}_i]^- \\ 
\mathcal{O}_\mathbb{X}(\vec{x}_1 - i \vec{x}_3) & [D^{n-i}_i]^+ \\ 
\mathcal{O}_\mathbb{X}( (n-i) \vec{x}_3) & [D^{2n-i}_i]^- \\ 
\mathcal{O}_\mathbb{X}( \vec{x}_1-\vec{x}_2 +(n-i) \vec{x}_3) & [D^{2n-i}_i]^+ \\ 
\hline
\end{array}$};
    \node (4) at (7.5, -2.7) {$\begin{array}{|c|c|}
\hline
X(\vec{x}_3) & \widehat{\phi}^{-1}(X(\vec{x}_3)) \\ 
\hline
\mathcal{O}_\mathbb{X}((1-i) \vec{x}_3) & [D^{1-i}_{i-1}]^- \\ 
\mathcal{O}_\mathbb{X}(\vec{x}_1 - \vec{x}_2 - (i-1) \vec{x}_3) & [D^{1-i}_{i-1}]^+ \\ 
\mathcal{O}_\mathbb{X}(\vec{x}_2 - (i-1) \vec{x}_3) & [D^{n+1-i}_{i-1}]^- \\ 
\mathcal{O}_\mathbb{X}(\vec{x}_1 - (i-1) \vec{x}_3) & [D^{n+1-i}_{i-1}]^+ \\ 
\hline
\end{array}$};

    \draw[->] (1) -- (2) node[midway, above] {$(\vec{x}_1)$};
    \draw[->] (1) -- (3) node[midway, below] {$(\vec{x}_2)$};
    \draw[->] (1) -- (4) node[midway, below] {$(\vec{x}_3)$};
\end{tikzpicture}

\noindent where $i\in \mathbb{Z}$. Then one can see that the statement (a) holds for $\vec{x}=\vec{x}_1,\vec{x}_2$ and $\vec{x}_3$.     

(2) $X$ is an extension bundle. Then, $X$ takes the form $E_{\mathcal{O}_\mathbb{X}(-(i+1)\vec{x}_3)}\langle(k-1)\vec{x}_3\rangle$  where $i \in \mathbb{Z}$, $1 \leq k \leq n-1$, and the skew-curve corresponding to $X$ is $\widetilde{[D^{k-i}_i]}$. By \cite[Proposition 2.3 (i)]{Dong2025Two}, $X(\vec{x}_1),X(\vec{x}_2)$ can also be written in the form $E_{\mathcal{O}_\mathbb{X}((k-i-1)\vec{x}_3)}\langle(n-k-1)\vec{x}_3\rangle$. Then, by Proposition \ref{correspondence}, it follows that \[\widehat{\phi}^{-1}(X(\vec{x}_1))=\widehat{\phi}^{-1}(X(\vec{x}_2))=\widetilde{[D^{n-i}_{i-k}]}=\widetilde{[D^{k-i+n}_i]}.\]
    Moverover, note that  $X(\vec{x}_3)=E_{\mathcal{O}_\mathbb{X}(-i\vec{x}_3)}\langle(k-1)\vec{x}_3\rangle=\widetilde{[D^{k-i+1}_{i-1}]}$. Therefore, the statement (b) holds for $\vec{x}=\vec{x}_1,\vec{x}_2$ and $\vec{x}_3$.

 Since for a general $\vec{x}$, by \eqref{LLL}, the skew-curve $\widehat{\phi}^{-1}(X(\vec{x}))$ can be obtained  by a combination of the previously described operations, the proof is complete.
\end{proof}

\section{The geometric interpretation of tilting sheaf}\label{GCTTT}
 In this section, we investigate the correspondence between tilting sheaves in the category
${\rm coh}\mbox{-}\mathbb{X}$ and pseudo-triangulations on $(\mathcal{S},M, \sigma)$, and then study the compatibility
between the flip of a skew-arc within a pseudo-triangulation and the mutation of indecomposable direct
summand of the corresponding tilting sheaf.

\subsection{Tilting sheaves}
Recall from \cite{Geigle1987WeightedCurves,Geng2020MutationTilting} that a sheaf $T$ in ${\rm coh}\mbox{-}\mathbb{X}$ is called a \emph{tilting sheaf} if $T$ is rigid, namely, ${\rm Ext}^{1}_\mathbb{X}(T, T)=0$ and for $X\in {\rm coh}\mbox{-}\mathbb{X}$ with  ${\rm Hom}_\mathbb{X}(T,X)=0={\rm Ext}^{1}_\mathbb{X}(T,X)$, we have $X=0$. The following proposition provides an equivalent definition for tilting sheaves.

\begin{prop}(\cite{Lenzing2006HereditaryCategories})\label{tilting2}
	Assume that $\mathcal{H}$ is a hereditary abelian category with a tilting object. Then $T$ is a tilting object in $\mathcal{H}$ if and only if ${\rm Ext}_{\mathbb{X}}^{1}(T,T)=0$ and the number of non-isomorphic indecomposable direct summands of $T$ equals the rank of the Grothendieck group $K_{0}(\mathcal {H})$.
\end{prop}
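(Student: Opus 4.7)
The plan is to prove the two implications of the equivalence using the structure theory of hereditary abelian categories with a tilting object, specifically the Euler form and Bongartz-style completion.

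For the forward direction, assume $T$ is a tilting object in $\mathcal{H}$. By definition $\operatorname{Ext}^1_{\mathcal{H}}(T,T)=0$, so I only need to count summands. The natural tool is the derived functor $\operatorname{RHom}_{\mathcal{H}}(T,-)\colon D^b(\mathcal{H})\to D^b(\operatorname{End}_{\mathcal{H}}(T))$. Under the rigidity and generation conditions built into the definition of a tilting object, together with the hereditary hypothesis, Happel's theorem provides a triangle equivalence. Passing to Grothendieck groups yields $K_0(\mathcal{H})\cong K_0(\operatorname{End}_{\mathcal{H}}(T))$. Since $\operatorname{End}_{\mathcal{H}}(T)$ has finite global dimension (because $\mathcal{H}$ is hereditary and $T$ tilts it), $K_0(\operatorname{End}_{\mathcal{H}}(T))$ is free on the classes of the simple $\operatorname{End}_{\mathcal{H}}(T)$-modules; by Krull--Schmidt these simples correspond bijectively to pairwise non-isomorphic indecomposable summands of $T$. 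Comparing ranks gives the required equality.

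For the converse, assume $\operatorname{Ext}^1_{\mathcal{H}}(T,T)=0$ and that $T$ has exactly $r:=\operatorname{rk}K_0(\mathcal{H})$ pairwise non-isomorphic indecomposable summands $T_1,\ldots,T_r$. The key input is Bongartz's completion lemma in the hereditary setting: any rigid object in $\mathcal{H}$ can be enlarged by adding further indecomposable summands to a tilting object $\widetilde T=T\oplus T'$. Indeed, one chooses $T'$ as a universal extension killing $\operatorname{Ext}^1_{\mathcal{H}}(-,T)$ against a fixed tilting object already provided by hypothesis, and verifies that $\widetilde T$ is rigid and generates $\mathcal{H}$ in the required sense. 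Applying the forward direction to $\widetilde T$, the number of pairwise non-isomorphic indecomposable summands of $\widetilde T$ equals $r$. Since $T$ already contributes $r$ such summands, every summand of $T'$ is isomorphic to one of the $T_i$, so $T$ itself already satisfies the defining conditions of a tilting object.

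The step I expect to be the main obstacle is the Bongartz completion in the generality of a hereditary abelian category with a tilting object. Unlike the situation for finite-dimensional algebras, one cannot directly appeal to module-theoretic arguments; instead one needs to use the hypothesis that $\mathcal{H}$ already admits a tilting object (so $K_0(\mathcal{H})$ is finitely generated free and the Euler form $\langle X,Y\rangle=\dim\operatorname{Hom}_{\mathcal{H}}(X,Y)-\dim\operatorname{Ext}^1_{\mathcal{H}}(X,Y)$ is well-defined and non-degenerate) and the fact that rigid objects $T_1,\ldots,T_k$ have linearly independent classes in $K_0(\mathcal{H})$. Once these are in hand, the completion argument and the counting argument fit together to yield the equivalence. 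Since the result is a general statement about hereditary categories attributed to Lenzing, I would ultimately cite the corresponding result in \cite{Lenzing2006HereditaryCategories} rather than reproduce the full completion argument.
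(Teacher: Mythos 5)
The paper does not prove this statement at all: it is quoted verbatim from Lenzing--Reiten \cite{Lenzing2006HereditaryCategories} and used as a black box, so there is no in-paper argument to compare against. Your sketch is a correct outline of the standard proof of the cited result: the forward direction via Happel's derived equivalence $D^b(\mathcal{H})\simeq D^b(\operatorname{End}_{\mathcal{H}}(T))$ and the resulting isomorphism of Grothendieck groups (with $K_0$ of the finite-global-dimension endomorphism algebra free on the simples, in bijection with the indecomposable summands of $T$), and the converse via Bongartz completion of a rigid object followed by the count, noting that the tilting property depends only on $\operatorname{add}(T)$. You correctly identify Bongartz completion in the hereditary-category setting as the nontrivial input and appropriately defer it to the literature, which is consistent with how the paper itself treats the whole proposition.
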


Thus, by Proposition \ref{the properties of coherent sheaves}, we know that each tilting sheaf in ${\rm coh}\mbox{-}\mathbb{X}$  has $n+3$ indecomposable direct  summands.

Consider an indecomposable sheaf $X$ in ${\rm coh}\mbox{-}\mathbb{X}$. Note that if $X$ lies in a homogeneous tube or in a tube of $\tau$-period  $2$ with length greater than or equal to 2, then ${\rm Ext}^{1}(X,X) \neq 0$. Therefore, for a tilting sheaf $T$ in ${\rm coh}\mbox{-}\mathbb{X}$, its indecomposable direct summands can only come from simple sheaves in the tubes of $\tau$-period  $2$, vector bundles, or sheaves in the tube of $\tau$-period  $n$.

\begin{rem}
For the four simple objects in the  tubes of $\tau$-period  $2$, under the bijection $\widehat{\phi}$, we have the following correspondence:
\[
(1, L)^+ \leftrightarrow S_{\infty, \overline{0}}, \quad
(1, L)^- \leftrightarrow S_{\infty, \overline{1}}, \quad
(-1, L)^+ \leftrightarrow S_{0, \overline{0}}, \quad
(-1, L)^- \leftrightarrow S_{0, \overline{1}}.
\]

To explicitly represent these four parameterized curves $(1, L)^+$, $(1, L)^-$, $(-1, L)^+$, $(-1, L)^-$ on the cylindrical surface, we define two semicircles:
 \[L_0=\{ (\theta, \frac{1}{2}) \mid \theta \in [0, \pi] \},\quad L_1=\{ (\theta, \frac{1}{2}) \mid \theta \in [\pi, 2\pi] \}.\]
These semicircles form the pair $\{L_0, L_1\}$, which will be used for visualization. Specifically, when we depict the skew-curves on the cylindrical surface, we draw $L_0$ and $L_1$, and label them with the parameter pair $(\epsilon_1, \epsilon_2)$, where $\epsilon_1, \epsilon_2 \in \{+, -\}$. The labeling corresponds to the following association:
\[
(1, L)^+ \leftrightarrow (+,+), \quad
(1, L)^- \leftrightarrow (-, -), \quad
(-1, L)^+ \leftrightarrow (+, -), \quad
(-1, L)^- \leftrightarrow (-, +).
\]
For simplicity in textual descriptions, we will refer to these curves only by their parameter pairs $(\epsilon_1, \epsilon_2)$. We denote the set of all parameter pairs as $\mathbf{C}_{*}$:
\[\mathbf{C}_{*}=\{(\epsilon_1, \epsilon_2)\mid  \epsilon_1, \epsilon_2\in \{+,-\}\}\subseteq\mathbf{C}_{\mathbf{k}^*}^{\text{pw}}.\]
\end{rem}

\subsection{Compatibility of skew-curves}
Note that skew-curves are not curves on $(\mathcal{S}, M)$, so the original definition of curve compatibility does not apply. We give the definition of compatibility for skew-curves on $(\mathcal{S}, M,\sigma)$: 
\begin{defn}\label{curvecompatible}

Two skew-curves $\widehat{\gamma}_1$ and $\widehat{\gamma}_2$ in $\widehat{\mathbf{C}}$ are called \emph{compatible} if
   $$
   \operatorname{dim}_{\mathbf{k}} \operatorname{Ext}_{\mathbb{X}}^1(\widehat{\phi}(\widehat{\gamma}_1), \widehat{\phi}(\widehat{\gamma}_2)) = \operatorname{dim}_{\mathbf{k}} \operatorname{Ext}_{\mathbb{X}}^1(\widehat{\phi}(\widehat{\gamma}_2), \widehat{\phi}(\widehat{\gamma}_1)) = 0.
   $$
A self-compatible skew-curve $\widehat{\gamma}$ is called a \emph{skew-arc}.
\end{defn}
 
\begin{exm}
 Based on the Table \ref{tab:dim_ext1} and the bijection $\widehat{\phi}$, the parameter pair $(\epsilon_1, \epsilon_2)$ represents the skew-curve in $\mathbf{C}_*$ that is compatible with all skew-curves in $\bigcup_{i=1}^2\mathbf{C}_{\mathtt{b}, \times_i}^{\epsilon_i}$. 
 \begin{longtblr}[
  caption = {The value of $\operatorname{dim}_{\mathbf{k}} {\rm Ext}^1_\mathbb{X}(X, Y)$},
label = {tab:dim_ext1}]{
  width = \linewidth,
  colspec = {Q[388]Q[175]Q[165]Q[137]Q[137]},
  cells = {c},
  vline{2} = {-}{},
  hline{1,6} = {-}{0.08em},
  hline{2} = {-}{},
}
\diagbox[innerwidth=\linewidth]{$Y $}{$X $} & $S_{\infty,\overline{0}}$ & $S_{\infty,\overline{1}} $ & $S_{0,\overline{0}} $ & $S_{0,\overline{1}} $ \\
$\mathcal{O}_\mathbb{X}(i \vec{x}_3)$                               & $1$            & $0 $            & $1 $       & $0 $       \\
$\mathcal{O}_\mathbb{X}(\vec{x}_1 - \vec{x}_2 + i \vec{x}_3)$       & $0 $           & $1 $            & $0 $       & $1 $       \\
$\mathcal{O}_\mathbb{X}(\vec{x}_1+ i \vec{x}_3)$                    & $0 $           & $1 $            & $1 $       & $0 $       \\
$\mathcal{O}_\mathbb{X}( \vec{x}_2 + i \vec{x}_3)$                  & $1 $           & $0 $            & $0 $       & $1 $       
\end{longtblr}
\end{exm}

 We will show that the compatibility between skew-curves is closely related to whether their associated curves intersect. Before that, we need some preparations.  
 
\begin{defn}
Let $A$ and $B$ be sets of curves on $(\mathcal{S},M)$. Define the total number of intersection points between curves in $A$ and $B$ as:
\[
I(A, B) = \sum_{\gamma \in A} \sum_{\delta \in B} |\gamma \cap \delta|.
\]
\end{defn}

 \begin{lem}(\cite{MR3313495})\label{extension-free}
	For any line bundle $L$ and $\vec{x}\in\mathbb{L}(2,2,n)$, the direct sum $L\oplus L(\vec{x})$ is extension-free if and only if $-\vec{c}\leq \vec{x}\leq \vec{c}$. 
\end{lem}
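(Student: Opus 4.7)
The strategy is to reduce to the case $L=\mathcal{O}_{\mathbb{X}}$, translate the vanishing of the relevant ${\rm Ext}^{1}$-groups via Serre duality into inequalities in $\mathbb{L}(2,2,n)$, and then match these inequalities with $-\vec{c}\le\vec{x}\le\vec{c}$ by a direct normal-form computation.

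Since twisting by a line bundle is an exact autoequivalence of ${\rm coh}\mbox{-}\mathbb{X}$, we may replace $L$ by $\mathcal{O}_{\mathbb{X}}$. Line bundles are themselves rigid, so $L\oplus L(\vec{x})$ is extension-free iff both ${\rm Ext}^{1}_{\mathbb{X}}(\mathcal{O}_{\mathbb{X}},\mathcal{O}_{\mathbb{X}}(\vec{x}))$ and ${\rm Ext}^{1}_{\mathbb{X}}(\mathcal{O}_{\mathbb{X}}(\vec{x}),\mathcal{O}_{\mathbb{X}})$ vanish. Combining Proposition \ref{the properties of coherent sheaves}(a) (Serre duality, with $\tau=(\vec{\omega})$) and Proposition \ref{the properties of coherent sheaves}(c) (${\rm Hom}_{\mathbb{X}}(\mathcal{O}(\vec{y}),\mathcal{O}(\vec{z}))\cong S_{\vec{z}-\vec{y}}$, nonzero iff $\vec{z}-\vec{y}\ge 0$), these two vanishings become respectively $\vec{\omega}-\vec{x}\not\ge 0$ and $\vec{\omega}+\vec{x}\not\ge 0$.

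For type $(2,2,n)$ the dualizing element is $\vec{\omega}=\vec{c}-\vec{x}_1-\vec{x}_2-\vec{x}_3$, whose normal form, after applying $-\vec{x}_i=(p_i-1)\vec{x}_i-\vec{c}$, is $\vec{x}_1+\vec{x}_2+(n-1)\vec{x}_3-2\vec{c}$. I would write $\vec{x}=l_1\vec{x}_1+l_2\vec{x}_2+l_3\vec{x}_3+l\vec{c}$ in normal form (so $l_1,l_2\in\{0,1\}$, $l_3\in\{0,\dots,n-1\}$, $l\in\mathbb{Z}$), and use the criterion from \eqref{equ:nor} that an element in normal form $\sum m_i\vec{x}_i+m\vec{c}$ is $\ge 0$ iff $m\ge 0$. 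Handling the carries coming from $2\vec{x}_1=2\vec{x}_2=n\vec{x}_3=\vec{c}$, a direct computation gives the $\vec{c}$-coefficients in normal form of $\vec{\omega}-\vec{x}$, $\vec{\omega}+\vec{x}$, $\vec{c}-\vec{x}$, $\vec{c}+\vec{x}$:
\[
-2-l,\qquad l-2+l_1+l_2+\mathbf{1}_{l_3\ge 1},\qquad 1-l-l_1-l_2-\mathbf{1}_{l_3\ge 1},\qquad l+1,
\]
respectively. Consequently the two extension-vanishing conditions read $l\ge -1$ and $l+l_1+l_2+\mathbf{1}_{l_3\ge 1}\le 1$, which coincide exactly with $\vec{c}+\vec{x}\ge 0$ and $\vec{c}-\vec{x}\ge 0$; i.e.~$-\vec{c}\le\vec{x}\le\vec{c}$, as claimed.

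The main bookkeeping obstacle is the simultaneous reduction to normal form of $\vec{\omega}+\vec{x}$ and $\vec{c}-\vec{x}$: each of the three generator coefficients can independently reach its bound $p_i$ and contribute a carry of $\vec{c}$, which is why the $\vec{c}$-coefficient involves the indicator $\mathbf{1}_{l_3\ge 1}$ rather than a linear function of $l_3$. That the very same indicator appears in both pairs of expressions is precisely what makes the extension-vanishing conditions line up on the nose with $\vec{x}\in[-\vec{c},\vec{c}]$.
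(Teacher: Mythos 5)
The paper does not prove this lemma; it is imported verbatim from \cite{MR3313495}, so there is no internal proof to compare against. Your argument is correct and is the standard one: the reduction to $L=\mathcal{O}_{\mathbb{X}}$, the Serre-duality translation of the two ${\rm Ext}^1$-vanishings into $\vec{\omega}-\vec{x}\not\ge 0$ and $\vec{\omega}+\vec{x}\not\ge 0$, and the four $\vec{c}$-coefficients $-2-l$, $\;l-2+l_1+l_2+\mathbf{1}_{l_3\ge 1}$, $\;1-l-l_1-l_2-\mathbf{1}_{l_3\ge 1}$, $\;l+1$ all check out, so the two vanishing conditions do coincide exactly with $\vec{x}\ge-\vec{c}$ and $\vec{x}\le\vec{c}$.
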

\begin{rem}\label{relative}
The above lemma states that under the bijection $\widehat{\phi}$, the relative positions of $\widehat{\phi}^{-1}(L)$ and $\widehat{\phi}^{-1}(L(\vec{x}))$  $(\vec{x}\neq 0)$ can be represented as follows:
\begin{figure}[H]
	\centering

\tikzset{every picture/.style={line width=0.75pt}}  

\begin{tikzpicture}[x=0.75pt,y=0.75pt,yscale=-1,xscale=1]

\draw [color={rgb, 255:red, 245; green, 166; blue, 35 }  ,draw opacity=1 ]   (289.93,90.28) -- (210.47,65.26) ;
\draw [shift={(210.47,65.26)}, rotate = 197.48] [color={rgb, 255:red, 245; green, 166; blue, 35 }  ,draw opacity=1 ][fill={rgb, 255:red, 245; green, 166; blue, 35 }  ,fill opacity=1 ][line width=0.75]      (0, 0) circle [x radius= 1.34, y radius= 1.34]   ;
\draw [shift={(289.93,90.28)}, rotate = 197.48] [color={rgb, 255:red, 245; green, 166; blue, 35 }  ,draw opacity=1 ][fill={rgb, 255:red, 245; green, 166; blue, 35 }  ,fill opacity=1 ][line width=0.75]      (0, 0) circle [x radius= 1.34, y radius= 1.34]   ;
 
\draw [color={rgb, 255:red, 74; green, 144; blue, 226 }  ,draw opacity=1 ][line width=1.5]    (110.4,40.6) -- (311.03,40.68) ;
 
\draw [color={rgb, 255:red, 245; green, 166; blue, 35 }  ,draw opacity=1 ]   (210.47,65.26) -- (230.34,89.87) ;
\draw [shift={(230.34,89.87)}, rotate = 51.09] [color={rgb, 255:red, 245; green, 166; blue, 35 }  ,draw opacity=1 ][fill={rgb, 255:red, 245; green, 166; blue, 35 }  ,fill opacity=1 ][line width=0.75]      (0, 0) circle [x radius= 1.34, y radius= 1.34]   ;
\draw [shift={(210.47,65.26)}, rotate = 51.09] [color={rgb, 255:red, 245; green, 166; blue, 35 }  ,draw opacity=1 ][fill={rgb, 255:red, 245; green, 166; blue, 35 }  ,fill opacity=1 ][line width=0.75]      (0, 0) circle [x radius= 1.34, y radius= 1.34]   ;
 
\draw [color={rgb, 255:red, 208; green, 2; blue, 27 }  ,draw opacity=1 ][line width=1.5]    (110,90.2) -- (257.03,90.26) -- (310.63,90.28) ;
 
\draw [color={rgb, 255:red, 245; green, 166; blue, 35 }  ,draw opacity=1 ]   (130.63,90.28) -- (210.47,65.26) ;
\draw [shift={(210.47,65.26)}, rotate = 342.6] [color={rgb, 255:red, 245; green, 166; blue, 35 }  ,draw opacity=1 ][fill={rgb, 255:red, 245; green, 166; blue, 35 }  ,fill opacity=1 ][line width=0.75]      (0, 0) circle [x radius= 1.34, y radius= 1.34]   ;
\draw [shift={(130.63,90.28)}, rotate = 342.6] [color={rgb, 255:red, 245; green, 166; blue, 35 }  ,draw opacity=1 ][fill={rgb, 255:red, 245; green, 166; blue, 35 }  ,fill opacity=1 ][line width=0.75]      (0, 0) circle [x radius= 1.34, y radius= 1.34]   ;
 
\draw   (141.43,95.63) .. controls (141.44,100.3) and (143.78,102.63) .. (148.45,102.62) -- (166.25,102.58) .. controls (172.92,102.56) and (176.26,104.88) .. (176.27,109.55) .. controls (176.26,104.88) and (179.58,102.54) .. (186.25,102.53)(183.25,102.54) -- (204.05,102.49) .. controls (208.72,102.48) and (211.04,100.15) .. (211.03,95.48) ;
 
\draw [color={rgb, 255:red, 245; green, 166; blue, 35 }  ,draw opacity=1 ]   (210.72,40.64) -- (130.63,90.28) ;
\draw [shift={(130.63,90.28)}, rotate = 148.21] [color={rgb, 255:red, 245; green, 166; blue, 35 }  ,draw opacity=1 ][fill={rgb, 255:red, 245; green, 166; blue, 35 }  ,fill opacity=1 ][line width=0.75]      (0, 0) circle [x radius= 1.34, y radius= 1.34]   ;
\draw [shift={(170.67,65.46)}, rotate = 148.21] [color={rgb, 255:red, 245; green, 166; blue, 35 }  ,draw opacity=1 ][fill={rgb, 255:red, 245; green, 166; blue, 35 }  ,fill opacity=1 ][line width=0.75]      (0, 0) circle [x radius= 1.34, y radius= 1.34]   ;
\draw [shift={(210.72,40.64)}, rotate = 148.21] [color={rgb, 255:red, 245; green, 166; blue, 35 }  ,draw opacity=1 ][fill={rgb, 255:red, 245; green, 166; blue, 35 }  ,fill opacity=1 ][line width=0.75]      (0, 0) circle [x radius= 1.34, y radius= 1.34]   ;
 
\draw [color={rgb, 255:red, 245; green, 166; blue, 35 }  ,draw opacity=1 ]   (210.47,65.26) -- (200.63,90.28) ;
\draw [shift={(200.63,90.28)}, rotate = 111.47] [color={rgb, 255:red, 245; green, 166; blue, 35 }  ,draw opacity=1 ][fill={rgb, 255:red, 245; green, 166; blue, 35 }  ,fill opacity=1 ][line width=0.75]      (0, 0) circle [x radius= 1.34, y radius= 1.34]   ;
\draw [shift={(210.47,65.26)}, rotate = 111.47] [color={rgb, 255:red, 245; green, 166; blue, 35 }  ,draw opacity=1 ][fill={rgb, 255:red, 245; green, 166; blue, 35 }  ,fill opacity=1 ][line width=0.75]      (0, 0) circle [x radius= 1.34, y radius= 1.34]   ;
 
\draw [color={rgb, 255:red, 245; green, 166; blue, 35 }  ,draw opacity=1 ]   (210.47,65.26) -- (141.03,90.28) ;
\draw [shift={(141.03,90.28)}, rotate = 160.19] [color={rgb, 255:red, 245; green, 166; blue, 35 }  ,draw opacity=1 ][fill={rgb, 255:red, 245; green, 166; blue, 35 }  ,fill opacity=1 ][line width=0.75]      (0, 0) circle [x radius= 1.34, y radius= 1.34]   ;
\draw [shift={(210.47,65.26)}, rotate = 160.19] [color={rgb, 255:red, 245; green, 166; blue, 35 }  ,draw opacity=1 ][fill={rgb, 255:red, 245; green, 166; blue, 35 }  ,fill opacity=1 ][line width=0.75]      (0, 0) circle [x radius= 1.34, y radius= 1.34]   ;
 
\draw [color={rgb, 255:red, 245; green, 166; blue, 35 }  ,draw opacity=1 ]   (210.47,65.26) -- (190.63,90.28) ;
\draw [shift={(190.63,90.28)}, rotate = 128.42] [color={rgb, 255:red, 245; green, 166; blue, 35 }  ,draw opacity=1 ][fill={rgb, 255:red, 245; green, 166; blue, 35 }  ,fill opacity=1 ][line width=0.75]      (0, 0) circle [x radius= 1.34, y radius= 1.34]   ;
\draw [shift={(210.47,65.26)}, rotate = 128.42] [color={rgb, 255:red, 245; green, 166; blue, 35 }  ,draw opacity=1 ][fill={rgb, 255:red, 245; green, 166; blue, 35 }  ,fill opacity=1 ][line width=0.75]      (0, 0) circle [x radius= 1.34, y radius= 1.34]   ;
 
\draw  [dash pattern={on 0.84pt off 2.51pt}]  (177.98,87.48) -- (158.65,87.32) ;
 
\draw [color={rgb, 255:red, 245; green, 166; blue, 35 }  ,draw opacity=1 ]   (210.72,40.64) -- (210.47,65.26) ;
 
\draw    (210.47,65.26) -- (210.23,89.88) ;
\draw [shift={(210.23,89.88)}, rotate = 90.56] [color={rgb, 255:red, 0; green, 0; blue, 0 }  ][fill={rgb, 255:red, 0; green, 0; blue, 0 }  ][line width=0.75]      (0, 0) circle [x radius= 1.34, y radius= 1.34]   ;
\draw [shift={(210.47,65.26)}, rotate = 90.56] [color={rgb, 255:red, 0; green, 0; blue, 0 }  ][fill={rgb, 255:red, 0; green, 0; blue, 0 }  ][line width=0.75]      (0, 0) circle [x radius= 1.34, y radius= 1.34]   ;
 
\draw    (141.03,90.28) ;
\draw [shift={(141.03,90.28)}, rotate = 0] [color={rgb, 255:red, 0; green, 0; blue, 0 }  ][fill={rgb, 255:red, 0; green, 0; blue, 0 }  ][line width=0.75]      (0, 0) circle [x radius= 1.34, y radius= 1.34]   ;
\draw [shift={(141.03,90.28)}, rotate = 0] [color={rgb, 255:red, 0; green, 0; blue, 0 }  ][fill={rgb, 255:red, 0; green, 0; blue, 0 }  ][line width=0.75]      (0, 0) circle [x radius= 1.34, y radius= 1.34]   ;
 
\draw    (210.72,40.64) ;
\draw [shift={(210.72,40.64)}, rotate = 0] [color={rgb, 255:red, 0; green, 0; blue, 0 }  ][fill={rgb, 255:red, 0; green, 0; blue, 0 }  ][line width=0.75]      (0, 0) circle [x radius= 1.34, y radius= 1.34]   ;
\draw [shift={(210.72,40.64)}, rotate = 0] [color={rgb, 255:red, 0; green, 0; blue, 0 }  ][fill={rgb, 255:red, 0; green, 0; blue, 0 }  ][line width=0.75]      (0, 0) circle [x radius= 1.34, y radius= 1.34]   ;
 
\draw    (170.67,65.46) ;
\draw [shift={(170.67,65.46)}, rotate = 0] [color={rgb, 255:red, 0; green, 0; blue, 0 }  ][fill={rgb, 255:red, 0; green, 0; blue, 0 }  ][line width=0.75]      (0, 0) circle [x radius= 1.34, y radius= 1.34]   ;
\draw [shift={(170.67,65.46)}, rotate = 0] [color={rgb, 255:red, 0; green, 0; blue, 0 }  ][fill={rgb, 255:red, 0; green, 0; blue, 0 }  ][line width=0.75]      (0, 0) circle [x radius= 1.34, y radius= 1.34]   ;
 
\draw [color={rgb, 255:red, 245; green, 166; blue, 35 }  ,draw opacity=1 ]   (210.47,65.26) -- (220.34,89.98) ;
\draw [shift={(220.34,89.98)}, rotate = 68.24] [color={rgb, 255:red, 245; green, 166; blue, 35 }  ,draw opacity=1 ][fill={rgb, 255:red, 245; green, 166; blue, 35 }  ,fill opacity=1 ][line width=0.75]      (0, 0) circle [x radius= 1.34, y radius= 1.34]   ;
\draw [shift={(210.47,65.26)}, rotate = 68.24] [color={rgb, 255:red, 245; green, 166; blue, 35 }  ,draw opacity=1 ][fill={rgb, 255:red, 245; green, 166; blue, 35 }  ,fill opacity=1 ][line width=0.75]      (0, 0) circle [x radius= 1.34, y radius= 1.34]   ;
 
\draw [color={rgb, 255:red, 245; green, 166; blue, 35 }  ,draw opacity=1 ]   (210.47,65.26) -- (279.37,90.25) ;
\draw [shift={(279.37,90.25)}, rotate = 19.94] [color={rgb, 255:red, 245; green, 166; blue, 35 }  ,draw opacity=1 ][fill={rgb, 255:red, 245; green, 166; blue, 35 }  ,fill opacity=1 ][line width=0.75]      (0, 0) circle [x radius= 1.34, y radius= 1.34]   ;
\draw [shift={(210.47,65.26)}, rotate = 19.94] [color={rgb, 255:red, 245; green, 166; blue, 35 }  ,draw opacity=1 ][fill={rgb, 255:red, 245; green, 166; blue, 35 }  ,fill opacity=1 ][line width=0.75]      (0, 0) circle [x radius= 1.34, y radius= 1.34]   ;
 
\draw  [dash pattern={on 0.84pt off 2.51pt}]  (237.42,87.74) -- (256.76,87.69) ;
 
\draw    (210.47,65.26) ;
\draw [shift={(210.47,65.26)}, rotate = 0] [color={rgb, 255:red, 0; green, 0; blue, 0 }  ][fill={rgb, 255:red, 0; green, 0; blue, 0 }  ][line width=0.75]      (0, 0) circle [x radius= 1.34, y radius= 1.34]   ;
\draw [shift={(210.47,65.26)}, rotate = 0] [color={rgb, 255:red, 0; green, 0; blue, 0 }  ][fill={rgb, 255:red, 0; green, 0; blue, 0 }  ][line width=0.75]      (0, 0) circle [x radius= 1.34, y radius= 1.34]   ;
 
\draw    (200.63,90.28) ;
\draw [shift={(200.63,90.28)}, rotate = 0] [color={rgb, 255:red, 0; green, 0; blue, 0 }  ][fill={rgb, 255:red, 0; green, 0; blue, 0 }  ][line width=0.75]      (0, 0) circle [x radius= 1.34, y radius= 1.34]   ;
\draw [shift={(200.63,90.28)}, rotate = 0] [color={rgb, 255:red, 0; green, 0; blue, 0 }  ][fill={rgb, 255:red, 0; green, 0; blue, 0 }  ][line width=0.75]      (0, 0) circle [x radius= 1.34, y radius= 1.34]   ;
 
\draw    (279.37,90.25) ;
\draw [shift={(279.37,90.25)}, rotate = 0] [color={rgb, 255:red, 0; green, 0; blue, 0 }  ][fill={rgb, 255:red, 0; green, 0; blue, 0 }  ][line width=0.75]      (0, 0) circle [x radius= 1.34, y radius= 1.34]   ;
\draw [shift={(279.37,90.25)}, rotate = 0] [color={rgb, 255:red, 0; green, 0; blue, 0 }  ][fill={rgb, 255:red, 0; green, 0; blue, 0 }  ][line width=0.75]      (0, 0) circle [x radius= 1.34, y radius= 1.34]   ;
 
\draw    (289.93,90.28) ;
\draw [shift={(289.93,90.28)}, rotate = 0] [color={rgb, 255:red, 0; green, 0; blue, 0 }  ][fill={rgb, 255:red, 0; green, 0; blue, 0 }  ][line width=0.75]      (0, 0) circle [x radius= 1.34, y radius= 1.34]   ;
\draw [shift={(289.93,90.28)}, rotate = 0] [color={rgb, 255:red, 0; green, 0; blue, 0 }  ][fill={rgb, 255:red, 0; green, 0; blue, 0 }  ][line width=0.75]      (0, 0) circle [x radius= 1.34, y radius= 1.34]   ;
 
\draw    (211,81) -- (227.14,109.99) ;
\draw [shift={(228.6,112.61)}, rotate = 240.89] [fill={rgb, 255:red, 0; green, 0; blue, 0 }  ][line width=0.08]  [draw opacity=0] (5.36,-2.57) -- (0,0) -- (5.36,2.57) -- (3.56,0) -- cycle    ;
 
\draw  [dash pattern={on 0.84pt off 2.51pt}]  (130.67,40.67) -- (130.63,90.28) ;
\draw [shift={(130.63,90.28)}, rotate = 90.04] [color={rgb, 255:red, 0; green, 0; blue, 0 }  ][fill={rgb, 255:red, 0; green, 0; blue, 0 }  ][line width=0.75]      (0, 0) circle [x radius= 1.34, y radius= 1.34]   ;
\draw [shift={(130.67,40.67)}, rotate = 90.04] [color={rgb, 255:red, 0; green, 0; blue, 0 }  ][fill={rgb, 255:red, 0; green, 0; blue, 0 }  ][line width=0.75]      (0, 0) circle [x radius= 1.34, y radius= 1.34]   ;
 
\draw  [dash pattern={on 0.84pt off 2.51pt}]  (289.97,40.66) -- (289.93,90.28) ;
\draw [shift={(289.93,90.28)}, rotate = 90.04] [color={rgb, 255:red, 0; green, 0; blue, 0 }  ][fill={rgb, 255:red, 0; green, 0; blue, 0 }  ][line width=0.75]      (0, 0) circle [x radius= 1.34, y radius= 1.34]   ;
\draw [shift={(289.97,40.66)}, rotate = 90.04] [color={rgb, 255:red, 0; green, 0; blue, 0 }  ][fill={rgb, 255:red, 0; green, 0; blue, 0 }  ][line width=0.75]      (0, 0) circle [x radius= 1.34, y radius= 1.34]   ;
 
\draw [color={rgb, 255:red, 245; green, 166; blue, 35 }  ,draw opacity=1 ]   (210.47,65.26) ;
\draw [shift={(210.47,65.26)}, rotate = 0] [color={rgb, 255:red, 245; green, 166; blue, 35 }  ,draw opacity=1 ][fill={rgb, 255:red, 245; green, 166; blue, 35 }  ,fill opacity=1 ][line width=0.75]      (0, 0) circle [x radius= 1.34, y radius= 1.34]   ;
 
\draw [color={rgb, 255:red, 65; green, 117; blue, 5 }  ,draw opacity=1 ]   (170.67,65.46) ;
\draw [shift={(170.67,65.46)}, rotate = 0] [color={rgb, 255:red, 65; green, 117; blue, 5 }  ,draw opacity=1 ][fill={rgb, 255:red, 65; green, 117; blue, 5 }  ,fill opacity=1 ][line width=0.75]      (0, 0) circle [x radius= 1.34, y radius= 1.34]   ;
 
\draw  [dash pattern={on 0.84pt off 2.51pt}]  (179.98,43.48) -- (160.65,43.32) ;
 
\draw  [dash pattern={on 0.84pt off 2.51pt}]  (254.98,43.48) -- (235.65,43.32) ;
 
\draw    (230.34,89.87) ;
\draw [shift={(230.34,89.87)}, rotate = 0] [color={rgb, 255:red, 0; green, 0; blue, 0 }  ][fill={rgb, 255:red, 0; green, 0; blue, 0 }  ][line width=0.75]      (0, 0) circle [x radius= 1.34, y radius= 1.34]   ;
\draw [shift={(230.34,89.87)}, rotate = 0] [color={rgb, 255:red, 0; green, 0; blue, 0 }  ][fill={rgb, 255:red, 0; green, 0; blue, 0 }  ][line width=0.75]      (0, 0) circle [x radius= 1.34, y radius= 1.34]   ;
 
\draw    (190.63,90.28) ;
\draw [shift={(190.63,90.28)}, rotate = 0] [color={rgb, 255:red, 0; green, 0; blue, 0 }  ][fill={rgb, 255:red, 0; green, 0; blue, 0 }  ][line width=0.75]      (0, 0) circle [x radius= 1.34, y radius= 1.34]   ;
\draw [shift={(190.63,90.28)}, rotate = 0] [color={rgb, 255:red, 0; green, 0; blue, 0 }  ][fill={rgb, 255:red, 0; green, 0; blue, 0 }  ][line width=0.75]      (0, 0) circle [x radius= 1.34, y radius= 1.34]   ;
 
\draw    (220.34,89.98) ;
\draw [shift={(220.34,89.98)}, rotate = 0] [color={rgb, 255:red, 0; green, 0; blue, 0 }  ][fill={rgb, 255:red, 0; green, 0; blue, 0 }  ][line width=0.75]      (0, 0) circle [x radius= 1.34, y radius= 1.34]   ;
\draw [shift={(220.34,89.98)}, rotate = 0] [color={rgb, 255:red, 0; green, 0; blue, 0 }  ][fill={rgb, 255:red, 0; green, 0; blue, 0 }  ][line width=0.75]      (0, 0) circle [x radius= 1.34, y radius= 1.34]   ;

\draw (140.6,113.8) node [anchor=north west][inner sep=0.75pt]  [font=\tiny]  {$n\ marked\ points\ $};
 
\draw (229.93,115.34) node [anchor=north west][inner sep=0.75pt]  [font=\tiny]  {$\phi ^{-1}( L)$};

c\end{tikzpicture}
\end{figure}
\noindent In this illustration, bold blue and red curves represent different boundaries of $(\mathcal{S}, M)$, while orange and green points denote various $\times_i$. The black skew-curve represents $\widehat{\phi}^{-1}(L)$, while the orange skew-curves depict all possible cases of $\widehat{\phi}^{-1}(L(\vec{x}))$.
\end{rem}

The following notation of the set of curves associated with a skew-curve can be regarded, in a certain sense, as the reverse process of constructing skew-curves: 
\begin{defn}
For a skew-curve $\widehat{\gamma} \in \widehat{\mathbf{C}}$, the  set  $\mathbf{C}(\widehat{\gamma})$  of curves associated with $\widehat{\gamma}$ is defined as:
\begin{itemize}
    \item  If $\widehat{\gamma}=\gamma^+$ or $\gamma^-$ for some $\sigma$-fixed $\gamma\in \mathbf{C}_{\mathtt{b}}$, then $\mathbf{C}(\widehat{\gamma})=\{\gamma\}$;
    \item  If $\widehat{\gamma}=\{\gamma, \sigma(\gamma )\}$ for some not $\sigma$-fixed $\gamma\in \mathbf{C}_{\mathtt{b}}\cup \mathbf{C}_{\mathtt{p}}$, then $\mathbf{C}(\widehat{\gamma})=\{\gamma, \sigma(\gamma )\}$;
    \item  If $\widehat{\gamma}\in \mathbf{C}_{\mathbf{k}^*}^{\text{pw}}\cup \mathbf{C}_{\mathbf{k}^*}^{\text{sp}}$, then $\mathbf{C}(\widehat{\gamma})=\{L_0, L_1\}$.
\end{itemize}
\end{defn}

\begin{prop}\label{prop:6.4}
    Two skew-curves $\widehat{\gamma}_1$, $\widehat{\gamma}_2\in\widehat{\mathbf{C}}\setminus \{\mathbf{C}_{\mathbf{k}^*}^{\text{pw}} \cup \mathbf{C}_{\mathbf{k}^*}^{\text{sp}}\}$  are  compatible if and only if one of the following conditions holds:
    \begin{itemize}
        \item[\text{(T1)}] $I(\mathbf{C}(\widehat{\gamma}_1),\mathbf{C}(\widehat{\gamma}_2)) = 0$.
        \item[\text{(T2)}]  $I(\mathbf{C}(\widehat{\gamma}_1),\mathbf{C}(\widehat{\gamma}_2)) = 1$ and $\widehat{\gamma}_1$ and $\widehat{\gamma}_2$ both belong to either $\mathbf{C}_{\mathtt{b},\times}^+$ or $\mathbf{C}_{\mathtt{b},\times}^-$ for some $\times \in \mathcal{X}$. 
    \end{itemize}
\end{prop}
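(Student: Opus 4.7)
I would proceed by case analysis according to which of the three remaining strata $\mathbf{C}_{\mathtt{b},\mathcal{X}}$, $\mathbf{C}_{\mathtt{b}}^\sigma$, $\mathbf{C}_{\mathtt{p}}^\sigma$ contains each of $\widehat{\gamma}_1,\widehat{\gamma}_2$. In every case, Proposition~\ref{correspondence} pins down $\widehat{\phi}(\widehat{\gamma}_i)$ explicitly as a line bundle, an extension bundle, or a sheaf $S_{1,\overline{i}}^{(j)}$ in the tube of $\tau$-period $n$. Using the defining short exact sequences from Section~\ref{Preliminaries} together with Serre duality, each $\operatorname{Ext}^1_\mathbb{X}(\widehat{\phi}(\widehat{\gamma}_1),\widehat{\phi}(\widehat{\gamma}_2))$ (and its symmetric partner) reduces to a finite combination of $\operatorname{Hom}$-spaces between line bundles, which Proposition~\ref{the properties of coherent sheaves}(c) turns into explicit $\mathbb{L}(2,2,n)$-inequalities. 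These numerical criteria are then to be compared with the intersection count $I(\mathbf{C}(\widehat{\gamma}_1),\mathbf{C}(\widehat{\gamma}_2))$ read off the universal cover $\widetilde{\mathcal{S}}$.

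The core case is when both $\widehat{\gamma}_i$ lie in $\mathbf{C}_{\mathtt{b},\mathcal{X}}$, so that the images are line bundles $L_1$ and $L_2 = L_1(\vec{x})$. Lemma~\ref{extension-free} instantly gives compatibility iff $-\vec{c}\le\vec{x}\le\vec{c}$, and Proposition~\ref{L-action} records exactly how $\vec{x}$ shifts the endpoints of the associated skew-arc. An inspection of the resulting configurations, in the spirit of Remark~\ref{relative}, shows that shifts $\vec{x}\in[-\vec{c},\vec{c}]$ fall into precisely two geometric types: either the new arc is disjoint from $\widehat{\phi}^{-1}(L_1)$, giving $I=0$ and case (T1); or the new arc meets $\widehat{\phi}^{-1}(L_1)$ at exactly one of the fixed points $\times_1,\times_2$ shared by the two $\sigma$-fixed curves, and the degree-shift computation forces the two skew-curves to carry the same $\pm$-parameter, yielding $I=1$ and case (T2). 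Any shift strictly outside $[-\vec{c},\vec{c}]$ forces the lift of the new arc to cross the lift of $\widehat{\phi}^{-1}(L_1)$ at least twice in $\widetilde{\mathcal{S}}$, producing a nonzero $\operatorname{Ext}^1$ in (at least) one direction.

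For the remaining cases, neither a curve belonging to $\mathbf{C}(\widehat{\gamma})$ for $\widehat{\gamma}\in\mathbf{C}_{\mathtt{b}}^\sigma$ nor one for $\widehat{\gamma}\in\mathbf{C}_{\mathtt{p}}^\sigma$ is individually $\sigma$-fixed; a direct check in $\widetilde{\mathcal{S}}$ shows that the geodesic representatives of such curves avoid $\mathcal{X}$. Consequently every intersection with the other skew-curve lies off $\mathcal{X}$ and comes in a free $\sigma$-orbit of size two, so the same $\operatorname{Hom}$-combination computation shows that $\operatorname{Ext}^1_\mathbb{X}$ vanishes in both directions precisely when no intersection occurs, i.e.\ purely condition (T1). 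The main obstacle, and the heart of the argument, is the line-bundle case: one has to match the algebraic interval $[-\vec{c},\vec{c}]$, described via the normal form $\vec{x}=\sum_{i=1}^{3}l_i\vec{x}_i+l\vec{c}$, against the geometric dichotomy ``$I=0$'' versus ``$I=1$ at a shared fixed point with matching parameter'', which requires a systematic bookkeeping across the four subclasses $\mathbf{C}_{\mathtt{b},\times_1}^\pm,\mathbf{C}_{\mathtt{b},\times_2}^\pm$ of $\mathbf{C}_{\mathtt{b},\mathcal{X}}$.
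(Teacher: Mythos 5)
Your treatment of the case where both skew-curves lie in $\mathbf{C}_{\mathtt{b},\mathcal{X}}$ matches the paper: Lemma \ref{extension-free} (via Remark \ref{relative}) gives the interval criterion $-\vec{c}\le\vec{x}\le\vec{c}$, and comparing it with the endpoint bookkeeping of Proposition \ref{L-action} yields exactly the dichotomy (T1)/(T2). That part is fine, although you call it ``the heart of the argument'' when in fact it is the easy half.

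The genuine gap is in your handling of the remaining cases, where at least one skew-curve lies in $\mathbf{C}_{\mathtt{b}}^\sigma$ or $\mathbf{C}_{\mathtt{p}}^\sigma$. You observe that the associated curves avoid $\mathcal{X}$ and that intersections come in free $\sigma$-orbits of size two, and then assert that ``consequently'' the same Hom-combination computation shows $\operatorname{Ext}^1_{\mathbb{X}}$ vanishes in both directions precisely when $I=0$. But the orbit structure of the intersection points says nothing by itself about Ext-groups in ${\rm coh}\mbox{-}\mathbb{X}$; the equivalence between ``no crossing'' and ``Ext vanishes'' for extension bundles and torsion sheaves is precisely the content to be proved, and you supply no mechanism for it. The paper establishes it by transferring the question to ${\rm coh}\mbox{-}\mathbb{Y}(n,n)$: the curves in $\mathbf{C}(\widehat{\gamma}_i)$ are exactly the summands of $F\cdot\operatorname{Ind}(\widehat{\phi}(\widehat{\gamma}_i))$ under the bijection $\phi$, the equivariantization inequalities of \cite[Lemma 3.4]{chen2014noteserredualityequivariantization} sandwich $\dim\operatorname{Ext}^1_{\mathbb{X}}$ between Ext-dimensions in $\mathbb{Y}$, and \cite[Theorem 3.10]{Chen2023GeometricModel} identifies the latter with intersection numbers. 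The converse direction moreover requires the observation that $\widehat{\phi}(\widehat{\gamma}_1)$ is $(\vec{x}_1-\vec{x}_2)$-stable when $\widehat{\gamma}_1\notin\mathbf{C}_{\mathtt{b},\mathcal{X}}$ (Proposition \ref{L-action}), so that vanishing of one Ext-space forces vanishing of all cross-terms in the induced direct sums; your proposal does not address this at all. A purely direct computation of $\operatorname{Ext}^1$ between two extension bundles, or between an extension bundle and a sheaf $S_{1,\overline{i}}^{(j)}$, via the defining sequences and Serre duality is conceivable, but you would then still have to match the resulting numerical criteria against the intersection pattern of the two-element curve sets $\mathbf{C}(\widehat{\gamma}_i)$ case by case, and none of that is carried out or even sketched.
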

\begin{proof}
In this proof, we set 
\[\operatorname{Ext}^1_{\mathbb{X}^{\mathbb{Z}(\vec{x}_1 - \vec{x}_2)}}(-,-):=\operatorname{Ext}^1_{({\rm coh}\mbox{-}\mathbb{X})^{\mathbb{Z}(\vec{x}_1 - \vec{x}_2)}}(-,-)\] 
and 
\[\operatorname{Ext}^1_{\mathbb{Y}^{G}}(-,-):=\operatorname{Ext}^1_{({\rm coh}\mbox{-}\mathbb{Y})^{G}}(-,-). \]
Let $H_1$ and $H_2$ be the equivalences in Proposition \ref{(2,2,n) and (n,n)} (a) and (b), respectively. Let  ${\rm Ind}_1: {\rm coh}\mbox{-}\mathbb{X} \rightarrow ({\rm coh}\mbox{-}\mathbb{X})^{\mathbb{Z}(\vec{x}_1-\vec{x}_2)}$ and ${\rm Ind}_2: {\rm coh}\mbox{-}\mathbb{Y} \rightarrow ({\rm coh}\mbox{-}\mathbb{Y})^{G}$ be the induction functors. Fix  the forgetful functor $F:  ({\rm coh}\mbox{-}\mathbb{X})^{\mathbb{Z}(\vec{x}_1-\vec{x}_2)} \rightarrow {\rm coh}\mbox{-}\mathbb{X}$. We have two cases: 

(1) $\widehat{\gamma}_1,\widehat{\gamma}_2$  are in $\mathbf{C}_{\mathtt{b} ,\mathcal{X}}$. By Remark \ref{relative}, the skew-curves $\widehat{\gamma}_1$, $\widehat{\gamma}_2$ are compatible if and only if $\mathbf{C}(\widehat{\gamma}_1)=\mathbf{C}(\widehat{\gamma}_2)$ (in which case $I(\mathbf{C}(\widehat{\gamma}_1), \mathbf{C}(\widehat{\gamma}_2)) = 0$) or if they satisfy condition \text{(T2)}. 
    
(2) At least one $\widehat{\gamma}_i \notin \mathbf{C}_{\mathtt{b} ,\mathcal{X}}$. Note that for any skew-curve $\widehat{\gamma}$ in $\mathbf{C}_{\mathtt{b} ,\mathcal{X}} \cup \mathbf{C}_{\mathtt{b}}^\sigma \cup \mathbf{C}_{\mathtt{p}}^\sigma$, $\widehat{\phi}(\widehat{\gamma})$ is always an indecomposable direct summand of $H_2 \cdot {\rm Ind}_2(\phi(\gamma))$, where $\gamma \in \mathbf{C}(\widehat{\gamma})$. Moreover, under the bijection $\phi$, the indecomposable direct summands of $F\cdot{\rm Ind}_2(\widehat{\phi}(\widehat{\gamma_i}))$ correspond to curves in $\mathbf{C}(\widehat{\gamma}_i)$ for each $i=1,2$. Therefore, by \cite[Lemma 3.4]{chen2014noteserredualityequivariantization}, we have
\begin{align*}
\operatorname{dim}_{\mathbf{k}} \operatorname{Ext}^1_{\mathbb{X}}(\widehat{\phi}(\widehat{\gamma}_1), \widehat{\phi}(\widehat{\gamma}_2)) & \leq \operatorname{dim}_{\mathbf{k}} \operatorname{Ext}^1_{\mathbb{Y}^{G}}({\rm Ind}_2(\phi(\gamma_1)), {\rm Ind}_2(\phi(\gamma_2))) \\
& \leq \operatorname{dim}_{\mathbf{k}} \operatorname{Ext}^1_{\mathbb{Y}}\left(\bigoplus_{\gamma \in \mathbf{C}(\widehat{\gamma_1})}\phi(\gamma), \bigoplus_{\delta \in \mathbf{C}(\widehat{\gamma_2})}\phi(\delta)\right),
\end{align*}

\noindent where $\gamma_1 \in \mathbf{C}(\widehat{\gamma_1})$ and $\gamma_2 \in \mathbf{C}(\widehat{\gamma_2})$. By \cite[Theorem 3.10]{Chen2023GeometricModel}, the condition \[I(\mathbf{C}(\widehat{\gamma}_1), \mathbf{C}(\widehat{\gamma}_2)) = 0\]
implies that the right-hand side is zero. It follows that $\operatorname{Ext}^1_{\mathbb{X}}(\phi(\widehat{\gamma}_1), \phi(\widehat{\gamma}_2)) = 0$.

Conversely, if $\operatorname{Ext}^1_{\mathbb{X}}(\phi(\widehat{\gamma}_1), \phi(\widehat{\gamma}_2)) = 0$, without loss of generality, we assume $\widehat{\gamma}_1 \notin \mathbf{C}_{\mathtt{b} ,\mathcal{X}}$. Then, by Proposition \ref{L-action}, we have $\widehat{\phi}(\widehat{\gamma}_1) = \widehat{\phi}(\widehat{\gamma}_1)(\vec{x}_1 - \vec{x}_2)$. Hence, $$\operatorname{Ext}^1_{\mathbb{X}}(\phi(\widehat{\gamma}_1), \phi(\widehat{\gamma}_2)(\vec{x}_1 - \vec{x}_2)) = 0.$$ 

\noindent Note that an indecomposable direct summand of $F \cdot {\rm Ind}_2(\phi(\widehat{\gamma}_i))$ is either $\widehat{\phi}(\widehat{\gamma}_i)$ or $\widehat{\phi}(\widehat{\gamma}_i)(\vec{x}_1 - \vec{x}_2)$ for $i=1,2$. By \cite[Lemma 3.4]{chen2014noteserredualityequivariantization}, we have
\begin{align*}
0 &= \operatorname{dim}_{\mathbf{k}} \operatorname{Ext}^1_{\mathbb{X}}(F \cdot {\rm Ind}_1(\widehat{\phi}(\widehat{\gamma}_1)), F \cdot {\rm Ind}_1(\widehat{\phi}(\widehat{\gamma}_2))) \\
& \geq \operatorname{dim}_{\mathbf{k}} \operatorname{Ext}^1_{\mathbb{X}^{\mathbb{Z}(\vec{x}_1 - \vec{x}_2)}}({\rm Ind}_2(\widehat{\phi}(\widehat{\gamma}_1)), {\rm Ind}_2(\widehat{\phi}(\widehat{\gamma}_2))) \\
& = \operatorname{dim}_{\mathbf{k}} \operatorname{Ext}^1_{\mathbb{Y}}(H_1 \cdot {\rm Ind}_2(\widehat{\phi}(\widehat{\gamma}_1)), H_1 \cdot {\rm Ind}_2(\widehat{\phi}(\widehat{\gamma}_2))). 
\end{align*}
\noindent For each $i=1,2$, under the bijection $\phi$, the indecomposable direct summands of $H_1 \cdot {\rm Ind}_2(\widehat{\phi}(\widehat{\gamma}_i))$ correspond to curves in $\mathbf{C}(\widehat{\gamma}_i)$. By \cite[Theorem 3.10]{Chen2023GeometricModel}, we conclude that $I(\mathbf{C}(\widehat{\gamma}_1), \mathbf{C}(\widehat{\gamma}_2)) = 0$.
\end{proof}
For the skew-curves in $\mathbf{C}_*$, we observe the following:
\begin{lem}\label{lem:6.4}
For any $(\epsilon_1, \epsilon_2)\in \mathbf{C}_*$,  the skew-curve  $(\epsilon_1, \epsilon_2)$ is not compatible with all $\widehat{\gamma}\in \mathbf{C}_{\mathtt{b}}^\sigma$. 
\end{lem}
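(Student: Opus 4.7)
The plan is to use Serre duality to convert the $\operatorname{Ext}^1$-vanishing question into a $\operatorname{Hom}$-nonvanishing question, then compute this $\operatorname{Hom}$ via the two-line-bundle extension defining $\widehat{\phi}(\widehat{\gamma})$.

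Fix $\widehat{\gamma}\in \mathbf{C}_{\mathtt{b}}^\sigma$ and set $S = \widehat{\phi}(\epsilon_1, \epsilon_2)$ and $E = \widehat{\phi}(\widehat{\gamma}) = E_{\mathcal{O}_{\mathbb{X}}(-(i+1)\vec{x}_3)}\langle(k-1)\vec{x}_3\rangle$. First, Proposition \ref{the properties of coherent sheaves}(a)--(b) give $\operatorname{Ext}^1_{\mathbb{X}}(E, S) \cong D\operatorname{Hom}_{\mathbb{X}}(S, E(\vec{\omega})) = 0$, since $E(\vec{\omega})$ is a vector bundle and $S$ has finite length. Thus incompatibility is equivalent to $\operatorname{Ext}^1_{\mathbb{X}}(S, E)\neq 0$, i.e., to $\operatorname{Hom}_{\mathbb{X}}(E, \tau S)\neq 0$ by Serre duality. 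Since $\tau$ permutes the four simples in $\widehat{\phi}(\mathbf{C}_{*})$, it is enough to prove the uniform statement: $\operatorname{Hom}_{\mathbb{X}}(E, S')\neq 0$ for every extension bundle $E$ and every simple $S'$ lying at a weight-$2$ point.

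For this, I would use the defining short exact sequence $0 \to L_1 \to E \to L_2 \to 0$ with $L_1 = \mathcal{O}_{\mathbb{X}}(-(i+1)\vec{x}_3)$ and $L_2 = L_1((k-1)\vec{x}_3 + \vec{\omega})$. Applying $\operatorname{Hom}_{\mathbb{X}}(-, S')$ and noting that $\operatorname{Ext}^1_{\mathbb{X}}(L_j, S') = 0$ by the same Serre-duality argument, the long exact sequence degenerates to
$$\dim_{\mathbf{k}}\operatorname{Hom}_{\mathbb{X}}(E, S') = \dim_{\mathbf{k}}\operatorname{Hom}_{\mathbb{X}}(L_1, S') + \dim_{\mathbf{k}}\operatorname{Hom}_{\mathbb{X}}(L_2, S').$$
A direct computation with the sequences $0 \to \mathcal{O}(k'\vec{x}_j) \to \mathcal{O}((k'+1)\vec{x}_j) \to S_{\lambda_j, \bar{k'}} \to 0$ ($j = 1, 2$) and Proposition \ref{the properties of coherent sheaves}(c) yields $\dim_{\mathbf{k}}\operatorname{Hom}_{\mathbb{X}}(\mathcal{O}_{\mathbb{X}}(\vec{y}), S_{\lambda_j, \bar{k'}}) = (k' - l_j)\bmod 2$, where $l_j$ is the $\vec{x}_j$-coefficient of $\vec{y}$ in normal form. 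The decisive parity input is that $\vec{\omega} = \vec{x}_1 - \vec{x}_2 - \vec{x}_3$ has normal-form coefficients $(l_1, l_2) = (1, 1)$ while $\vec{x}_3$ is neutral in those slots; so $L_1$ and $L_2$ have opposite parities at both weight-$2$ points, exactly one of the two Hom-dimensions is $1$ and the other $0$, the sum equals $1$, and $\operatorname{Hom}_{\mathbb{X}}(E, S')\neq 0$ as needed. The main obstacle is bookkeeping under the relations $2\vec{x}_1 = 2\vec{x}_2 = n\vec{x}_3 = \vec{c}$; this is streamlined by reducing to a canonical representative via the $\mathbb{L}(2,2,n)$-action of Proposition \ref{L-action}.
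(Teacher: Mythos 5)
Your argument is correct. You prove, as the paper does, that $\operatorname{Ext}^1_{\mathbb{X}}(S,E)\neq 0$ for every weight-$2$ simple $S$ and every extension bundle $E$, and both proofs ultimately rest on the same parity fact: the two line bundles appearing in the defining sequence \eqref{extension bundle} of $E$ differ by an element of $\mathbb{L}(2,2,n)$ whose $\vec{x}_1$- and $\vec{x}_2$-coefficients are odd, so exactly one of them has nonvanishing $\operatorname{Ext}^1$ (equivalently, after Serre duality, nonvanishing $\operatorname{Hom}$) against the given simple. The packaging differs: the paper fixes $(\epsilon_1,\epsilon_2)=(+,+)$, uses the identity $E_L\langle\vec{x}\rangle\cong E_{L(\vec{x}_1-\vec{x}_2)}\langle\vec{x}\rangle$ to arrange that the sub-line-bundle carries the nonzero extension class, and then exhibits a non-split extension of $S_{\infty,\overline{0}}$ by $E_L\langle\vec{x}\rangle$ via an explicit pushout diagram; you instead dualize twice (first to dispose of $\operatorname{Ext}^1(E,S)$, then to convert $\operatorname{Ext}^1(S,E)$ into $\operatorname{Hom}(E,\tau S)$) and read off the answer from the additivity $\dim\operatorname{Hom}(E,S')=\dim\operatorname{Hom}(L_1,S')+\dim\operatorname{Hom}(L_2,S')$ supplied by the long exact sequence. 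Your route is a clean dimension count and handles all four simples uniformly, at the cost of the explicit parity formula $\dim\operatorname{Hom}(\mathcal{O}(\vec{y}),S_{\lambda_j,\overline{k'}})=(k'-l_j)\bmod 2$, which does check out. One small bookkeeping slip: your $L_1$ and $L_2$ are not literally the sub and quotient of \eqref{extension bundle} --- the subobject is $L(\vec{\omega})$ with $L=\mathcal{O}_{\mathbb{X}}(-(i+1)\vec{x}_3)$, and the quotient is $L((k-1)\vec{x}_3)$, so you have attached the $\vec{\omega}$-twist to the wrong term. Since your pair and the correct pair realize the same multiset of parities at each weight-$2$ point, the sum of the two $\operatorname{Hom}$-dimensions is still $1$ and the conclusion is unaffected, but the labels should be fixed.
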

\begin{proof}
    We only prove the case where $(\epsilon_1, \epsilon_2) = (+, +)$, as the others follow similarly. Since $\widehat{\phi}(\widehat{\gamma})$ is an extension bundle, by definition of extension bundle, $\widehat{\phi}(\widehat{\gamma})$ can be expressed as $E_L\langle \vec{x} \rangle$ for some line bundle $L$ and $0 \leq \vec{x} \leq \vec{\delta}$. By \cite[Proposition 2.3 (i)]{Dong2025Two}, $E_L\langle \vec{x} \rangle$ can also be written in the form $E_{L(\vec{x}_1 - \vec{x}_2)}\langle \vec{x} \rangle$. Note that either ${\rm Ext}^1_\mathbb{X}(S_{\infty, \overline{0}}, L) \neq 0$ or ${\rm Ext}^1_\mathbb{X}(S_{\infty, \overline{0}}, L(\vec{x}_1 - \vec{x}_2)) \neq 0$. Without loss of generality, we assume ${\rm Ext}^1_\mathbb{X}(S_{\infty, \overline{0}}, L(\vec{x}_1 - \vec{x}_2)) \neq 0$. Then we obtain the following pushout commutative diagram with exact rows and columns:
\[\begin{tikzcd}
	&& 0 & 0 \\
	&& {L(\vec{x}-\vec{x}_1)} & {L(\vec{x}-\vec{x}_1)} \\
	0 & {L(\vec{\omega})} & {E_L\langle\vec{x}\rangle} & {L(\vec{x})} & 0 \\
	0 & {L(\vec{\omega})} & {L(\vec{x}_2-\vec{x}_3)} & {S_{\infty, \overline{0}}} & 0 \\
	&& 0 & 0
	\arrow[from=1-3, to=2-3]
	\arrow[from=1-4, to=2-4]
	\arrow[Rightarrow, no head, from=2-3, to=2-4]
	\arrow[from=2-3, to=3-3]
	\arrow[from=2-4, to=3-4]
	\arrow[from=3-1, to=3-2]
	\arrow[from=3-2, to=3-3]
	\arrow[Rightarrow, no head, from=3-2, to=4-2]
	\arrow[from=3-3, to=3-4]
	\arrow[from=3-3, to=4-3]
	\arrow[from=3-4, to=3-5]
	\arrow[from=3-4, to=4-4]
	\arrow[from=4-1, to=4-2]
	\arrow[from=4-2, to=4-3]
	\arrow[from=4-3, to=4-4]
	\arrow[from=4-3, to=5-3]
	\arrow[from=4-4, to=4-5]
	\arrow[from=4-4, to=5-4]
\end{tikzcd}\]
\noindent  This implies that ${\rm Ext}^1_\mathbb{X}(S_{\infty, \overline{0}}, E_L\langle \vec{x} \rangle) \neq 0$ and finishes the proof.
\end{proof}

\subsection{Pseudo-triangulations and tilting bundles}
Analogous to the definition of a triangulation on $(\mathcal{S}, M)$, we define a \emph{pseudo-triangulation} on $(\mathcal{S}, M,\sigma)$ as a maximal collection of distinct, pairwise compatible skew-arcs. We denote by $\Lambda$ a pseudo-triangulation on $(\mathcal{S}, M,\sigma)$.  By abuse of notations, we denote
$\Lambda_{\mathtt{b}, \mathcal{X}} = \Lambda \cap \mathbf{C}_{\mathtt{b}, \mathcal{X}}$, 
$\Lambda_{\mathtt{b}}^\sigma = \Lambda \cap \mathbf{C}_{\mathtt{b}}^\sigma$, 
$\Lambda_{\mathtt{p}}^\sigma = \Lambda \cap \mathbf{C}_{\mathtt{p}}^\sigma$, 
$\Lambda_* = \Lambda \cap \mathbf{C}_*$, $\cdots$.

Now, assume that $\Lambda$ is a pseudo-triangulation  on $(\mathcal{S}, M,\sigma)$. We will outline some properties of $\Lambda$ and propose a classification framework for all possible $\Lambda$. Furthermore, we calculate the number of skew-arcs in a pseudo-triangulation and then establish a correspondence between  pseudo-triangulations on $(\mathcal{S}, M,\sigma)$ and tilting sheaves in ${\rm coh}\mbox{-}\mathbb{X}$.

\begin{lem}\label{lem:6.5}
    For a pseudo-triangulation $\Lambda$ on $(\mathcal{S}, M,\sigma)$, we have $|\Lambda_*| \leq 2$. Moreover,
    \begin{itemize}
        \item[(a)] If $\Lambda_* = \{(\epsilon_1, \epsilon_2)\}$, then $\Lambda\setminus\Lambda_* = \Lambda_{\mathtt{b}, \times_1}^{\epsilon_1} \cup \Lambda_{\mathtt{b}, \times_2}^{\epsilon_2} \cup \Lambda_\mathtt{p}^\sigma$.
        \item[(b)] If $\Lambda_* = \{(\epsilon_1, \epsilon_2), (\epsilon_1^\prime, \epsilon_2^\prime)\}$, then $\epsilon_1 = \epsilon_1^\prime$ or $\epsilon_2 = \epsilon_2^\prime$, and
        \begin{itemize}
            \item  $\Lambda\setminus\Lambda_* = \Lambda_{\mathtt{b}, \times_1}^{\epsilon_1} \cup \Lambda_{\mathtt{p}}^\sigma$, when $\epsilon_1 = \epsilon_1^\prime$;
            \item  $\Lambda\setminus\Lambda_*= \Lambda_{\mathtt{b}, \times_2}^{\epsilon_2} \cup \Lambda_\mathtt{p}^\sigma$,  when $\epsilon_2 = \epsilon_2^\prime$.
        \end{itemize}
    \end{itemize}
\end{lem}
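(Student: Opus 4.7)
The plan is to convert the compatibility conditions via $\widehat{\phi}$ into $\operatorname{Ext}^{1}$-vanishing statements, and then combine the $\operatorname{Ext}^{1}$-table (Table~\ref{tab:dim_ext1}) with Lemma~\ref{lem:6.4} to pin down $\Lambda$ component by component along the partition $\widehat{\mathbf{C}} = \mathbf{C}_{\mathtt{b},\times_{1}}^{\pm}\cup\mathbf{C}_{\mathtt{b},\times_{2}}^{\pm}\cup\mathbf{C}_{\mathtt{b}}^{\sigma}\cup\mathbf{C}_{\mathtt{p}}^{\sigma}\cup\mathbf{C}_{\mathbf{k}^{*}}^{\text{pw}}\cup\mathbf{C}_{\mathbf{k}^{*}}^{\text{sp}}$.

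I first classify the compatibility inside $\mathbf{C}_{*}$. The four parameter pairs correspond via $\widehat{\phi}$ to the four simples $S_{\infty,\overline{0}}, S_{\infty,\overline{1}}, S_{0,\overline{0}}, S_{0,\overline{1}}$, distributed across the two tubes of $\tau$-period $2$. Since $\tau$ swaps the two simples within each such tube, Serre duality yields $\operatorname{Ext}^{1}_{\mathbb{X}}(S_{\lambda,\overline{0}},S_{\lambda,\overline{1}}) \cong D\operatorname{Hom}(S_{\lambda,\overline{1}}, S_{\lambda,\overline{1}}) = \mathbf{k}$, while simples in distinct tubes give vanishing $\operatorname{Ext}^{1}$. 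Under the labeling $(+,+)\leftrightarrow S_{\infty,\overline{0}}$, $(-,-)\leftrightarrow S_{\infty,\overline{1}}$, $(+,-)\leftrightarrow S_{0,\overline{0}}$, $(-,+)\leftrightarrow S_{0,\overline{1}}$, the only incompatible pairs inside $\mathbf{C}_{*}$ are thus $\{(+,+),(-,-)\}$ and $\{(+,-),(-,+)\}$. Any three-element subset of $\mathbf{C}_{*}$ contains one of these pairs, so $|\Lambda_{*}| \leq 2$; and when $|\Lambda_{*}| = 2$ the two pairs must share at least one coordinate.

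Next I carve out $\Lambda \setminus \Lambda_{*}$. Table~\ref{tab:dim_ext1} shows that $(\epsilon_{1},\epsilon_{2})$ is compatible with an element of $\mathbf{C}_{\mathtt{b},\mathcal{X}}$ exactly when that element lies in $\mathbf{C}_{\mathtt{b},\times_{1}}^{\epsilon_{1}} \cup \mathbf{C}_{\mathtt{b},\times_{2}}^{\epsilon_{2}}$, and Lemma~\ref{lem:6.4} forces $\Lambda \cap \mathbf{C}_{\mathtt{b}}^{\sigma} = \emptyset$ as soon as $\Lambda_{*}$ is non-empty. Since $\mathbf{C}_{\mathtt{p}}^{\sigma}$ maps into the period-$n$ tube, which is disjoint from the period-$2$ tubes, every element of $\mathbf{C}_{\mathtt{p}}^{\sigma}$ is automatically compatible with every element of $\mathbf{C}_{*}$. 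These observations give the claimed description in case~(a). In case~(b), requiring simultaneous compatibility with both parameter pairs forces $\Lambda \cap \mathbf{C}_{\mathtt{b},\times_{i}}$ to lie in $\mathbf{C}_{\mathtt{b},\times_{i}}^{\epsilon_{i}} \cap \mathbf{C}_{\mathtt{b},\times_{i}}^{\epsilon_{i}'}$, which equals $\mathbf{C}_{\mathtt{b},\times_{i}}^{\epsilon_{i}}$ when $\epsilon_{i}=\epsilon_{i}'$ and $\emptyset$ when $\epsilon_{i}\neq\epsilon_{i}'$; this is precisely the collapse asserted in~(b).

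Finally, to see that the remaining components $\mathbf{C}_{\mathbf{k}^{*}}^{\text{pw}}$ and $\mathbf{C}_{\mathbf{k}^{*}}^{\text{sp}} \setminus \mathbf{C}_{*}$ contribute nothing to any pseudo-triangulation, I check that none of their elements is self-compatible. Their images under $\widehat{\phi}$ are indecomposables $S_{\mu}^{(j)}$ lying either in a homogeneous tube (where $\tau X \cong X$ forces $\operatorname{Ext}^{1}(X,X) \cong D\operatorname{End}(X) \neq 0$) or in a period-$2$ tube with length $j \geq 2$, where a standard tube computation again yields non-vanishing self-$\operatorname{Ext}^{1}$. Hence neither set intersects $\Lambda$, and assembling all of the above constraints yields exactly the stated description. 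The main obstacle is the case-by-case bookkeeping across the above decomposition of $\widehat{\mathbf{C}}$: each summand must be checked against the compatibility rules imposed by the fixed $\Lambda_{*}$, but once Table~\ref{tab:dim_ext1}, Lemma~\ref{lem:6.4}, and Serre duality are in hand each individual verification is essentially immediate.
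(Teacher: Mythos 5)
Your proof is correct and follows essentially the same route as the paper's (much terser) argument: bound $|\Lambda_*|$ using the $\operatorname{Ext}^1$ between the two simples in each $\tau$-period-$2$ tube, use Table~\ref{tab:dim_ext1} (Example~\ref{exm}) for compatibility with $\mathbf{C}_{\mathtt{b},\mathcal{X}}$, Lemma~\ref{lem:6.4} to exclude $\mathbf{C}_{\mathtt{b}}^{\sigma}$, and orthogonality of distinct tubes for $\Lambda_{\mathtt{p}}^{\sigma}$. You simply make explicit several steps the paper leaves implicit (e.g.\ the non-rigidity of the remaining skew-curves and the vanishing of $\operatorname{Ext}^1$ from bundles to torsion sheaves), so no further comparison is needed.
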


\begin{proof}
    The first statement is clear since for any two pairwise non-isomorphic simple objects $S_1, S_2$ in the same $\tau$-period $2$ tube, we have ${\rm Ext}^{1}_\mathbb{X}(S_1,S_2)\neq 0$. As for the second statement, the skew-arcs in $\mathbf{C}_{\mathtt{p}}^\sigma$ correspond to indecomposable objects in the $\tau$-period $n$ tubes, and different tubes are orthogonal in $\operatorname{coh}\mbox{-}\mathbb{X}$.
\end{proof}

Based on the previous information, we can classify the cases for $\Lambda$ into three types:

\begin{itemize}
    \item  $\Lambda = \Lambda_{\mathtt{b},\mathcal{X}} \cup \Lambda_{\mathtt{b}}^\sigma \cup \Lambda_{\mathtt{p}}^\sigma$; 
    \item  $\Lambda = \Lambda_{\mathtt{b},\mathcal{X}} \cup \Lambda_{\mathtt{p}}^\sigma \cup \Lambda_*$, with $|\Lambda_*| = 1$; 
    \item  $\Lambda = \Lambda_{\mathtt{b},\mathcal{X}} \cup \Lambda_{\mathtt{p}}^\sigma \cup \Lambda_*$, with $|\Lambda_*| = 2$.
\end{itemize}

In order to  refine above classification, we firsty introduce some notations. 
\begin{conven}\label{conven2}
If $\Lambda$ satisfies $\Lambda_{\mathtt{b}}^\sigma \neq \emptyset$, then for each $\widehat{\gamma} \in \Lambda_{\mathtt{b}}^\sigma$, $\widehat{\gamma}$ and the boundaries of $(\mathcal{S}, M)$ form two quadrilaterals, one containing $\times_1$ and the other containing $\times_2$. A skew-arc $\widehat{\gamma}\in \Lambda_{\mathtt{b}}^\sigma$ is said to be \emph{closest} to $\times_i$ if no other skew-arc in $\Lambda_{\mathtt{b}}^\sigma$ lies within the quadrilateral containing $\times_i$. For $i = 1, 2$, let $\widehat{\gamma}_i = \{\gamma_i, \sigma(\gamma_i)\} \in \Lambda_{\mathtt{b}}^\sigma$ denote the skew-curve closest to $\times_i$, where $\widehat{\gamma}_1$ and $\widehat{\gamma}_2$ may coincide.
\begin{figure}[H]

\tikzset{every picture/.style={line width=0.75pt}}  

\begin{tikzpicture}[x=0.75pt,y=0.75pt,yscale=-1,xscale=1]

\draw  [fill={rgb, 255:red, 245; green, 166; blue, 35 }  ,fill opacity=1 ] (215.18,144.96) -- (198.31,194.54) -- (114.61,194.84) -- (89.57,144.89) -- cycle ;
 
\draw [color={rgb, 255:red, 245; green, 166; blue, 35 }  ,draw opacity=1 ]   (89.57,144.89) -- (115,194.77) ;
 
\draw  [fill={rgb, 255:red, 104; green, 161; blue, 226 }  ,fill opacity=1 ][dash pattern={on 0.84pt off 2.51pt}] (350.37,144.93) -- (395.76,144.93) -- (421.19,194.8) -- (376.21,194.59) -- cycle ;
 
\draw  [fill={rgb, 255:red, 245; green, 166; blue, 35 }  ,fill opacity=1 ] (349.98,144.93) -- (375.8,194.8) -- (243.54,194.59) -- (260.37,144.93) -- cycle ;
 
\draw    (152.37,144.93) -- (161.37,144.93) ;
\draw [shift={(161.37,144.93)}, rotate = 0] [color={rgb, 255:red, 0; green, 0; blue, 0 }  ][fill={rgb, 255:red, 0; green, 0; blue, 0 }  ][line width=0.75]      (0, 0) circle [x radius= 1.34, y radius= 1.34]   ;
\draw [shift={(152.37,144.93)}, rotate = 0] [color={rgb, 255:red, 0; green, 0; blue, 0 }  ][fill={rgb, 255:red, 0; green, 0; blue, 0 }  ][line width=0.75]      (0, 0) circle [x radius= 1.34, y radius= 1.34]   ;
 
\draw    (300.76,144.93) -- (309.37,144.93) ;
\draw [shift={(309.37,144.93)}, rotate = 0] [color={rgb, 255:red, 0; green, 0; blue, 0 }  ][fill={rgb, 255:red, 0; green, 0; blue, 0 }  ][line width=0.75]      (0, 0) circle [x radius= 1.34, y radius= 1.34]   ;
\draw [shift={(300.76,144.93)}, rotate = 0] [color={rgb, 255:red, 0; green, 0; blue, 0 }  ][fill={rgb, 255:red, 0; green, 0; blue, 0 }  ][line width=0.75]      (0, 0) circle [x radius= 1.34, y radius= 1.34]   ;
 
\draw    (341.37,144.93) -- (349.98,144.93) ;
\draw [shift={(349.98,144.93)}, rotate = 0] [color={rgb, 255:red, 0; green, 0; blue, 0 }  ][fill={rgb, 255:red, 0; green, 0; blue, 0 }  ][line width=0.75]      (0, 0) circle [x radius= 1.34, y radius= 1.34]   ;
\draw [shift={(341.37,144.93)}, rotate = 0] [color={rgb, 255:red, 0; green, 0; blue, 0 }  ][fill={rgb, 255:red, 0; green, 0; blue, 0 }  ][line width=0.75]      (0, 0) circle [x radius= 1.34, y radius= 1.34]   ;
 
\draw    (260.37,144.93) -- (269.37,144.93) ;
\draw [shift={(269.37,144.93)}, rotate = 0] [color={rgb, 255:red, 0; green, 0; blue, 0 }  ][fill={rgb, 255:red, 0; green, 0; blue, 0 }  ][line width=0.75]      (0, 0) circle [x radius= 1.34, y radius= 1.34]   ;
\draw [shift={(260.37,144.93)}, rotate = 0] [color={rgb, 255:red, 0; green, 0; blue, 0 }  ][fill={rgb, 255:red, 0; green, 0; blue, 0 }  ][line width=0.75]      (0, 0) circle [x radius= 1.34, y radius= 1.34]   ;
 
\draw    (152.5,195) -- (161.5,195) ;
\draw [shift={(161.5,195)}, rotate = 0] [color={rgb, 255:red, 0; green, 0; blue, 0 }  ][fill={rgb, 255:red, 0; green, 0; blue, 0 }  ][line width=0.75]      (0, 0) circle [x radius= 1.34, y radius= 1.34]   ;
\draw [shift={(152.5,195)}, rotate = 0] [color={rgb, 255:red, 0; green, 0; blue, 0 }  ][fill={rgb, 255:red, 0; green, 0; blue, 0 }  ][line width=0.75]      (0, 0) circle [x radius= 1.34, y radius= 1.34]   ;
 
\draw    (367.19,194.8) -- (375.8,194.8) ;
\draw [shift={(375.8,194.8)}, rotate = 0] [color={rgb, 255:red, 0; green, 0; blue, 0 }  ][fill={rgb, 255:red, 0; green, 0; blue, 0 }  ][line width=0.75]      (0, 0) circle [x radius= 1.34, y radius= 1.34]   ;
\draw [shift={(367.19,194.8)}, rotate = 0] [color={rgb, 255:red, 0; green, 0; blue, 0 }  ][fill={rgb, 255:red, 0; green, 0; blue, 0 }  ][line width=0.75]      (0, 0) circle [x radius= 1.34, y radius= 1.34]   ;
 
\draw  [dash pattern={on 0.84pt off 2.51pt}]  (266.79,191.2) -- (286.12,191.2) ;
 
\draw    (215.37,144.93) -- (206.37,144.93) ;
\draw [shift={(206.37,144.93)}, rotate = 180] [color={rgb, 255:red, 0; green, 0; blue, 0 }  ][fill={rgb, 255:red, 0; green, 0; blue, 0 }  ][line width=0.75]      (0, 0) circle [x radius= 1.34, y radius= 1.34]   ;
\draw [shift={(215.37,144.93)}, rotate = 180] [color={rgb, 255:red, 0; green, 0; blue, 0 }  ][fill={rgb, 255:red, 0; green, 0; blue, 0 }  ][line width=0.75]      (0, 0) circle [x radius= 1.34, y radius= 1.34]   ;
 
\draw    (198.5,194.5) -- (189,195) ;
\draw [shift={(189,195)}, rotate = 176.99] [color={rgb, 255:red, 0; green, 0; blue, 0 }  ][fill={rgb, 255:red, 0; green, 0; blue, 0 }  ][line width=0.75]      (0, 0) circle [x radius= 1.34, y radius= 1.34]   ;
\draw [shift={(198.5,194.5)}, rotate = 176.99] [color={rgb, 255:red, 0; green, 0; blue, 0 }  ][fill={rgb, 255:red, 0; green, 0; blue, 0 }  ][line width=0.75]      (0, 0) circle [x radius= 1.34, y radius= 1.34]   ;
 
\draw  [dash pattern={on 0.84pt off 2.51pt}]  (148.48,191.18) -- (129.15,191.02) ;
 
\draw    (98.37,144.93) -- (89.76,144.86) ;
\draw [shift={(89.76,144.86)}, rotate = 180.47] [color={rgb, 255:red, 0; green, 0; blue, 0 }  ][fill={rgb, 255:red, 0; green, 0; blue, 0 }  ][line width=0.75]      (0, 0) circle [x radius= 1.34, y radius= 1.34]   ;
\draw [shift={(98.37,144.93)}, rotate = 180.47] [color={rgb, 255:red, 0; green, 0; blue, 0 }  ][fill={rgb, 255:red, 0; green, 0; blue, 0 }  ][line width=0.75]      (0, 0) circle [x radius= 1.34, y radius= 1.34]   ;
 
\draw  [dash pattern={on 0.84pt off 2.51pt}]  (133.52,148.01) -- (114.18,147.85) ;
 
\draw  [dash pattern={on 0.84pt off 2.51pt}]  (295.52,147.67) -- (276.18,147.52) ;
 
\draw  [fill={rgb, 255:red, 80; green, 227; blue, 194 }  ,fill opacity=1 ] (215.41,145.02) -- (260.41,145.02) -- (243.54,194.59) -- (198.54,194.59) -- cycle ;
 
\draw  [dash pattern={on 0.84pt off 2.51pt}]  (193.22,147.81) -- (173.88,147.65) ;
 
\draw  [dash pattern={on 0.84pt off 2.51pt}]  (182.72,191.81) -- (163.38,191.65) ;
 
\draw  [dash pattern={on 0.84pt off 2.51pt}]  (314.76,148.53) -- (334.09,148.53) ;
 
\draw  [dash pattern={on 0.84pt off 2.51pt}]  (327.16,191.23) -- (346.49,191.23) ;
 
\draw [color={rgb, 255:red, 245; green, 166; blue, 35 }  ,draw opacity=1 ]   (350.37,144.93) -- (375.8,194.8) ;
 
\draw [color={rgb, 255:red, 245; green, 166; blue, 35 }  ,draw opacity=1 ]   (215.37,144.93) -- (198.5,194.5) ;
 
\draw [color={rgb, 255:red, 245; green, 166; blue, 35 }  ,draw opacity=1 ]   (260.37,144.93) -- (243.5,194.5) ;
 
\draw    (300.37,194.93) -- (309.37,194.93) ;
\draw [shift={(309.37,194.93)}, rotate = 0] [color={rgb, 255:red, 0; green, 0; blue, 0 }  ][fill={rgb, 255:red, 0; green, 0; blue, 0 }  ][line width=0.75]      (0, 0) circle [x radius= 1.34, y radius= 1.34]   ;
\draw [shift={(300.37,194.93)}, rotate = 0] [color={rgb, 255:red, 0; green, 0; blue, 0 }  ][fill={rgb, 255:red, 0; green, 0; blue, 0 }  ][line width=0.75]      (0, 0) circle [x radius= 1.34, y radius= 1.34]   ;
 
\draw    (350.37,144.93) -- (359.37,144.93) ;
\draw [shift={(359.37,144.93)}, rotate = 0] [color={rgb, 255:red, 0; green, 0; blue, 0 }  ][fill={rgb, 255:red, 0; green, 0; blue, 0 }  ][line width=0.75]      (0, 0) circle [x radius= 1.34, y radius= 1.34]   ;
\draw [shift={(350.37,144.93)}, rotate = 0] [color={rgb, 255:red, 0; green, 0; blue, 0 }  ][fill={rgb, 255:red, 0; green, 0; blue, 0 }  ][line width=0.75]      (0, 0) circle [x radius= 1.34, y radius= 1.34]   ;
 
\draw    (198.5,194.5) -- (207.5,194.5) ;
\draw [shift={(207.5,194.5)}, rotate = 0] [color={rgb, 255:red, 0; green, 0; blue, 0 }  ][fill={rgb, 255:red, 0; green, 0; blue, 0 }  ][line width=0.75]      (0, 0) circle [x radius= 1.34, y radius= 1.34]   ;
\draw [shift={(198.5,194.5)}, rotate = 0] [color={rgb, 255:red, 0; green, 0; blue, 0 }  ][fill={rgb, 255:red, 0; green, 0; blue, 0 }  ][line width=0.75]      (0, 0) circle [x radius= 1.34, y radius= 1.34]   ;
 
\draw    (234.5,194.5) -- (243.5,194.5) ;
\draw [shift={(243.5,194.5)}, rotate = 0] [color={rgb, 255:red, 0; green, 0; blue, 0 }  ][fill={rgb, 255:red, 0; green, 0; blue, 0 }  ][line width=0.75]      (0, 0) circle [x radius= 1.34, y radius= 1.34]   ;
\draw [shift={(234.5,194.5)}, rotate = 0] [color={rgb, 255:red, 0; green, 0; blue, 0 }  ][fill={rgb, 255:red, 0; green, 0; blue, 0 }  ][line width=0.75]      (0, 0) circle [x radius= 1.34, y radius= 1.34]   ;
 
\draw    (251.37,144.93) -- (260.37,144.93) ;
\draw [shift={(260.37,144.93)}, rotate = 0] [color={rgb, 255:red, 0; green, 0; blue, 0 }  ][fill={rgb, 255:red, 0; green, 0; blue, 0 }  ][line width=0.75]      (0, 0) circle [x radius= 1.34, y radius= 1.34]   ;
\draw [shift={(251.37,144.93)}, rotate = 0] [color={rgb, 255:red, 0; green, 0; blue, 0 }  ][fill={rgb, 255:red, 0; green, 0; blue, 0 }  ][line width=0.75]      (0, 0) circle [x radius= 1.34, y radius= 1.34]   ;
 
\draw    (215.37,144.93) -- (224.37,144.93) ;
\draw [shift={(224.37,144.93)}, rotate = 0] [color={rgb, 255:red, 0; green, 0; blue, 0 }  ][fill={rgb, 255:red, 0; green, 0; blue, 0 }  ][line width=0.75]      (0, 0) circle [x radius= 1.34, y radius= 1.34]   ;
\draw [shift={(215.37,144.93)}, rotate = 0] [color={rgb, 255:red, 0; green, 0; blue, 0 }  ][fill={rgb, 255:red, 0; green, 0; blue, 0 }  ][line width=0.75]      (0, 0) circle [x radius= 1.34, y radius= 1.34]   ;
 
\draw    (411.8,194.8) -- (420.8,194.8) ;
\draw [shift={(420.8,194.8)}, rotate = 0] [color={rgb, 255:red, 0; green, 0; blue, 0 }  ][fill={rgb, 255:red, 0; green, 0; blue, 0 }  ][line width=0.75]      (0, 0) circle [x radius= 1.34, y radius= 1.34]   ;
\draw [shift={(411.8,194.8)}, rotate = 0] [color={rgb, 255:red, 0; green, 0; blue, 0 }  ][fill={rgb, 255:red, 0; green, 0; blue, 0 }  ][line width=0.75]      (0, 0) circle [x radius= 1.34, y radius= 1.34]   ;
 
\draw    (375.8,194.8) -- (384.8,194.8) ;
\draw [shift={(384.8,194.8)}, rotate = 0] [color={rgb, 255:red, 0; green, 0; blue, 0 }  ][fill={rgb, 255:red, 0; green, 0; blue, 0 }  ][line width=0.75]      (0, 0) circle [x radius= 1.34, y radius= 1.34]   ;
\draw [shift={(375.8,194.8)}, rotate = 0] [color={rgb, 255:red, 0; green, 0; blue, 0 }  ][fill={rgb, 255:red, 0; green, 0; blue, 0 }  ][line width=0.75]      (0, 0) circle [x radius= 1.34, y radius= 1.34]   ;
 
\draw    (386.37,144.93) -- (395.37,144.93) ;
\draw [shift={(395.37,144.93)}, rotate = 0] [color={rgb, 255:red, 0; green, 0; blue, 0 }  ][fill={rgb, 255:red, 0; green, 0; blue, 0 }  ][line width=0.75]      (0, 0) circle [x radius= 1.34, y radius= 1.34]   ;
\draw [shift={(386.37,144.93)}, rotate = 0] [color={rgb, 255:red, 0; green, 0; blue, 0 }  ][fill={rgb, 255:red, 0; green, 0; blue, 0 }  ][line width=0.75]      (0, 0) circle [x radius= 1.34, y radius= 1.34]   ;
 
\draw    (123.41,194.87) -- (114.8,194.8) ;
\draw [shift={(114.8,194.8)}, rotate = 180.47] [color={rgb, 255:red, 0; green, 0; blue, 0 }  ][fill={rgb, 255:red, 0; green, 0; blue, 0 }  ][line width=0.75]      (0, 0) circle [x radius= 1.34, y radius= 1.34]   ;
\draw [shift={(123.41,194.87)}, rotate = 180.47] [color={rgb, 255:red, 0; green, 0; blue, 0 }  ][fill={rgb, 255:red, 0; green, 0; blue, 0 }  ][line width=0.75]      (0, 0) circle [x radius= 1.34, y radius= 1.34]   ;
 
\draw    (252.11,194.57) -- (243.5,194.5) ;
\draw [shift={(243.5,194.5)}, rotate = 180.47] [color={rgb, 255:red, 0; green, 0; blue, 0 }  ][fill={rgb, 255:red, 0; green, 0; blue, 0 }  ][line width=0.75]      (0, 0) circle [x radius= 1.34, y radius= 1.34]   ;
\draw [shift={(252.11,194.57)}, rotate = 180.47] [color={rgb, 255:red, 0; green, 0; blue, 0 }  ][fill={rgb, 255:red, 0; green, 0; blue, 0 }  ][line width=0.75]      (0, 0) circle [x radius= 1.34, y radius= 1.34]   ;
 
\draw [line width=1.5]    (114.8,194.8) -- (420.8,194.8) ;
 
\draw  [dash pattern={on 0.84pt off 2.51pt}]  (245.35,148.84) -- (226.02,148.68) ;
 
\draw  [dash pattern={on 0.84pt off 2.51pt}]  (212.16,191.83) -- (231.49,191.83) ;
 
\draw  [dash pattern={on 0.84pt off 2.51pt}]  (381.68,148.01) -- (362.35,147.85) ;
 
\draw  [dash pattern={on 0.84pt off 2.51pt}]  (408.85,191.34) -- (389.52,191.18) ;
 
\draw [line width=1.5]    (89.76,144.93) -- (395.76,144.93) ;

\draw (221.66,166.73) node [anchor=north west][inner sep=0.75pt]  [font=\tiny]  {$\times _{1}$};
 
\draw (256.17,164.57) node [anchor=north west][inner sep=0.75pt]  [font=\tiny]  {$\gamma _{1}$};
 
\draw (347.83,165.07) node [anchor=north west][inner sep=0.75pt]  [font=\tiny]  {$\gamma _{2}$};
 
\draw (152.33,208.73) node [anchor=north west][inner sep=0.75pt]  [font=\tiny]  {$\Lambda _{{org}}$};
 
\draw (378.99,166.23) node [anchor=north west][inner sep=0.75pt]  [font=\tiny]  {$\times _{2}$};
 
\draw (215.16,208.46) node [anchor=north west][inner sep=0.75pt]  [font=\tiny]  {$\Lambda _{grn}$};
 
\draw (393.16,208.46) node [anchor=north west][inner sep=0.75pt]  [font=\tiny]  {$\Lambda _{blu}$};
 
\draw (299.33,208.73) node [anchor=north west][inner sep=0.75pt]  [font=\tiny]  {$\Lambda _{{org}}$};
 
\draw (180,161.73) node [anchor=north west][inner sep=0.75pt]  [font=\tiny]  {$\sigma ( \gamma _{1})$};
 
\draw (101.33,162.07) node [anchor=north west][inner sep=0.75pt]  [font=\tiny]  {$\sigma ( \gamma _{2})$};

\end{tikzpicture}\end{figure}
\noindent Then $\widehat{\gamma}_1$ and $\widehat{\gamma}_2$ divide $\Lambda$ into three disjoint subsets:
\[
\Lambda = \Lambda_{org} \cup \Lambda_{grn} \cup \Lambda_{blu}
\]
as shown in above figure, where $\Lambda_{org}$ is the set of skew-arcs in $\Lambda$ that fall within the orange region containing $\widehat{\gamma}_1$ and $\widehat{\gamma}_2$; $\Lambda_{grn}$ consists of those in the green region, and $\Lambda_{blu}$ contains those in the blue region.
\end{conven}

For $i = 1, 2$ and $\epsilon \in \{+, -\}$, let 
$\mathbf{C}_{\times_i}^{\epsilon} = \mathbf{C}_{\mathtt{b}, \times_i}^{\epsilon} \cup \{(\epsilon, +), (\epsilon, -)\}$ and $\mathbf{C}_{\times_i}=\mathbf{C}_{\times_i}^{+}\cup\mathbf{C}_{\times_i}^{-}$. According to Proposition \ref{prop:6.4} and Lemma \ref{lem:6.5}, it can be easily verified that
 $\Lambda_{\times_i}^{+} \neq \emptyset$ and $\Lambda_{\times_i}^{-} \neq \emptyset$
if and only if one of the following holds:
\begin{itemize}
    \item[$(A0)$] $\Lambda_{\times_i}=\{\gamma^+,\gamma^-\}$ for some $\gamma$ $\sigma$-fixed on $\times_i$;
    \item[$(A1)$] If $i = 1$, and $(-, +), (+, +)\in \Lambda$ or  $(-, -),(+, -) \in \Lambda$;
    \item[$(A2)$] If $i = 2$, and $(-, +),(-, -)\in \Lambda$ or $(+, +),(+, -) \in \Lambda$.
\end{itemize}

Now, we can define a map $\zeta$ to distinguish the skew-arcs in $\Lambda_{\times_i}$ as follows: 
\[
\zeta: \{\text{All pseudo-triangulations in } (\mathcal{S}, M)\} \to \{\pm, +, -\} \times \{\pm, +, -\},
\]
such that
\[
\Lambda \mapsto (\zeta(\Lambda)_1, \zeta(\Lambda)_2),
\]
with
\[
	\zeta(\Lambda)_i =
\begin{cases}
\epsilon & \text{if } \Lambda_{\times_i} \subset \mathbf{C}_{\times_i}^{\epsilon}, \quad \epsilon \in \{+, -\}; \\
\pm & \text{if } \Lambda_{\times_i}^{+} \neq \emptyset \text{ and } \Lambda_{\times_i}^{-} \neq \emptyset.
\end{cases}
\]
To show $\zeta$ is well-defined, we need the following lemma:  
\begin{lem}\label{lem:01}\label{well-defined}
  For each $\times \in \mathcal{X}$,  if there exists $\epsilon\in \{+,-\}$ such that $\Lambda_{\times}^{\epsilon} = \emptyset$, then  $|\Lambda_{\times}^{\neg\epsilon}| \geq 2$. 
\end{lem}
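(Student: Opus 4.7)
The plan is to argue the contrapositive via maximality of $\Lambda$: assuming $\Lambda_{\times}^{\epsilon} = \emptyset$ and $|\Lambda_{\times}^{\neg\epsilon}| \leq 1$, we exhibit a skew-arc $\widehat{\delta} \notin \Lambda$ compatible with every element of $\Lambda$, violating maximality. By the degree-shift automorphisms of $\operatorname{coh}\mbox{-}\mathbb{X}$ (shift by $\vec{x}_1-\vec{x}_2$ swaps the $\pm$-signs at both fixed points simultaneously, and shift by $\vec{x}_1$ together with shift by $\vec{x}_1-\vec{x}_2$ exchanges $\times_1$ and $\times_2$, both tracked explicitly via Proposition \ref{L-action}), the four cases for $(\times,\epsilon)$ all reduce to $\times = \times_1$, $\epsilon = +$.

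Next I would align the argument with the trichotomy $|\Lambda_*|\in\{0,1,2\}$ coming from Lemma \ref{lem:6.5}. The case $|\Lambda_*| = 2$ is automatic, since Lemma \ref{lem:6.5}(b) guarantees that $\Lambda_*$ itself already contributes elements to both $\Lambda_{\times_1}^+$ and $\Lambda_{\times_1}^-$, contradicting $\Lambda_{\times_1}^+ = \emptyset$. For $|\Lambda_*| = 1$ with $\Lambda_* = \{(\epsilon_1, \epsilon_2)\}$, the hypothesis $\Lambda_{\times_1}^+ = \emptyset$ forces $\epsilon_1 = -$, and Lemma \ref{lem:6.5}(a) reduces $\Lambda \setminus \Lambda_*$ to $\Lambda_{\mathtt{b}, \times_1}^- \cup \Lambda_{\mathtt{b}, \times_2}^{\epsilon_2} \cup \Lambda_\mathtt{p}^\sigma$; consequently the condition $|\Lambda_{\times_1}^-|\le 1$ forces $\Lambda_{\mathtt{b}, \times_1}^- = \emptyset$. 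In the $|\Lambda_*| = 0$ case, $\Lambda_{\times_1} = \Lambda_{\mathtt{b}, \times_1}^-$ has at most one element.

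The heart of the proof is to construct $\widehat{\delta}$. In each remaining situation I would look for $\widehat{\delta} = [D_{-j}^{j}]^{-}$ (that is, a line-bundle skew-arc $\mathcal{O}_\mathbb{X}(j\vec{x}_3) \notin \Lambda$) for an appropriate $j\in\mathbb{Z}$. By Proposition \ref{prop:6.4} and Lemma \ref{extension-free}/Remark \ref{relative}, compatibility of $\widehat{\delta}$ with each line-bundle or extension-bundle summand of $\Lambda$ excludes only a bounded interval of integers $j$; by Lemma \ref{lem:6.4} together with the orthogonality between distinct tubes in $\operatorname{coh}_0\mbox{-}\mathbb{X}$, compatibility with $\Lambda_\mathtt{p}^\sigma$ and with the at most one element of $\Lambda_*\setminus\Lambda_{\times_1}$ (in the $|\Lambda_*|=1$ sub-case) is automatic. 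Since only finitely many $j$'s are forbidden while $\mathbb{Z}$ is infinite, a valid $j$ exists, yielding the required $\widehat{\delta}$.

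The main obstacle is the sub-case $|\Lambda_*| = 0$ with $|\Lambda_{\times_1}^-|=1$, where $\Lambda$ already contains a single $[D_{-i_0}^{i_0}]^-$: pairwise compatibility with this element, via Lemma \ref{extension-free}, adds the further restriction $0 < |j-i_0| \leq n$, so $j$ must be chosen in a finite window. One must verify that this window still contains an index avoiding the finitely many forbidden ranges coming from $\Lambda_{\mathtt{b}, \times_2} \cup \Lambda_\mathtt{b}^\sigma$; when it does not, one falls back on adding a simple skew-arc $(-,+)$ or $(-,-)\in\mathbf{C}_*\setminus\Lambda$, whose compatibility with $\Lambda$ (in particular with $\Lambda_\mathtt{b}^\sigma$) is handled by Lemma \ref{lem:6.4}. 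The delicate part is the bookkeeping of indices, but in every configuration one of the two options always succeeds.
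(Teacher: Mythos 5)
Your skeleton---argue against maximality, reduce to $(\times,\epsilon)=(\times_1,+)$ by degree-shift symmetry, and split on $|\Lambda_*|\in\{0,1,2\}$---is sound and runs parallel to the paper's case analysis (the paper splits instead on $\Lambda_{\mathtt b}^{\sigma}\neq\emptyset$ versus $|\Lambda_*|=2,1,0$, but your treatment of $|\Lambda_*|=2$ and $|\Lambda_*|=1$ matches its cases (2) and (3)). The gap is in the construction of the new compatible skew-arc $\widehat{\delta}=[D^{j}_{-j}]^{-}$. You claim that compatibility with each line-bundle or extension-bundle summand of $\Lambda$ \emph{excludes} only a bounded set of $j$, so that infinitely many $j$ survive. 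This is backwards. By Lemma \ref{extension-free} (and Remark \ref{relative}), a line bundle $L$ occurring in $\Lambda$ is extension-free with $\mathcal{O}_{\mathbb X}(j\vec{x}_3)$ only when $\mathcal{O}_{\mathbb X}(j\vec{x}_3)=L(\vec{x})$ with $-\vec{c}\le\vec{x}\le\vec{c}$, i.e.\ only for $j$ in a window of length about $n$; likewise, by Proposition \ref{prop:6.4} the curve $[D^{j}_{-j}]$ meets the pair of curves underlying an extension-bundle skew-arc for all but boundedly many $j$. So every vector-bundle summand of $\Lambda$ \emph{confines} $j$ to a finite window, and the admissible set is an intersection of finitely many finite windows, which could a priori be empty or consist only of indices already realized in $\Lambda$. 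Ruling that out is precisely the content of the lemma, and your cardinality count does not do it.

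Your fallback is also unavailable in the hardest configuration: by Lemma \ref{lem:6.4} the skew-curves in $\mathbf{C}_*$ are \emph{incompatible} with every element of $\mathbf{C}_{\mathtt b}^{\sigma}$, so whenever $\Lambda_{\mathtt b}^{\sigma}\neq\emptyset$ (the paper's main case) no element of $\mathbf{C}_*$ can be added, and the assertion that ``one of the two options always succeeds'' is exactly what remains to be proved. The paper closes this by a local argument you would need to reproduce: take the skew-arc of $\Lambda_{\mathtt b}^{\sigma}$ closest to $\times_1$ (Convention \ref{conven2}); the quadrilateral it cuts out around $\times_1$ contains exactly four skew-arcs of $\mathbf{C}_{\mathtt b,\times_1}$, two of each sign, each compatible with all of $\Lambda\setminus\Lambda_{\times_1}$; maximality then forces the two of sign $\neg\epsilon$ to lie in $\Lambda$ already, giving $|\Lambda_{\times_1}^{\neg\epsilon}|\ge 2$ directly rather than by exhibiting a new arc. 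A similar pinned-down choice (two explicit arcs landing between the arcs of $\Lambda_{\times_2}$) handles the case $\Lambda_{\mathtt b}^{\sigma}=\Lambda_*=\emptyset$.
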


\begin{proof}
We only consider the case $\times = \times_1$, as the argument for $\times = \times_2$ follows similarly. 

Assume that  $\Lambda_{\times_1}^{\epsilon}=\emptyset$ for some $\epsilon\in \{+,-\}$.  There are  four cases to consider:

(1) $\Lambda_{\mathtt{b}}^\sigma \neq \emptyset$. In this case, $\Lambda_*=\emptyset$. We claim that either $\Lambda_{\times_1}^+ \neq \emptyset$ or $\Lambda_{\times_1}^- \neq \emptyset$.  To show it, we adopt the notations from Convention \ref{conven2}. Consider the skew-arcs  $\widehat{\alpha}_1, \widehat{\alpha}_2, \widehat{\alpha}_3$, and $\widehat{\alpha}_4$, in $\mathbf{C}_{\mathtt{b},\times_1}$, as illustrated below.
\begin{figure}[H]
    \centering

\tikzset{every picture/.style={line width=0.75pt}}  

\begin{tikzpicture}[x=0.75pt,y=0.75pt,yscale=-1,xscale=1]

\draw  [fill={rgb, 255:red, 80; green, 227; blue, 194 }  ,fill opacity=1 ] (235.41,165.02) -- (280.41,165.02) -- (263.54,214.59) -- (218.54,214.59) -- cycle ;
 
\draw    (218.5,214.5) -- (227.5,214.5) ;
\draw [shift={(227.5,214.5)}, rotate = 0] [color={rgb, 255:red, 0; green, 0; blue, 0 }  ][fill={rgb, 255:red, 0; green, 0; blue, 0 }  ][line width=0.75]      (0, 0) circle [x radius= 1.34, y radius= 1.34]   ;
\draw [shift={(218.5,214.5)}, rotate = 0] [color={rgb, 255:red, 0; green, 0; blue, 0 }  ][fill={rgb, 255:red, 0; green, 0; blue, 0 }  ][line width=0.75]      (0, 0) circle [x radius= 1.34, y radius= 1.34]   ;
 
\draw    (254.5,214.5) -- (263.5,214.5) ;
\draw [shift={(263.5,214.5)}, rotate = 0] [color={rgb, 255:red, 0; green, 0; blue, 0 }  ][fill={rgb, 255:red, 0; green, 0; blue, 0 }  ][line width=0.75]      (0, 0) circle [x radius= 1.34, y radius= 1.34]   ;
\draw [shift={(254.5,214.5)}, rotate = 0] [color={rgb, 255:red, 0; green, 0; blue, 0 }  ][fill={rgb, 255:red, 0; green, 0; blue, 0 }  ][line width=0.75]      (0, 0) circle [x radius= 1.34, y radius= 1.34]   ;
 
\draw    (271.37,164.93) -- (280.37,164.93) ;
\draw [shift={(280.37,164.93)}, rotate = 0] [color={rgb, 255:red, 0; green, 0; blue, 0 }  ][fill={rgb, 255:red, 0; green, 0; blue, 0 }  ][line width=0.75]      (0, 0) circle [x radius= 1.34, y radius= 1.34]   ;
\draw [shift={(271.37,164.93)}, rotate = 0] [color={rgb, 255:red, 0; green, 0; blue, 0 }  ][fill={rgb, 255:red, 0; green, 0; blue, 0 }  ][line width=0.75]      (0, 0) circle [x radius= 1.34, y radius= 1.34]   ;
 
\draw    (235.37,164.93) -- (244.37,164.93) ;
\draw [shift={(244.37,164.93)}, rotate = 0] [color={rgb, 255:red, 0; green, 0; blue, 0 }  ][fill={rgb, 255:red, 0; green, 0; blue, 0 }  ][line width=0.75]      (0, 0) circle [x radius= 1.34, y radius= 1.34]   ;
\draw [shift={(235.37,164.93)}, rotate = 0] [color={rgb, 255:red, 0; green, 0; blue, 0 }  ][fill={rgb, 255:red, 0; green, 0; blue, 0 }  ][line width=0.75]      (0, 0) circle [x radius= 1.34, y radius= 1.34]   ;
 
\draw  [dash pattern={on 0.84pt off 2.51pt}]  (265.35,168.84) -- (246.02,168.68) ;
 
\draw  [dash pattern={on 0.84pt off 2.51pt}]  (232.16,211.83) -- (251.49,211.83) ;
 
\draw    (218.54,214.59) -- (280.41,165.02) ;
\draw [shift={(280.41,165.02)}, rotate = 321.3] [color={rgb, 255:red, 0; green, 0; blue, 0 }  ][fill={rgb, 255:red, 0; green, 0; blue, 0 }  ][line width=0.75]      (0, 0) circle [x radius= 1.34, y radius= 1.34]   ;
\draw [shift={(249.47,189.81)}, rotate = 321.3] [color={rgb, 255:red, 0; green, 0; blue, 0 }  ][fill={rgb, 255:red, 0; green, 0; blue, 0 }  ][line width=0.75]      (0, 0) circle [x radius= 1.34, y radius= 1.34]   ;
\draw [shift={(218.54,214.59)}, rotate = 321.3] [color={rgb, 255:red, 0; green, 0; blue, 0 }  ][fill={rgb, 255:red, 0; green, 0; blue, 0 }  ][line width=0.75]      (0, 0) circle [x radius= 1.34, y radius= 1.34]   ;
 
\draw    (235.37,164.93) -- (263.5,214.5) ;
\draw [shift={(263.5,214.5)}, rotate = 60.43] [color={rgb, 255:red, 0; green, 0; blue, 0 }  ][fill={rgb, 255:red, 0; green, 0; blue, 0 }  ][line width=0.75]      (0, 0) circle [x radius= 1.34, y radius= 1.34]   ;
\draw [shift={(249.44,189.71)}, rotate = 60.43] [color={rgb, 255:red, 0; green, 0; blue, 0 }  ][fill={rgb, 255:red, 0; green, 0; blue, 0 }  ][line width=0.75]      (0, 0) circle [x radius= 1.34, y radius= 1.34]   ;
\draw [shift={(235.37,164.93)}, rotate = 60.43] [color={rgb, 255:red, 0; green, 0; blue, 0 }  ][fill={rgb, 255:red, 0; green, 0; blue, 0 }  ][line width=0.75]      (0, 0) circle [x radius= 1.34, y radius= 1.34]   ;
 
\draw [line width=1.5]    (235.34,164.83) -- (280.37,164.93) ;
 
\draw [line width=1.5]    (218.54,214.59) -- (263.57,214.68) ;

\draw (234.33,171.4) node [anchor=north west][inner sep=0.75pt]  [font=\tiny]  {$\hat{\alpha }_{1}$};
 
\draw (258.67,172.4) node [anchor=north west][inner sep=0.75pt]  [font=\tiny]  {$\hat{\alpha }_{2}$};
 
\draw (225.33,197.23) node [anchor=north west][inner sep=0.75pt]  [font=\tiny]  {$\hat{\alpha }_{4}$};
 
\draw (252,197.23) node [anchor=north west][inner sep=0.75pt]  [font=\tiny]  {$\hat{\alpha }_{3}$};
 
\draw (249.44,188.11) node [anchor=north west][inner sep=0.75pt]  [font=\tiny]  {$\times _{1}$};
 
\draw (276.17,184.57) node [anchor=north west][inner sep=0.75pt]  [font=\tiny]  {$\gamma _{1}$};
 
\draw (199,180.73) node [anchor=north west][inner sep=0.75pt]  [font=\tiny]  {$\sigma ( \gamma _{1})$};

\end{tikzpicture}
\end{figure}
\noindent  If $\Lambda_{\times_1}^+ =\Lambda_{\times_1}^- \neq \emptyset$, then each $\widehat{\alpha}_i$ is compatible with all skew-arcs in $\Lambda$. It follows that $\Lambda$ is not maximal, a contradiction. Furthermore, it is obvious that if  $\Lambda_{\times_1}^- = \emptyset$, then $\Lambda_{\times_1}^+ \neq \emptyset$ and $\widehat{\alpha}_1, \widehat{\alpha}_2 \in \Lambda_{\times_1}^+$. Conversely, if  $\Lambda_{\times_1}^+ = \emptyset$, then $\Lambda_{\times_1}^- \neq \emptyset$ and $\widehat{\alpha}_3, \widehat{\alpha}_4 \in \Lambda_{\times_1}^-$. Thus, the statement holds.

(2) $\Lambda_* \neq \emptyset$ and $|\Lambda_*|=2$. Since $\Lambda_* \subset \Lambda_{\times_1}^{\neg\epsilon}$, the statement holds immediately.

(3)
$\Lambda_* \neq \emptyset$ and  $|\Lambda_*| = 1$. In this case, $\Lambda_* = \{(\neg\epsilon, +)\}$ or $\{(\neg\epsilon, -)\}$. We claim that $\Lambda_{\mathtt{b},\times_1}^{\neg\epsilon} \neq \emptyset$. Assume to the contrary that $\Lambda_{\mathtt{b},\times_1}^{\neg\epsilon}= \emptyset$. Then $\Lambda_{\mathtt{p}}^\sigma$ does not consist of mutually compatible skew-arcs. Therefore, it follows that $|\Lambda_{\times_1}^{\neg\epsilon}| = |\Lambda_{\mathtt{b},\times_1}^{\neg\epsilon}| + 1 \geq 2$. 

(4)
$\Lambda_{\mathtt{b}}^\sigma=\Lambda_* = \emptyset$. We claim that both $\Lambda_{\mathtt{b},\times_2}^{+}$ and $\Lambda_{\mathtt{b},\times_2}^{-}$ must be non-empty. Otherwise, if there exists $\epsilon^\prime\in \{+,-\}$ such that $\Lambda_{\times_2}^{\epsilon^\prime}=\emptyset$ , then $(\epsilon,\epsilon^\prime)\in \Lambda_*$, a contradiction to $\Lambda_* = \emptyset$. Then there always exist two skew-arcs in $\mathbf{C}_{\times_1}^{\neg\epsilon}$ that are compatible with all skew-arcs in $\Lambda_{\times_1}^{\neg\epsilon} \cup \Lambda_{\mathtt{p}}^\sigma$. These skew-arcs are highlighted in orange in the figure below:
 \begin{figure}[H]
     \centering

\tikzset{every picture/.style={line width=0.75pt}}  

\begin{tikzpicture}[x=0.75pt,y=0.75pt,yscale=-1,xscale=1]

\draw [color={rgb, 255:red, 245; green, 166; blue, 35 }  ,draw opacity=1 ]   (178.42,215.68) -- (208.87,240.68) ;
 
\draw [color={rgb, 255:red, 245; green, 166; blue, 35 }  ,draw opacity=1 ]   (178.42,215.68) -- (127.87,240.68) ;
 
\draw    (147.97,190.68) -- (127.87,240.68) ;
 
\draw [line width=1.5]    (114.87,240.68) -- (238.97,240.68) ;
 
\draw  [dash pattern={on 0.84pt off 2.51pt}]  (167,237.44) -- (154.98,237.72) ;
 
\draw  [dash pattern={on 0.84pt off 2.51pt}]  (194.26,237.64) -- (201,237.44) ;
 
\draw [line width=1.5]    (115.87,190.68) -- (238.97,190.68) ;
 
\draw  [dash pattern={on 0.84pt off 2.51pt}]  (155.26,193.64) -- (162,193.44) ;
 
\draw    (137.87,240.68) -- (147.97,240.68) ;
\draw [shift={(147.97,240.68)}, rotate = 0] [color={rgb, 255:red, 0; green, 0; blue, 0 }  ][fill={rgb, 255:red, 0; green, 0; blue, 0 }  ][line width=0.75]      (0, 0) circle [x radius= 1.34, y radius= 1.34]   ;
\draw [shift={(137.87,240.68)}, rotate = 0] [color={rgb, 255:red, 0; green, 0; blue, 0 }  ][fill={rgb, 255:red, 0; green, 0; blue, 0 }  ][line width=0.75]      (0, 0) circle [x radius= 1.34, y radius= 1.34]   ;
 
\draw    (208.87,190.68) -- (218.97,190.68) ;
\draw [shift={(218.97,190.68)}, rotate = 0] [color={rgb, 255:red, 0; green, 0; blue, 0 }  ][fill={rgb, 255:red, 0; green, 0; blue, 0 }  ][line width=0.75]      (0, 0) circle [x radius= 1.34, y radius= 1.34]   ;
\draw [shift={(208.87,190.68)}, rotate = 0] [color={rgb, 255:red, 0; green, 0; blue, 0 }  ][fill={rgb, 255:red, 0; green, 0; blue, 0 }  ][line width=0.75]      (0, 0) circle [x radius= 1.34, y radius= 1.34]   ;
 
\draw    (208.87,240.68) -- (218.97,240.68) ;
\draw [shift={(218.97,240.68)}, rotate = 0] [color={rgb, 255:red, 0; green, 0; blue, 0 }  ][fill={rgb, 255:red, 0; green, 0; blue, 0 }  ][line width=0.75]      (0, 0) circle [x radius= 1.34, y radius= 1.34]   ;
\draw [shift={(208.87,240.68)}, rotate = 0] [color={rgb, 255:red, 0; green, 0; blue, 0 }  ][fill={rgb, 255:red, 0; green, 0; blue, 0 }  ][line width=0.75]      (0, 0) circle [x radius= 1.34, y radius= 1.34]   ;
 
\draw    (162.97,190.68) -- (173.07,190.68) ;
\draw [shift={(173.07,190.68)}, rotate = 0] [color={rgb, 255:red, 0; green, 0; blue, 0 }  ][fill={rgb, 255:red, 0; green, 0; blue, 0 }  ][line width=0.75]      (0, 0) circle [x radius= 1.34, y radius= 1.34]   ;
\draw [shift={(162.97,190.68)}, rotate = 0] [color={rgb, 255:red, 0; green, 0; blue, 0 }  ][fill={rgb, 255:red, 0; green, 0; blue, 0 }  ][line width=0.75]      (0, 0) circle [x radius= 1.34, y radius= 1.34]   ;
 
\draw    (183.77,240.68) -- (193.87,240.68) ;
\draw [shift={(193.87,240.68)}, rotate = 0] [color={rgb, 255:red, 0; green, 0; blue, 0 }  ][fill={rgb, 255:red, 0; green, 0; blue, 0 }  ][line width=0.75]      (0, 0) circle [x radius= 1.34, y radius= 1.34]   ;
\draw [shift={(183.77,240.68)}, rotate = 0] [color={rgb, 255:red, 0; green, 0; blue, 0 }  ][fill={rgb, 255:red, 0; green, 0; blue, 0 }  ][line width=0.75]      (0, 0) circle [x radius= 1.34, y radius= 1.34]   ;
 
\draw  [dash pattern={on 0.84pt off 2.51pt}]  (147.97,190.68) -- (127.87,240.68) ;
\draw [shift={(127.87,240.68)}, rotate = 111.9] [color={rgb, 255:red, 0; green, 0; blue, 0 }  ][fill={rgb, 255:red, 0; green, 0; blue, 0 }  ][line width=0.75]      (0, 0) circle [x radius= 1.34, y radius= 1.34]   ;
\draw [shift={(137.92,215.68)}, rotate = 111.9] [color={rgb, 255:red, 0; green, 0; blue, 0 }  ][fill={rgb, 255:red, 0; green, 0; blue, 0 }  ][line width=0.75]      (0, 0) circle [x radius= 1.34, y radius= 1.34]   ;
\draw [shift={(147.97,190.68)}, rotate = 111.9] [color={rgb, 255:red, 0; green, 0; blue, 0 }  ][fill={rgb, 255:red, 0; green, 0; blue, 0 }  ][line width=0.75]      (0, 0) circle [x radius= 1.34, y radius= 1.34]   ;
 
\draw    (173.07,190.68) -- (183.17,190.68) ;
\draw [shift={(183.17,190.68)}, rotate = 0] [color={rgb, 255:red, 0; green, 0; blue, 0 }  ][fill={rgb, 255:red, 0; green, 0; blue, 0 }  ][line width=0.75]      (0, 0) circle [x radius= 1.34, y radius= 1.34]   ;
\draw [shift={(173.07,190.68)}, rotate = 0] [color={rgb, 255:red, 0; green, 0; blue, 0 }  ][fill={rgb, 255:red, 0; green, 0; blue, 0 }  ][line width=0.75]      (0, 0) circle [x radius= 1.34, y radius= 1.34]   ;
 
\draw    (173.67,240.68) -- (183.77,240.68) ;
\draw [shift={(183.77,240.68)}, rotate = 0] [color={rgb, 255:red, 0; green, 0; blue, 0 }  ][fill={rgb, 255:red, 0; green, 0; blue, 0 }  ][line width=0.75]      (0, 0) circle [x radius= 1.34, y radius= 1.34]   ;
\draw [shift={(173.67,240.68)}, rotate = 0] [color={rgb, 255:red, 0; green, 0; blue, 0 }  ][fill={rgb, 255:red, 0; green, 0; blue, 0 }  ][line width=0.75]      (0, 0) circle [x radius= 1.34, y radius= 1.34]   ;
 
\draw  [dash pattern={on 0.84pt off 2.51pt}]  (228.97,190.68) -- (178.42,215.68) ;
\draw [shift={(178.42,215.68)}, rotate = 153.68] [color={rgb, 255:red, 0; green, 0; blue, 0 }  ][fill={rgb, 255:red, 0; green, 0; blue, 0 }  ][line width=0.75]      (0, 0) circle [x radius= 1.34, y radius= 1.34]   ;
\draw [shift={(228.97,190.68)}, rotate = 153.68] [color={rgb, 255:red, 0; green, 0; blue, 0 }  ][fill={rgb, 255:red, 0; green, 0; blue, 0 }  ][line width=0.75]      (0, 0) circle [x radius= 1.34, y radius= 1.34]   ;
 
\draw  [dash pattern={on 0.84pt off 2.51pt}]  (147.97,190.68) -- (178.42,215.68) ;
 
\draw  [dash pattern={on 0.84pt off 2.51pt}]  (190.01,193.89) -- (202.75,193.69) ;
 
\draw    (127.87,240.68) ;
\draw [shift={(127.87,240.68)}, rotate = 0] [color={rgb, 255:red, 0; green, 0; blue, 0 }  ][fill={rgb, 255:red, 0; green, 0; blue, 0 }  ][line width=0.75]      (0, 0) circle [x radius= 1.34, y radius= 1.34]   ;
 
\draw  [dash pattern={on 0.84pt off 2.51pt}]  (228.97,190.68) -- (208.87,240.68) ;
\draw [shift={(208.87,240.68)}, rotate = 111.9] [color={rgb, 255:red, 0; green, 0; blue, 0 }  ][fill={rgb, 255:red, 0; green, 0; blue, 0 }  ][line width=0.75]      (0, 0) circle [x radius= 1.34, y radius= 1.34]   ;
\draw [shift={(218.92,215.68)}, rotate = 111.9] [color={rgb, 255:red, 0; green, 0; blue, 0 }  ][fill={rgb, 255:red, 0; green, 0; blue, 0 }  ][line width=0.75]      (0, 0) circle [x radius= 1.34, y radius= 1.34]   ;
\draw [shift={(228.97,190.68)}, rotate = 111.9] [color={rgb, 255:red, 0; green, 0; blue, 0 }  ][fill={rgb, 255:red, 0; green, 0; blue, 0 }  ][line width=0.75]      (0, 0) circle [x radius= 1.34, y radius= 1.34]   ;
 
\draw [color={rgb, 255:red, 245; green, 166; blue, 35 }  ,draw opacity=1 ]   (345.97,190.68) -- (376.42,215.68) ;
 
\draw [color={rgb, 255:red, 245; green, 166; blue, 35 }  ,draw opacity=1 ]   (426.97,190.68) -- (376.42,215.68) ;
 
\draw    (345.97,190.68) -- (325.87,240.68) ;
 
\draw [line width=1.5]    (312.87,240.68) -- (436.97,240.68) ;
 
\draw  [dash pattern={on 0.84pt off 2.51pt}]  (365,237.44) -- (352.98,237.72) ;
 
\draw  [dash pattern={on 0.84pt off 2.51pt}]  (392.26,237.64) -- (399,237.44) ;
 
\draw [line width=1.5]    (313.87,190.68) -- (436.97,190.68) ;
 
\draw  [dash pattern={on 0.84pt off 2.51pt}]  (353.26,193.64) -- (360,193.44) ;
 
\draw    (335.87,240.68) -- (345.97,240.68) ;
\draw [shift={(345.97,240.68)}, rotate = 0] [color={rgb, 255:red, 0; green, 0; blue, 0 }  ][fill={rgb, 255:red, 0; green, 0; blue, 0 }  ][line width=0.75]      (0, 0) circle [x radius= 1.34, y radius= 1.34]   ;
\draw [shift={(335.87,240.68)}, rotate = 0] [color={rgb, 255:red, 0; green, 0; blue, 0 }  ][fill={rgb, 255:red, 0; green, 0; blue, 0 }  ][line width=0.75]      (0, 0) circle [x radius= 1.34, y radius= 1.34]   ;
 
\draw    (406.87,190.68) -- (416.97,190.68) ;
\draw [shift={(416.97,190.68)}, rotate = 0] [color={rgb, 255:red, 0; green, 0; blue, 0 }  ][fill={rgb, 255:red, 0; green, 0; blue, 0 }  ][line width=0.75]      (0, 0) circle [x radius= 1.34, y radius= 1.34]   ;
\draw [shift={(406.87,190.68)}, rotate = 0] [color={rgb, 255:red, 0; green, 0; blue, 0 }  ][fill={rgb, 255:red, 0; green, 0; blue, 0 }  ][line width=0.75]      (0, 0) circle [x radius= 1.34, y radius= 1.34]   ;
 
\draw    (406.87,240.68) -- (416.97,240.68) ;
\draw [shift={(416.97,240.68)}, rotate = 0] [color={rgb, 255:red, 0; green, 0; blue, 0 }  ][fill={rgb, 255:red, 0; green, 0; blue, 0 }  ][line width=0.75]      (0, 0) circle [x radius= 1.34, y radius= 1.34]   ;
\draw [shift={(406.87,240.68)}, rotate = 0] [color={rgb, 255:red, 0; green, 0; blue, 0 }  ][fill={rgb, 255:red, 0; green, 0; blue, 0 }  ][line width=0.75]      (0, 0) circle [x radius= 1.34, y radius= 1.34]   ;
 
\draw    (360.97,190.68) -- (371.07,190.68) ;
\draw [shift={(371.07,190.68)}, rotate = 0] [color={rgb, 255:red, 0; green, 0; blue, 0 }  ][fill={rgb, 255:red, 0; green, 0; blue, 0 }  ][line width=0.75]      (0, 0) circle [x radius= 1.34, y radius= 1.34]   ;
\draw [shift={(360.97,190.68)}, rotate = 0] [color={rgb, 255:red, 0; green, 0; blue, 0 }  ][fill={rgb, 255:red, 0; green, 0; blue, 0 }  ][line width=0.75]      (0, 0) circle [x radius= 1.34, y radius= 1.34]   ;
 
\draw    (381.77,240.68) -- (391.87,240.68) ;
\draw [shift={(391.87,240.68)}, rotate = 0] [color={rgb, 255:red, 0; green, 0; blue, 0 }  ][fill={rgb, 255:red, 0; green, 0; blue, 0 }  ][line width=0.75]      (0, 0) circle [x radius= 1.34, y radius= 1.34]   ;
\draw [shift={(381.77,240.68)}, rotate = 0] [color={rgb, 255:red, 0; green, 0; blue, 0 }  ][fill={rgb, 255:red, 0; green, 0; blue, 0 }  ][line width=0.75]      (0, 0) circle [x radius= 1.34, y radius= 1.34]   ;
 
\draw  [dash pattern={on 0.84pt off 2.51pt}]  (345.97,190.68) -- (325.87,240.68) ;
\draw [shift={(325.87,240.68)}, rotate = 111.9] [color={rgb, 255:red, 0; green, 0; blue, 0 }  ][fill={rgb, 255:red, 0; green, 0; blue, 0 }  ][line width=0.75]      (0, 0) circle [x radius= 1.34, y radius= 1.34]   ;
\draw [shift={(335.92,215.68)}, rotate = 111.9] [color={rgb, 255:red, 0; green, 0; blue, 0 }  ][fill={rgb, 255:red, 0; green, 0; blue, 0 }  ][line width=0.75]      (0, 0) circle [x radius= 1.34, y radius= 1.34]   ;
\draw [shift={(345.97,190.68)}, rotate = 111.9] [color={rgb, 255:red, 0; green, 0; blue, 0 }  ][fill={rgb, 255:red, 0; green, 0; blue, 0 }  ][line width=0.75]      (0, 0) circle [x radius= 1.34, y radius= 1.34]   ;
 
\draw    (371.07,190.68) -- (381.17,190.68) ;
\draw [shift={(381.17,190.68)}, rotate = 0] [color={rgb, 255:red, 0; green, 0; blue, 0 }  ][fill={rgb, 255:red, 0; green, 0; blue, 0 }  ][line width=0.75]      (0, 0) circle [x radius= 1.34, y radius= 1.34]   ;
\draw [shift={(371.07,190.68)}, rotate = 0] [color={rgb, 255:red, 0; green, 0; blue, 0 }  ][fill={rgb, 255:red, 0; green, 0; blue, 0 }  ][line width=0.75]      (0, 0) circle [x radius= 1.34, y radius= 1.34]   ;
 
\draw    (371.67,240.68) -- (381.77,240.68) ;
\draw [shift={(381.77,240.68)}, rotate = 0] [color={rgb, 255:red, 0; green, 0; blue, 0 }  ][fill={rgb, 255:red, 0; green, 0; blue, 0 }  ][line width=0.75]      (0, 0) circle [x radius= 1.34, y radius= 1.34]   ;
\draw [shift={(371.67,240.68)}, rotate = 0] [color={rgb, 255:red, 0; green, 0; blue, 0 }  ][fill={rgb, 255:red, 0; green, 0; blue, 0 }  ][line width=0.75]      (0, 0) circle [x radius= 1.34, y radius= 1.34]   ;
 
\draw  [dash pattern={on 0.84pt off 2.51pt}]  (376.42,215.68) -- (325.87,240.68) ;
\draw [shift={(325.87,240.68)}, rotate = 153.68] [color={rgb, 255:red, 0; green, 0; blue, 0 }  ][fill={rgb, 255:red, 0; green, 0; blue, 0 }  ][line width=0.75]      (0, 0) circle [x radius= 1.34, y radius= 1.34]   ;
\draw [shift={(376.42,215.68)}, rotate = 153.68] [color={rgb, 255:red, 0; green, 0; blue, 0 }  ][fill={rgb, 255:red, 0; green, 0; blue, 0 }  ][line width=0.75]      (0, 0) circle [x radius= 1.34, y radius= 1.34]   ;
 
\draw  [dash pattern={on 0.84pt off 2.51pt}]  (376.42,215.68) -- (406.87,240.68) ;
 
\draw  [dash pattern={on 0.84pt off 2.51pt}]  (388.01,193.89) -- (400.75,193.69) ;
 
\draw    (325.87,240.68) ;
\draw [shift={(325.87,240.68)}, rotate = 0] [color={rgb, 255:red, 0; green, 0; blue, 0 }  ][fill={rgb, 255:red, 0; green, 0; blue, 0 }  ][line width=0.75]      (0, 0) circle [x radius= 1.34, y radius= 1.34]   ;
 
\draw  [dash pattern={on 0.84pt off 2.51pt}]  (426.97,190.68) -- (406.87,240.68) ;
\draw [shift={(406.87,240.68)}, rotate = 111.9] [color={rgb, 255:red, 0; green, 0; blue, 0 }  ][fill={rgb, 255:red, 0; green, 0; blue, 0 }  ][line width=0.75]      (0, 0) circle [x radius= 1.34, y radius= 1.34]   ;
\draw [shift={(416.92,215.68)}, rotate = 111.9] [color={rgb, 255:red, 0; green, 0; blue, 0 }  ][fill={rgb, 255:red, 0; green, 0; blue, 0 }  ][line width=0.75]      (0, 0) circle [x radius= 1.34, y radius= 1.34]   ;
\draw [shift={(426.97,190.68)}, rotate = 111.9] [color={rgb, 255:red, 0; green, 0; blue, 0 }  ][fill={rgb, 255:red, 0; green, 0; blue, 0 }  ][line width=0.75]      (0, 0) circle [x radius= 1.34, y radius= 1.34]   ;
 
\draw   (138.25,244.65) .. controls (138.22,249.32) and (140.53,251.67) .. (145.2,251.7) -- (163.44,251.83) .. controls (170.11,251.88) and (173.42,254.23) .. (173.39,258.9) .. controls (173.42,254.23) and (176.77,251.92) .. (183.44,251.97)(180.44,251.95) -- (201.7,252.1) .. controls (206.37,252.13) and (208.72,249.82) .. (208.75,245.15) ;
 
\draw   (336.25,244.9) .. controls (336.22,249.57) and (338.53,251.92) .. (343.2,251.95) -- (361.44,252.08) .. controls (368.11,252.13) and (371.42,254.48) .. (371.39,259.15) .. controls (371.42,254.48) and (374.77,252.17) .. (381.44,252.22)(378.44,252.2) -- (399.7,252.35) .. controls (404.37,252.38) and (406.72,250.07) .. (406.75,245.4) ;

\draw (180.42,219.08) node [anchor=north west][inner sep=0.75pt]  [font=\tiny]  {$\times _{1}$};
 
\draw (139.87,219.08) node [anchor=north west][inner sep=0.75pt]  [font=\tiny]  {$\times _{2}$};
 
\draw (101,186.8) node [anchor=north west][inner sep=0.75pt]  [font=\tiny]  {$\partial $};
 
\draw (99.5,236.8) node [anchor=north west][inner sep=0.75pt]  [font=\tiny]  {$\partial ^{\prime }$};
 
\draw (161,161.5) node [anchor=north west][inner sep=0.75pt]    {$\epsilon =+$};
 
\draw (378.42,219.08) node [anchor=north west][inner sep=0.75pt]  [font=\tiny]  {$\times _{1}$};
 
\draw (337.87,219.08) node [anchor=north west][inner sep=0.75pt]  [font=\tiny]  {$\times _{2}$};
 
\draw (299,186.8) node [anchor=north west][inner sep=0.75pt]  [font=\tiny]  {$\partial $};
 
\draw (297.5,236.8) node [anchor=north west][inner sep=0.75pt]  [font=\tiny]  {$\partial ^{\prime }$};
 
\draw (359,161.5) node [anchor=north west][inner sep=0.75pt]    {$\epsilon =-$};
 
\draw (147.93,263.15) node [anchor=north west][inner sep=0.75pt]  [font=\tiny]  {$n\ marked\ points\ $};
 
\draw (345.93,263.4) node [anchor=north west][inner sep=0.75pt]  [font=\tiny]  {$n\ marked\ points\ $};
 
\draw (171,226.5) node [anchor=north west][inner sep=0.75pt]  [font=\tiny]  {$?$};
 
\draw (175.5,199.5) node [anchor=north west][inner sep=0.75pt]  [font=\tiny]  {$?$};
 
\draw (376.5,199) node [anchor=north west][inner sep=0.75pt]  [font=\tiny]  {$?$};
 
\draw (370,227.5) node [anchor=north west][inner sep=0.75pt]  [font=\tiny]  {$?$};
\end{tikzpicture}
 \end{figure}
\noindent It follows that these two skew-arcs are in $\Lambda_{\times_1}^{\neg\epsilon}$, and thus $|\Lambda_{\times_1}^{\neg\epsilon}| \geq 2$. 
\end{proof}

For simplicity, if $\zeta(\Lambda)_i = \pm$ and satisfies condition $A_0$ (resp. $A_1$, $A_2$), we abbreviate it as $\zeta(\Lambda)_i = \pm(0)$ (resp. $\pm(1)$, $\pm(2)$).

 Next, we come to determine the number $|\Lambda|$ of skew-arcs in $\Lambda$. Let $P_{\Lambda} = \{ \times_i \mid \zeta(\Lambda)_i \neq \pm \}\subset \mathcal{X}$  and let $L_{\times_1}$ (resp. $L_{\times_2}$) be the loop with endpoints at $\times_1$ (resp. $\times_2$). Consider the marked surface $(\mathcal{S}, M \cup P_{\Lambda})$, where the points in $P_{\Lambda}$ serve as punctures on $\mathcal{S}$. Note that skew-curves in  $\mathbf{C}_{\mathtt{b},\times}$ ($\times\in P_\Lambda$) or in $\Lambda_{\mathtt{b}}^\sigma \cup \Lambda_{\mathtt{p}}^\sigma$ can naturally be viewed as curves on  $(\mathcal{S}, M \cup P_{\Lambda})$. We define a set of curves $\Gamma_{\Lambda}$ on $(\mathcal{S}, M \cup P_{\Lambda})$, associated with $\Lambda$, as follows:

For each skew-arc $\widehat{\gamma} \in \Lambda_{\mathtt{b}}^\sigma \cup \Lambda_{\mathtt{p}}^\sigma$, add all curves in $\mathbf{C}(\widehat{\gamma})$ to $\Gamma_{\Lambda}$, and the proceed according to the values of $\zeta(\Lambda)_i$ for $i = 1, 2$:
\begin{itemize}
    \item[(i)] If $\zeta(\Lambda)_i = \pm(i)$ for $i = 1$ or $2$, add $L_{\times} (\times\neq \times_i)$ to $\Gamma_{\Lambda}$.
    \item[(ii)] If $\zeta(\Lambda)_i = \pm(0)$ for $i = 1$ or $2$, assume that $\Lambda_{\times_i} = \{\widehat{\gamma}, \sigma(\widehat{\gamma})\}$. Add the unique curve in $\mathbf{C}(\widehat{\gamma})$ to $\Gamma_{\Lambda}$.
    \item[(iii)] If $\zeta(\Lambda)_i \in \{+, -\}$ for $i = 1$ or $2$, for each $\widehat{\gamma} \in \Lambda_{\mathtt{b}, \times_i}^{\zeta(\Lambda)_i}$, add  $\widehat{\gamma}$ and $\sigma(\widehat{\gamma})$ to $\Gamma_{\Lambda}$.
    \item[(iv)] If $\zeta(\Lambda)_1, \zeta(\Lambda)_2 \in \{+, -\}$ and $\Lambda_* \neq \emptyset$, add $L_0$ and $L_1$ to $\Gamma_{\Lambda}$.
\end{itemize}
 
By Proposition \ref{prop:6.4} and Lemma \ref{lem:6.4}, one can check that $\Gamma_{\Lambda}$ is a triangulation on $(\mathcal{S}, M \cup P_{\Lambda})$. Moreover, $\Gamma_{\Lambda}$ is $\sigma$-fixed. 

\begin{exm}
Let $n=4$. We give some pseudo-triangulations $\Lambda$ on $(\mathcal{S}, M,\sigma)$ and the corresponding triangulations $\Gamma_{\Lambda}$  on $(\mathcal{S}, M \cup P_{\Lambda})$.
\[

\]
\end{exm}

\begin{prop}
	The number of skew-arcs  in $\Lambda$ is $|\Lambda| = n + 3.$
\end{prop}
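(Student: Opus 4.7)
The plan is to compute $|\Lambda|$ indirectly by relating it to the number of arcs in the associated triangulation $\Gamma_\Lambda$ on the punctured marked surface $(\mathcal{S}, M \cup P_\Lambda)$, for which the formula \eqref{formula} applies. Since $\mathcal{S}$ has genus $g = 0$, $b = 2$ boundary components, $c = 2n$ boundary marked points, and $p = |P_\Lambda|$ punctures, the formula gives
$$|\Gamma_\Lambda| = 6 + 3|P_\Lambda| + 2n - 6 = 2n + 3|P_\Lambda|.$$

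The second step is to enumerate the possible values of the pair $(\zeta(\Lambda)_1, \zeta(\Lambda)_2)$. By Lemmas \ref{lem:6.5} and \ref{well-defined}, together with the definitions of conditions $A_0,A_1,A_2$, only three configurations can occur: (1) both entries lie in $\{+,-\}$, so $|P_\Lambda| = 2$; (2) exactly one entry equals $\pm$ (of any subtype $\pm(0),\pm(1),\pm(2)$), so $|P_\Lambda| = 1$; (3) both entries equal $\pm$, in which case both must be of type $\pm(0)$, so $|P_\Lambda| = 0$. The apparently mixed cases, such as $\pm(0)$ at $\times_1$ together with $\pm(1)$ or $\pm(2)$ at $\times_2$, are excluded because $A_0$ forces $\Lambda_{\times_i} \cap \mathbf{C}_* = \emptyset$, contradicting the nonemptiness of $\Lambda_*$ required by $A_1$ or $A_2$; similarly, $\pm(1)$ at $\times_1$ combined with $\pm(2)$ at $\times_2$ would force $|\Lambda_*| \geq 4$, contradicting Lemma \ref{lem:6.5}.

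The third step is to count how each stratum $\Lambda_{\mathtt{b},\times_i}$, $\Lambda_\mathtt{b}^\sigma$, $\Lambda_\mathtt{p}^\sigma$, $\Lambda_*$ contributes to $\Gamma_\Lambda$ under rules (i)--(iv): a skew-arc in $\Lambda_\mathtt{b}^\sigma \cup \Lambda_\mathtt{p}^\sigma$ contributes two curves (its $\sigma$-orbit); a skew-arc in $\Lambda_{\mathtt{b},\times_i}^{\zeta(\Lambda)_i}$ when $\zeta(\Lambda)_i \in \{+,-\}$ contributes two curves (the two halves $\gamma^+, \gamma^-$ of the $\sigma$-fixed curve, now distinct arcs since $\times_i$ is a puncture); a pair $\{\gamma^+, \gamma^-\} \subset \Lambda$ in the $\pm(0)$ case contributes one curve $\gamma$; the two elements of $\Lambda_*$ in the $\pm(1)$ or $\pm(2)$ case contribute the single loop $L_{\times}$; and when both $\zeta(\Lambda)_i \in \{+,-\}$ with $\Lambda_* \neq \emptyset$, one adds the two arcs $L_0, L_1$. (Note that when $|\Lambda_*| = 2$, the structural constraint from Lemma \ref{lem:6.5}(b) forces $\zeta(\Lambda)$ to have at least one $\pm$ entry, so the $|\Lambda_*| = 2$ subcase only arises in cases (2) or (3).)

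Running through each case, the contributions sum to $2n + 3|P_\Lambda|$ on the geometric side, and a straightforward rearrangement yields $|\Lambda| = n + 3$ in every case. The main obstacle is the bookkeeping: ensuring the impossibility of the mixed configurations mentioned in step two, and correctly converting between the stratum sizes of $\Lambda$ and the number of arcs they produce in $\Gamma_\Lambda$ when some $\times_i$ is punctured and the other is not. Once these are handled, the counting itself is elementary arithmetic.
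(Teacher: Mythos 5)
Your proposal is correct and follows essentially the same route as the paper: both compute $|\Gamma_\Lambda| = 2n + 3|P_\Lambda|$ from the triangulation formula and then solve for $|\Lambda|$ by tallying, case by case on $\zeta(\Lambda)$ and $\Lambda_*$, how many arcs of $\Gamma_\Lambda$ each stratum of $\Lambda$ contributes under rules (i)--(iv). The only difference is that you leave the final per-case arithmetic as ``straightforward'' where the paper writes it out explicitly, but your stratum contribution counts are the correct ones and the arithmetic does close in every case.
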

\begin{proof}
The main idea of this proof is to use the process of constructing $\Gamma_{\Lambda}$ from $\Lambda$ and leverage $|\Gamma_{\Lambda}|$ to determine $|\Lambda|$. From the equation \ref{formula}, we have  $|\Gamma_{\Lambda}|=2n+3|P_{\Lambda}|$.  We analyze the cases based on the types of $\Lambda$: 
   
    (1) $\Lambda_* \neq \emptyset$ and $|\Lambda_*|=2$. Assume that $\Lambda_*=\{(\epsilon_1, \epsilon_2), (\epsilon_1^\prime, \epsilon_2^\prime)\}$, then $\epsilon_1=\epsilon_1^\prime$ or $\epsilon_2=\epsilon_2^\prime$, and $|P_{\Lambda}|=1$. We may assume, without loss of generality, that $\epsilon_1=\epsilon_1^\prime$. Note that 
    $$|\Gamma_{\Lambda}|=2|\Lambda_{\mathtt{b},  \times_1}^{\epsilon_1}\cup \Lambda_{\mathtt{p}}^\sigma|+\frac{1}{2}|\Lambda_*|.$$ We have
    $$|\Lambda| =|\Lambda_{\mathtt{b},  \times_1}^{\epsilon_1}\cup \Lambda_{\mathtt{p}}^\sigma|+|\Lambda_*|=\frac{|\Gamma_{\Lambda}|-\frac{1}{2}|\Lambda_*|}{2}+|\Lambda_*| = n+3.$$
    
    (2) $\Lambda_* \neq \emptyset$ and $|\Lambda_*|=1$. Assume that $\Lambda_*=\{(\epsilon_1, \epsilon_2)\}$. In this case,  $|P_{\Lambda}|=2$ and
    $$|\Gamma_{\Lambda}|=2|\Lambda_{\mathtt{b},  \times_1}^{\epsilon_1} \cup \Lambda_{\mathtt{b},  \times_2}^{\epsilon_2} \cup \Lambda_{\mathtt{p}}^\sigma\cup\Lambda_*|.$$ Thus,
    $$|\Lambda| =|\Lambda_{\mathtt{b},  \times_1}^{\epsilon_1} \cup \Lambda_{\mathtt{b},  \times_2}^{\epsilon_2} \cup \Lambda_{\mathtt{p}}^\sigma\cup\Lambda_*|=\frac{1}{2}|\Gamma_{\Lambda}| = n+3.$$
   
    (3) $\Lambda_* = \emptyset$ and $\zeta(\Lambda) = (\pm, \pm)$. Then we have $|P_{\Lambda}|=0$, $|\Lambda_{\mathtt{b},\mathcal{X}}| = 4$ and $$|\Gamma_{\Lambda}| = \frac{1}{2}|\Lambda_{\mathtt{b},\mathcal{X}}| + 2|\Lambda_{\mathtt{b}}^\sigma \cup \Lambda_{\mathtt{p}}^\sigma|.$$ Hence, 
    $$|\Lambda| = |\Lambda_{\mathtt{b},\mathcal{X}}\cup\Lambda_{\mathtt{b}}^\sigma \cup \Lambda_{\mathtt{p}}^\sigma|=\frac{|\Gamma_{\Lambda}| - \frac{1}{2}|\Lambda_{\mathtt{b},\mathcal{X}}|}{2} + |\Lambda_{\mathtt{b},\mathcal{X}}| = n + 3.$$
    
    (4) $\Lambda_* = \emptyset$ and $\zeta(\Lambda) = (\pm, +)$ or $(\pm, -)$. Then $|P_{\Lambda}|=1$,  $|\Lambda_{\times_1}|=2$ and  $$|\Gamma_{\Lambda}| = \frac{1}{2}|\Lambda_{\times_1}| + 2|\Lambda_{\mathtt{b}}^\sigma \cup \Lambda_{\mathtt{p}}^\sigma\cup \Lambda_{\times_2}|.$$ Thus, 
    $$|\Lambda| =|\Lambda_{\mathtt{b}}^\sigma \cup \Lambda_{\mathtt{p}}^\sigma\cup\Lambda_{\times_1}\cup \Lambda_{\times_2}|= \frac{|\Gamma_{\Lambda}| - \frac{1}{2}|\Lambda_{\times_1}|}{2} + |\Lambda_{\times_1}| = n + 3.$$
    
    (5) $\Lambda_* = \emptyset$ and $\zeta(\Lambda) = (+, \pm)$ or $(-,\pm)$. This case is similar to (4).
    
    (6) $\Lambda_* = \emptyset$ and $\zeta(\Lambda)=(\epsilon_1,\epsilon_2)$ with $\epsilon_1, \epsilon_2\in \{+,-\}$. Then $|P_{\Lambda}|=2$ and $$|\Gamma_{\Lambda}| = 2|\Lambda_{\mathtt{b},\mathcal{X}}\cup\Lambda_{\mathtt{b}}^\sigma \cup \Lambda_{\mathtt{p}}^\sigma|.$$ Thus, 
    $$|\Lambda| =|\Lambda_{\mathtt{b},\mathcal{X}}\cup\Lambda_{\mathtt{b}}^\sigma \cup \Lambda_{\mathtt{p}}^\sigma|=  \frac{1}{2}|\Gamma_{\Lambda}| = n + 3.$$
\end{proof}

\begin{prop}\label{tilting}
	Tilting sheaves in ${\rm coh}\mbox{-}\mathbb{X}$ correspond one-to-one with pseudo-triangulations on $(\mathcal{S}, M,\sigma)$.
\end{prop}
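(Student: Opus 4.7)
The plan is to establish the bijection via the natural correspondence $\Lambda \longmapsto T_\Lambda := \bigoplus_{\widehat{\gamma} \in \Lambda} \widehat{\phi}(\widehat{\gamma})$, verifying tilting-ness on one side and maximality on the other, with all ingredients already in place from the earlier material.

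First I would show that for every pseudo-triangulation $\Lambda$ on $(\mathcal{S}, M, \sigma)$, the object $T_\Lambda$ is a tilting sheaf. Since $\widehat{\phi}$ is a bijection (Proposition \ref{correspondence}), the distinct skew-arcs in $\Lambda$ produce pairwise non-isomorphic indecomposable direct summands. Self-compatibility of each $\widehat{\gamma} \in \Lambda$ gives $\operatorname{Ext}^1_{\mathbb{X}}(\widehat{\phi}(\widehat{\gamma}), \widehat{\phi}(\widehat{\gamma})) = 0$, and pairwise compatibility gives $\operatorname{Ext}^1_{\mathbb{X}}(\widehat{\phi}(\widehat{\gamma}_1), \widehat{\phi}(\widehat{\gamma}_2)) = 0$ for all $\widehat{\gamma}_1, \widehat{\gamma}_2 \in \Lambda$ (Definition \ref{curvecompatible}). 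Hence $\operatorname{Ext}^1_{\mathbb{X}}(T_\Lambda, T_\Lambda) = 0$. By the previous proposition $|\Lambda| = n+3$, which by Proposition \ref{the properties of coherent sheaves}(d) is precisely the rank of $K_0({\rm coh}\mbox{-}\mathbb{X})$ (since $\sum p_i + 2 - t = 2+2+n+2-3 = n+3$). Applying Proposition \ref{tilting2} then gives that $T_\Lambda$ is a tilting sheaf.

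Next I would construct the inverse assignment. Given a tilting sheaf $T$, decompose $T = \bigoplus_{i=1}^{m} X_i$ into pairwise non-isomorphic indecomposable direct summands and set $\Lambda_T := \{\widehat{\phi}^{-1}(X_i) \mid 1 \leq i \leq m\} \subset \widehat{\mathbf{C}}$. From $\operatorname{Ext}^1_{\mathbb{X}}(X_i, X_i) = 0$ we get that each $\widehat{\phi}^{-1}(X_i)$ is self-compatible, hence a skew-arc; from $\operatorname{Ext}^1_{\mathbb{X}}(X_i, X_j) = 0$ for all $i, j$ the skew-arcs in $\Lambda_T$ are pairwise compatible. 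By Proposition \ref{tilting2} and Proposition \ref{the properties of coherent sheaves}(d), we have $m = n+3$.

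The main (and only nontrivial) obstacle is maximality: I must argue $\Lambda_T$ cannot be strictly enlarged to a bigger set of pairwise compatible skew-arcs. Suppose such an enlargement exists via some skew-arc $\widehat{\gamma} \notin \Lambda_T$. Then $X := \widehat{\phi}(\widehat{\gamma})$ is indecomposable, not isomorphic to any $X_i$, and satisfies $\operatorname{Ext}^1_{\mathbb{X}}(X, T) = \operatorname{Ext}^1_{\mathbb{X}}(T, X) = 0 = \operatorname{Ext}^1_{\mathbb{X}}(X, X)$, making $T \oplus X$ a rigid sheaf with $n+4$ indecomposable direct summands. This contradicts the standard fact (implicit in Proposition \ref{tilting2}, coming from the bilinear form on $K_0$ of the hereditary category ${\rm coh}\mbox{-}\mathbb{X}$) that any rigid object has at most $\operatorname{rk} K_0({\rm coh}\mbox{-}\mathbb{X}) = n+3$ non-isomorphic indecomposable direct summands. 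Thus $\Lambda_T$ is maximal, i.e., a pseudo-triangulation.

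Finally, the two assignments $\Lambda \mapsto T_\Lambda$ and $T \mapsto \Lambda_T$ are mutually inverse by construction and the bijectivity of $\widehat{\phi}$, completing the proof.
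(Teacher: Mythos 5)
Your proposal is correct and follows essentially the same route as the paper: map $\Lambda$ to $\bigoplus_{\widehat{\gamma}\in\Lambda}\widehat{\phi}(\widehat{\gamma})$, use the compatibility conditions to get $\operatorname{Ext}^1$-vanishing, count $|\Lambda|=n+3$ against the rank of $K_0$ via Proposition \ref{tilting2}, and invert through $\widehat{\phi}$. In fact you are more careful than the paper on the one point it leaves implicit, namely that $\widehat{\phi}^{-1}$ of a tilting sheaf is \emph{maximal}; your argument via the standard bound that a rigid object in a hereditary category with tilting object has at most $\operatorname{rk}K_0$ non-isomorphic indecomposable summands is the right way to close that gap.
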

\begin{proof}
	Let $\Lambda=\{\widehat{\gamma}_1, \widehat{\gamma}_2, \cdots, \widehat{\gamma}_{n+3}\}$ be a pseudo-triangulation on $(\mathcal{S}, M,\sigma)$. Let $$T=\bigoplus\limits_{i=1}^{n+3}\widehat{\phi}(\widehat{\gamma}_i).$$ According to the definition of pseudo-triangulation, we have $${\rm dim_{\mathbf{k}}Ext_{\mathbb{X}}^{1}}(\widehat{\phi}(\widehat{\gamma}_i),\, \widehat{\phi}(\widehat{\gamma}_j))=0$$ for $1\leq i,\, j\leq n+3$.
 This implies ${\rm dim_{\mathbf{k}}Ext_{\mathbb{X}}^{1}}(T,T)=0$. Combining with Proposition \ref{tilting2}, we get that $T$ is a tilting sheaf in ${\rm coh}\mbox{-}\mathbb{X}$. 
 
 Conversely, if $T=\oplus_{i=1}^{n+3}T_{i}$ is a tilting sheaf in ${\rm coh}\mbox{-}\mathbb{X} $, then $\operatorname{Ext}_{\mathbb{X}}^1(T,T)=0$. By the bijection $\widehat{\phi}$, $\{\widehat{\phi}^{-1}(T_1), \widehat{\phi}^{-1}(T_2), \cdots, \widehat{\phi}^{-1}(T_{n+3})\}$ forms a pseudo-triangulation on $(\mathcal{S}, M,\sigma)$.
\end{proof}

In the later, we write $\widehat{\phi}(\Lambda) := \oplus_{\widehat{\gamma} \in \Lambda} \widehat{\phi}(\widehat{\gamma})$ the tilting sheaf corresponding to $\Lambda$.

\subsection{Flips and mutations}
For a detailed discussion on the flip of a skew-arc in a pseudo-triangulation, see Appendix \ref{foh}. In this subsection, we examine the compatibility between the flip of a skew-arc in a pseudo-triangulation and the mutation of the indecomposable direct summand of the corresponding tilting sheaf.

 Recall that each tilting sheaf in ${\rm coh}\mbox{-}\mathbb{X}$  has $n+3$ indecomposable direct  summands. A rigid sheaf in ${\rm coh}\mbox{-}\mathbb{X}$ is called  \emph{almost complete tilting} if it has $n+2$-many (pairwise non-isomorphic) indecomposable direct summands. Let $\overline{T}$ be an almost complete tilting sheaf in ${\rm coh}\mbox{-}\mathbb{X}$. If there exists an indecomposable sheaf $E$, such that $\overline{T}\oplus E$ is a tilting sheaf, then $E$ is called a \emph{complement} of $\overline{T}$. As shown in \cite{Hubner1996Dissertation}, there are exactly two complements of $\overline{T}$. Let $T=\overline{T}\oplus X$ and $T^{\prime}=\overline{T}\oplus X^{\prime}$ be two tilting sheaves in ${\rm coh}\mbox{-}\mathbb{X}$, where $X$ and $X^{\prime}$ are two non-isomorphic indecomposable objects in ${\rm coh}\mbox{-}\mathbb{X}$. Following \cite{Geng2020MutationTilting}, the sheaf $T^{\prime}$ is called the \emph{mutation} of $T$ at $X$ and denoted by $T^{\prime}=\mu_{X}(T)$. The summand $X^{\prime}$  the \emph{mutation} of $X$ with respect to $T$ and denote by $\mu_{T}(X)=X^{\prime}.$

\begin{cor}\label{mutation and flip}
Let $\Lambda$ be a pseudo-triangulation on $(\mathcal{S}, M,\sigma)$ and $\widehat{\gamma}$ be a skew-arc in $\Lambda$. Then \[\widehat{\phi}(\widehat{\mu}_{\Lambda}(\widehat{\gamma}))=\mu_{T}(\widehat{\phi}(\widehat{\gamma})).\]
\end{cor}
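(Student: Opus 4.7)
The plan is to deduce this corollary almost immediately from Proposition~\ref{tilting} together with the uniqueness of complements for an almost complete tilting sheaf, so the work is conceptual rather than computational.

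First, I would invoke the construction of the flip from Appendix~\ref{foh}: given a skew-arc $\widehat{\gamma}\in\Lambda$, the flip replaces $\widehat{\gamma}$ by a uniquely determined distinct skew-arc $\widehat{\mu}_{\Lambda}(\widehat{\gamma})$ so that
\[
\Lambda^{\prime}:=(\Lambda\setminus\{\widehat{\gamma}\})\cup\{\widehat{\mu}_{\Lambda}(\widehat{\gamma})\}
\]
is again a pseudo-triangulation on $(\mathcal{S},M,\sigma)$. Applying Proposition~\ref{tilting} to both $\Lambda$ and $\Lambda^{\prime}$, the sheaves $T=\widehat{\phi}(\Lambda)$ and $T^{\prime}=\widehat{\phi}(\Lambda^{\prime})$ are both tilting sheaves in ${\rm coh}\mbox{-}\mathbb{X}$. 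Setting $\overline{T}:=\bigoplus_{\widehat{\delta}\in\Lambda\setminus\{\widehat{\gamma}\}}\widehat{\phi}(\widehat{\delta})$, we have
\[
T=\overline{T}\oplus\widehat{\phi}(\widehat{\gamma}),\qquad T^{\prime}=\overline{T}\oplus\widehat{\phi}(\widehat{\mu}_{\Lambda}(\widehat{\gamma})),
\]
so $\overline{T}$ is an almost complete tilting sheaf with $n+2$ pairwise non-isomorphic indecomposable summands.

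Next I would use the injectivity of $\widehat{\phi}$ from Proposition~\ref{correspondence} (Theorem~\ref{thmA}): since the flip yields $\widehat{\mu}_{\Lambda}(\widehat{\gamma})\neq\widehat{\gamma}$, the two complements $\widehat{\phi}(\widehat{\gamma})$ and $\widehat{\phi}(\widehat{\mu}_{\Lambda}(\widehat{\gamma}))$ of $\overline{T}$ are non-isomorphic. By H\"ubner's result (cited in the text from \cite{Hubner1996Dissertation}), an almost complete tilting sheaf admits exactly two complements; together with the definition of the mutation $\mu_{T}(\widehat{\phi}(\widehat{\gamma}))$ as the unique indecomposable sheaf $X^{\prime}\not\cong\widehat{\phi}(\widehat{\gamma})$ such that $\overline{T}\oplus X^{\prime}$ is tilting, we conclude
\[
\mu_{T}(\widehat{\phi}(\widehat{\gamma}))=\widehat{\phi}(\widehat{\mu}_{\Lambda}(\widehat{\gamma})),
\]
as desired.

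The only non-routine input here is the claim that the geometric flip $\widehat{\mu}_{\Lambda}(\widehat{\gamma})$ exists, is unique, distinct from $\widehat{\gamma}$, and yields a pseudo-triangulation; this is precisely the content of Appendix~\ref{foh}, so the main potential obstacle is really packaged into that appendix rather than into this corollary. Once that is granted, the argument reduces to the two-complement dichotomy for almost complete tilting sheaves.
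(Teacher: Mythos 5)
Your proposal is correct and follows essentially the same route as the paper: apply Proposition~\ref{tilting} to $\Lambda$ and its flip to realize $\overline{T}$ as an almost complete tilting sheaf with two complements $\widehat{\phi}(\widehat{\gamma})$ and $\widehat{\phi}(\widehat{\mu}_{\Lambda}(\widehat{\gamma}))$, then conclude by the two-complement property. Your additional remark that injectivity of $\widehat{\phi}$ guarantees the complements are non-isomorphic is a small but welcome extra detail the paper leaves implicit.
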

\begin{proof}
    By Proposition \ref{tilting}, we know that $\Lambda \setminus \{\widehat{\gamma}\}$ corresponds to an almost complete tilting sheaf $\overline{T}$ in the category ${\rm coh}\mbox{-}\mathbb{X}$. Clearly, the two complements of $\overline{T}$ are $\widehat{\phi}(\widehat{\gamma})$ and $\widehat{\mu}_{\Lambda}(\widehat{\gamma})$.  Thus, our claim follows.
\end{proof}

\section{Connectedness of the tilting graph $\mathcal{G}(\mathcal{T}_{\mathbb{X}})$}\label{connectedness}
 In this section, we employ geometric-combinatorial approaches to prove the connectivity of the tilting graph of the category ${\rm coh}\mbox{-}\mathbb{X}$. Recall that the \emph{tilting graph} $\mathcal{G}(\mathcal{T}_{\mathbb{X}})$ of the category ${\rm coh}\mbox{-}\mathbb{X}$ has as vertices the isoclasses of basic tilting sheaves of ${\rm coh}\mbox{-}\mathbb{X}$, while two vertices $T$ and $T^\prime$ are connected by an edge if and only if they differ by precisely one indecomposable direct summand.

Let $T$ and $T^{\prime}$ be tilting sheaves in ${\rm coh}\mbox{-}\mathbb{X}$.  $T$ can be \emph{mutated to}  $T^{\prime}$ if there exists a sequence of tilting sheaves $T = T^{0}, T^{1}, \dots, T^{n-1}, T^{n} = T^{\prime}$ such that $T^{i}$ is a mutation of $T^{i-1}$ for every $1 \leq i \leq n$. Furthermore, under the bijection in Proposition \ref{tilting}, we say a pseudo-triangulation $\Lambda$ can be \emph{flipped into} $\Lambda^{\prime}$ if the associated tilting sheaves $T$ and $T^{\prime}$ satisfy that $T$ can be \emph{mutated to} $T^{\prime}$. In other word, $\Lambda$ can be transformed into $\Lambda^{\prime}$ by a sequence of flips of skew-arcs. 

\begin{prop}\label{FV-pseudo-triangulations}
Let $\Lambda$ be a pseudo-triangulation of the form
$$
\{[D^{a_0}_{b_0}]^+, [D^{a_0}_{b_0}]^-, \widetilde{[D^{a_i}_{b_i}]}, [D^{a_n}_{b_n}]^+, [D^{a_n}_{b_n}]^- \mid 1 \leq i \leq n-1 \},
$$
where $a_0 + b_0 = 0$, $a_n + b_n = n$, and
\begin{equation}\label{assumption on chains}
a_0 \leq a_1 \leq \cdots \leq a_n, \quad b_0 \leq b_1 \leq \cdots \leq b_n.
\end{equation}
Then, $\widehat{\phi}(\Lambda)$ is a $\mathbb{Z}(\vec{x}_1 - \vec{x}_2)$-stable tilting bundle in ${\rm coh}\mbox{-}\mathbb{X}$. The converse also holds.
\end{prop}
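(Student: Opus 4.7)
The plan is to prove the two implications separately, making heavy use of Proposition \ref{L-action} (describing the $\mathbb{L}(2,2,n)$-action on skew-curves) and the compatibility criterion in Proposition \ref{prop:6.4}.

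For the forward direction, since $\Lambda$ is given as a pseudo-triangulation, Proposition \ref{tilting} yields that $T:=\widehat{\phi}(\Lambda)$ is a tilting sheaf; moreover every skew-arc in $\Lambda$ lies in $\mathbf{C}_{\mathtt{b},\mathcal{X}}\cup \mathbf{C}_{\mathtt{b}}^\sigma$, so by Table \ref{table:skew-curves-mapping} each indecomposable summand of $T$ is a line bundle or an extension bundle, making $T$ a tilting bundle. To verify $\mathbb{Z}(\vec{x}_1-\vec{x}_2)$-stability, I would write $\vec{x}_1-\vec{x}_2$ in normal form as $\vec{x}_1+\vec{x}_2-\vec{c}$, giving $(l_1,l_2,l_3,l)=(1,1,0,-1)$ and hence $l_3+(l_1+l_2+l)n=n$ and $-l_3-ln=n$. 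Proposition \ref{L-action}(a) then yields $\widehat{\phi}([D^j_i]^\epsilon)(\vec{x}_1-\vec{x}_2)=\widehat{\phi}([D^{j+n}_{i+n}]^{\neg\epsilon})=\widehat{\phi}([D^j_i]^{\neg\epsilon})$ (using $[D^{j+n}_{i+n}]=[D^j_i]$), while Proposition \ref{L-action}(b) gives $\widehat{\phi}(\widetilde{[D^{a_i}_{b_i}]})(\vec{x}_1-\vec{x}_2)=\widehat{\phi}(\widetilde{[D^{a_i}_{b_i}]})$. Since both $[D^{a_0}_{b_0}]^{\pm}$ and both $[D^{a_n}_{b_n}]^{\pm}$ appear in $\Lambda$, the shift merely permutes the summands of $T$, giving stability.

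For the converse, let $T$ be a $\mathbb{Z}(\vec{x}_1-\vec{x}_2)$-stable tilting bundle, and set $\Lambda:=\widehat{\phi}^{-1}(T)$. The bundle hypothesis rules out skew-arcs in $\mathbf{C}_{\mathtt{p}}^\sigma\cup \mathbf{C}_{\mathbf{k}^*}^{\mathrm{pw}}\cup \mathbf{C}_{\mathbf{k}^*}^{\mathrm{sp}}$ (by Table \ref{table:skew-curves-mapping}), so $\Lambda_\mathtt{p}^\sigma=\Lambda_*=\emptyset$. Stability together with the shift formulas from the forward direction forces $[D^j_i]^\epsilon\in\Lambda\Leftrightarrow [D^j_i]^{\neg\epsilon}\in\Lambda$. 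Since $\Lambda_*=\emptyset$, the classification of $\zeta(\Lambda)_i$ recorded just before Lemma \ref{lem:01} leaves only case $(A0)$, giving $\Lambda_{\times_i}=\{\gamma_i^+,\gamma_i^-\}$ for a single $\sigma$-fixed curve $\gamma_i$. Using Remark \ref{sigma-fixed} to normalize representatives, $\gamma_1=[D^{a_0}_{b_0}]$ with $a_0+b_0=0$ and $\gamma_2=[D^{a_n}_{b_n}]$ with $a_n+b_n=n$; the remaining $|\Lambda|-4=n-1$ elements of $\Lambda$ are skew-arcs $\widetilde{[D^{a_i}_{b_i}]}\in\mathbf{C}_\mathtt{b}^\sigma$ for $1\leq i\leq n-1$.

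The hard part is then verifying the chain inequalities \eqref{assumption on chains}. For every pair $\widehat{\gamma}_p,\widehat{\gamma}_q\in\Lambda$ with at least one in $\mathbf{C}_\mathtt{b}^\sigma$, condition (T2) of Proposition \ref{prop:6.4} fails, so compatibility reduces to $I(\mathbf{C}(\widehat{\gamma}_p),\mathbf{C}(\widehat{\gamma}_q))=0$. Lifting to the universal cover $\widetilde{\mathcal{S}}$, where $D^a_b$ is represented by a straight segment from $(b,0)$ to $(a,1)$, non-crossing of two such segments is equivalent to the endpoint pairs being comparable, i.e.\ $(a-a')(b-b')\geq 0$. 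The technical core is to use the ambiguities $\widetilde{[D^a_b]}=\widetilde{[D^{-b}_{-a}]}$ and $(a,b)\sim (a+n,b+n)$ to pick a coherent representative for each $\widetilde{[D^{a_i}_{b_i}]}$ so that the relabelled sequences simultaneously satisfy $a_0\leq a_1\leq\cdots\leq a_n$ and $b_0\leq b_1\leq\cdots\leq b_n$; concretely, one sorts the $n-1$ extension skew-arcs according to where their lifts sit between the lifts of $\gamma_1$ and $\gamma_2$ and shows that this ordering is consistent on both boundaries. Handling this bookkeeping carefully — in particular, pinning down which representative of each skew-arc realizes the desired chain — is where the subtlety lies.
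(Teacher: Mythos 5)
Your proposal is correct and follows essentially the same route as the paper: the forward direction via Proposition \ref{tilting} together with the shift formulas of Proposition \ref{L-action} applied to $\vec{x}_1-\vec{x}_2=\vec{x}_1+\vec{x}_2-\vec{c}$, and the converse by using stability to pair up the line-bundle summands and force $\Lambda\subset\mathbf{C}_{\mathtt{b},\mathcal{X}}\cup\mathbf{C}_{\mathtt{b}}^\sigma$ with $\zeta(\Lambda)=(\pm(0),\pm(0))$. You are in fact more explicit than the paper, which simply asserts that $\Lambda$ ``has the required form'' without carrying out the choice of representatives realizing the chains \eqref{assumption on chains}; your reduction of compatibility to $I(\mathbf{C}(\widehat{\gamma}_p),\mathbf{C}(\widehat{\gamma}_q))=0$ and the non-crossing criterion in the universal cover is the right way to finish that step, even though you leave the final bookkeeping as a sketch.
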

\begin{proof}
Assume that $\Lambda$ has the form described in the proposition. Since $\Lambda \subset \mathbf{C}_{\mathtt{b},\mathcal{X}} \cup \mathbf{C}_{\mathtt{b}}^\sigma$, Proposition \ref{tilting} implies that $\widehat{\phi}(\Lambda)$ is a tilting bundle in $\operatorname{coh}\mbox{-}\mathbb{X}$. Furthermore, by Proposition \ref{L-action}, we know that $\widehat{\phi}(\Lambda)(\vec{x}_1 - \vec{x}_2) = \widehat{\phi}(\Lambda)$, meaning that $\widehat{\phi}(\Lambda)$ is $\mathbb{Z}(\vec{x}_1 - \vec{x}_2)$-stable.

Conversely, suppose that $T$ is a $\mathbb{Z}(\vec{x}_1 - \vec{x}_2)$-stable tilting bundle in $\operatorname{coh}\mbox{-}\mathbb{X}$ and let $\Lambda = \widehat{\phi}^{-1}(T)$. By Proposition \ref{tilting}, $\Lambda \subset \mathbf{C}_{\mathtt{b},\mathcal{X}} \cup \mathbf{C}_{\mathtt{b}}^\sigma$, and for each indecomposable line bundle summand $L$ of $T$, there is an indecomposable summand $L' \cong L(\vec{x}_1 - \vec{x}_2)$ of $T$. By Proposition \ref{L-action}, both $\widehat{\phi}^{-1}(L)$ and $\sigma(\widehat{\phi}^{-1}(L) )$ belong to $\Lambda$. Therefore,  $\Lambda$ has the required form.
\end{proof}

We refer to the pseudo-triangulations $\Lambda$ stated in the above proposition as \emph{FV-pseudo-triangulations}. For a $\mathbb{Z}(\vec{x}_1 - \vec{x}_2)$-stable tilting object $T$, Proposition \ref{FV-pseudo-triangulations} allows us to order its indecomposable direct summands as follows:
\[
T = T_0^+ \oplus T_0^- \oplus T_1 \oplus \cdots \oplus T_{n-1} \oplus T_n^+ \oplus T_n^-,
\]
\noindent where:  
\[
T_0^+ = \widehat{\phi}([D^{a_0}_{b_0}]^+), \quad T_0^- = \widehat{\phi}([D^{a_0}_{b_0}]^-),
\]
\[
T_i = \widehat{\phi}(\widetilde{[D^{a_i}_{b_i}]}) \quad \text{for } 1 \leq i \leq n-1,
\]
\noindent and
\[
T_n^+ = \widehat{\phi}([D^{a_n}_{b_n}]^+), \quad T_n^- = \widehat{\phi}([D^{a_n}_{b_n}]^-).
\]

 Let $\widehat{\mu}_0(T)$ denote the mutation of $\mu_{T_0^+}(T)$ at $T_0^-$ (or equivalently, the mutation of $\mu_{T_0^-}(T)$ at $T_0^+$). Similarly, define $\widehat{\mu}_n(T)$, and for $1 \leq i \leq n-1$, let $\widehat{\mu}_i(T)$ represent the mutation of $T$ at $T_i$. We define the map
\[
\iota: \{\mathbb{Z}(\vec{x}_1 - \vec{x}_2) \text{-stable tilting bundles}\} \to \mathbb{Z}_2^{n+1}
\]
\[
T \mapsto (\iota(T)_0, \iota(T)_1, \ldots, \iota(T)_n),
\]
\noindent where 
\[
\iota(T)_i = \begin{cases} 
1 & \text{if } \widehat{\mu}_i(T) \text{ remains a } \mathbb{Z}(\vec{x}_1 - \vec{x}_2)\text{-stable tilting bundle}, \\
0 & \text{otherwise}.
\end{cases}
\]

For an integer $k$, let $\left\lfloor k \right\rfloor_{\text{o}}$ and $\left\lfloor k \right\rfloor_{\text{e}}$ denote the largest odd and even numbers less than or equal to $k$, respectively. We continue to consider FV-pseudo-triangulations of the following forms: 
\[
\overrightarrow{\Lambda}^a_b =\{ [D^{a}_{b}]^+, [D^{a}_{b}]^- , \widetilde{[D^{a+1}_{b}]} , \widetilde{[D^{a+1}_{b+1}]} , \cdots , [D^{a+\frac{\left\lfloor n+1 \right\rfloor_{\text{e}}}{2}}_{b+\frac{\left\lfloor n \right\rfloor_{\text{e}}}{2}}]^+ , [D^{a+\frac{\left\lfloor n+1 \right\rfloor_{\text{e}}}{2}}_{b+\frac{\left\lfloor n \right\rfloor_{\text{e}}}{2}}]^-\}
\]
\[
\underrightarrow{\Lambda}^a_b = \{[D^{a}_{b}]^+ , [D^{a}_{b}]^- , \widetilde{[D^{a}_{b+1}]} , \widetilde{[D^{a+1}_{b+1}]} , \cdots , [D^{a+\frac{\left\lfloor n \right\rfloor_{\text{e}}}{2}}_{b+\frac{\left\lfloor n+1 \right\rfloor_{\text{e}}}{2}}]^+ , [D^{a+\frac{\left\lfloor n \right\rfloor_{\text{e}}}{2}}_{b+\frac{\left\lfloor n+1 \right\rfloor_{\text{e}}}{2}}]^-\}
\]
where $a,b \in \mathbb{Z}$ such that $a+b=0$. The following figures are shown as illustrated.
\begin{figure}[H]
    \centering

\tikzset{every picture/.style={line width=0.75pt}}  



\end{figure}

To describe the relationship between these FV-pseudo-triangulations, we present the following statement:
\begin{prop}
Let $a, b \in \mathbb{Z}$ such that $a + b = 0$. The following relations hold: 
\begin{itemize}
    \item[(a)]  $\widehat{\mu}_{\left\lfloor n \right\rfloor_{\text{o}}} \cdot \widehat{\mu}_{\left\lfloor n \right\rfloor_{\text{o}}-2} \cdots \widehat{\mu}_{1}\cdot\widehat{\phi}(\overrightarrow{\Lambda}^a_b) =\widehat{\phi}(\underrightarrow{\Lambda}^a_b)$;
    \item[(b)] $\widehat{\mu}_{\left\lfloor n \right\rfloor_{\text{o}}} \cdot \widehat{\mu}_{\left\lfloor n \right\rfloor_{\text{o}}-2} \cdots \widehat{\mu}_{1}\cdot\widehat{\phi}(\underrightarrow{\Lambda}^a_b) = \widehat{\phi}(\overrightarrow{\Lambda}^a_b)$;
    \item[(c)] $\widehat{\mu}_{\left\lfloor n \right\rfloor_{\text{e}}} \cdot \widehat{\mu}_{\left\lfloor n \right\rfloor_{\text{e}}-2} \cdots \widehat{\mu}_{0}\cdot\widehat{\phi}(\overrightarrow{\Lambda}^a_b) = \widehat{\phi}(\underrightarrow{\Lambda}^{a+1}_{b- 1});$
    \item[(d)]  $\widehat{\mu}_{\left\lfloor n \right\rfloor_{\text{e}}} \cdot \widehat{\mu}_{\left\lfloor n \right\rfloor_{\text{e}}-2} \cdots \widehat{\mu}_{0}\cdot\widehat{\phi}(\underrightarrow{\Lambda}^a_b) = \widehat{\phi}(\overrightarrow{\Lambda}^{a-1}_{b+1})$.  
\end{itemize}
\end{prop}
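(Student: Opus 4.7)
The plan is to use Corollary~\ref{mutation and flip} to translate each of (a)--(d) into an equivalent statement about flips of skew-arcs in the given FV-pseudo-triangulation, and then verify these flips by inspecting the local structure of $(\mathcal{S},M,\sigma)$.

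For (a) (and symmetrically (b)), the key observation is that $\overrightarrow{\Lambda}^a_b$ and $\underrightarrow{\Lambda}^a_b$ agree at every even position, while at each odd position $2k-1$ they carry $\widetilde{[D^{a+k}_{b+k-1}]}$ and $\widetilde{[D^{a+k-1}_{b+k}]}$ respectively. These two extension-bundle skew-arcs form the two ``diagonals'' of the quadrilateral bounded by the two adjacent even-indexed skew-arcs (together with the relevant boundary segments and their $\sigma$-images), so the flip rule of Appendix~\ref{foh} interchanges them. Distinct odd-indexed positions lie in disjoint quadrilaterals, so the corresponding flips commute and may be performed in any order; combining them via Corollary~\ref{mutation and flip} yields (a).

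For (c) (and symmetrically (d)), the same diagonals-of-a-quadrilateral argument shows that the flip of $\widetilde{[D^{a+k}_{b+k}]}$ at each interior even position $2k$ produces $\widetilde{[D^{a+k+1}_{b+k-1}]}$, matching the $2k$-th entry of $\underrightarrow{\Lambda}^{a+1}_{b-1}$. The boundary positions require separate treatment: the composite mutation $\widehat{\mu}_0$ replaces the $\times_1$-pair $\{[D^a_b]^+,[D^a_b]^-\}$ by another $\times_1$-pair $\{[D^{a'}_{b'}]^+,[D^{a'}_{b'}]^-\}$ with $a'+b'=0$, and similarly for $\widehat{\mu}_n$ at position $n$ when $n$ is even (and also for the boundary flip at position $n$ appearing in (a) when $n$ is odd). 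Using Proposition~\ref{L-action} together with the unchanged adjacent odd-indexed skew-arc $\widetilde{[D^{a+1}_b]}$ to pin down the compatibility constraint on the new pair, one computes $(a',b')=(a+1,b-1)$, and analogously at position $n$, which assembles to $\underrightarrow{\Lambda}^{a+1}_{b-1}$.

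The principal obstacle is analysing the composite boundary mutations $\widehat{\mu}_0$ and (when applicable) $\widehat{\mu}_n$, since these are two-step operations on a pair of line bundles which lack the clean diagonals-of-a-quadrilateral interpretation of the interior flips, and the $\pm$ labels must be tracked through both intermediate steps. Concretely, one has to verify that after the first constituent mutation the resulting (typically non-$\mathbb{Z}(\vec{x}_1-\vec{x}_2)$-stable) tilting bundle still admits the second mutation---this is supplied by Lemma~\ref{well-defined}---and then that the final pair is compatible (in the sense of Definition~\ref{curvecompatible}) with every surviving interior skew-arc, which follows from Proposition~\ref{prop:6.4} together with the local picture around $\times_1$ and $\times_2$ described in Convention~\ref{conven2}.
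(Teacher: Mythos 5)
Your proposal is correct and follows essentially the same route as the paper, which simply carries out the flips directly (displaying the computation for $n$ even and case (c), and referring to a figure for the interchange of the two diagonals in each local quadrilateral, with the remaining cases "following similarly"). Your additional care with the composite boundary mutations $\widehat{\mu}_0$ and $\widehat{\mu}_n$ and with the commutation of flips at non-adjacent positions is sound and merely makes explicit what the paper leaves to the picture.
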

\begin{proof}
For even $n$, we have
\[
\begin{aligned}
\widehat{\mu}_{n} \cdot \widehat{\mu}_{n-2} \cdots \widehat{\mu}_{0} \cdot \widehat{\phi}(\overrightarrow{\Lambda}^a_b) 
&= \widehat{\phi}([D^{a+1}_{b-1}]^+) \oplus \widehat{\phi}([D^{a+1}_{b-1}]^-) \oplus \widehat{\phi}(\widetilde{[D^{a+1}_{b}]}) \\
&\oplus \cdots \oplus \widehat{\phi}([D^{a+1+\frac{n}{2}}_{b-1+\frac{n}{2}}]^+) \oplus \widehat{\phi}([D^{a+1+\frac{n}{2}}_{b-1+\frac{n}{2}}]^-) \\
&= \widehat{\phi}(\underrightarrow{\Lambda}^{a+1}_{b-1}).
\end{aligned}
\]
as shown in the following figure, the transition occurs from $\overrightarrow{\Lambda}^a_b$ (red and black skew-arcs) to $\underrightarrow{\Lambda}^{a+1}_{b-1}$ (green and black skew-arcs).
\begin{figure}[H]
    \centering

\tikzset{every picture/.style={line width=0.75pt}}  


\end{figure}
\noindent The remaining cases follow similarly. Hence, the proof is complete.
\end{proof}

As a result, by combining the above proposition with Corollary \ref{mutation and flip}, we have the following conclusion:
\begin{cor}\label{Tab}
The $\mathbb{Z}(\vec{x}_1 - \vec{x}_2)$-stable tilting bundles in 
\[\{ \widehat{\phi}(\overrightarrow{\Lambda}^a_b), \widehat{\phi}(\underrightarrow{\Lambda}^a_b) \mid a, b \in \mathbb{Z} \text{ such that } a + b = 0 \}\] can be mutated to each other in  $\mathcal{G}(\mathcal{T}_{\mathbb{X}})$.
\end{cor}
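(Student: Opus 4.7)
The plan is to deduce the corollary directly from the four mutation identities (a)--(d) of the preceding proposition, together with Corollary \ref{mutation and flip}. Since each operator $\widehat{\mu}_i$ is (a composition of at most two) tilting mutation(s), every identity of the form $\widehat{\mu}_{i_k} \cdots \widehat{\mu}_{i_1}(T) = T'$ exhibits a finite path in $\mathcal{G}(\mathcal{T}_{\mathbb{X}})$ connecting $T$ to $T'$. Therefore it suffices to show that, using only the four identities, one can traverse between any two members of the family
\[
\mathcal{F} := \{\widehat{\phi}(\overrightarrow{\Lambda}^a_b),\, \widehat{\phi}(\underrightarrow{\Lambda}^a_b) \mid a,b \in \mathbb{Z},\ a + b = 0\}.
\]

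First, identities (a) and (b) provide, for each fixed pair $(a, b)$ with $a + b = 0$, explicit mutation paths
\[
\widehat{\phi}(\overrightarrow{\Lambda}^a_b) \longleftrightarrow \widehat{\phi}(\underrightarrow{\Lambda}^a_b),
\]
showing that the two shapes of FV-pseudo-triangulations at the \emph{same} parameter pair are connected in $\mathcal{G}(\mathcal{T}_{\mathbb{X}})$. Thus it is enough to connect different parameter pairs.

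Second, identity (c) yields a path $\widehat{\phi}(\overrightarrow{\Lambda}^a_b) \rightsquigarrow \widehat{\phi}(\underrightarrow{\Lambda}^{a+1}_{b-1})$, and composing with (b) applied at the new parameter pair $(a+1, b-1)$ yields a path to $\widehat{\phi}(\overrightarrow{\Lambda}^{a+1}_{b-1})$; this realizes the parameter shift $(a, b) \mapsto (a+1, b-1)$ in $\mathcal{G}(\mathcal{T}_{\mathbb{X}})$. Symmetrically, identity (d) (composed with (a)) realizes the inverse shift $(a, b) \mapsto (a-1, b+1)$. Iterating these two shifts allows passage from any pair $(a, b)$ with $a + b = 0$ to any other such pair in finitely many mutation steps; combined with the bridge from the first step, this proves that every two elements of $\mathcal{F}$ are mutation-equivalent.

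I do not anticipate a serious obstacle: the statement is essentially a combinatorial corollary of the previous proposition, and the only point requiring care is noting that the compound operators $\widehat{\mu}_0$ and $\widehat{\mu}_n$ (which each involve two summands) still correspond to finite walks in $\mathcal{G}(\mathcal{T}_{\mathbb{X}})$, which follows immediately from Corollary \ref{mutation and flip} applied summand by summand.
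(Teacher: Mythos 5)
Your proposal is correct and takes essentially the same route as the paper, which simply asserts the corollary "by combining the above proposition with Corollary \ref{mutation and flip}"; you have spelled out the implicit bookkeeping (same-parameter bridge via (a)/(b), parameter shifts $(a,b)\mapsto(a\pm 1,b\mp 1)$ via (c)/(d) composed with (b)/(a), and the observation that each $\widehat{\mu}_i$ is a walk of length one or two in $\mathcal{G}(\mathcal{T}_{\mathbb{X}})$).
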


We present the following criterion for the $\mathbb{Z}(\vec{x}_1 - \vec{x}_2)$-stable tilting bundles given in Corollary \ref{Tab}:
\begin{prop}\label{111}
   A $\mathbb{Z}(\vec{x}_1 - \vec{x}_2)$-stable tilting bundle $T$  belongs to 
   \[\{\widehat{\phi}(\overrightarrow{\Lambda}^a_b), \widehat{\phi}(\underrightarrow{\Lambda}^a_b) \mid a, b \in \mathbb{Z} \text{ such that } a + b = 0\}\] if and only if
    $$\iota(T) = (\underbrace{1,1,\cdots,1}_{n\ \text{times}}).$$
\end{prop}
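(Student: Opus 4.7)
My plan is to encode FV-pseudo-triangulations by binary words and then reduce the claim to the assertion that $\iota(T)$ consists entirely of $1$'s precisely when the word is alternating.

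\textbf{Setup.} Let $\Lambda = \widehat{\phi}^{-1}(T)$ be the FV-pseudo-triangulation associated with $T$ via Proposition \ref{FV-pseudo-triangulations}. Since the chains satisfy $(a_n + b_n) - (a_0 + b_0) = n$ and $(a_{i+1}-a_i) + (b_{i+1}-b_i) \geq 1$ for each of the $n$ gaps (the skew-arcs being pairwise distinct), each gap is exactly a unit step in either the $a$-direction or the $b$-direction. Encoding the $i$-th step by $\epsilon_i \in \{\mathtt{a},\mathtt{b}\}$ gives a word $w(\Lambda) = \epsilon_1\cdots\epsilon_n$. By inspection, $\overrightarrow{\Lambda}^a_b$ and $\underrightarrow{\Lambda}^a_b$ correspond exactly to the two alternating words $\mathtt{a}\mathtt{b}\mathtt{a}\mathtt{b}\cdots$ and $\mathtt{b}\mathtt{a}\mathtt{b}\mathtt{a}\cdots$. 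So the proposition reduces to: $\iota(T) = (1,\ldots,1)$ iff $w(\Lambda)$ is alternating.

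\textbf{Sufficiency.} Assume $\Lambda$ is alternating; I treat $\overrightarrow{\Lambda}^a_b$, the other case being symmetric. For $1 \leq i \leq n-1$, the two neighbors $\widetilde{[D^{a_{i-1}}_{b_{i-1}}]}$ and $\widetilde{[D^{a_{i+1}}_{b_{i+1}}]}$ have four distinct endpoints (one $a$-step and one $b$-step separate them), so they bound an honest quadrilateral whose other diagonal is again a $\sigma$-fixed bundle-type skew-arc $\widetilde{[D^{a^\prime_i}_{b^\prime_i}]} \in \mathbf{C}_{\mathtt{b}}^{\sigma}$. Hence $\widehat{\mu}_i(T)$ is again in $\mathbf{C}_{\mathtt{b},\mathcal{X}} \cup \mathbf{C}_{\mathtt{b}}^{\sigma}$ and so, by Proposition \ref{FV-pseudo-triangulations}, $\mathbb{Z}(\vec{x}_1-\vec{x}_2)$-stable. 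For $i = 0, n$ a direct computation using the correspondence table and Proposition \ref{L-action} (equivalently, reading off the flip in the cover $\widetilde{\mathcal{S}}$ around the fixed point $\times_i$) shows $\widehat{\mu}_i(T)$ is obtained by replacing the boundary line-bundle pair with a matching pair that still satisfies the chain condition. Thus $\iota(T)_i = 1$ for all $i$.

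\textbf{Necessity.} Conversely, assume $\iota(T)=(1,\ldots,1)$ and suppose for contradiction that $w(\Lambda)$ has two consecutive equal letters $\epsilon_i = \epsilon_{i+1} = \mathtt{a}$ at some $1\leq i \leq n-1$ (the case $\mathtt{b}$ is symmetric). Then $(a_{i-1},b_{i-1})$, $(a_i,b_i)=(a_{i-1}+1,b_{i-1})$, $(a_{i+1},b_{i+1})=(a_{i-1}+2,b_{i-1})$, so the three skew-arcs $\widetilde{[D^{a_{i-1}+k}_{b_{i-1}}]}$ ($k=0,1,2$) share the common bottom endpoint $(b_{i-1})_{\partial^\prime}$ and fan out to the three consecutive top vertices $(a_{i-1}+k)_{\partial}$. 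Removing the middle skew-arc from $\Lambda$ exposes a quadrilateral region whose two diagonals are the removed arc and the top-to-top curve $\widetilde{[D^{a_{i-1},\, a_{i-1}+2}]} \in \mathbf{C}_{\mathtt{p}}^{\sigma}$. By the definition of flip (Appendix \ref{foh}), $\widehat{\mu}_i(\widetilde{[D^{a_i}_{b_i}]})$ is this torsion skew-arc. So $\widehat{\mu}_i(T)$ acquires a torsion direct summand and is no longer a tilting bundle, contradicting $\iota(T)_i = 1$. The boundary constraints $\iota(T)_0 = \iota(T)_n = 1$ add no further restrictions beyond alternation (by the local analysis at $\times_1,\times_2$ already used in the sufficiency step). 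Hence $w(\Lambda)$ is alternating and $T \in \{\widehat{\phi}(\overrightarrow{\Lambda}^a_b), \widehat{\phi}(\underrightarrow{\Lambda}^a_b)\}$.

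\textbf{Main obstacle.} The conceptually transparent interior argument rests on identifying the flip as the opposite diagonal of the exposed quadrilateral, which is clean. The principal technical burden is the bookkeeping for the two boundary mutations $\widehat{\mu}_0, \widehat{\mu}_n$: these are \emph{double} mutations at the matched $\pm$-pair, so one must verify both that the intermediate step $\mu_{T_0^+}(T)$ remains a tilting sheaf and that the subsequent mutation at $T_0^-$ produces another matched line-bundle pair rather than breaking the $\mathbb{Z}(\vec{x}_1-\vec{x}_2)$-symmetry. Handling these cases uses the explicit form of $\widehat\phi$ on $\mathbf{C}_{\mathtt{b},\times_1} \cup \mathbf{C}_{\mathtt{b},\times_2}$ together with Proposition \ref{L-action}.
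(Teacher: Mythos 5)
Your proposal is correct in substance, and it is worth saying up front that the paper does not really prove this proposition: its entire proof is the sentence ``Compare with \cite[Proposition 5.4]{Chen2023GeometricModel}, it is not difficult to check,'' deferring to the analogue in the type $(p,q)$ model. What you have written is therefore a self-contained argument where the paper offers only a pointer. Your reduction to the word $w(\Lambda)$ is valid: distinctness of the skew-arcs plus the monotonicity in Proposition \ref{FV-pseudo-triangulations} forces each gap to contribute at least $1$ to $a+b$, and the total increment telescopes to $n$, so each gap is a unit step and $a_j+b_j=j$ along the chain. Your two local computations are also the right ones. When $\epsilon_i=\epsilon_{i+1}$ the exposed region is a quadrilateral with three of its four vertices on one boundary component, its opposite diagonal lies in $\mathbf{C}_{\mathtt{p}}^{\sigma}$, and by Corollary \ref{mutation and flip} the new complement is a torsion sheaf, so $\iota(T)_i=0$. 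When $\epsilon_i\neq\epsilon_{i+1}$ the quadrilateral has two vertices on each boundary, both diagonals have index sum $i\not\equiv 0 \pmod{n}$, so the flip stays in $\mathbf{C}_{\mathtt{b}}^{\sigma}$ and the mutated chain is still monotone, whence Proposition \ref{FV-pseudo-triangulations} gives stability. The one piece you leave genuinely unfinished is the sufficiency claim $\iota(T)_0=\iota(T)_n=1$ for the two double mutations at the matched pairs; rather than redoing that bookkeeping, you can close it by citing the proposition immediately preceding this one, whose proof and accompanying figure exhibit $\widehat{\mu}_0$ as replacing the pair $[D^{a}_{b}]^{\pm}$ by $[D^{a+1}_{b-1}]^{\pm}$, visibly yielding another FV-pseudo-triangulation. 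With that reference supplied, your argument is complete and more informative than the one in the paper.
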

\begin{proof}
Compare with \cite[Propsition 5.4]{Chen2023GeometricModel}, it not difficult to check.
\end{proof}

The following observations can be verified directly:
\begin{lem}\label{flipsss}
   Assume that $T$ is a $\mathbb{Z}(\vec{x}_1 - \vec{x}_2)$-stable tilting bundle. 
   \begin{itemize}
       \item[(a)] If $\iota(T)=(1, 0, \iota(T)_{2}, \dots, \iota(T)_{n-1}, 1)$, then 
       $$\iota(\widehat{\mu}_0(T)) = (1, 1, \iota(T)_{2}, \dots, \iota(T)_{n-1}, 1);$$
       \item[(b)] If $\iota(T)=(1, 1, 0, \iota(T)_{3}, \dots, \iota(T)_{n-1}, 1)$, then $$\iota(\widehat{\mu}_1(T)) = (1, 1, 1, \iota(T)_{3}, \dots, \iota(T)_{n-1}, 1);$$
       \item[(c)] If $\iota(T)=(1, \iota(T)_{1}, \dots, \iota(T)_{i-2}, 1, 1, 0, \iota(T)_{i+2}, \dots, \iota(T)_{n-1}, 1)$ for $i \geq 2$, then $\iota(\widehat{\mu}_i(T)) = (1, \iota(T)_{1}, \dots, \iota(T)_{i-2}, 0, 1, 1, \iota(T)_{i+2}, \dots, \iota(T)_{n-1}, 1).$
   \end{itemize}
\end{lem}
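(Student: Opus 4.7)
The plan is to read both $T$ and its flips in terms of the explicit chain data $(a_i)_{i=0}^n$, $(b_i)_{i=0}^n$ supplied by Proposition \ref{FV-pseudo-triangulations}, and to translate each coordinate of $\iota(T)$ into a purely numerical condition on this data.

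By Proposition \ref{FV-pseudo-triangulations}, $\widehat{\phi}^{-1}(T)$ is an FV-pseudo-triangulation whose chains satisfy \eqref{assumption on chains}. The flip $\widehat{\mu}_i$ modifies only the $i$-th chain entry for $1\le i\le n-1$, or the initial (resp.\ terminal) split pair for $i=0$ (resp.\ $i=n$); by Corollary \ref{mutation and flip}, it produces another $\mathbb{Z}(\vec{x}_1-\vec{x}_2)$-stable tilting bundle exactly when the replacement skew-arc still lies in $\mathbf{C}_{\mathtt{b},\mathcal{X}}\cup \mathbf{C}_{\mathtt{b}}^\sigma$. My first step is to spell out this "staying FV" condition in terms of the chain. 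For $1\le i\le n-1$, I expect $\iota(T)_i=1$ to be equivalent to the local quadrilateral around $\widetilde{[D^{a_i}_{b_i}]}$ being non-degenerate in the sense that neither pair of opposite corners collapses; this translates into an explicit strict-inequality condition on $a_{i-1},a_i,a_{i+1},b_{i-1},b_i,b_{i+1}$, with the degenerate case corresponding to the flipped skew-arc falling into $\mathbf{C}_{\mathtt{p}}^\sigma$ rather than $\mathbf{C}_{\mathtt{b}}^\sigma$. The endpoint values $\iota(T)_0$ and $\iota(T)_n$ admit an analogous but slightly different translation, because the split pairs $[D^{a_0}_{b_0}]^{\pm}$ and $[D^{a_n}_{b_n}]^{\pm}$ act as a single unit under $\widehat{\mu}_0, \widehat{\mu}_n$.

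With this dictionary in hand, each of (a), (b), (c) reduces to a short direct computation: substitute the post-flip chain entries (read off from the flip rule in Appendix \ref{foh}) into the local criterion of Step~1 and compare. Coordinates of $\iota$ at positions $j$ with $|j-i|\ge 2$ are automatically preserved because they depend only on chain data far from $i$; reversibility of flips forces $\iota(\widehat{\mu}_i(T))_i=\iota(T)_i=1$; so only positions $i-1$ and $i+1$ can change. In cases (a) and (b), the flip occurs near the left boundary of the chain, and the degeneracy originally at position $i+1$ is resolved without creating a new one, since there is no chain position to the left of $0$ (for (a)) and the initial split pair absorbs the adjustment (for (b)). In the interior case (c), the flip is symmetric: the degeneracy at position $i+1$ is reflected across position $i$ into a new degeneracy at position $i-1$, producing the swap $(1,1,0)\mapsto (0,1,1)$.

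The main obstacle is Step~1, namely pinning down the precise chain-level inequality characterizing $\iota(T)_j=1$, especially at $j\in\{0,n\}$ where the split-pair structure requires separate treatment, and confirming that the flip rules of Appendix \ref{foh} interact cleanly with this inequality across the boundary/interior divide. Once the translation is established and verified uniformly for interior and endpoint positions, the three cases become a bookkeeping exercise in substituting the new chain entries into the criterion.
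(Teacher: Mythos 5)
The paper offers no argument for this lemma at all (it is introduced only with ``the following observations can be verified directly''), so there is nothing to compare against except the implicit expectation of a direct check; your plan is exactly that check, and it is correct. The dictionary you defer to Step~1 is the right one and is easy to pin down: writing $\widehat{\phi}^{-1}(T)$ in the chain form of Proposition \ref{FV-pseudo-triangulations}, one has $a_i+b_i=i$, so each step of the chain raises exactly one of $a$ or $b$ by one, and for $1\le i\le n-1$ the coordinate $\iota(T)_i$ equals $1$ precisely when the steps $i-1\to i$ and $i\to i+1$ are of different types (equivalently $a_{i-1}<a_{i+1}$ and $b_{i-1}<b_{i+1}$); in the degenerate case the flipped diagonal lands in $\mathbf{C}_{\mathtt{p}}^\sigma$, as you predict. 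The ``main obstacle'' you worry about at the endpoints in fact dissolves: $\widehat{\mu}_0$ (and $\widehat{\mu}_n$) always replaces the split pair $[D^{a_0}_{b_0}]^{\pm}$ by $[D^{a_0\pm 1}_{b_0\mp 1}]^{\pm}$, which preserves the monotonicity of the chains, so $\iota(T)_0=\iota(T)_n=1$ for every FV-pseudo-triangulation and the leading and trailing $1$'s in all three cases are automatic. With that in place your locality, reversibility, and substitution steps give (a), (b), (c) exactly as you describe.
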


\begin{prop}{}{}
   Any $\mathbb{Z}(\vec{x}_1 - \vec{x}_2)$-stable tilting bundle $T$ can be  mutated to some $\mathbb{Z}(\vec{x}_1 - \vec{x}_2)$-stable tilting bundle in $\{\widehat{\phi}(\overrightarrow{\Lambda}^a_b), \widehat{\phi}(\underrightarrow{\Lambda}^a_b)\mid a, b \in \mathbb{Z} \text{ such that } a + b = 0\}$. 
\end{prop}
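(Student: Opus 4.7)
The plan is to show that any $\mathbb{Z}(\vec{x}_1 - \vec{x}_2)$-stable tilting bundle $T$ can be transformed, via a sequence of mutations each preserving $\mathbb{Z}(\vec{x}_1 - \vec{x}_2)$-stability, into a tilting bundle with $\iota(T) = (1,1,\dots,1)$. Once this is accomplished, Proposition \ref{111} identifies the terminal bundle as a member of the target set $\{\widehat{\phi}(\overrightarrow{\Lambda}^a_b), \widehat{\phi}(\underrightarrow{\Lambda}^a_b) \mid a+b=0\}$, completing the argument.

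To drive the induction, I would use the potential function $\Psi(T) = \sum_{i : \iota(T)_i = 0} i$, the sum of positions of zeros in $\iota(T)$. When $\Psi(T) = 0$ we are done. Otherwise, let $j$ be the leftmost coordinate where $\iota(T)_j = 0$, so $\iota(T)_k = 1$ for all $k<j$. Three cases arise: if $j=1$, apply Lemma \ref{flipsss}(a) via $\widehat{\mu}_0$; if $j=2$, apply Lemma \ref{flipsss}(b) via $\widehat{\mu}_1$; and if $j \geq 3$, since $\iota(T)_{j-2} = \iota(T)_{j-1} = 1$ by minimality of $j$, apply Lemma \ref{flipsss}(c) with $i = j-1$ via $\widehat{\mu}_{j-1}$. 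In each case the hypothesis $\iota(T)_{\ast} = 1$ on the relevant coordinate guarantees that the mutated bundle $T'$ is again $\mathbb{Z}(\vec{x}_1-\vec{x}_2)$-stable, and inspection of the effect on $\iota$ gives $\Psi(T') < \Psi(T)$: in Case~1 a zero is removed outright, while in Cases~2 and~3 the leftmost zero either vanishes or shifts two coordinates leftward, with all other entries unchanged. The induction therefore descends, and after finitely many steps yields the desired terminal bundle.

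The main obstacle I anticipate is justifying that the boundary values $\iota(T)_0 = \iota(T)_n = 1$, which appear implicitly in every pattern of Lemma \ref{flipsss}, hold for an arbitrary $\mathbb{Z}(\vec{x}_1-\vec{x}_2)$-stable $T$. I expect this to follow from the very construction of $\widehat{\mu}_0$ and $\widehat{\mu}_n$ as the simultaneous double mutations of the two $(\vec{x}_1-\vec{x}_2)$-conjugate line-bundle summands at each end: stability forces the two replaced summands to again form a $(\vec{x}_1-\vec{x}_2)$-orbit, so the result automatically remains stable. If this fails to cover an edge case (for example $\iota(T)_0 = 0$ due to an obstruction in the mutation triangle), one can invoke the left--right symmetry of $(\mathcal{S}, M, \sigma)$ exchanging $\times_1$ and $\times_2$, which converts such a case into one already handled. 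Verifying these boundary details, and checking that each intermediate bundle in the induction is indeed a tilting bundle (so the mutations are defined), constitutes the technical heart of the argument.
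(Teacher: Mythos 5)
Your proposal is correct and follows essentially the same route as the paper: repeatedly apply Lemma \ref{flipsss}(c) to shift the leftmost zero of $\iota(T)$ two places to the left and then eliminate it with Lemma \ref{flipsss}(a) or (b), concluding via Proposition \ref{111}. The only differences are organizational (your decreasing potential function versus the paper's left-to-right scan that batches the moves for each zero), and the boundary condition $\iota(T)_0=\iota(T)_n=1$ that you flag as the technical heart is likewise left implicit in the paper's argument.
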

\begin{proof}
    By Lemma \ref{flipsss} (a)(b), we may assume that  
    \[\iota(T) = (1, 1, 1, \iota(T)_3, \dots, \iota(T)_{n-1}, 1). \]
    
Then, consider $\iota(T)_3, \iota(T)_4, \dots, \iota(T)_{n-1}$. For $3 \leq j \leq n$, if $\iota(T)_j = 1$, then we consider $\iota(T)_{j+1}$. If $\iota(T)_j = 0$, by Lemma \ref{flipsss} (3), we can find a sequence of flips, denoted as $\nu_j$, such that either
$$\iota(\nu_j(T)) = (1, 0, 1, 1, \dots, 1, \iota(T)_{j+1}, \dots, \iota(T)_{n-1}, 1),$$
or
$$\iota(\nu_j(T)) = (1, 1, 0, 1, \dots, 1, \iota(T)_{j+1}, \dots, \iota(T)_{n-1}, 1).$$
In either case, by Lemma \ref{flipsss} (a)(b), we can find a sequence of flips, denoted as $\nu_j^\prime$, such that
$$\iota(\nu_j^\prime \nu_j(T)) = (1, 1, 1, 1, \dots, 1, \iota(T)_{j+1}, \dots, \iota(T)_{n-1}, 1).$$

We then proceed similarly for $\iota(T)_{j+1}$. By recursion, we eventually get a sequence of flips, denoted as $\nu$, such that
$$\iota(\nu(T)) = (1, 1, \dots, 1).$$
Thus, by Proposition \ref{111}, the proof is complete.
\end{proof}

It is well known that any two triangulations of a polygon can be flipped into each other.  Thus, for a pseudo-triangulation $\Lambda$ with $\Lambda_{\mathtt{b}}^\sigma \neq \emptyset$, and adopting Convention \ref{conven2}, we can flip $\Lambda$ into some pseudo-triangulation $\Lambda^{\prime}$ such that  $\Lambda_{org}^{\prime}\subset \mathbf{C}_{\mathtt{b}}^\sigma, \Lambda^{\prime}_{grn}=\Lambda_{grn}$ and $\Lambda^{\prime}_{blu}=\Lambda_{blu}$.

\begin{prop}\label{filp to FV}
    Any pseudo-triangulation $\Lambda$ with $\Lambda_{\mathtt{b}}^\sigma \neq \emptyset$  can be flipped into an FV-pseudo-triangulation.
\end{prop}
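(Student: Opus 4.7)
The plan is to proceed in three stages, each bringing $\Lambda$ closer to an FV-pseudo-triangulation.

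First, I would invoke the remark immediately preceding the proposition: with $\widehat{\gamma}_1, \widehat{\gamma}_2 \in \Lambda_{\mathtt{b}}^\sigma$ chosen as the closest $\sigma$-fixed boundary skew-arcs to $\times_1$ and $\times_2$, one can flip $\Lambda$ into a pseudo-triangulation $\Lambda'$ with $\Lambda'_{org} \subset \mathbf{C}_{\mathtt{b}}^\sigma$, while $\Lambda'_{grn} = \Lambda_{grn}$ and $\Lambda'_{blu} = \Lambda_{blu}$. After this step, the orange region already has the shape demanded by an FV-pseudo-triangulation.

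Second, I would reduce the green and blue regions locally to the FV shape (the two cases being symmetric; I focus on green). Since $\Lambda_{\mathtt{b}}^\sigma \neq \emptyset$ forces $\Lambda_* = \emptyset$ by Lemma \ref{lem:6.5}, the skew-arcs in $\Lambda'_{grn}$ lie in $\mathbf{C}_{\mathtt{b}, \times_1} \cup \mathbf{C}_{\mathtt{p}}^\sigma$. I would aim to reduce $\Lambda'_{grn}$ to $\{[D^{a_0}_{b_0}]^+, [D^{a_0}_{b_0}]^-\}$ for some $a_0 + b_0 = 0$ in two sub-steps: (i) eliminate every puncture skew-arc in $\mathbf{C}_{\mathtt{p}}^\sigma \cap \Lambda'_{grn}$ by an appropriate flip, which, according to the description in Appendix \ref{foh}, can be arranged to replace the puncture skew-arc with a boundary skew-arc in $\mathbf{C}_{\mathtt{b}, \times_1}$; (ii) once $\Lambda'_{grn} \subset \mathbf{C}_{\mathtt{b}, \times_1}$, apply Lemma \ref{lem:01}, which says that if $\zeta(\Lambda')_1 \in \{+, -\}$ then $|\Lambda'_{\times_1}| \geq 2$, and inductively flip the innermost skew-arc to reach the configuration $\{\delta^+, \delta^-\}$, inducting on $|\Lambda'_{grn}|$. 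Each flip at this stage keeps $\Lambda'_{org}$ and $\Lambda_{blu}$ fixed.

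Third, once both green and blue regions are in the desired form, the monotonicity conditions \eqref{assumption on chains} on the indices $(a_i, b_i)$ follow automatically: by Proposition \ref{prop:6.4}, two distinct compatible skew-arcs in $\mathbf{C}_{\mathtt{b}}^\sigma$ must be nested, so the indices triangulating the orange region are forced to be monotone in the natural ordering from $\times_1$ to $\times_2$, yielding the FV form.

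The main obstacle is sub-step (i) of stage two: showing that a puncture skew-arc in the green region can always be flipped to a boundary skew-arc, rather than being replaced by another puncture skew-arc in the same $\tau$-period $n$ tube. This requires a careful choice of flip order (handling puncture skew-arcs from the outside in, relative to $\times_1$) together with the explicit tilting-mutation analysis available in Appendix \ref{foh}, using the fact that the boundary of the green region already contains the $\sigma$-fixed arcs $\gamma_1, \sigma(\gamma_1)$ from $\Lambda'_{org}$ against which such a flip can be taken.
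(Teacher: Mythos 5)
Your proposal is correct and follows essentially the same route as the paper: normalize the orange region via the remark, use polygon flip-connectivity to convert the green (resp.\ blue) region's skew-arcs into boundary skew-arcs in $\mathbf{C}_{\mathtt{b},\times_1}$ (resp.\ $\mathbf{C}_{\mathtt{b},\times_2}$), and then reduce each to the $\pm(0)$ configuration $\{\delta^+,\delta^-\}$. The only cosmetic difference is in the final reduction step, where the paper alternates flips of the leftmost and rightmost skew-arcs rather than flipping the innermost one, and it resolves your flagged obstacle (puncture skew-arc flipping to a boundary one) by the Type II(1) flip rule together with flip-connectivity of polygon triangulations.
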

\begin{proof}
Without loss of generality, assume $\Lambda_{org} = \Lambda_{\mathtt{b}}^\sigma$ and $\zeta(\Lambda)_i \neq \pm(0)$ for all $i$. 

First, we consider $\Lambda_{grn}$. Note that the skew-arcs in $\Lambda_{grn}$ are locally confined within the yellow region shown below:
\begin{figure}[H]
    \centering

\tikzset{every picture/.style={line width=0.75pt}}  

\begin{tikzpicture}[x=0.75pt,y=0.75pt,yscale=-1,xscale=1]

\draw  [fill={rgb, 255:red, 248; green, 231; blue, 28 }  ,fill opacity=1 ][dash pattern={on 0.84pt off 2.51pt}] (133.47,136.81) -- (147.5,161.5) -- (102.5,161.5) -- cycle ;
 
\draw  [fill={rgb, 255:red, 248; green, 231; blue, 28 }  ,fill opacity=1 ] (164.37,111.93) -- (133.47,136.81) -- (119.41,112.02) -- cycle ;
 
\draw [color={rgb, 255:red, 208; green, 2; blue, 27 }  ,draw opacity=1 ][line width=1.5]    (77.5,161.5) -- (176.54,161.59) ;
 
\draw [color={rgb, 255:red, 104; green, 161; blue, 226 }  ,draw opacity=1 ][line width=1.5]    (91.34,111.83) -- (190.37,111.93) ;
 
\draw    (102.5,161.5) ;
\draw [shift={(102.5,161.5)}, rotate = 0] [color={rgb, 255:red, 0; green, 0; blue, 0 }  ][fill={rgb, 255:red, 0; green, 0; blue, 0 }  ][line width=0.75]      (0, 0) circle [x radius= 1.34, y radius= 1.34]   ;
\draw [shift={(102.5,161.5)}, rotate = 0] [color={rgb, 255:red, 0; green, 0; blue, 0 }  ][fill={rgb, 255:red, 0; green, 0; blue, 0 }  ][line width=0.75]      (0, 0) circle [x radius= 1.34, y radius= 1.34]   ;
 
\draw    (147.5,161.5) ;
\draw [shift={(147.5,161.5)}, rotate = 0] [color={rgb, 255:red, 0; green, 0; blue, 0 }  ][fill={rgb, 255:red, 0; green, 0; blue, 0 }  ][line width=0.75]      (0, 0) circle [x radius= 1.34, y radius= 1.34]   ;
\draw [shift={(147.5,161.5)}, rotate = 0] [color={rgb, 255:red, 0; green, 0; blue, 0 }  ][fill={rgb, 255:red, 0; green, 0; blue, 0 }  ][line width=0.75]      (0, 0) circle [x radius= 1.34, y radius= 1.34]   ;
 
\draw    (119.41,112.02) ;
\draw [shift={(119.41,112.02)}, rotate = 0] [color={rgb, 255:red, 0; green, 0; blue, 0 }  ][fill={rgb, 255:red, 0; green, 0; blue, 0 }  ][line width=0.75]      (0, 0) circle [x radius= 1.34, y radius= 1.34]   ;
\draw [shift={(119.41,112.02)}, rotate = 0] [color={rgb, 255:red, 0; green, 0; blue, 0 }  ][fill={rgb, 255:red, 0; green, 0; blue, 0 }  ][line width=0.75]      (0, 0) circle [x radius= 1.34, y radius= 1.34]   ;
 
\draw  [dash pattern={on 0.84pt off 2.51pt}]  (149.35,115.84) -- (130.02,115.68) ;
 
\draw  [dash pattern={on 0.84pt off 2.51pt}]  (116.16,158.83) -- (135.49,158.83) ;
 
\draw    (119.41,112.02) -- (133.47,136.81) ;
 
\draw    (119.34,111.83) -- (102.5,161.5) ;
 
\draw    (164.37,111.93) -- (147.5,161.5) ;
 
\draw    (164.37,111.93) -- (133.47,136.81) ;
\draw [shift={(133.47,136.81)}, rotate = 141.16] [color={rgb, 255:red, 0; green, 0; blue, 0 }  ][fill={rgb, 255:red, 0; green, 0; blue, 0 }  ][line width=0.75]      (0, 0) circle [x radius= 1.34, y radius= 1.34]   ;
\draw [shift={(164.37,111.93)}, rotate = 141.16] [color={rgb, 255:red, 0; green, 0; blue, 0 }  ][fill={rgb, 255:red, 0; green, 0; blue, 0 }  ][line width=0.75]      (0, 0) circle [x radius= 1.34, y radius= 1.34]   ;
 
\draw    (128.37,111.93) ;
\draw [shift={(128.37,111.93)}, rotate = 0] [color={rgb, 255:red, 0; green, 0; blue, 0 }  ][fill={rgb, 255:red, 0; green, 0; blue, 0 }  ][line width=0.75]      (0, 0) circle [x radius= 1.34, y radius= 1.34]   ;
 
\draw    (155.37,111.93) ;
\draw [shift={(155.37,111.93)}, rotate = 0] [color={rgb, 255:red, 0; green, 0; blue, 0 }  ][fill={rgb, 255:red, 0; green, 0; blue, 0 }  ][line width=0.75]      (0, 0) circle [x radius= 1.34, y radius= 1.34]   ;
 
\draw    (138.5,161.5) ;
\draw [shift={(138.5,161.5)}, rotate = 0] [color={rgb, 255:red, 0; green, 0; blue, 0 }  ][fill={rgb, 255:red, 0; green, 0; blue, 0 }  ][line width=0.75]      (0, 0) circle [x radius= 1.34, y radius= 1.34]   ;
 
\draw    (111.5,161.5) ;
\draw [shift={(111.5,161.5)}, rotate = 0] [color={rgb, 255:red, 0; green, 0; blue, 0 }  ][fill={rgb, 255:red, 0; green, 0; blue, 0 }  ][line width=0.75]      (0, 0) circle [x radius= 1.34, y radius= 1.34]   ;
 
\draw  [dash pattern={on 0.84pt off 2.51pt}]  (133.47,136.81) -- (147.5,161.5) ;
 
\draw  [dash pattern={on 0.84pt off 2.51pt}]  (133.47,136.81) -- (102.5,161.5) ;

\draw (133.44,135.11) node [anchor=north west][inner sep=0.75pt]  [font=\tiny]  {$\times _{1}$};
 
\draw (160.17,131.57) node [anchor=north west][inner sep=0.75pt]  [font=\tiny]  {$\gamma _{1}$};
 
\draw (83,127.73) node [anchor=north west][inner sep=0.75pt]  [font=\tiny]  {$\sigma ( \gamma _{1})$};
 
\draw (132,122.9) node [anchor=north west][inner sep=0.75pt]  [font=\tiny]  {$?$};
 
\draw (127,147.4) node [anchor=north west][inner sep=0.75pt]  [font=\tiny]  {$?$};
 
\draw (117.5,169.9) node [anchor=north west][inner sep=0.75pt]  [font=\normalsize]  {$\Lambda _{grn}$};

\end{tikzpicture}

\end{figure}
\noindent Bold blue and red curves represent different boundaries of $(\mathcal{S}, M)$.

Second, fix the skew-arcs in $\Lambda_{org}$ and $\Lambda_{blu}$. Since any two triangulations of a polygon can be flipped into each other, we can iteratively flip (Type II(1)) the skew-arcs in $\Lambda_{grn}$ into those in $\mathbf{C}_{\mathtt{b}, \times_1}^{\zeta(\Lambda)_1}$, as illustrated below:
\begin{figure}[H]
    \centering

\tikzset{every picture/.style={line width=0.75pt}}  

\begin{tikzpicture}[x=0.75pt,y=0.75pt,yscale=-1,xscale=1]

\draw [color={rgb, 255:red, 208; green, 2; blue, 27 }  ,draw opacity=1 ][line width=1.5]    (77.5,161.5) -- (176.54,161.59) ;
 
\draw [color={rgb, 255:red, 104; green, 161; blue, 226 }  ,draw opacity=1 ][line width=1.5]    (91.34,111.83) -- (190.37,111.93) ;
 
\draw    (102.5,161.5) ;
\draw [shift={(102.5,161.5)}, rotate = 0] [color={rgb, 255:red, 0; green, 0; blue, 0 }  ][fill={rgb, 255:red, 0; green, 0; blue, 0 }  ][line width=0.75]      (0, 0) circle [x radius= 1.34, y radius= 1.34]   ;
\draw [shift={(102.5,161.5)}, rotate = 0] [color={rgb, 255:red, 0; green, 0; blue, 0 }  ][fill={rgb, 255:red, 0; green, 0; blue, 0 }  ][line width=0.75]      (0, 0) circle [x radius= 1.34, y radius= 1.34]   ;
 
\draw    (147.5,161.5) ;
\draw [shift={(147.5,161.5)}, rotate = 0] [color={rgb, 255:red, 0; green, 0; blue, 0 }  ][fill={rgb, 255:red, 0; green, 0; blue, 0 }  ][line width=0.75]      (0, 0) circle [x radius= 1.34, y radius= 1.34]   ;
\draw [shift={(147.5,161.5)}, rotate = 0] [color={rgb, 255:red, 0; green, 0; blue, 0 }  ][fill={rgb, 255:red, 0; green, 0; blue, 0 }  ][line width=0.75]      (0, 0) circle [x radius= 1.34, y radius= 1.34]   ;
 
\draw    (119.41,112.02) ;
\draw [shift={(119.41,112.02)}, rotate = 0] [color={rgb, 255:red, 0; green, 0; blue, 0 }  ][fill={rgb, 255:red, 0; green, 0; blue, 0 }  ][line width=0.75]      (0, 0) circle [x radius= 1.34, y radius= 1.34]   ;
\draw [shift={(119.41,112.02)}, rotate = 0] [color={rgb, 255:red, 0; green, 0; blue, 0 }  ][fill={rgb, 255:red, 0; green, 0; blue, 0 }  ][line width=0.75]      (0, 0) circle [x radius= 1.34, y radius= 1.34]   ;
 
\draw  [dash pattern={on 0.84pt off 2.51pt}]  (145.35,115.84) -- (133.52,115.96) ;
 
\draw  [dash pattern={on 0.84pt off 2.51pt}]  (120.16,158.83) -- (131.49,158.83) ;
 
\draw    (119.41,112.02) -- (133.47,136.81) ;
 
\draw    (119.34,111.83) -- (102.5,161.5) ;
 
\draw    (164.37,111.93) -- (147.5,161.5) ;
 
\draw    (164.37,111.93) -- (133.47,136.81) ;
\draw [shift={(133.47,136.81)}, rotate = 141.16] [color={rgb, 255:red, 0; green, 0; blue, 0 }  ][fill={rgb, 255:red, 0; green, 0; blue, 0 }  ][line width=0.75]      (0, 0) circle [x radius= 1.34, y radius= 1.34]   ;
\draw [shift={(164.37,111.93)}, rotate = 141.16] [color={rgb, 255:red, 0; green, 0; blue, 0 }  ][fill={rgb, 255:red, 0; green, 0; blue, 0 }  ][line width=0.75]      (0, 0) circle [x radius= 1.34, y radius= 1.34]   ;
 
\draw    (128.37,111.93) ;
\draw [shift={(128.37,111.93)}, rotate = 0] [color={rgb, 255:red, 0; green, 0; blue, 0 }  ][fill={rgb, 255:red, 0; green, 0; blue, 0 }  ][line width=0.75]      (0, 0) circle [x radius= 1.34, y radius= 1.34]   ;
 
\draw    (155.37,111.93) ;
\draw [shift={(155.37,111.93)}, rotate = 0] [color={rgb, 255:red, 0; green, 0; blue, 0 }  ][fill={rgb, 255:red, 0; green, 0; blue, 0 }  ][line width=0.75]      (0, 0) circle [x radius= 1.34, y radius= 1.34]   ;
 
\draw    (138.5,161.5) ;
\draw [shift={(138.5,161.5)}, rotate = 0] [color={rgb, 255:red, 0; green, 0; blue, 0 }  ][fill={rgb, 255:red, 0; green, 0; blue, 0 }  ][line width=0.75]      (0, 0) circle [x radius= 1.34, y radius= 1.34]   ;
 
\draw    (111.5,161.5) ;
\draw [shift={(111.5,161.5)}, rotate = 0] [color={rgb, 255:red, 0; green, 0; blue, 0 }  ][fill={rgb, 255:red, 0; green, 0; blue, 0 }  ][line width=0.75]      (0, 0) circle [x radius= 1.34, y radius= 1.34]   ;
 
\draw  [dash pattern={on 0.84pt off 2.51pt}]  (133.47,136.81) -- (147.5,161.5) ;
 
\draw  [dash pattern={on 0.84pt off 2.51pt}]  (133.47,136.81) -- (102.5,161.5) ;
 
\draw    (128.37,111.93) -- (133.47,136.81) ;
 
\draw    (155.37,111.93) -- (133.47,136.81) ;
 
\draw  [dash pattern={on 0.84pt off 2.51pt}]  (133.47,136.81) -- (138.57,161.68) ;
 
\draw  [dash pattern={on 0.84pt off 2.51pt}]  (133.4,136.62) -- (111.5,161.5) ;

\draw (133.44,135.11) node [anchor=north west][inner sep=0.75pt]  [font=\tiny]  {$\times _{1}$};
 
\draw (160.17,131.57) node [anchor=north west][inner sep=0.75pt]  [font=\tiny]  {$\gamma _{1}$};
 
\draw (83,127.73) node [anchor=north west][inner sep=0.75pt]  [font=\tiny]  {$\sigma ( \gamma _{1})$};
\end{tikzpicture}

\end{figure}
Denote the resulting pseudo-triangulation by $\Lambda^\prime$. Next, alternate flips of the leftmost and rightmost skew-arcs in $\Lambda_{grn}^\prime$, yielding a pseudo-triangulation $\Lambda^{\prime\prime}$ such that $\zeta(\Lambda^{\prime\prime})_1 = \pm(0)$. For example, when $|\Lambda_{grn}^\prime| = 4$, the sequence of flips proceeds as follows:
\begin{figure}[H]
    \centering

\tikzset{every picture/.style={line width=0.75pt}}  

\begin{tikzpicture}[x=0.75pt,y=0.75pt,yscale=-1,xscale=1]

\draw [color={rgb, 255:red, 208; green, 2; blue, 27 }  ,draw opacity=1 ][line width=1.5]    (77.5,161.5) -- (176.54,161.59) ;
 
\draw [color={rgb, 255:red, 104; green, 161; blue, 226 }  ,draw opacity=1 ][line width=1.5]    (91.34,111.83) -- (190.37,111.93) ;
 
\draw    (102.5,161.5) ;
\draw [shift={(102.5,161.5)}, rotate = 0] [color={rgb, 255:red, 0; green, 0; blue, 0 }  ][fill={rgb, 255:red, 0; green, 0; blue, 0 }  ][line width=0.75]      (0, 0) circle [x radius= 1.34, y radius= 1.34]   ;
\draw [shift={(102.5,161.5)}, rotate = 0] [color={rgb, 255:red, 0; green, 0; blue, 0 }  ][fill={rgb, 255:red, 0; green, 0; blue, 0 }  ][line width=0.75]      (0, 0) circle [x radius= 1.34, y radius= 1.34]   ;
 
\draw    (147.5,161.5) ;
\draw [shift={(147.5,161.5)}, rotate = 0] [color={rgb, 255:red, 0; green, 0; blue, 0 }  ][fill={rgb, 255:red, 0; green, 0; blue, 0 }  ][line width=0.75]      (0, 0) circle [x radius= 1.34, y radius= 1.34]   ;
\draw [shift={(147.5,161.5)}, rotate = 0] [color={rgb, 255:red, 0; green, 0; blue, 0 }  ][fill={rgb, 255:red, 0; green, 0; blue, 0 }  ][line width=0.75]      (0, 0) circle [x radius= 1.34, y radius= 1.34]   ;
 
\draw    (119.41,112.02) ;
\draw [shift={(119.41,112.02)}, rotate = 0] [color={rgb, 255:red, 0; green, 0; blue, 0 }  ][fill={rgb, 255:red, 0; green, 0; blue, 0 }  ][line width=0.75]      (0, 0) circle [x radius= 1.34, y radius= 1.34]   ;
\draw [shift={(119.41,112.02)}, rotate = 0] [color={rgb, 255:red, 0; green, 0; blue, 0 }  ][fill={rgb, 255:red, 0; green, 0; blue, 0 }  ][line width=0.75]      (0, 0) circle [x radius= 1.34, y radius= 1.34]   ;
 
\draw    (119.41,112.02) -- (133.47,136.81) ;
 
\draw    (119.34,111.83) -- (102.5,161.5) ;
 
\draw    (164.37,111.93) -- (147.5,161.5) ;
 
\draw    (164.37,111.93) -- (133.47,136.81) ;
\draw [shift={(133.47,136.81)}, rotate = 141.16] [color={rgb, 255:red, 0; green, 0; blue, 0 }  ][fill={rgb, 255:red, 0; green, 0; blue, 0 }  ][line width=0.75]      (0, 0) circle [x radius= 1.34, y radius= 1.34]   ;
\draw [shift={(164.37,111.93)}, rotate = 141.16] [color={rgb, 255:red, 0; green, 0; blue, 0 }  ][fill={rgb, 255:red, 0; green, 0; blue, 0 }  ][line width=0.75]      (0, 0) circle [x radius= 1.34, y radius= 1.34]   ;
 
\draw    (133.37,111.93) ;
\draw [shift={(133.37,111.93)}, rotate = 0] [color={rgb, 255:red, 0; green, 0; blue, 0 }  ][fill={rgb, 255:red, 0; green, 0; blue, 0 }  ][line width=0.75]      (0, 0) circle [x radius= 1.34, y radius= 1.34]   ;
 
\draw    (147.37,111.93) ;
\draw [shift={(147.37,111.93)}, rotate = 0] [color={rgb, 255:red, 0; green, 0; blue, 0 }  ][fill={rgb, 255:red, 0; green, 0; blue, 0 }  ][line width=0.75]      (0, 0) circle [x radius= 1.34, y radius= 1.34]   ;
 
\draw  [dash pattern={on 0.84pt off 2.51pt}]  (133.47,136.81) -- (147.5,161.5) ;
 
\draw  [dash pattern={on 0.84pt off 2.51pt}]  (133.47,136.81) -- (102.5,161.5) ;
 
\draw    (133.37,111.93) -- (133.47,136.81) ;
 
\draw    (147.37,111.93) -- (133.47,136.81) ;
 
\draw  [dash pattern={on 0.84pt off 2.51pt}]  (133.47,136.81) -- (133.57,161.68) ;
\draw [shift={(133.57,161.68)}, rotate = 89.77] [color={rgb, 255:red, 0; green, 0; blue, 0 }  ][fill={rgb, 255:red, 0; green, 0; blue, 0 }  ][line width=0.75]      (0, 0) circle [x radius= 1.34, y radius= 1.34]   ;
\draw [shift={(133.47,136.81)}, rotate = 89.77] [color={rgb, 255:red, 0; green, 0; blue, 0 }  ][fill={rgb, 255:red, 0; green, 0; blue, 0 }  ][line width=0.75]      (0, 0) circle [x radius= 1.34, y radius= 1.34]   ;
 
\draw  [dash pattern={on 0.84pt off 2.51pt}]  (133.47,136.81) -- (119.57,161.68) ;
\draw [shift={(119.57,161.68)}, rotate = 119.19] [color={rgb, 255:red, 0; green, 0; blue, 0 }  ][fill={rgb, 255:red, 0; green, 0; blue, 0 }  ][line width=0.75]      (0, 0) circle [x radius= 1.34, y radius= 1.34]   ;
 
\draw [color={rgb, 255:red, 208; green, 2; blue, 27 }  ,draw opacity=1 ][line width=1.5]    (216.5,161.5) -- (315.54,161.59) ;
 
\draw [color={rgb, 255:red, 104; green, 161; blue, 226 }  ,draw opacity=1 ][line width=1.5]    (230.34,111.83) -- (329.37,111.93) ;
 
\draw    (241.5,161.5) ;
\draw [shift={(241.5,161.5)}, rotate = 0] [color={rgb, 255:red, 0; green, 0; blue, 0 }  ][fill={rgb, 255:red, 0; green, 0; blue, 0 }  ][line width=0.75]      (0, 0) circle [x radius= 1.34, y radius= 1.34]   ;
\draw [shift={(241.5,161.5)}, rotate = 0] [color={rgb, 255:red, 0; green, 0; blue, 0 }  ][fill={rgb, 255:red, 0; green, 0; blue, 0 }  ][line width=0.75]      (0, 0) circle [x radius= 1.34, y radius= 1.34]   ;
 
\draw    (286.5,161.5) ;
\draw [shift={(286.5,161.5)}, rotate = 0] [color={rgb, 255:red, 0; green, 0; blue, 0 }  ][fill={rgb, 255:red, 0; green, 0; blue, 0 }  ][line width=0.75]      (0, 0) circle [x radius= 1.34, y radius= 1.34]   ;
\draw [shift={(286.5,161.5)}, rotate = 0] [color={rgb, 255:red, 0; green, 0; blue, 0 }  ][fill={rgb, 255:red, 0; green, 0; blue, 0 }  ][line width=0.75]      (0, 0) circle [x radius= 1.34, y radius= 1.34]   ;
 
\draw    (258.41,112.02) ;
\draw [shift={(258.41,112.02)}, rotate = 0] [color={rgb, 255:red, 0; green, 0; blue, 0 }  ][fill={rgb, 255:red, 0; green, 0; blue, 0 }  ][line width=0.75]      (0, 0) circle [x radius= 1.34, y radius= 1.34]   ;
\draw [shift={(258.41,112.02)}, rotate = 0] [color={rgb, 255:red, 0; green, 0; blue, 0 }  ][fill={rgb, 255:red, 0; green, 0; blue, 0 }  ][line width=0.75]      (0, 0) circle [x radius= 1.34, y radius= 1.34]   ;
 
\draw    (272.37,111.93) -- (241.5,161.5) ;
 
\draw    (258.34,111.83) -- (241.5,161.5) ;
 
\draw    (303.37,111.93) -- (286.5,161.5) ;
 
\draw    (303.37,111.93) -- (272.47,136.81) ;
\draw [shift={(272.47,136.81)}, rotate = 141.16] [color={rgb, 255:red, 0; green, 0; blue, 0 }  ][fill={rgb, 255:red, 0; green, 0; blue, 0 }  ][line width=0.75]      (0, 0) circle [x radius= 1.34, y radius= 1.34]   ;
\draw [shift={(303.37,111.93)}, rotate = 141.16] [color={rgb, 255:red, 0; green, 0; blue, 0 }  ][fill={rgb, 255:red, 0; green, 0; blue, 0 }  ][line width=0.75]      (0, 0) circle [x radius= 1.34, y radius= 1.34]   ;
 
\draw    (272.37,111.93) ;
\draw [shift={(272.37,111.93)}, rotate = 0] [color={rgb, 255:red, 0; green, 0; blue, 0 }  ][fill={rgb, 255:red, 0; green, 0; blue, 0 }  ][line width=0.75]      (0, 0) circle [x radius= 1.34, y radius= 1.34]   ;
 
\draw    (286.37,111.93) ;
\draw [shift={(286.37,111.93)}, rotate = 0] [color={rgb, 255:red, 0; green, 0; blue, 0 }  ][fill={rgb, 255:red, 0; green, 0; blue, 0 }  ][line width=0.75]      (0, 0) circle [x radius= 1.34, y radius= 1.34]   ;
 
\draw    (303.37,111.93) -- (272.57,161.68) ;
 
\draw  [dash pattern={on 0.84pt off 2.51pt}]  (272.47,136.81) -- (241.5,161.5) ;
 
\draw    (272.37,111.93) -- (272.47,136.81) ;
 
\draw    (286.37,111.93) -- (272.47,136.81) ;
 
\draw  [dash pattern={on 0.84pt off 2.51pt}]  (272.47,136.81) -- (272.57,161.68) ;
\draw [shift={(272.57,161.68)}, rotate = 89.77] [color={rgb, 255:red, 0; green, 0; blue, 0 }  ][fill={rgb, 255:red, 0; green, 0; blue, 0 }  ][line width=0.75]      (0, 0) circle [x radius= 1.34, y radius= 1.34]   ;
\draw [shift={(272.47,136.81)}, rotate = 89.77] [color={rgb, 255:red, 0; green, 0; blue, 0 }  ][fill={rgb, 255:red, 0; green, 0; blue, 0 }  ][line width=0.75]      (0, 0) circle [x radius= 1.34, y radius= 1.34]   ;
 
\draw  [dash pattern={on 0.84pt off 2.51pt}]  (272.47,136.81) -- (258.57,161.68) ;
\draw [shift={(258.57,161.68)}, rotate = 119.19] [color={rgb, 255:red, 0; green, 0; blue, 0 }  ][fill={rgb, 255:red, 0; green, 0; blue, 0 }  ][line width=0.75]      (0, 0) circle [x radius= 1.34, y radius= 1.34]   ;
 
\draw    (185.1,135.2) -- (209.1,135.02) ;
\draw [shift={(212.1,135)}, rotate = 179.58] [fill={rgb, 255:red, 0; green, 0; blue, 0 }  ][line width=0.08]  [draw opacity=0] (5.36,-2.57) -- (0,0) -- (5.36,2.57) -- (3.56,0) -- cycle    ;
 
\draw [color={rgb, 255:red, 208; green, 2; blue, 27 }  ,draw opacity=1 ][line width=1.5]    (77.5,246.5) -- (176.54,246.59) ;
 
\draw [color={rgb, 255:red, 104; green, 161; blue, 226 }  ,draw opacity=1 ][line width=1.5]    (91.34,196.83) -- (190.37,196.93) ;
 
\draw    (102.5,246.5) ;
\draw [shift={(102.5,246.5)}, rotate = 0] [color={rgb, 255:red, 0; green, 0; blue, 0 }  ][fill={rgb, 255:red, 0; green, 0; blue, 0 }  ][line width=0.75]      (0, 0) circle [x radius= 1.34, y radius= 1.34]   ;
\draw [shift={(102.5,246.5)}, rotate = 0] [color={rgb, 255:red, 0; green, 0; blue, 0 }  ][fill={rgb, 255:red, 0; green, 0; blue, 0 }  ][line width=0.75]      (0, 0) circle [x radius= 1.34, y radius= 1.34]   ;
 
\draw    (147.5,246.5) ;
\draw [shift={(147.5,246.5)}, rotate = 0] [color={rgb, 255:red, 0; green, 0; blue, 0 }  ][fill={rgb, 255:red, 0; green, 0; blue, 0 }  ][line width=0.75]      (0, 0) circle [x radius= 1.34, y radius= 1.34]   ;
\draw [shift={(147.5,246.5)}, rotate = 0] [color={rgb, 255:red, 0; green, 0; blue, 0 }  ][fill={rgb, 255:red, 0; green, 0; blue, 0 }  ][line width=0.75]      (0, 0) circle [x radius= 1.34, y radius= 1.34]   ;
 
\draw    (119.41,197.02) ;
\draw [shift={(119.41,197.02)}, rotate = 0] [color={rgb, 255:red, 0; green, 0; blue, 0 }  ][fill={rgb, 255:red, 0; green, 0; blue, 0 }  ][line width=0.75]      (0, 0) circle [x radius= 1.34, y radius= 1.34]   ;
\draw [shift={(119.41,197.02)}, rotate = 0] [color={rgb, 255:red, 0; green, 0; blue, 0 }  ][fill={rgb, 255:red, 0; green, 0; blue, 0 }  ][line width=0.75]      (0, 0) circle [x radius= 1.34, y radius= 1.34]   ;
 
\draw    (133.39,196.97) -- (102.5,246.5) ;
 
\draw    (119.34,196.83) -- (102.5,246.5) ;
 
\draw    (164.37,196.93) -- (147.5,246.5) ;
 
\draw    (147.37,196.93) -- (133.57,246.68) ;
\draw [shift={(133.57,246.68)}, rotate = 105.5] [color={rgb, 255:red, 0; green, 0; blue, 0 }  ][fill={rgb, 255:red, 0; green, 0; blue, 0 }  ][line width=0.75]      (0, 0) circle [x radius= 1.34, y radius= 1.34]   ;
\draw [shift={(147.37,196.93)}, rotate = 105.5] [color={rgb, 255:red, 0; green, 0; blue, 0 }  ][fill={rgb, 255:red, 0; green, 0; blue, 0 }  ][line width=0.75]      (0, 0) circle [x radius= 1.34, y radius= 1.34]   ;
 
\draw    (133.37,196.93) ;
\draw [shift={(133.37,196.93)}, rotate = 0] [color={rgb, 255:red, 0; green, 0; blue, 0 }  ][fill={rgb, 255:red, 0; green, 0; blue, 0 }  ][line width=0.75]      (0, 0) circle [x radius= 1.34, y radius= 1.34]   ;
 
\draw    (147.37,196.93) ;
\draw [shift={(147.37,196.93)}, rotate = 0] [color={rgb, 255:red, 0; green, 0; blue, 0 }  ][fill={rgb, 255:red, 0; green, 0; blue, 0 }  ][line width=0.75]      (0, 0) circle [x radius= 1.34, y radius= 1.34]   ;
 
\draw    (164.37,196.93) -- (133.57,246.68) ;
 
\draw    (133.39,196.97) -- (119.57,246.68) ;
 
\draw    (133.37,196.93) -- (133.47,221.81) ;
 
\draw    (147.37,196.93) -- (133.47,221.81) ;
 
\draw  [dash pattern={on 0.84pt off 2.51pt}]  (133.47,221.81) -- (133.57,246.68) ;
\draw [shift={(133.57,246.68)}, rotate = 89.77] [color={rgb, 255:red, 0; green, 0; blue, 0 }  ][fill={rgb, 255:red, 0; green, 0; blue, 0 }  ][line width=0.75]      (0, 0) circle [x radius= 1.34, y radius= 1.34]   ;
\draw [shift={(133.47,221.81)}, rotate = 89.77] [color={rgb, 255:red, 0; green, 0; blue, 0 }  ][fill={rgb, 255:red, 0; green, 0; blue, 0 }  ][line width=0.75]      (0, 0) circle [x radius= 1.34, y radius= 1.34]   ;
 
\draw  [dash pattern={on 0.84pt off 2.51pt}]  (133.47,221.81) -- (119.57,246.68) ;
\draw [shift={(119.57,246.68)}, rotate = 119.19] [color={rgb, 255:red, 0; green, 0; blue, 0 }  ][fill={rgb, 255:red, 0; green, 0; blue, 0 }  ][line width=0.75]      (0, 0) circle [x radius= 1.34, y radius= 1.34]   ;
 
\draw    (185.1,220.2) -- (209.1,220.02) ;
\draw [shift={(212.1,220)}, rotate = 179.58] [fill={rgb, 255:red, 0; green, 0; blue, 0 }  ][line width=0.08]  [draw opacity=0] (5.36,-2.57) -- (0,0) -- (5.36,2.57) -- (3.56,0) -- cycle    ;
 
\draw    (256.16,170.28) -- (159.11,188.37) ;
\draw [shift={(156.16,188.92)}, rotate = 349.44] [fill={rgb, 255:red, 0; green, 0; blue, 0 }  ][line width=0.08]  [draw opacity=0] (5.36,-2.57) -- (0,0) -- (5.36,2.57) -- (3.56,0) -- cycle    ;
 
\draw [color={rgb, 255:red, 208; green, 2; blue, 27 }  ,draw opacity=1 ][line width=1.5]    (216.5,246.5) -- (315.54,246.59) ;
 
\draw [color={rgb, 255:red, 104; green, 161; blue, 226 }  ,draw opacity=1 ][line width=1.5]    (230.34,196.83) -- (329.37,196.93) ;
 
\draw    (241.5,246.5) ;
\draw [shift={(241.5,246.5)}, rotate = 0] [color={rgb, 255:red, 0; green, 0; blue, 0 }  ][fill={rgb, 255:red, 0; green, 0; blue, 0 }  ][line width=0.75]      (0, 0) circle [x radius= 1.34, y radius= 1.34]   ;
\draw [shift={(241.5,246.5)}, rotate = 0] [color={rgb, 255:red, 0; green, 0; blue, 0 }  ][fill={rgb, 255:red, 0; green, 0; blue, 0 }  ][line width=0.75]      (0, 0) circle [x radius= 1.34, y radius= 1.34]   ;
 
\draw    (286.5,246.5) ;
\draw [shift={(286.5,246.5)}, rotate = 0] [color={rgb, 255:red, 0; green, 0; blue, 0 }  ][fill={rgb, 255:red, 0; green, 0; blue, 0 }  ][line width=0.75]      (0, 0) circle [x radius= 1.34, y radius= 1.34]   ;
\draw [shift={(286.5,246.5)}, rotate = 0] [color={rgb, 255:red, 0; green, 0; blue, 0 }  ][fill={rgb, 255:red, 0; green, 0; blue, 0 }  ][line width=0.75]      (0, 0) circle [x radius= 1.34, y radius= 1.34]   ;
 
\draw    (258.41,197.02) ;
\draw [shift={(258.41,197.02)}, rotate = 0] [color={rgb, 255:red, 0; green, 0; blue, 0 }  ][fill={rgb, 255:red, 0; green, 0; blue, 0 }  ][line width=0.75]      (0, 0) circle [x radius= 1.34, y radius= 1.34]   ;
\draw [shift={(258.41,197.02)}, rotate = 0] [color={rgb, 255:red, 0; green, 0; blue, 0 }  ][fill={rgb, 255:red, 0; green, 0; blue, 0 }  ][line width=0.75]      (0, 0) circle [x radius= 1.34, y radius= 1.34]   ;
 
\draw    (272.39,196.97) -- (241.5,246.5) ;
 
\draw    (258.34,196.83) -- (241.5,246.5) ;
 
\draw    (303.37,196.93) -- (286.5,246.5) ;
 
\draw    (286.37,196.93) -- (272.57,246.68) ;
\draw [shift={(272.57,246.68)}, rotate = 105.5] [color={rgb, 255:red, 0; green, 0; blue, 0 }  ][fill={rgb, 255:red, 0; green, 0; blue, 0 }  ][line width=0.75]      (0, 0) circle [x radius= 1.34, y radius= 1.34]   ;
\draw [shift={(286.37,196.93)}, rotate = 105.5] [color={rgb, 255:red, 0; green, 0; blue, 0 }  ][fill={rgb, 255:red, 0; green, 0; blue, 0 }  ][line width=0.75]      (0, 0) circle [x radius= 1.34, y radius= 1.34]   ;
 
\draw    (272.37,196.93) ;
\draw [shift={(272.37,196.93)}, rotate = 0] [color={rgb, 255:red, 0; green, 0; blue, 0 }  ][fill={rgb, 255:red, 0; green, 0; blue, 0 }  ][line width=0.75]      (0, 0) circle [x radius= 1.34, y radius= 1.34]   ;
 
\draw    (286.37,196.93) ;
\draw [shift={(286.37,196.93)}, rotate = 0] [color={rgb, 255:red, 0; green, 0; blue, 0 }  ][fill={rgb, 255:red, 0; green, 0; blue, 0 }  ][line width=0.75]      (0, 0) circle [x radius= 1.34, y radius= 1.34]   ;
 
\draw    (303.37,196.93) -- (272.57,246.68) ;
 
\draw    (272.39,196.97) -- (258.57,246.68) ;
\draw [shift={(258.57,246.68)}, rotate = 105.53] [color={rgb, 255:red, 0; green, 0; blue, 0 }  ][fill={rgb, 255:red, 0; green, 0; blue, 0 }  ][line width=0.75]      (0, 0) circle [x radius= 1.34, y radius= 1.34]   ;
 
\draw    (272.37,196.93) -- (272.47,221.81) ;
 
\draw    (272.47,221.81) -- (272.57,246.68) ;
\draw [shift={(272.57,246.68)}, rotate = 89.77] [color={rgb, 255:red, 0; green, 0; blue, 0 }  ][fill={rgb, 255:red, 0; green, 0; blue, 0 }  ][line width=0.75]      (0, 0) circle [x radius= 1.34, y radius= 1.34]   ;
\draw [shift={(272.47,221.81)}, rotate = 89.77] [color={rgb, 255:red, 0; green, 0; blue, 0 }  ][fill={rgb, 255:red, 0; green, 0; blue, 0 }  ][line width=0.75]      (0, 0) circle [x radius= 1.34, y radius= 1.34]   ;

\draw (133.44,135.11) node [anchor=north west][inner sep=0.75pt]  [font=\tiny]  {$\times _{1}$};
 
\draw (160.17,131.57) node [anchor=north west][inner sep=0.75pt]  [font=\tiny]  {$\gamma _{1}$};
 
\draw (83,127.73) node [anchor=north west][inner sep=0.75pt]  [font=\tiny]  {$\sigma ( \gamma _{1})$};
 
\draw (272.44,135.11) node [anchor=north west][inner sep=0.75pt]  [font=\tiny]  {$\times _{1}$};
 
\draw (299.17,131.57) node [anchor=north west][inner sep=0.75pt]  [font=\tiny]  {$\gamma _{1}$};
 
\draw (222,127.73) node [anchor=north west][inner sep=0.75pt]  [font=\tiny]  {$\sigma ( \gamma _{1})$};
 
\draw (133.44,220.11) node [anchor=north west][inner sep=0.75pt]  [font=\tiny]  {$\times _{1}$};
 
\draw (160.17,216.57) node [anchor=north west][inner sep=0.75pt]  [font=\tiny]  {$\gamma _{1}$};
 
\draw (83,212.73) node [anchor=north west][inner sep=0.75pt]  [font=\tiny]  {$\sigma ( \gamma _{1})$};
 
\draw (272.44,220.11) node [anchor=north west][inner sep=0.75pt]  [font=\tiny]  {$\times _{1}$};
 
\draw (299.17,216.57) node [anchor=north west][inner sep=0.75pt]  [font=\tiny]  {$\gamma _{1}$};
 
\draw (222,212.73) node [anchor=north west][inner sep=0.75pt]  [font=\tiny]  {$\sigma ( \gamma _{1})$};

\end{tikzpicture}

\end{figure}

Finally, apply a similar flipping process to $\Lambda_{blu}^{\prime\prime}$ to obtain a pseudo-triangulation $\Lambda^{\prime\prime\prime}$ where $\zeta(\Lambda^{\prime\prime\prime}) = (\pm(0), \pm(0))$ and $\Lambda^{\prime\prime\prime}_{org} \subset \mathbf{C}_{\mathtt{b}}^\sigma$. Since $\Lambda^{\prime\prime\prime}$ is an FV-pseudo-triangulation, the proof is complete.
\end{proof}
Consequently, to prove $\mathcal{G}(\mathcal{T}_{\mathbb{X}})$ is connected, it suffices to show that any pseudo-triangulation $\Lambda$ can be flipped into a pseudo-triangulation that includes skew-arcs in $\mathbf{C}_{\mathtt{b}}^\sigma$. To demonstrate this, we analyze several cases of $\Lambda$, which we address through the following lemmas:

\begin{lem}\label{lem:1}
    If $|\Lambda_*| = 2$, then $\Lambda$ can be transformed into a pseudo-triangulation that includes skew-arcs in $\mathbf{C}_{\mathtt{b}}^\sigma$.
\end{lem}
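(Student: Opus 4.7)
By Lemma~\ref{lem:6.5}(b), after possibly interchanging $\times_1 \leftrightarrow \times_2$ and $+ \leftrightarrow -$, we may assume
\[
\Lambda_* = \{(+,+),(+,-)\} \quad\text{and}\quad \Lambda \setminus \Lambda_* = \Lambda_{\mathtt{b},\times_1}^+ \cup \Lambda_{\mathtt{p}}^\sigma.
\]
In particular $\Lambda_{\mathtt{b}}^\sigma = \emptyset$, and the tilting sheaf $T=\widehat{\phi}(\Lambda)$ has $S_{\infty,\overline{0}}$ and $S_{0,\overline{0}}$ as direct summands. From the counting $|\Lambda|=n+3$ together with the bound $|\Lambda_{\mathtt{p}}^\sigma|\leq n-1$ (only $n-1$ pairwise rigid objects fit in a $\tau$-period $n$ tube), one gets $|\Lambda_{\mathtt{b},\times_1}^+|\geq 2$; this will be used to apply the ``saturation'' argument below. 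The plan is to apply two successive flips from Appendix~\ref{foh}, one for each loop in $\Lambda_*$, and identify each resulting complement by elimination on compatibility.

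\emph{Step 1.} Perform the flip $\widehat{\mu}_\Lambda((+,+))$ to obtain $\Lambda^{(1)}$; by Corollary~\ref{mutation and flip} this realizes $\mu_T(S_{\infty,\overline{0}})$. The flipped skew-arc $\widehat{\gamma}$ must be compatible with every element of $\Lambda \setminus \{(+,+)\}$. Any parameter pair distinct from $(+,+)$ and compatible with every arc in $\Lambda_{\mathtt{b},\times_1}^+$ must begin with $+$ (the $\operatorname{Ext}^1$-table between the four $\tau$-period $2$ simples and the line bundles forming $\Lambda_{\mathtt{b},\times_1}^+$ forces this), leaving only $(+,-)\in\Lambda$; so $\widehat{\gamma}\notin\mathbf{C}_*$. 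By Lemma~\ref{lem:6.4}, $\widehat{\gamma}\notin\mathbf{C}_{\mathtt{b}}^\sigma$ since it must be compatible with $(+,-)$. Since $\Lambda_{\mathtt{p}}^\sigma$ is saturated within its tube, $\widehat{\gamma}\notin\mathbf{C}_{\mathtt{p}}^\sigma$. The only remaining option is $\widehat{\gamma}\in\mathbf{C}_{\mathtt{b},\mathcal{X}}$; compatibility with $(+,-)$ via Proposition~\ref{prop:6.4} confines it to $\mathbf{C}_{\mathtt{b},\times_1}^+\cup\mathbf{C}_{\mathtt{b},\times_2}^-$, and the saturation of $\Lambda_{\mathtt{b},\times_1}^+$ in $\Lambda$ (an extra arc there would break the maximality of $\Lambda$ already before we removed $(+,+)$) excludes the first alternative. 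Hence $\widehat{\gamma}\in\mathbf{C}_{\mathtt{b},\times_2}^-$, so $\Lambda^{(1)}_*=\{(+,-)\}$ and $\Lambda^{(1)}_{\mathtt{b},\times_2}^-\neq\emptyset$, fitting Lemma~\ref{lem:6.5}(a).

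\emph{Step 2.} Apply $\widehat{\mu}_{\Lambda^{(1)}}((+,-))$ to obtain $\Lambda^{(2)}$. A parallel case analysis now forbids every option except $\mathbf{C}_{\mathtt{b}}^\sigma$: the only parameter pair compatible with both $\Lambda^{(1)}_{\mathtt{b},\times_1}^+$ and $\Lambda^{(1)}_{\mathtt{b},\times_2}^-$ is $(+,-)$ itself; any new arc in $\mathbf{C}_{\mathtt{b},\mathcal{X}}$ violates the saturation of $\Lambda^{(1)}_{\mathtt{b},\times_1}^+$ or $\Lambda^{(1)}_{\mathtt{b},\times_2}^-$; and any new arc in $\mathbf{C}_{\mathtt{p}}^\sigma$ violates the saturation of $\Lambda^{(1)}_{\mathtt{p}}^\sigma$. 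Thus $\Lambda^{(2)}$ contains a skew-arc in $\mathbf{C}_{\mathtt{b}}^\sigma$, as required. The main obstacle is the saturation claims at each step---that certain subsets $\Lambda^{(i)}_{\mathtt{b},\times_j}^{\epsilon}$ and $\Lambda^{(i)}_{\mathtt{p}}^\sigma$ admit no further compatible extension within their respective classes. These reduce to the counting $|\Lambda^{(i)}|=n+3$ combined with the compatibility criteria of Lemma~\ref{lem:6.4} and Proposition~\ref{prop:6.4}; the careful bookkeeping of which arcs are ruled out by which existing summand is the core technical content of the argument.
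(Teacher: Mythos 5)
There is a genuine gap in your Step~2. After Step~1 you have
$\Lambda^{(1)}=\{(+,-)\}\cup\Lambda_{\mathtt{b},\times_1}^{+}\cup\{\gamma^{-}\}\cup\Lambda_{\mathtt{p}}^\sigma$
with $\gamma^{-}\in\mathbf{C}_{\mathtt{b},\times_2}^{-}$. Your elimination for the flip of $(+,-)$ overlooks the candidate $\gamma^{+}\in\mathbf{C}_{\mathtt{b},\times_2}^{+}$, the other half of the same $\sigma$-fixed curve $\gamma$. It is compatible with $\gamma^{-}$ (same underlying curve, so $I(\mathbf{C}(\gamma^{+}),\mathbf{C}(\gamma^{-}))=0$), and with $\Lambda_{\mathtt{b},\times_1}^{+}\cup\Lambda_{\mathtt{p}}^\sigma$ because that compatibility is a condition on the underlying curve $\gamma$ alone, already guaranteed by $\gamma^{-}\in\Lambda^{(1)}$. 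Your ``saturation'' argument does not exclude it: that argument only rules out candidates which are \emph{also} compatible with the removed pair $(+,-)$, whereas $(+,-)$ is compatible with $\mathbf{C}_{\mathtt{b},\times_2}^{-}$ but not with $\mathbf{C}_{\mathtt{b},\times_2}^{+}$, so the addability of $\gamma^{+}$ to $\Lambda^{(1)}\setminus\{(+,-)\}$ contradicts nothing. Since complements are unique and $\gamma^{+}$ is one, the flip $\widehat{\mu}_{\Lambda^{(1)}}((+,-))$ equals $\gamma^{+}$, not an element of $\mathbf{C}_{\mathtt{b}}^\sigma$; your two-flip plan terminates with $\zeta(\Lambda^{(2)})_2=\pm(0)$ and no arc in $\mathbf{C}_{\mathtt{b}}^\sigma$. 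In fact two flips can never suffice: by Lemma~\ref{lem:6.4} a pseudo-triangulation containing any parameter pair contains no arc of $\mathbf{C}_{\mathtt{b}}^\sigma$, so such an arc can only appear at or after the flip removing the second parameter pair, and that flip produces $\gamma^{+}$.

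The paper's proof needs (at least) one more flip and an extra preliminary reduction. It first uses flips to arrange $\Lambda_{\mathtt{p}}^\sigma=\emptyset$ (any two triangulations of a polygon are flip-connected), then flips the two parameter pairs to obtain the two halves $\gamma^{+},\gamma^{-}$ of a single $\sigma$-fixed arc, and only then flips one of these halves; with $\Lambda_{\mathtt{p}}^\sigma$ cleared out, that third flip (a Type~I flip in the notation of Appendix~\ref{foh}) lands in $\mathbf{C}_{\mathtt{b}}^\sigma$. If you want to salvage your argument, you must add this third flip and, like the paper, control its outcome --- which is exactly where the reduction to $\Lambda_{\mathtt{p}}^\sigma=\emptyset$ is used. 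Your Step~1, by contrast, is essentially correct.
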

\begin{proof}
We only prove the case $\Lambda_* = \{(+, +), (-, +)\}$; other cases follow similarly. Since any two triangulations of a polygon can be flipped into each other, we may assume that $ \Lambda_{\mathtt{p}}^\sigma=\emptyset$. 

Sequentially flipping $(+, +)$ and $(-, +)$, followed by flipping the orange (or blue) skew-arc in $\Lambda^\prime$, yields a new pseudo-triangulation that includes skew-arcs in $\mathbf{C}_{\mathtt{b}}^\sigma$. See the diagram below for illustration.

\begin{figure}[H]
    \centering

\tikzset{every picture/.style={line width=0.75pt}}  


\end{figure}
\end{proof}

\begin{lem}\label{lem:2}
    If $|\Lambda_*|=1$, then $\Lambda$ can be flipped into a pseudo-triangulation that includes skew-arcs in $\mathbf{C}_{\mathtt{b}}^\sigma$.
\end{lem}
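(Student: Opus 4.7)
The strategy is to reduce the case $|\Lambda_*|=1$ to the already-established case of Lemma \ref{lem:1} by performing a single flip of the unique parameter pair.

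Since $|\Lambda_*|=1$, write $\Lambda_*=\{(\epsilon_1,\epsilon_2)\}$. By Lemma \ref{lem:6.5}(a) we have
\[
\Lambda=\Lambda_{\mathtt{b},\times_1}^{\epsilon_1}\cup \Lambda_{\mathtt{b},\times_2}^{\epsilon_2}\cup \Lambda_{\mathtt{p}}^\sigma \cup \{(\epsilon_1,\epsilon_2)\},
\]
and in particular $\Lambda_{\mathtt{b}}^\sigma=\emptyset$. The first step is to analyse the flip $\widehat{\mu}_{\Lambda}((\epsilon_1,\epsilon_2))$ of the parameter pair. By Corollary \ref{mutation and flip} this is equivalent to identifying the mutation $\mu_{T}(\widehat{\phi}((\epsilon_1,\epsilon_2)))$ in the corresponding tilting sheaf $T$. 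The summand $\widehat{\phi}((\epsilon_1,\epsilon_2))$ is a simple object in a $\tau$-period $2$ tube, so its mutation must be either (i) the other simple $\widehat{\phi}((\epsilon_1',\epsilon_2'))$ in the same tube (with exactly one of the signs flipped), or (ii) a line bundle or extension bundle determined by the Bongartz-type complement construction.

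In case (i), the new pseudo-triangulation $\Lambda'$ obtained from $\Lambda$ by this single flip satisfies $|\Lambda'_*|=2$, and Lemma \ref{lem:1} applies to $\Lambda'$, producing a pseudo-triangulation that includes skew-arcs in $\mathbf{C}_{\mathtt{b}}^\sigma$. The verification here amounts to checking via Proposition \ref{prop:6.4} and Lemma \ref{lem:6.4} that the other simple in the tube is compatible with the remaining skew-arcs $\Lambda\setminus\Lambda_*$, which follows because compatibility with the bundles indexed by $\Lambda_{\mathtt{b},\times_1}^{\epsilon_1}\cup \Lambda_{\mathtt{b},\times_2}^{\epsilon_2}$ is insensitive to which simple within the same $\tau$-period $2$ tube is chosen, provided the sign pattern $(\epsilon_1,\epsilon_2)$ is compatible with the bundle signs on each side (which it already was).

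In case (ii), the flip produces a skew-arc landing outside $\mathbf{C}_*$. I would argue that after this flip, at least one of the local configurations near $\times_1$ or $\times_2$ becomes a pseudo-triangulation of a polygon in which adjacent skew-arcs on opposite sides of the relevant fixed point $\times_i$ can now be flipped to a $\sigma$-fixed curve through $\times_i$. Concretely, the resulting $\Lambda'$ satisfies $\zeta(\Lambda')_i=\pm$ for at least one $i\in\{1,2\}$, and by Lemma \ref{well-defined} together with the classification of pseudo-triangulations, there must exist adjacent skew-arcs $\widehat{\alpha}\in \Lambda'_{\times_i}^{+}$ and its $\sigma$-counterpart in $\Lambda'_{\times_i}^{-}$; a further flip analogous to the terminal step in the diagram of Lemma \ref{lem:1}'s proof then yields a skew-arc in $\mathbf{C}_{\mathtt{b}}^\sigma$.

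The main obstacle will be case (ii): proving that such a flip is always available and lands in $\mathbf{C}_{\mathtt{b}}^\sigma$ rather than producing another parameter pair or an element of $\mathbf{C}_{\mathtt{p}}^\sigma$. I expect to handle this by a local geometric analysis on the cylindrical surface around $\times_i$, patterned on the figures in the proof of Lemma \ref{lem:1}, using the fact that flipping an arc adjacent to a fixed point in a $\sigma$-invariant triangulation of the quotient polygon has a canonical ``$\sigma$-fixed lift''. If case (ii) should turn out to be vacuous (i.e., the flip of $(\epsilon_1,\epsilon_2)$ always lies in $\mathbf{C}_*$), then the entire lemma reduces cleanly to case (i) followed by Lemma \ref{lem:1}.
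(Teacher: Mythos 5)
There is a genuine gap, and it sits exactly where you place your hopes. Your case (i) — the flip of the unique parameter pair producing another parameter pair — is in fact vacuous. When $|\Lambda_*|=\{(\epsilon_1,\epsilon_2)\}$, Lemma \ref{well-defined} forces $\Lambda_{\times_i}^{\neg\epsilon_i}=\emptyset$ to imply $|\Lambda_{\times_i}^{\epsilon_i}|\geq 2$, hence $\Lambda_{\mathtt{b},\times_i}^{\epsilon_i}\neq\emptyset$ for \emph{both} $i$. By Table \ref{tab:dim_ext1}, a parameter pair $(\epsilon_1',\epsilon_2')$ is compatible with a nonempty $\Lambda_{\mathtt{b},\times_i}^{\epsilon_i}$ only if $\epsilon_i'=\epsilon_i$, so the only parameter pair compatible with $\Lambda\setminus\Lambda_*$ is $(\epsilon_1,\epsilon_2)$ itself; the second complement of the almost complete tilting sheaf is therefore never in $\mathbf{C}_*$. (As a side point, the ``other simple in the same tube'' as $(+,+)$ is $(-,-)$ — both signs flip, not one — and two simples in the same $\tau$-period $2$ tube are incompatible anyway by Lemma \ref{lem:6.5}, so it could never serve as the mutation.) Consequently your entire proof rests on case (ii), which you only sketch (``I would argue'', ``I expect to handle this''), and the sketch as stated is not right either: flipping $(\epsilon_1,\epsilon_2)$ does not in general produce $\zeta(\Lambda')_i=\pm$ followed by a clean second flip into $\mathbf{C}_{\mathtt{b}}^\sigma$. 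What actually happens depends on whether $|\Lambda_{\mathtt{b},\times_i}^{\epsilon_i}|=1$ for some $i$ or $\geq 2$ for both, and in the former situation the flip of the parameter pair (Type III(2)) lands in $\mathbf{C}_{\mathtt{b},\times_i}^{\neg\epsilon_i}$, leaving you with $\Lambda'_*=\emptyset$ and no element of $\mathbf{C}_{\mathtt{b}}^\sigma$ — you would then need the (separately proved) Lemma \ref{lem:3}, not a quick local flip.

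The paper's proof avoids this by choosing \emph{which} arc to flip according to that dichotomy: after reducing to $\Lambda_{\mathtt{p}}^\sigma=\emptyset$, if $|\Lambda_{\mathtt{b},\times_i}^{\epsilon_i}|=1$ for some $i$ it flips that unique bundle skew-arc (not the parameter pair), which creates a \emph{second} parameter pair and reduces to Lemma \ref{lem:1}; only when $|\Lambda_{\mathtt{b},\times_i}^{\epsilon_i}|\geq 2$ for both $i$ does it flip $(\epsilon_1,\epsilon_2)$, and in that configuration the flip lands directly in $\mathbf{C}_{\mathtt{b}}^\sigma$ in a single step. To repair your argument you would need to (a) delete case (i), (b) add the case split on $|\Lambda_{\mathtt{b},\times_i}^{\epsilon_i}|$, and (c) in the subcase where some $|\Lambda_{\mathtt{b},\times_i}^{\epsilon_i}|=1$, switch to flipping that bundle arc so as to reach $|\Lambda'_*|=2$ and invoke Lemma \ref{lem:1}.
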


\begin{proof}
 We  proof the case $\Lambda = \{(+, +)\}$; other cases follow similarly. Since any two triangulations of a polygon can be flipped into each other, we may assume that $ \Lambda_{\mathtt{p}}^\sigma=\emptyset$. We have two cases:
        
         (1) $|\Lambda_{\mathtt{b},\times_i}^+| = 1$ for some $i$. Assume without loss of generality that $i=1$.  By flipping the only skew-arc in $\Lambda_{\mathtt{b},\times_1}^+$, we obtain a pseudo-triangulation $\Lambda'$ with $|\Lambda'_*| = 2$, as shown in the figure below:
  \begin{figure}[H]
           \centering
\tikzset{every picture/.style={line width=0.75pt}}  

\end{figure}
\end{proof}

\begin{lem}\label{lem:3}
    If $\Lambda_{\mathtt{b}}^\sigma = \Lambda_* = \emptyset$, then $\Lambda$ can be flipped into a pseudo-triangulation that includes skew-arcs in $\mathbf{C}_{\mathtt{b}}^\sigma$.
\end{lem}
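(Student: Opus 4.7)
Since $\Lambda_{\mathtt{b}}^\sigma = \emptyset$ and $\Lambda_* = \emptyset$, the classification summary following Lemma~\ref{lem:6.5} forces $\zeta(\Lambda) = (\epsilon_1, \epsilon_2)$ for some $\epsilon_1, \epsilon_2 \in \{+, -\}$, so that
\[
\Lambda \;=\; \Lambda_{\mathtt{b},\times_1}^{\epsilon_1} \cup \Lambda_{\mathtt{b},\times_2}^{\epsilon_2} \cup \Lambda_{\mathtt{p}}^\sigma.
\]
The plan is to produce (after a preparatory rearrangement of $\Lambda_{\mathtt{p}}^\sigma$) a single flip whose outcome lies either in $\mathbf{C}_{\mathtt{b}}^\sigma$ (finishing the proof directly) or in $\mathbf{C}_*$ (reducing the problem to Lemma~\ref{lem:2}, and hence ultimately to Lemma~\ref{lem:1}).

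The geometric heart of the argument is the following observation. Suppose $\widehat{\gamma} \in \Lambda_{\mathtt{b},\times_1}^{\epsilon_1}$ and $\widehat{\delta} \in \Lambda_{\mathtt{b},\times_2}^{\epsilon_2}$ are neighboring in the associated triangulation $\Gamma_{\Lambda}$, in the sense that some triangle of $\Gamma_{\Lambda}$ has an edge coming from $\widehat{\gamma}$ and an edge coming from $\widehat{\delta}$. Then $\widehat{\gamma}$ (passing through $\times_1$) together with $\widehat{\delta}$ (passing through $\times_2$) and two boundary segments bound a $\sigma$-invariant region of $(\mathcal{S}, M)$. Flipping $\widehat{\gamma}$ inside this region replaces it with the unique skew-arc $\widehat{\mu}_{\Lambda}(\widehat{\gamma})$ such that $(\Lambda \setminus \{\widehat{\gamma}\}) \cup \{\widehat{\mu}_{\Lambda}(\widehat{\gamma})\}$ is again a pseudo-triangulation; by $\sigma$-equivariance and the fact that $\widehat{\mu}_{\Lambda}(\widehat{\gamma}) \neq \widehat{\gamma}$, this new skew-arc is $\sigma$-invariant but cannot pass through $\times_1$ or $\times_2$. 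Consequently, $\widehat{\mu}_{\Lambda}(\widehat{\gamma}) \in \mathbf{C}_{\mathtt{b}}^\sigma$ when it is boundary-to-boundary, and $\widehat{\mu}_{\Lambda}(\widehat{\gamma}) \in \mathbf{C}_*$ when it is a loop separating the two fixed points.

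The argument then has two steps. First, applying the classical flip-connectedness of triangulations of a polygon to $\Lambda_{\mathtt{p}}^\sigma$ (while keeping $\Lambda_{\mathtt{b},\mathcal{X}}$ fixed), we rearrange $\Lambda$ into a pseudo-triangulation containing a neighboring pair $(\widehat{\gamma}, \widehat{\delta})$ as above. Second, we perform the flip of $\widehat{\gamma}$; if $\widehat{\mu}_{\Lambda}(\widehat{\gamma}) \in \mathbf{C}_{\mathtt{b}}^\sigma$ we are done immediately, whereas if $\widehat{\mu}_{\Lambda}(\widehat{\gamma}) \in \mathbf{C}_*$ then the new pseudo-triangulation $\Lambda'$ has $\Lambda'_* \neq \emptyset$ and Lemma~\ref{lem:2} completes the argument.

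The main obstacle lies in justifying the first step: one must verify that a neighboring pair $(\widehat{\gamma}, \widehat{\delta})$ can always be arranged by flipping only within $\Lambda_{\mathtt{p}}^\sigma$. When $|\Lambda_{\mathtt{b},\times_1}^{\epsilon_1}|$ and $|\Lambda_{\mathtt{b},\times_2}^{\epsilon_2}|$ are both at least two, the required combinatorial choice is straightforward from the polygon-triangulation flip theorem. In the rigid sub-case where $|\Lambda_{\mathtt{b},\times_i}^{\epsilon_i}| = 1$ for some $i$, a preparatory flip of a $\Lambda_{\mathtt{p}}^\sigma$-arc adjacent to the unique vertical skew-arc at $\times_i$ is first carried out to expose the required adjacency. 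A secondary check, using the explicit flip description of Appendix~\ref{foh} together with Proposition~\ref{L-action}, confirms that $\widehat{\mu}_{\Lambda}(\widehat{\gamma}) \notin \Lambda_{\mathtt{b},\mathcal{X}}$: the ``other diagonal'' of the $\sigma$-invariant region in question cannot itself be a $\sigma$-fixed arc through $\times_1$ or $\times_2$, since the two endpoints lie on the halves of $\widehat{\delta}$ and the boundary segments, not both across from a single fixed point.
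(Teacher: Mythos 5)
Your opening step is wrong, and it undermines the whole argument. You claim that $\Lambda_{\mathtt{b}}^\sigma = \Lambda_* = \emptyset$ forces $\zeta(\Lambda) = (\epsilon_1,\epsilon_2)$ with $\epsilon_1,\epsilon_2 \in \{+,-\}$. In fact that configuration is \emph{impossible}: if both $\zeta(\Lambda)_1$ and $\zeta(\Lambda)_2$ lay in $\{+,-\}$, the skew-arc $(\zeta(\Lambda)_1,\zeta(\Lambda)_2) \in \mathbf{C}_*$ would be compatible with every element of $\Lambda$ --- it is compatible with all of $\mathbf{C}_{\mathtt{b},\times_i}^{\zeta(\Lambda)_i}$ (Table \ref{tab:dim_ext1}), with $\Lambda_{\mathtt{p}}^\sigma$ (different tubes are orthogonal), and the only obstruction, incompatibility with $\mathbf{C}_{\mathtt{b}}^\sigma$ from Lemma \ref{lem:6.4}, is vacuous since $\Lambda_{\mathtt{b}}^\sigma = \emptyset$. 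That contradicts the maximality of $\Lambda$ because $\Lambda_* = \emptyset$. Since $\Lambda_* = \emptyset$ also rules out conditions $(A1)$ and $(A2)$, any index with $\zeta(\Lambda)_i = \pm$ must satisfy $(A0)$, so the correct conclusion is the opposite of yours: at least one $\zeta(\Lambda)_i = \pm(0)$, i.e.\ $\Lambda$ contains both halves $\gamma^+,\gamma^-$ of a $\sigma$-fixed arc through some $\times_i$.

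Because your premise describes an empty set of pseudo-triangulations, the ``neighboring pair'' construction that follows never addresses the cases that actually occur. The paper's proof splits on how many indices satisfy $\zeta(\Lambda)_i = \pm(0)$: if both do, one flips a suitable skew-arc of $\Lambda_{\mathtt{p}}^\sigma$ sitting between the two $\sigma$-fixed arcs and lands directly in $\mathbf{C}_{\mathtt{b}}^\sigma$ (a Type II flip); if exactly one does, one flips one of the two halves in $\Lambda_{\mathtt{b},\times_i}$ to produce a skew-arc in $\mathbf{C}_*$ (Type I(2)) and then concludes by Lemma \ref{lem:2}. Your overall strategy --- reach either $\mathbf{C}_{\mathtt{b}}^\sigma$ directly or $\mathbf{C}_*$ and reduce to Lemma \ref{lem:2} --- is the right shape, but to repair the proof you must replace the first paragraph with the maximality argument above and then carry out the two genuine cases rather than the vacuous one.
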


\begin{proof}
    We claim that $\zeta(\Lambda)_i = \pm(0)$ holds for at least one $i$. Suppose otherwise; then $\Lambda \cup \{(\zeta(\Lambda)_1, \zeta(\Lambda)_2)\}$ forms a collection of distinct, pairwise compatible skew-arcs, contradicting the maximality of $\Lambda$. Thus, we have three cases: 
    
        (1)   $\zeta(\Lambda)_1 = \zeta(\Lambda)_2 = \pm(0)$. In this case, regardless of the skew-arcs in the yellow region, we can always flip the orange skew-arc in $\Lambda_{\mathtt{p}}^\sigma$, as shown in the figure below:
\begin{figure}[H]
    \centering

\tikzset{every picture/.style={line width=0.75pt}}  



  \end{figure}
  \noindent to obtain a skew-arc in $\mathbf{C}_*$. Then, by Lemma \ref{lem:2}, the statement holds. 
  
   (3) $\zeta(\Lambda)_2 = \pm(0)$ and $\zeta(\Lambda)_1 \neq \pm(0)$. This case is similar to (2).
\end{proof}

\begin{thm}{}{}\label{connected}
	The tilting graph $\mathcal{G}(\mathcal{T}_{\mathbb{X}})$ is connected.
\end{thm}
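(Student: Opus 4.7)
The plan is to reduce the connectedness of $\mathcal{G}(\mathcal{T}_{\mathbb{X}})$ to the flip-connectedness of the set of all pseudo-triangulations on $(\mathcal{S},M,\sigma)$, and then to exploit the established ``funnel'' of simplifications: arbitrary $\Lambda \leadsto \Lambda$ with a skew-arc in $\mathbf{C}_{\mathtt{b}}^\sigma \leadsto$ FV-pseudo-triangulation $\leadsto$ a distinguished FV-pseudo-triangulation. By Proposition \ref{tilting} and Corollary \ref{mutation and flip}, two vertices $T$ and $T'$ of $\mathcal{G}(\mathcal{T}_{\mathbb{X}})$ are connected by an edge if and only if the associated pseudo-triangulations $\Lambda = \widehat{\phi}^{-1}(T)$ and $\Lambda' = \widehat{\phi}^{-1}(T')$ differ by a single flip. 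Hence it suffices to show that any two pseudo-triangulations on $(\mathcal{S},M,\sigma)$ can be transformed into each other by a sequence of flips of skew-arcs.

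First, given an arbitrary pseudo-triangulation $\Lambda$, I would use the classification after Lemma \ref{lem:6.5} together with Lemmas \ref{lem:1}, \ref{lem:2} and \ref{lem:3} to produce, via a finite sequence of flips, a pseudo-triangulation $\Lambda^{(1)}$ with $\Lambda^{(1)}_{\mathtt{b}}^\sigma \neq \emptyset$. Lemma \ref{lem:1} handles $|\Lambda_*|=2$, Lemma \ref{lem:2} handles $|\Lambda_*|=1$, and Lemma \ref{lem:3} handles the case $\Lambda_* = \Lambda_{\mathtt{b}}^\sigma = \emptyset$; together these three cases exhaust all pseudo-triangulations for which $\Lambda_{\mathtt{b}}^\sigma$ is empty. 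Next, applying Proposition \ref{filp to FV} to $\Lambda^{(1)}$, I obtain a further sequence of flips transforming $\Lambda^{(1)}$ into an FV-pseudo-triangulation $\Lambda^{(2)}$.

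Once we are at an FV-pseudo-triangulation, the corresponding tilting sheaf $\widehat{\phi}(\Lambda^{(2)})$ is $\mathbb{Z}(\vec{x}_1-\vec{x}_2)$-stable by Proposition \ref{FV-pseudo-triangulations}. The preceding proposition (applied inductively to the vector $\iota(\widehat{\phi}(\Lambda^{(2)}))$ using the three flips described in Lemma \ref{flipsss}) shows that any $\mathbb{Z}(\vec{x}_1-\vec{x}_2)$-stable tilting bundle can be mutated to some $\widehat{\phi}(\overrightarrow{\Lambda}^a_b)$ or $\widehat{\phi}(\underrightarrow{\Lambda}^a_b)$; translating this back under Corollary \ref{mutation and flip}, $\Lambda^{(2)}$ can be flipped into some $\overrightarrow{\Lambda}^a_b$ or $\underrightarrow{\Lambda}^a_b$. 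Finally, Corollary \ref{Tab} guarantees that all FV-pseudo-triangulations of the form $\overrightarrow{\Lambda}^a_b$ or $\underrightarrow{\Lambda}^a_b$ (for $a+b=0$) are mutually flip-equivalent. Combining these three reductions gives a flip-path from $\Lambda$ to a fixed distinguished FV-pseudo-triangulation, say $\overrightarrow{\Lambda}^0_0$; applying the same argument to any second pseudo-triangulation $\Lambda'$ and concatenating the paths (one reversed) yields a flip-sequence between $\Lambda$ and $\Lambda'$, which is exactly the connectedness of $\mathcal{G}(\mathcal{T}_{\mathbb{X}})$.

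The main obstacle has, in fact, already been absorbed into the preparatory lemmas: the delicate case analysis in Lemmas \ref{lem:1}--\ref{lem:3}, which ensures that skew-arcs living in the ``loop'' sector $\mathbf{C}_*$ or in the boundary sector $\mathbf{C}_{\mathtt{b},\mathcal{X}}$ can always be converted into a $\sigma$-fixed boundary skew-arc in $\mathbf{C}_{\mathtt{b}}^\sigma$, together with the stability-preserving flip analysis for FV-pseudo-triangulations. Given all these ingredients, the proof of Theorem \ref{connected} itself is essentially a short concatenation argument, and I do not anticipate any additional technical difficulty beyond carefully citing each reduction step in the right order.
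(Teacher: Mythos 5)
Your proposal is correct and follows essentially the same route as the paper: reduce connectedness to flip-connectedness of pseudo-triangulations via Proposition \ref{tilting} and Corollary \ref{mutation and flip}, funnel an arbitrary pseudo-triangulation through Lemmas \ref{lem:1}--\ref{lem:3} and Proposition \ref{filp to FV} to an FV-pseudo-triangulation, and then connect all FV-pseudo-triangulations using Lemma \ref{flipsss}, Proposition \ref{111} and Corollary \ref{Tab}. The paper's own proof is just a terser statement of exactly this concatenation.
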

\begin{proof}
   By Proposition \ref{filp to FV} and Lemmas \ref{lem:1}-\ref{lem:3}, any two pseudo-triangulations on $(\mathcal{S},M)$ can be flipped into each other. Combined with Corollary \ref{mutation and flip} and the bijection $\widehat{\phi}$, the result follows.
\end{proof}

\appendix
\section{The equivariant relationship between ${\rm coh}\mbox{-}\mathbb{X}$ and ${\rm coh}\mbox{-}\mathbb{Y}$}\label{equivariant}
Let $\mathbb{X}$ and $\mathbb{Y}$ be the weighted projective lines of types $(2,2,n)$ and $(n,n)$, respectively.  We refer to \cite{Chen2020DualActions, Chen2017Equivariantization} for background on $\mathbf{k}$-linear finite group actions on a category and equivariantization.  Let $G$ be a finite group and $\mathcal{C}$ a category with a $\mathbf{k}$-linear $G$-action. We denote by $\mathcal{C}^G$ the category of $G$-equivariant objects in $\mathcal{C}$. Two natural functors play an important role in equivariant theory: the induction functor $\operatorname{Ind} : \mathcal{C} \rightarrow \mathcal{C}^G$ and the forgetful functor $F : \mathcal{C}^G \rightarrow \mathcal{C}$.

In this appendix, we use the equivariantization to relate weighted projective lines $\mathbb{X}$ and $\mathbb{Y}$. Our discussion follows a similar structure to that in \cite[Section 8]{MR4245628}, where the equivariant relationships between different weighted projective lines of tubular type are examined. For the reader's convenience, we present the details here as well.

According to \cite{MR3644095}, there exists finite subgroup $G$ of the automorphism group $\operatorname{Aut}(\mathbb{Y})$ such that $\mathbb{Y}/G \cong \mathbb{X}$. Specifically, $G$ is the cyclic group of order $2$, generated by $\sigma_{1,2}$, which exchanges the two weighted points of $\mathbb{Y}$. Moreover, the group $G$ can be expressed as the subgroup of $\operatorname{Aut}({\rm coh}\mbox{-}\mathbb{Y})$, the
group of isomorphism classes of self-equivalences of ${\rm coh}\mbox{-}\mathbb{Y}$, fixing the structure sheaf $\mathcal{O}_{\mathbb{Y}}$, compare \cite{Lenzing2000AutomorphismGroup}. 
Thus, there is natural $G$-action on ${\rm coh}\mbox{-}\mathbb{Y}$. By equivariantization, we get  the category $({\rm coh}\mbox{-}\mathbb{Y})^G$ of $G$-equivariant objects. 

\begin{rem}\label{isomor}
The isomorphism $\mathbb{Y}/G \cong \mathbb{X}$ is described explicitly as follows, though it is implicit in \cite{MR3644095}:   Consider the orbifold quotient $\mathbb{Y}/G = (\mathbb{Y}/G, \overline{\omega})$ of $\mathbb{Y}$ by $G$. Denote the orbits in $\mathbb{Y}/G$ by $G\lambda$, with stabilizer $G_\lambda$ for each $\lambda \in \mathbb{Y}$. Given that $\overline{\omega}(G\lambda)$ equals the product of the weight of $\lambda$ and $|G_\lambda|$, we have:
\[
\overline{\omega}(G\lambda) =
\begin{cases}
1, & \text{if } \lambda \notin \{\infty, 0, \pm1\}, \\
2, & \text{if } \lambda = \pm1, \\
n, & \text{if } \lambda \in \{\infty, 0\}.
\end{cases}
\]

Consequently, there exists an isomorphism $\varphi: \mathbb{Y}/G \to \mathbb{X}$ such that:

\[
\varphi(G\lambda) =
\begin{cases}
\infty, & \text{if } \lambda = 1, \\
0, & \text{if } \lambda = -1, \\
1, & \text{if } \lambda \in \{\infty, 0\}.
\end{cases}
\]
\end{rem}

Note that the dualizing element $\vec{x}_1-\vec{x}_2$ in $\mathbb{L}(2,2,n)$ has order $2$. We consider the cyclic group $\mathbb{Z}(\vec{x}_1-\vec{x}_2)$ of order $2$, which has a strict  $\mathbf{k}$-linear action on ${\rm coh}\mbox{-}\mathbb{X}$ by the degree-shift. We will consider the category $({\rm coh}\mbox{-}\mathbb{X})^{\mathbb{Z}(\vec{x}_1-\vec{x}_2)}$ of $\mathbb{Z}(\vec{x}_1-\vec{x}_2)$-equivariant sheaves.  We identify $G$ with the character group $\widehat{\mathbb{Z}(\vec{x}_1-\vec{x}_2)}$. Thus we obtain the dual $G$-action on $({\rm coh}\mbox{-}\mathbb{X})^{\mathbb{Z}(\vec{x}_1-\vec{x}_2)}$.

\begin{prop}\label{(2,2,n) and (n,n)}
Keep the notations as above. Then the following statements hold.
\begin{enumerate}
\item[(a)] There is an equivalence of categories
$$({\rm coh}\mbox{-}\mathbb{X})^{\mathbb{Z}(\vec{x}_1-\vec{x}_2)}\stackrel{\sim}\longrightarrow {\rm coh}\mbox{-}\mathbb{Y},$$
which is equivariant with respect to the above two $G$-actions.
    \item[(b)] There is an equivalence of categories
    $$ ({\rm coh}\mbox{-}\mathbb{Y})^{G} \stackrel{\sim} \longrightarrow {\rm coh}\mbox{-}\mathbb{X}.$$
\end{enumerate}
\end{prop}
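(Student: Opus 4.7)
The plan is to establish (b) first, via the standard equivariantization framework for Galois coverings of weighted projective lines, and then to derive (a) from (b) using the duality of equivariantizations for a finite abelian group and its character group.

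For (b): the group $G = \langle \sigma_{1,2}\rangle$ acts on ${\rm coh}\mbox{-}\mathbb{Y}$ by a $\mathbf{k}$-linear autoequivalence fixing $\mathcal{O}_\mathbb{Y}$, realizing the Galois cover $\mathbb{Y} \to \mathbb{Y}/G \cong \mathbb{X}$ via the isomorphism $\varphi$ of Remark \ref{isomor}. Invoking the general equivariantization principle for such coverings (as developed in the cited works of Chen and in the sense of \cite{Lenzing2000AutomorphismGroup}), one obtains $({\rm coh}\mbox{-}\mathbb{Y})^G \simeq {\rm coh}\mbox{-}(\mathbb{Y}/G) \simeq {\rm coh}\mbox{-}\mathbb{X}$. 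An explicit check is available at the level of homogeneous coordinate algebras: under the substitution $X_1 = Y_1 - Y_2$, $X_2 = Y_1 + Y_2$ together with a suitable choice of $X_3$, the $G$-invariant subalgebra of $S(n,n) = \mathbf{k}[Y_1, Y_2]$ (with $\sigma_{1,2}: Y_1 \leftrightarrow Y_2$) matches, up to regrading, the algebra $S(2,2,n) = \mathbf{k}[X_1, X_2, X_3]/(X_3^n - (X_2^2 - X_1^2))$, after which sheafification yields (b).

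For (a): since $|G| = 2$ is coprime to $\operatorname{char}\mathbf{k}$, the general duality for equivariantization (see \cite{Chen2020DualActions}) endows $({\rm coh}\mbox{-}\mathbb{Y})^G$ with a canonical $\hat{G}$-action together with a canonical equivalence $(({\rm coh}\mbox{-}\mathbb{Y})^G)^{\hat{G}} \simeq {\rm coh}\mbox{-}\mathbb{Y}$. Transporting via the equivalence of (b) produces an equivalence $({\rm coh}\mbox{-}\mathbb{X})^{\hat{G}} \simeq {\rm coh}\mbox{-}\mathbb{Y}$; the required $G$-equivariance in the statement of (a) is automatic from the naturality of the double-equivariantization isomorphism. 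It remains to identify the transported $\hat{G}$-action on ${\rm coh}\mbox{-}\mathbb{X}$ with the degree-shift action of $\mathbb{Z}(\vec{x}_1 - \vec{x}_2)$; this identification is the main obstacle.

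The match would proceed as follows. Since $2(\vec{x}_1 - \vec{x}_2) = \vec{c} - \vec{c} = 0$, the degree-shift by $\vec{x}_1 - \vec{x}_2$ is an autoequivalence of order dividing $2$, hence defines an action of $\hat{G} \cong \mathbb{Z}/2$. To show this action coincides with the canonical one, I would compare both on a spanning set of simples. The shift sends $S_{\lambda_i, \bar{k}} \mapsto S_{\lambda_i, \overline{k+1}}$ for $i = 1, 2$, swapping the two exceptional simples at each weight-$2$ point, while fixing simples at $\lambda_3$ and all ordinary simples. The canonical $\hat{G}$-action twists a $G$-equivariant object by the nontrivial character: for equivariantizations of $G$-fixed simples (namely $S_1$ and $S_{-1}$ in $\mathbb{Y}$, corresponding to $\lambda_1$ and $\lambda_2$ in $\mathbb{X}$), it swaps the two equivariant extensions, while for equivariantizations of simples on free $G$-orbits (yielding ordinary simples and $\lambda_3$-simples in $\mathbb{X}$), it acts trivially up to isomorphism. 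A parallel check on the structure sheaf --- whose two equivariant extensions on $\mathbb{Y}$ descend to $\mathcal{O}_\mathbb{X}$ and $\mathcal{O}_\mathbb{X}(\vec{x}_1 - \vec{x}_2)$, exactly reflecting the shift --- pins down the two $\hat{G}$-actions as naturally isomorphic, completing (a).
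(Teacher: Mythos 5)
Your architecture is the mirror image of the paper's: the paper first obtains (a) by importing the explicit equivalence $H_1$ of \cite{MR4444429}, identifies the transported dual $G$-action on ${\rm coh}\mbox{-}\mathbb{Y}$ with $\sigma_{1,2}$ by checking that it fixes $\mathcal{O}_{\mathbb{Y}}$ and swaps the simples at $\infty$ and $0$ (using \cite[Proposition 3.1]{Lenzing2000AutomorphismGroup} to see that an autoequivalence fixing the structure sheaf comes from an automorphism of $\mathbb{Y}$), and then deduces (b) from (a) via \cite[Theorem 4.6]{Chen2020DualActions}. You run the same duality in the opposite direction, and your matching of the transported $\widehat{G}$-action on ${\rm coh}\mbox{-}\mathbb{X}$ with the shift by $\vec{x}_1-\vec{x}_2$ on the structure sheaf and the simples is the right analogue of that step (with the usual caveat that one must identify the two actions \emph{as actions}, not merely their effect on isomorphism classes; the paper's route through automorphisms of the underlying curve is what supplies this, and you would need a corresponding argument).

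The genuine gap is in your direct proof of (b). The ``general equivariantization principle'' $({\rm coh}\mbox{-}\mathbb{Y})^{G}\simeq{\rm coh}\mbox{-}(\mathbb{Y}/G)$ is not contained in the sources you invoke: \cite{Chen2020DualActions} and \cite{Chen2017Equivariantization} provide the abstract duality machinery, \cite{Lenzing2000AutomorphismGroup} computes automorphism groups, and \cite{MR3644095} only identifies $\mathbb{Y}/G\cong\mathbb{X}$ as weighted curves. The categorical equivalence is exactly the content of \cite{MR4444429}, i.e.\ the black box the paper uses for the other direction, so you have not reduced the difficulty but relocated it with the wrong references. Moreover the explicit check you offer as a substitute fails: in $S(n,n)=\mathbf{k}[Y_1,Y_2]$ one has $\vec{y}_1\neq\vec{y}_2$ in $\mathbb{L}(n,n)$, so $Y_1\pm Y_2$ is not homogeneous, and the $G$-invariant subalgebra of $\mathbf{k}[Y_1,Y_2]$ is the polynomial ring $\mathbf{k}[Y_1+Y_2,\,Y_1Y_2]$, which is not $S(2,2,n)$. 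The correct relationship is $X_1=\tfrac12(Y_1^{n}-Y_2^{n})$, $X_2=\tfrac12(Y_1^{n}+Y_2^{n})$, $X_3=Y_1Y_2$, which satisfy $X_3^{n}=X_2^{2}-X_1^{2}$; here $X_2,X_3$ are invariant while $X_1$ is anti-invariant, and the order-two element $\vec{x}_1-\vec{x}_2\in\mathbb{L}(2,2,n)$ records the $G$-character rather than $S(2,2,n)$ arising as an invariant ring. Repairing this computation amounts to reproving the equivalence of \cite{MR4444429}; the cleanest fix is to cite that equivalence for one of the two directions, as the paper does, and then dualize.
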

\begin{proof}
The equivalence in (a) is obtained explicitly in \cite{MR4444429}, which will be denoted by  $H_1$. We observe that $H_1 \cdot{\rm Ind}_1 (\mathcal{O}_\mathbb{X})=\mathcal{O}_\mathbb{Y}$, where ${\rm Ind}_1\colon {\rm coh}\mbox{-}\mathbb{X} \rightarrow ({\rm coh}\mbox{-}\mathbb{X})^{\mathbb{Z}(\vec{x}_1-\vec{x}_2)}$ is the induction functor. In particular, the dual $G$-action fixes ${\rm Ind}_1(\mathcal{O}_\mathbb{X})$; see \cite[Subsection 4.3]{Chen2020DualActions}.

Transporting the dual $G$-action on $({\rm coh}\mbox{-}\mathbb{X})^{\mathbb{Z}(\vec{x}_1-\vec{x}_2)}$ via a quasi-inverse of $H_1$, we obtain the $G$-action on ${\rm coh}\mbox{-}\mathbb{Y}$. It follows that the transported action fixes $\mathcal{O}_\mathbb{Y}$, that is, given by an automorphism of $\mathbb{Y}$; see \cite[Proposition 3.1]{Lenzing2000AutomorphismGroup}.
We infer that the transported action swaps the simple sheaves associate to the weighted point $\infty$ and $0$ on $\mathbb{Y}$. Thus, the transported action coincides with the one determined  by $\sigma_{1,2}$; compare \cite[Proposition 3.1]{Lenzing2000AutomorphismGroup}.
 This proves (a).

The statement (b) follows from (a) and \cite[Theorem 4.6]{Chen2020DualActions}. 
\end{proof}

Denote the equivalence in Proposition \ref{(2,2,n) and (n,n)} as $H_2$. This equivalence $H_2$ induces a bijection $\overline{H_2}$ from  ${\rm ind}({\rm coh}\mbox{-}\mathbb{Y})^G$ to ${\rm ind}({\rm coh}\mbox{-}\mathbb{X})$. By \cite[Example 2.5]{Chen2017Equivariantization}, we can list the types of all indecomposable $G$-equivariant objects in $({\rm coh}\mbox{-}\mathbb{Y})^{G}$ in the following table: 
\begin{table}[H]\label{form}
\centering
\caption{Types of indecomposable $G$-equivariant objects in $({\rm coh}\mbox{-}\mathbb{Y})^{G}$}
\[
\begin{array}{|c|c|c|}
\hline
\text{Form} & \text{Properties of $Y$} & \text{Associated isomorphisms} \\
\hline
(Y, \alpha^-) & \text{indec.}, G\text{-stable} & \alpha_e^- = \alpha_{\sigma_{1,2}}^- = \text{id}_Y \\
\hline
(Y, \alpha^+) & \text{indec.}, G\text{-stable} & \alpha_e^+ = -\alpha_{\sigma_{1,2}}^+ = \text{id}_Y \\
\hline
(Y \oplus \sigma_{1,2}(Y), \alpha) & \text{indec., not } G\text{-stable} & 
\begin{aligned}
    \alpha_e & = \text{id}_{Y \oplus \sigma_{1,2}(Y)}, \\
    \alpha_{\sigma_{1,2}} & = {\scriptsize
    \renewcommand{\arraystretch}{0.5} 
\setlength{\arraycolsep}{1pt} 
\begin{pmatrix} 0 & \text{id}_{\sigma_{1,2}(Y)} \\ \text{id}_Y & 0 \end{pmatrix}}
\end{aligned} \\
\hline
\end{array}
\]
\end{table} 

To explicitly described the bijection $\overline{H_2}$, we provide a detailed description of the indecomposable sheaves in ${\rm coh}\mbox{-}\mathbb{X}$ and ${\rm coh}\mbox{-}\mathbb{Y}$:

On one hand, according to \cite[Corollary 3.8]{Lenzing2011WeightedProjective}, all indecomposable sheaves in ${\rm vect}\mbox{-}\mathbb{Y}$ are line bundles of the form $\mathcal{O}_{\mathbb{Y}}(\vec{y})$ for some $\vec{y} \in \mathbb{L}(n,n)$. We observe that the automorphism $\sigma_{1,2}$ exchanges $S_{\infty,\overline{i}}^{(j)}$ with $S_{0,\overline{i}}^{(j)}$ for any $i \in \mathbb{Z}$ and $j \in \mathbb{Z}_{\geq 1}$, and sends line bundles $\mathcal{O}_\mathbb{Y}(l_1\vec{y}_1 + l_2\vec{y}_2 + l\vec{c})$ to $\mathcal{O}_\mathbb{Y}(l_1\vec{y}_2 + l_2\vec{y}_1 + l\vec{c})$ for any $0 \leq l_i \leq n-1$, with $i=1,2$ and $l \in \mathbb{Z}$. Thus, indecomposable sheaves in ${\rm coh}\mbox{-}\mathbb{Y}$ are classified as follows:

\begin{itemize}
    \item Indecomposable $G$-stable sheaves:
    \begin{itemize}
        \item  $\mathcal{O}_\mathbb{Y}(l_1 \vec{y}_1 + l_2 \vec{y}_2 + l  \vec{c})$, where $l_1 = l_2$, $0\leq l_1, l_2\leq n-1$ and $l \in \mathbb{Z}$;
        \item  $S_{\lambda}^{(j)}$, where $\lambda = \pm 1$ and $j \in \mathbb{Z}_+$.
    \end{itemize}
    
    \item Indecomposable not $G$-stable sheaves:
    \begin{itemize}
        \item $\mathcal{O}_\mathbb{Y}(l_1 \vec{y}_1 + l_2 \vec{y}_2 + l  \vec{c})$, where $l_1 \neq l_2$, $0\leq l_1, l_2\leq n-1$ and $l \in \mathbb{Z}$;
        \item $S_{0,\overline{i}}^{(j)}$,  $S_{\infty,\overline{i}}^{(j)}$, $S_{\lambda}^{(j)}$, where $\lambda \neq \pm 1$, $i\in \mathbb{Z}$ and $j \in \mathbb{Z}_+$.
    \end{itemize}
\end{itemize}

On the other hand, for any line bundle $L\in {\rm coh}\mbox{-}\mathbb{X}$ and $\vec{x} \in \mathbb{L}(2,2,n)$ with $0 \leq \vec{x} \leq(n-2) \vec{x}_3$, we have 
\begin{equation}\label{dim of ext eqe 1}
{\rm Ext}^1_{\mathbb{X}}(L(\vec{x}),L(\vec{\omega}))\cong D{\rm Hom}_{\mathbb{X}}(L,L(\vec{x}))\cong \mathbf{k}. 
\end{equation}
Consequently, the  middle term $E$ in the non-split exact sequence
\begin{equation}\label{extension bundle}
    \begin{tikzcd}[ampersand replacement=\&,cramped,sep=small]
	0 \& {L(\vec{\omega})} \& E \& {L(\vec{x})} \& 0
	\arrow[from=1-2, to=1-3]
	\arrow[from=1-3, to=1-4]
	\arrow[from=1-4, to=1-5]
	\arrow[from=1-1, to=1-2]
\end{tikzcd}
\end{equation}
	is uniquely determined up to isomorphism. We refer to $E_L\langle\vec{x}\rangle:=E$ as the \emph{extension bundle} for $L$ and $\vec{x}$.  According to \cite{MR3028577}, each indecomposable bundle in ${\rm vect}\mbox{-}\mathbb{X}$ is either a line bundle or an extension bundle.

Then, for $i\in\mathbb{Z}$, $j\in\mathbb{Z}_+$, $1 \leq k \leq n-1$ , $\lambda \notin \{\pm 1, 0, \infty\} \subset \mathbb{Y}$ and  the isomorphism $\varphi$ in Remark \ref{isomor}, the bijection $\overline{H_2}: {\rm ind}({\rm coh}\mbox{-}\mathbb{Y})^G\xrightarrow[]{}{\rm ind}({\rm coh}\mbox{-}\mathbb{X})$ is described as follows:

\begin{table}[H]\label{H2}
\centering
\small
\caption{The bijection $\overline{H_2}$ induced by $H_2$}
\[\begin{tabular}{|c|c|}
\hline
\text{Isoclasses in ${\rm ind}({\rm coh}\mbox{-}\mathbb{Y})^G$} & \text{Corresponding elements in ${\rm ind}({\rm coh}\mbox{-}\mathbb{X})$} \\
\hline
$(\mathcal{O}_\mathbb{Y}(2i\vec{\omega} ), \alpha^-)$ & $\mathcal{O}_\mathbb{X}(2i\vec{\omega})$ \\
$(\mathcal{O}_\mathbb{Y}((2i+1)\vec{\omega} ), \alpha^+)$ & $\mathcal{O}_\mathbb{X}((2i+1)\vec{\omega})$ \\
$(\mathcal{O}_\mathbb{Y}(2i\vec{\omega} ), \alpha^+)$ & $\mathcal{O}_\mathbb{X}(\vec{x}_1 - \vec{x}_2 + 2i\vec{\omega})$ \\
$(\mathcal{O}_\mathbb{Y}((2i+1)\vec{\omega} ), \alpha^-)$ & $\mathcal{O}_\mathbb{X}(\vec{x}_1 - \vec{x}_2 + (2i+1)\vec{\omega})$ \\
$(\mathcal{O}_\mathbb{Y}( \vec{c} + 2i\vec{\omega} ), \alpha^-)$ & $\mathcal{O}_\mathbb{X}(\vec{x}_1 + 2i\vec{\omega})$ \\
$(\mathcal{O}_\mathbb{Y}( \vec{c} + (2i+1)\vec{\omega} ), \alpha^-)$ & $\mathcal{O}_\mathbb{X}(\vec{x}_1 + (2i+1)\vec{\omega})$ \\
$(\mathcal{O}_\mathbb{Y}( \vec{c} + 2i\vec{\omega} ), \alpha^+)$ & $\mathcal{O}_\mathbb{X}(\vec{x}_2 + 2i\vec{\omega})$ \\
$(\mathcal{O}_\mathbb{Y}( \vec{c} + (2i+1)\vec{\omega} ), \alpha^+)$ & $\mathcal{O}_\mathbb{X}(\vec{x}_2 + (2i+1)\vec{\omega})$ \\
$(\mathcal{O}_\mathbb{Y}(k\vec{y}_{1} + i\vec{\omega} ) \oplus \mathcal{O}_\mathbb{Y}(k\vec{y}_{2} + i\vec{\omega} ), \alpha)$ & $E_{\mathcal{O}_{\mathbb{X}}} \langle (k-1)\vec{x}_3 \rangle ((i-1)\vec{\omega})$ \\
$(S_{1}^{(j)}, \alpha^-)$ & $S_{\infty, \overline{i}}^{(j)}$ \\
$(S_{1}^{(j)}, \alpha^+)$ & $S_{\infty, \overline{i+1}}^{(j)}$ \\
$(S_{-1}^{(j)}, \alpha^-)$ & $S_{0, \overline{i}}^{(j)}$ \\
$(S_{-1}^{(j)}, \alpha^+)$ & $S_{0, \overline{i+1}}^{(j)}$ \\
$(S_{0, \overline{i}}^{(j)} \oplus S_{\infty, \overline{i}}^{(j)}, \alpha)$ & $S_{1, \overline{i}}^{(j)}$ \\
$(S_{\lambda}^{(j)} \oplus S_{\lambda^{-1}}^{(j)}, \alpha)$ & $S_{\varphi(G\lambda)}^{(j)}$ \\
\hline
\end{tabular}\]
\end{table}

\section{Flips of skew-arcs}\label{foh}
Recall from \cite{MR2448067} that the \emph{flip} of an arc $\gamma$ in a triangulation $\Gamma$ replaces $\gamma$ with the other diagonal $\gamma^{\prime}$ of the quadrilateral formed by the two triangles sharing $\gamma$, resulting in a new triangulation. We refer to $\gamma^{\prime}$ as the \emph{flip} of $\gamma$ with respect to $\Gamma$, and denote this operation as $\gamma^{\prime}= \mu_{\Gamma}(\gamma)$.

In this appendix, we define a similar operation for a skew-arc $\widehat{\gamma} $ in a pseudo-triangulation $\Lambda$. More precisely, the operation transforms $\widehat{\gamma}$ into a uniquely defined, different skew-arc $\widehat{\mu}_{\Lambda}(\widehat{\gamma})$, such that the pseudo-triangulation $\Lambda$ changes to another pseudo-triangulation  $\widehat{\mu}_{\widehat{\gamma}}(\Lambda) := \Lambda \setminus \{\widehat{\gamma}\} \cup \{\widehat{\mu}_{\Lambda}(\widehat{\gamma})\}$. For convenience, we refer to $\widehat{\mu}_{\Lambda}(\widehat{\gamma})$ as the \emph{flip} of $\widehat{\gamma}$ with respect to $\Lambda$. 

 For $\widehat{\gamma} \in \Lambda$, Lemma \ref{well-defined} allows us to define 
\[
\zeta(\Lambda \setminus \{\widehat{\gamma}\}) := \big(\zeta(\Lambda \setminus \{\widehat{\gamma}\})_1, \zeta(\Lambda \setminus \{\widehat{\gamma}\})_2\big),
\]
where
\begin{align*}
\zeta(\Lambda \setminus \{\widehat{\gamma}\})_i &= 
\begin{cases} 
\epsilon & \text{if } (\Lambda_{\times_i} \setminus \{\widehat{\gamma}\}) \subset \mathbf{C}_{\times_i}^\epsilon, \text{ where } \epsilon \in \{+, -\}; \\
\pm & \text{if } (\Lambda_{\times_i} \setminus \{\widehat{\gamma}\}) \cap \mathbf{C}_{\times_i}^+ \neq \emptyset \text{ and } (\Lambda_{\times_i} \setminus \{\widehat{\gamma}\}) \cap \mathbf{C}_{\times_i}^- \neq \emptyset.
\end{cases}
\end{align*}
\noindent  

 Whenever this can
be done without ambiguity, we shall consider curves in $\Gamma_\Lambda$ that have endpoints not in $P_\Lambda$ as curves on $(\mathcal{S}, M)$ by ignoring the points in $P_\Lambda$. Moreover,  skew-arcs in  $\Lambda_{\mathtt{b},\times}$ ($\times\in P_\Lambda$) can naturally be  viewed as curves  in  $\Gamma_\Lambda$, with one endpoint at $\times\in P_\Lambda$.  For $i\in \{1,2\}$, let $j\in \{1,2\}$ be the index different from $i$. Based on the type of $\widehat{\gamma}$ and the values of  $\zeta(\Lambda)$, and $\zeta(\Lambda \setminus \{\widehat{\gamma}\})$, all types of flips of skew-arcs are summarized as follows:

\begin{table}[H]
\centering
\footnotesize
\begin{tabular}{|c|c|p{5cm}|}
\hline
\multirow{2}{*}{\emph{Type}} & \multirow{2}{*}{Condition} & \multirow{2}{*}{The flip of $\widehat{\gamma}$ with respect to $\Lambda$} \\
                             &                            &                             \\ \hline
\emph{Type I} & $\widehat{\gamma} \in \Lambda_{\mathtt{b}, \times_i}$, $\mathbf{C}(\widehat{\gamma}) = \{\gamma\}$ &  \\ \hline
(1) & If $\zeta(\Lambda)_i = \pm(0)$, and $\mu_{\Gamma_\Lambda}(\gamma)$ is $\sigma$-fixed on $\times_i$ & 
$\widehat{\mu}_{\Lambda}(\widehat{\gamma}) = \mu_{\Gamma_\Lambda}(\gamma)^{\zeta(\Lambda \setminus \{\widehat{\gamma}\})_i} \in \Lambda_{\mathtt{b}, \times_i}$ \\ \hline
(2) & If $\zeta(\Lambda)_i = \pm(0)$, and $\mu_{\Gamma_\Lambda}(\gamma) = L_{\times_{j}}$  & 
$\widehat{\mu}_{\Lambda}(\widehat{\gamma}) = \zeta(\Lambda \setminus \{\widehat{\gamma}\}) \in \mathbf{C}_*$ \\ \hline
(3) & If $\zeta(\Lambda)_i \neq \pm(0)$ and $\mu_{\Gamma_\Lambda}(\widehat{\gamma}) = L_{\times_{j}}$ & 
$\widehat{\mu}_{\Lambda}(\widehat{\gamma}) = \begin{cases} 
(\neg\zeta(\Lambda)_1, \zeta(\Lambda)_2) & \text{if } i=1 \\
(\zeta(\Lambda)_1, \neg\zeta(\Lambda)_2) & \text{if } i=2 
\end{cases}$ \\ \hline
(4) & If $\zeta(\Lambda)_i \neq \pm(0)$ and $\mu_{\Gamma_\Lambda}(\widehat{\gamma})$ is not $\sigma$-fixed & 
$\widehat{\mu}_{\Lambda}(\widehat{\gamma})=\{\mu_{\Gamma_\Lambda}(\widehat{\gamma}), \mu_{\Gamma_\Lambda}(\sigma(\widehat{\gamma}))\}$ \\ \hline
(5) & If $\zeta(\Lambda)_i \neq \pm(0)$, and $\mu_{\Gamma_\Lambda}(\widehat{\gamma})\in\mathbf{C}_{\mathtt{b}, \times_{j}}$ & 
$\widehat{\mu}_{\Lambda}(\widehat{\gamma}) = \mu_{\Gamma_\Lambda}(\gamma)^{\zeta(\Lambda)_{j}}$ \\ \hline
(6) & If $\zeta(\Lambda)_i \neq \pm(0)$ and $\mu_{\Gamma_\Lambda}(\widehat{\gamma})$ is $\sigma$-fixed on $\times_i$ & 
$\widehat{\mu}_{\Lambda}(\widehat{\gamma}) = \mu_{\Gamma_\Lambda}(\widehat{\gamma})^{\neg \zeta(\Lambda)_i}$ \\ \hline
\emph{Type II} & $\widehat{\gamma} \in \Lambda_{\mathtt{b}}^\sigma \cup \Lambda_{\mathtt{p}}^\sigma$, $\mathbf{C}(\widehat{\gamma}) = \{\gamma, \sigma(\gamma)\}$ &  \\ \hline
(1) & If $\mu_{\Gamma_\Lambda}(\gamma) \in \mathbf{C}_{\mathtt{b}, \times_i}$ & 
$\widehat{\mu}_{\Lambda}(\widehat{\gamma})\in\{\mu_{\Gamma_\Lambda}(\gamma), \mu_{\Gamma_\Lambda}(\sigma(\gamma))\} \cap \mathbf{C}_{\mathtt{b}, \times_i}^{\zeta(\Lambda)_i}$ \\ \hline
(2) & If $\mu_{\Gamma_\Lambda}(\widehat{\gamma})$ is not $\sigma$-fixed & 
$\widehat{\mu}_{\Lambda}(\widehat{\gamma})=\{\mu_{\Gamma_\Lambda}(\widehat{\gamma}), \mu_{\Gamma_\Lambda}(\sigma(\widehat{\gamma}))\}$ \\ \hline
(3) & If $\{\mu_{\Gamma_\Lambda}(\gamma), \mu_{\Gamma_\Lambda}(\sigma(\gamma))\} = \{L_0, L_1\}$ & 
$\widehat{\mu}_{\Lambda}(\widehat{\gamma}) = (\zeta(\Lambda)_1, \zeta(\Lambda)_2) \in \mathbf{C}_*$ \\ \hline
\emph{Type III} & $\widehat{\gamma} \in \Lambda_*$, $\mathbf{C}(\widehat{\gamma}) = \{L_0, L_1\}$ &  \\ \hline
(1) & If $\mu_{\Gamma_\Lambda}(L_0)$ is not $\sigma$-fixed & 
$\widehat{\mu}_{\Lambda}(\widehat{\gamma})=\{\mu_{\Gamma_\Lambda}(L_0), \mu_{\Gamma_\Lambda}(L_1)\}$ \\ \hline
(2) & If $\mu_{\Gamma_\Lambda}(L_0) = \mu_{\Gamma_\Lambda}(\sigma(L_1))$ is $\sigma$-fixed on $\times_i$ & 
$\widehat{\mu}_{\Lambda}(\widehat{\gamma}) = \mu_{\Gamma_\Lambda}(L_0)^{\neg\zeta(\Lambda)_i}$ \\ \hline
(3) & If $\zeta(\Lambda)_i = \pm(i)$ for some $i \in \{1, 2\}$ & 
$\widehat{\mu}_{\Lambda}(\widehat{\gamma}) = \mu_{\Gamma_\Lambda}(L_{\times_{j}})^{\zeta(\Lambda \setminus \{\widehat{\gamma}\})_i}$ \\ \hline
\end{tabular}
\end{table}

\begin{rem}
One can observe that the uniqueness of the skew-arc $\widehat{\mu}_{\Lambda}(\widehat{\gamma})$ is determined by  the uniqueness of the flips of specific curves with respect to $\Gamma_\Lambda$, along with the values of $\zeta(\Lambda)$ and $\zeta(\Lambda \setminus \{\widehat{\gamma}\})$.
\end{rem}

To illustrate, we define a graph $\mathcal{G}(\mathcal{S}, M, \sigma)$ whose vertices represent all pseudo-triangulations on $(\mathcal{S}, M,\sigma)$. Two pseudo-triangulations are connected by an edge if and only if they differ by exactly one skew-arc. In the following example, we present the flips of some skew-arcs with respect to given pseudo-triangulations.

\begin{exm}
In this example, bold blue and red curves represent different boundaries of $(\mathcal{S}, M, \sigma)$, while orange and green points denote various $\times_i$. Rather than presenting entire pseudo-triangulations, we focus locally on the flips of skew-arcs.  We annotate certain edges of the full subgraph of $\mathcal{G}(\mathcal{S}, M,\sigma)$ with arrows to indicate the types of the two flips associated with each edge. We have the following full subgraphs of $\mathcal{G}(\mathcal{S}, M,\sigma)$: 
\begin{figure}[H]
    \centering

\tikzset{every picture/.style={line width=0.75pt}}  


\end{figure}

\end{exm}

 \noindent {\bf Acknowledgements.}\quad
 The authors would like to thank Shiquan Ruan for helpful discussions.  Jianmin Chen and Jinfeng Zhang were partially supported by the National Natural Science Foundation of China (Nos. 12371040, 12131018 and 12161141001).

\end{document}